
\documentclass[12pt]{amsart}

\usepackage[letterpaper,margin=.6in]{geometry}
\usepackage[matrix,arrow,ps,color,line,curve,frame,all]{xy}

\usepackage{url}
\usepackage{color}

\usepackage[all]{xypic}

\usepackage{braket}
\usepackage{enumerate}


\usepackage{latexsym}

\usepackage{amssymb}
\usepackage{amsfonts}
\usepackage{amscd}
\usepackage{amsmath,amsthm}
\usepackage{verbatim}

\usepackage{imakeidx}

\usepackage{tikz}
\usetikzlibrary{matrix,arrows,decorations.pathmorphing,fit,cd}
\tikzset{myboxgroup/.style={draw, densely dotted}} 

\usepackage[colorlinks,linkcolor=blue,citecolor=black!75!red]{hyperref}
\usepackage{cleveref}

%

\newtheorem{lemma}{Lemma}[section]
\newtheorem{proposition}[lemma]{Proposition}
\newtheorem{theorem}[lemma]{Theorem}
\newtheorem{corollary}[lemma]{Corollary}

{

}

\theoremstyle{definition}

\newtheorem{definition}[lemma]{Definition}

\theoremstyle{remark}

\newtheorem{remark}[lemma]{Remark}


\makeatletter
\let\xx@thm\@thm
\AtBeginDocument{\let\@thm\xx@thm}
\makeatother


\crefname{section}{section}{sections}
\Crefname{section}{Section}{Sections}
\crefformat{section}{#2section~#1#3}
\Crefformat{section}{#2Section~#1#3}

\crefformat{subsection}{#2\S#1#3}
\Crefformat{subsection}{#2\S#1#3}
\crefrangeformat{subsection}{\S\S#3#1#4--#5#2#6}
\Crefrangeformat{subsection}{\S\S#3#1#4--#5#2#6}
\crefmultiformat{subsection}{\S\S#2#1#3}{ and~#2#1#3}{, #2#1#3}{ and~#2#1#3}
\Crefmultiformat{subsection}{\S\S#2#1#3}{ and~#2#1#3}{, #2#1#3}{ and~#2#1#3}
\crefrangemultiformat{subsection}{\S\S#3#1#4--#5#2#6}{ and~#3#1#4--#5#2#6}{, #3#1#4--#5#2#6}{ and~#3#1#4--#5#2#6}
\Crefrangemultiformat{subsection}{\S\S#3#1#4--#5#2#6}{ and~#3#1#4--#5#2#6}{, #3#1#4--#5#2#6}{ and~#3#1#4--#5#2#6}


\crefname{appendix}{Appendix}{Appendices}
\crefformat{appendix}{#2Appendix~#1#3}
\Crefformat{appendix}{#2Appendix~#1#3}

\crefname{definition}{Definition}{Definitions}
\crefformat{definition}{#2Definition~#1#3}
\Crefformat{definition}{#2Definition~#1#3}

\crefname{definitionnodiamond}{Definition}{Definitions}
\crefformat{definitionnodiamond}{#2Definition~#1#3}
\Crefformat{definitionnodiamond}{#2Definition~#1#3}

\crefname{example}{Example}{Examples}
\crefformat{example}{#2Example~#1#3}
\Crefformat{example}{#2Example~#1#3}

\crefname{examplenodiamond}{Example}{Examples}
\crefformat{examplenodiamond}{#2Example~#1#3}
\Crefformat{examplenodiamond}{#2Example~#1#3}

\crefname{remark}{Remark}{Remarks}
\crefformat{remark}{#2Remark~#1#3}
\Crefformat{remark}{#2Remark~#1#3}

\crefname{remarknodiamond}{Remark}{Remarks}
\crefformat{remarknodiamond}{#2Remark~#1#3}
\Crefformat{remarknodiamond}{#2Remark~#1#3}

\crefname{convention}{Convention}{Conventions}
\crefformat{convention}{#2Convention~#1#3}
\Crefformat{convention}{#2Convention~#1#3}

\crefname{notation}{Notation}{Notations}
\crefformat{notation}{#2Notation~#1#3}
\Crefformat{notation}{#2Notation~#1#3}

\crefname{notationnodiamond}{Notation}{Notations}
\crefformat{notationnodiamond}{#2Notation~#1#3}
\Crefformat{notationnodiamond}{#2Notation~#1#3}

\crefname{lemma}{Lemma}{Lemmas}
\crefformat{lemma}{#2Lemma~#1#3}
\Crefformat{lemma}{#2Lemma~#1#3}

\crefname{proposition}{Proposition}{Propositions}
\crefformat{proposition}{#2Proposition~#1#3}
\Crefformat{proposition}{#2Proposition~#1#3}

\crefname{corollary}{Corollary}{Corollaries}
\crefformat{corollary}{#2Corollary~#1#3}
\Crefformat{corollary}{#2Corollary~#1#3}

\crefname{theorem}{Theorem}{Theorems}
\crefformat{theorem}{#2Theorem~#1#3}
\Crefformat{theorem}{#2Theorem~#1#3}

\crefname{assumption}{Assumption}{Assumptions}
\crefformat{assumption}{#2Assumption~#1#3}
\Crefformat{assumption}{#2Assumption~#1#3}

\crefname{enumi}{}{}
\crefformat{enumi}{(#2#1#3)}
\Crefformat{enumi}{(#2#1#3)}

\crefname{equation}{}{}
\crefformat{equation}{(#2#1#3)}
\Crefformat{equation}{(#2#1#3)}

\crefname{align}{}{}
\crefformat{align}{(#2#1#3)}
\Crefformat{align}{(#2#1#3)}

\crefname{proofstep}{Step}{Steps}
\crefformat{proofstep}{#2Step~#1#3}
\Crefformat{proofstep}{#2Step~#1#3}

\crefname{table}{Table}{Tables}
\crefformat{table}{#2Table~#1#3}
\Crefformat{table}{#2Table~#1#3}

\makeatletter
\renewcommand{\p@enumii}{}
\renewcommand{\p@enumiii}{}
\makeatother


\numberwithin{equation}{section}
\renewcommand{\theequation}{\thesection-\arabic{equation}}



\newcommand\bC{{\mathbb C}}

\newcommand\bP{{\mathbb P}}

\newcommand\bR{{\mathbb R}}

\newcommand\bZ{{\mathbb Z}}

\def\cB{{\mathcal B}}

\def\cF{{\mathcal F}}

\def\cL{{\mathcal L}}

\newcommand\fa{{\mathfrak a}}


%
%

\def\a{\alpha}
\def\b{\beta}

\def\l{\lambda}

\def\s{\sigma}

\def\ve{\varepsilon}

%
%

\def\sfx{{\sf x}}
\def\sfy{{\sf y}}



%
%

\newcommand\End{\operatorname {End}}
\newcommand\Ext{\operatorname {Ext}}
\newcommand\Hom{\operatorname {Hom}}

\newcommand\id{\operatorname{id}}

\newcommand\im{\operatorname{im}}
\newcommand\rank{\operatorname{rank}}
\newcommand\nullity{\operatorname{nullity}}
\newcommand\sgn{\operatorname{sgn}}
\newcommand\mult{\operatorname{mult}}
\newcommand\sym{\operatorname{sym}}
\newcommand\rel{\operatorname{rel}}
\newcommand\Sym{\operatorname{Sym}}
\newcommand\Lat{\operatorname{\mathsf{Lat}}}
\newcommand\GL{\operatorname{GL}}
\newcommand\Alt{\operatorname{Alt}}
\newcommand\Grass{\operatorname{Grass}}

\renewcommand{\Im}{\operatorname{Im}}


\def\dirlim{\mathop{\vtop{\baselineskip -100pt\lineskip -1pt\lineskiplimit 0pt
\setbox0\hbox{lim}\copy0\hbox to \wd0{\rightarrowfill}}}\limits}
\def\invlim{\mathop{\vtop{\baselineskip -100pt\lineskip -1pt\lineskiplimit 0pt
\setbox0\hbox{lim}\copy0\hbox to \wd0{\leftarrowfill}}}\limits}

\def\I11{{1 \kern -0.8pt \! \mbox{l}}}
\def\mumu{{\mu\kern-4.2pt\mu}}
\def\bfmu{{\mu\kern-4.2pt\mu}}
\def\2slash{\backslash \! \backslash}

\def\hdot{{\:\raisebox{2.5pt}{\text{\circle*{2.7}}}}}




\newcommand\numberthis{\addtocounter{equation}{1}\tag{\theequation}}


\pagenumbering{arabic}

\makeindex

\begin{document}

\title[R-matrices and elliptic algebras]{Elliptic R-matrices and\\
Feigin and Odesskii's elliptic algebras}

\author[Alex Chirvasitu]{Alex Chirvasitu}
\author[Ryo Kanda]{Ryo Kanda}
\author[S. Paul Smith]{S. Paul Smith}

\address[Alex Chirvasitu]{Department of Mathematics, University at
  Buffalo, Buffalo, NY 14260-2900, USA.}
\email{achirvas@buffalo.edu}

\address[Ryo Kanda]{Department of Mathematics, Graduate School of Science, Osaka City University, 3-3-138, Sugimoto, Sumiyoshi, Osaka, 558-8585, Japan.}
\email{ryo.kanda.math@gmail.com}

\address[S. Paul Smith]{Department of Mathematics, Box 354350,
  University of Washington, Seattle, WA 98195, USA.}
\email{smith@math.washington.edu}

\subjclass[2010]{14A22 (Primary), 16S38, 16W50, 17B37, 14H52 (Secondary)}

\keywords{Elliptic algebra; quantum Yang-Baxter equation; Sklyanin algebra; Koszul algebra; Artin-Schelter regular algebra}

\begin{abstract}
The algebras $Q_{n,k}(E,\tau)$ introduced by Feigin and Odesskii as generalizations of the 4-dimensional Sklyanin algebras form a family of quadratic algebras parametrized by coprime integers $n>k\ge 1$, a complex elliptic curve $E$, and a point $\tau\in E$. The main result in this paper is that $Q_{n,k}(E,\tau)$ has the same Hilbert series as the polynomial ring on $n$ variables when $\tau$ is not a torsion point. We also show that $Q_{n,k}(E,\tau)$ is a Koszul algebra, hence of global dimension $n$ when $\tau$ is not a torsion point, and, for all but countably many $\tau$, it is Artin-Schelter regular. The proofs use the fact that the space of quadratic relations defining $Q_{n,k}(E,\tau)$ is the image of an operator $R_{\tau}(\tau)$ that belongs to a family of operators $R_{\tau}(z):\mathbb{C}^n\otimes\mathbb{C}^n\to\mathbb{C}^n\otimes\mathbb{C}^n$, $z\in\mathbb{C}$, that (we will show) satisfy the quantum Yang-Baxter equation with spectral parameter.
\end{abstract}

\maketitle
\tableofcontents

\section{Introduction}
\label{se.intro}

\subsection{Feigin and Odesskii's elliptic algebras are deformations of polynomial rings}
\label{sect.intro.1}

In this paper we continue our study of the basic properties of the elliptic algebras $Q_{n,k}(E,\tau)$ defined by Feigin and Odesskii in 1989 in their papers \cite{FO-Kiev} and \cite{FO89}.  These are graded $\bC$-algebras generated by $n$ degree-one elements. 
The \textsf{Hilbert series} of  $Q_{n,k}(E,\tau)$ is the formal power series $\sum_{i=0}^\infty \dim(Q_{n,k}(E,\tau)_i) t^i$.

The main results in this paper are as follows. (The notation is explained after their statement.)

\begin{theorem}[\cref{thm.H.series}]
\label{thm.intro.H.series}
For all $\tau\in(\bC-\bigcup_{m\geq 1}\frac{1}{m}\Lambda)\cup\frac{1}{n}\Lambda$, the Hilbert series of $Q_{n,k}(E,\tau)$ is the same as that of the polynomial ring on $n$ variables placed in degree one, namely $(1-t)^{-n}$.
\end{theorem}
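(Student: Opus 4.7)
The plan is to compute the Hilbert series by exploiting the QYBE satisfied by $R_\tau(z)$. On $(\bC^n)^{\otimes d}$, I would define operators $R_i(z_1,\ldots,z_d)$ by tensoring $R_\tau(z_i-z_{i+1})$ with identities in the remaining factors, obtaining braid-type relations from the QYBE. From these I would construct an ``R-matrix symmetrizer'' $S_d(\tau)\in\End\bigl((\bC^n)^{\otimes d}\bigr)$ whose image is naturally identified with $Q_{n,k}(E,\tau)_d$, so that $\dim Q_{n,k}(E,\tau)_d=\rank S_d(\tau)$. The key input here is that the two-sided span of $\Im R_\tau(\tau)$ in tensor degree $d$ is exactly the kernel of $S_d(\tau)$, which is forced by the QYBE together with the fact that $\Im R_\tau(\tau)$ is the space of quadratic relations.

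The strategy then sandwiches $\rank S_d(\tau)$ between the same number $\binom{n+d-1}{d}$ from both sides. For the lower bound, I would construct an explicit graded cyclic quotient of $Q_{n,k}(E,\tau)$ of the expected size---the standard tool in the Feigin--Odesskii setting being an action on spaces of theta functions on a product of copies of $E$, whose graded dimensions match the Hilbert coefficients of the polynomial ring. For the upper bound, I would combine (i) upper semi-continuity of $\tau\mapsto\dim Q_{n,k}(E,\tau)_d$ with (ii) direct verification at a well-chosen value $\tau_0\in\frac{1}{n}\Lambda$, where the R-matrix simplifies enough to identify the defining relations, up to a change of basis, with those of a (possibly twisted) polynomial ring. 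Upper semi-continuity then propagates the bound $\dim Q_{n,k}(E,\tau)_d\le\binom{n+d-1}{d}$ from $\tau_0$ to a Zariski-open neighborhood, and an irreducibility/generic argument extends the equality to all non-torsion $\tau$.

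The main obstacle is the handling of the excluded torsion locus $\bigcup_{m\ge 1}\frac{1}{m}\Lambda\setminus\frac{1}{n}\Lambda$. These are the points where $R_\tau(z)$ acquires poles or degenerations producing extra quadratic relations, which would shrink the algebra below the polynomial Hilbert series. Proving that no such pathology occurs for $\tau$ in the allowed set, and verifying directly that the expected Hilbert series holds at the included torsion points of $\frac{1}{n}\Lambda$, will require careful theta-function identities controlling the vanishing behaviour of the R-matrix: one must track which $\tau$ can cause $\rank S_d(\tau)$ to drop and check that every such $\tau$ lies in the excluded set.
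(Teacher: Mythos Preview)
Your instinct to build an ``R-matrix symmetrizer'' is exactly right: the paper's operator $F_d(-\tau)$ plays precisely this role, and the equality $\ker F_d(-\tau)=\sum_{s+t+2=d}V^{\otimes s}\otimes\im R_\tau(\tau)\otimes V^{\otimes t}$ is the heart of the argument. The inclusion $\supseteq$ is easy from the QYBE, as you anticipate. The difficulty is entirely in the reverse inclusion, and here your sandwich strategy has gaps on both sides.

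For the lower bound: constructing a graded cyclic quotient of $Q_{n,k}(E,\tau)$ with Hilbert series exactly $(1-t)^{-n}$ is not ``standard'' for general $k$. The Feigin--Odesskii theta-function realization gives a \emph{representation} on symmetric theta functions, but proving that representation is large enough is essentially equivalent to the theorem you are trying to prove. The paper explicitly remarks that its argument ``bears no resemblance'' to earlier proofs for $k=1$ (which used central regular sequences or modules of $I$-type, neither available in general), so this side of your sandwich is at least as hard as the theorem.

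For the upper bound: semi-continuity from a special point $\tau_0$ gives the bound only on a Zariski-open dense set, and no irreducibility argument will promote that to \emph{all} $\tau\notin\bigcup_{m\ge1}\frac{1}{m}\Lambda$. Indeed, your proposal does not explain why the excluded set has this exact shape. In the paper this set arises for a concrete reason: the proof studies an auxiliary theta operator $G_\tau(z)$ (a restriction of $R(z)_{12}R(z-\tau)_{23}\cdots R(z-(d-2)\tau)_{d-1,d}$ to $\im F_{d-1}(-\tau)\otimes V$), whose determinant is a theta function with respect to $\frac{1}{n}\Lambda$ having exactly $(d-1)n\binom{n+d-2}{d-1}$ zeros. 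One proves lower bounds $\nullity G_\tau(m\tau)\ge c_m$ for $m\in\{-1,1,2,\ldots,d-1\}$ and observes that $\sum_m c_m$ \emph{equals} this zero count, forcing all inequalities to be equalities. The hypothesis $\tau\notin\bigcup_{m=1}^d\frac{1}{mn}\Lambda$ is precisely what guarantees the cosets $m\tau+\frac{1}{n}\Lambda$ are pairwise disjoint, so no double counting occurs. That exact-count mechanism, not a sandwich, is what pins down the Hilbert series on the stated set.
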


The quadratic dual of $Q_{n,k}(E,\tau)$ is denoted by $Q_{n,k}(E,\tau)^!$\index{Q_n,k(E,tau)^"!@$Q_{n,k}(E,\tau)^"!$}.

\begin{theorem}[\cref{thm.H.series.dual}]
For all $\tau\in (\bC-\bigcup_{m=1}^{n+1}\frac{1}{mn}\Lambda) \cup \frac{1}{n}\Lambda$, the Hilbert series of $Q_{n,k}(E,\tau)^!$ is the same as that of the exterior algebra on $n$ variables placed in degree one, namely 
$(1+t)^n$.
\end{theorem}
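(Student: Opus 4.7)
The plan is to combine the Hilbert series computation of \Cref{thm.intro.H.series} with Koszul duality. As the abstract announces, the paper establishes that $Q_{n,k}(E,\tau)$ is Koszul. For any Koszul algebra $A$, the identity $H_A(t)\cdot H_{A^!}(-t)=1$, together with $H_{Q_{n,k}(E,\tau)}(t)=(1-t)^{-n}$, gives $H_{Q_{n,k}(E,\tau)^!}(t)=(1+t)^n$ on the set where both Koszulity and \Cref{thm.intro.H.series} apply. The degree-$2$ coefficient is already immediate from \Cref{thm.intro.H.series} alone, without invoking Koszulity: by general quadratic duality, $\dim(Q_{n,k}(E,\tau)^!)_2 = n^2 - \dim Q_{n,k}(E,\tau)_2 = n^2 - \binom{n+1}{2} = \binom{n}{2}$.

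The hypothesis on $\tau$ in the present statement is, however, strictly weaker than that of \Cref{thm.intro.H.series}: it excludes only the finite union $\bigcup_{m=1}^{n+1}\frac{1}{mn}\Lambda$ rather than all torsion. To cover the extra values of $\tau$, the plan is to argue directly. By definition, the relations of $Q_{n,k}(E,\tau)^!$ are the annihilator $R^\perp\subseteq V^*\otimes V^*$ of $R=\im R_\tau(\tau)$. The key step is to identify $R^\perp$ as the image of a companion operator $R_\tau^\vee(\tau)$ obtained from $R_\tau$ by a standard crossing-symmetry transformation. The family $\{R_\tau^\vee(z)\}$ should inherit a Yang-Baxter-type equation from $\{R_\tau(z)\}$, so the proof of \Cref{thm.intro.H.series} can be rerun with $R_\tau^\vee$ in place of $R_\tau$. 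The excluded set of $\tau$ then reflects the resonance locus of $R_\tau^\vee$, which one aims to identify with $\bigcup_{m=1}^{n+1}\frac{1}{mn}\Lambda$; the $n$-torsion points are ``added back'' by a separate degeneration argument, paralleling the treatment in \Cref{thm.intro.H.series}.

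The main obstacle I anticipate is pinning down precisely the resonance locus of $R_\tau^\vee$. This requires careful analysis of the elliptic theta-function determinants governing the rank of $R_\tau^\vee(\tau)$, and it is here that the bound $m \leq n+1$ will emerge, presumably from a degree count tied to the dimensions of $V^{\otimes d}$ for small $d$. The handling of $n$-torsion also needs care, since there the operator $R_\tau$ degenerates in a way that must be controlled by an independent input tied to the geometry of the $n$-torsion on $E$.
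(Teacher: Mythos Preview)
Your proposal is on the right track and matches the paper's strategy in outline, but you have the emphasis inverted and you are anticipating a difficulty that does not arise.

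The paper uses your Koszulity-plus-functional-equation argument \emph{only} for $\tau\in\frac{1}{n}\Lambda$, where it is easy: there $Q_{n,k}(E,\tau)$ is a twist of the polynomial ring, hence Koszul, and the identity $H_{Q}(t)H_{Q^!}(-t)=1$ gives $(1+t)^n$. Outside $\frac{1}{n}\Lambda$ the paper does not invoke Koszulity at all (Koszulity is established later, in \S\ref{se.ksz}, and on the more restrictive set $\bC-\bigcup_{m\ge1}\frac{1}{m}\Lambda$), so your first paragraph cannot carry the weight you assign it.

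Your second paragraph is essentially what the paper does. The ``companion operator'' is the transpose $R_{n,k,\tau}(z)^{\hdot}$, and the paper computes (\cref{lem.R.transpose}) that up to a nonzero scalar this equals $R_{n,n-k,-\tau}(-z)$, i.e.\ another member of the same family with $k$ replaced by $n-k$ and the sign of $\tau$ flipped. Hence no new Yang--Baxter verification is needed, and $Q_{n,k}(E,\tau)^!\cong S_{n,n-k}(E,\tau):=TV/(\ker R_{n,n-k,\tau}(\tau))$. One then reruns the Hilbert-series argument with $F_d(\tau)$ in place of $F_d(-\tau)$; the limiting operator as $\tau\to0$ is now the anti-symmetrizer, so the target dimensions are $\binom{n}{d}$ rather than $\binom{n+d-1}{d}$.

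The point you miss is why the excluded set is the \emph{finite} union $\bigcup_{m=1}^{n+1}\frac{1}{mn}\Lambda$. This is not a delicate resonance computation. The degree-$d$ step of the rerun requires $\tau\notin\bigcup_{m=1}^{d}\frac{1}{mn}\Lambda$, exactly parallel to the symmetric case. But once you know $\dim S_d=\binom{n}{d}$ for $d\le n+1$, you have $S_{n+1}=0$; since $S$ is generated in degree one, $S_d=0$ for all $d\ge n+1$ automatically. So you never need $d>n+1$, and the excluded set stays finite. This elementary truncation is the whole reason the dual result has a weaker hypothesis than \cref{thm.intro.H.series}.
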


\begin{theorem}[\cref{thm.Koszul,th.fgl}]
For all $\tau\in(\bC-\bigcup_{m\geq 1}\frac{1}{m}\Lambda)\cup\frac{1}{n}\Lambda$, $Q_{n,k}(E,\tau)$ is a Koszul algebra whose global dimension is $n$.
\end{theorem}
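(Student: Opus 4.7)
Write $A = Q_{n,k}(E,\tau)$. The first theorem gives $H_A(t) = (1-t)^{-n}$ and the second gives $H_{A^!}(t) = (1+t)^n$ on the common parameter set. These satisfy $H_A(t)\cdot H_{A^!}(-t) = 1$, the numerical Koszul identity, which is the Euler characteristic of a would-be Koszul resolution $A\otimes (A^!)^*\to\bC$. The numerical identity alone is insufficient for Koszulity (Positselski has explicit counter-examples), so one genuinely needs additional input.

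My preferred route is a flat-deformation argument. Since $H_A(t)$ is constant as $\tau$ ranges over the stated set, the family $\{Q_{n,k}(E,\tau)\}$ is flat as a family of quadratic algebras. One identifies a distinguished limit --- most naturally, a degeneration at which the operator $R_\tau(\tau)$ from the paper's $R$-matrix formalism tends to the symmetrization projector --- at which $A$ becomes (a twist of) the polynomial ring $\bC[x_1,\dots,x_n]$, which is Koszul. Semicontinuity of $\operatorname{Tor}^A(\bC,\bC)$ along the flat family then propagates Koszulity from the limit fiber to the generic fiber, using that the Hilbert series is constant so there is no room for jumps. A second, more intrinsic route is Backelin's distributivity criterion: show that for every $m$ the lattice of subspaces of $V^{\otimes m}$ generated by the $V^{\otimes i}\otimes R\otimes V^{\otimes(m-i-2)}$, with $R = \operatorname{Im} R_\tau(\tau)$, is distributive. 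The quantum Yang--Baxter equation satisfied by $R_\tau(z)$ (established earlier in the paper) provides exactly the braided framework in which such arguments typically succeed.

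Once Koszulity is in hand, the global dimension claim is essentially automatic: for a Koszul algebra, $\operatorname{Ext}^*_A(\bC,\bC) \cong A^!$ as graded vector spaces, and by the second theorem $A^!$ is nonzero precisely in degrees $0,\dots,n$, so $\mathrm{gldim}(A) = n$. The main obstacle is thus the Koszulity step itself: in the deformation approach one must pin down a concrete Koszul limit and verify that flatness extends through it, while in the distributivity approach one must carry out a genuine combinatorial analysis of the subspaces cut out by the $R$-matrix relations.
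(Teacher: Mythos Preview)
Your global-dimension argument is correct and matches the paper exactly: once Koszulity holds, $\Ext^*_A(\bC,\bC)\cong A^!$ as graded spaces, and $H_{A^!}(t)=(1+t)^n$ forces $\operatorname{gldim}A=n$.

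The gap is in your preferred route to Koszulity. Semicontinuity of $\dim\operatorname{Tor}^{A_\tau}_{i,j}(\bC,\bC)$ only shows that the Koszul locus is \emph{open} and contains $\tau=0$; it does not identify which $\tau$ lie in it, and gives no reason why it should contain every point of the specific co-countable set in the statement. Your phrase ``no room for jumps'' does not close this: constancy of $H_A$ and $H_{A^!}$ pins down the diagonal $\operatorname{Tor}^A_{i,i}\cong (A^!_i)^*$ and the column-wise Euler characteristics, but off-diagonal Tor's can still appear in cancelling pairs---precisely the Positselski phenomenon you yourself invoke. There is also a flatness issue: $H_{A_\tau}=(1-t)^{-n}$ is only established on the non-algebraic set $S=(\bC-\bigcup_{m\ge 1}\tfrac{1}{m}\Lambda)\cup\tfrac{1}{n}\Lambda$, so you do not have a flat family over a variety on which to run standard semicontinuity.

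The paper takes your second route and carries out Backelin's distributivity verification in full. The reduction (via \cite[Thm.~1.6.3]{PP05}) is to showing that each intersection $\Sigma_\ell\cap I_r=\sum_{i\le\ell}W_i\cap\bigcap_{j>\ell}W_j$ inside $V^{\otimes d}$ has the same dimension as its polynomial-ring analogue. This is recast as a rank computation for a theta operator $H_\tau(z)$ built from products of $R(w)_{i,i+1}$'s, and settled by the same technique used for $R_\tau(z)$, $F_d(z)$, and $G_\tau(z)$ earlier: bound $\nullity H_\tau(m\tau)$ from below at each candidate zero $m\tau+\tfrac{1}{n}\Lambda$ (including a multiplicity-two contribution at $m=\ell-1$), then check that these bounds saturate the total zero count of $\det H_\tau(z)$ as a theta function with respect to $\tfrac{1}{n}\Lambda$. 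The QYBE is used throughout to factor and rearrange the operator products. This is substantial work---an entire section of the paper---and it is what delivers the precise parameter set rather than a merely generic conclusion.
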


\begin{theorem}[\cref{th.reg}]
For fixed $(n,k,E)$, $Q_{n,k}(E,\tau)$ is Artin-Schelter regular for all but countably many $\tau$.
\end{theorem}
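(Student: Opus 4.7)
The plan is to combine the preceding structural results with a genericity argument in $\tau$. Let $S\subseteq\bC$ be the intersection of the parameter sets appearing in \cref{thm.intro.H.series,thm.H.series.dual,thm.Koszul,th.fgl}; this is the complement of a countable set. For $\tau\in S$, the algebra $A:=Q_{n,k}(E,\tau)$ is Koszul of global dimension $n$, its Hilbert series equals $(1-t)^{-n}$, and that of $A^!$ equals $(1+t)^n$.

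A standard criterion---ultimately stemming from the Koszul resolution and the identification $\Ext^{\bullet}_A(\bC,A)\cong (A^!)^{\vee}$ as graded vector spaces---asserts that such a Koszul algebra is Artin--Schelter regular if and only if $A^!$ is a Frobenius algebra. Since $A^!$ is generated in degree one and $\dim A^!_n=1$, Frobeniusness reduces to the nondegeneracy of the multiplication pairing
\[
\mu(\tau)\colon A^!_1\otimes_{\bC}A^!_{n-1}\longrightarrow A^!_n\cong\bC.
\]
Both sides have dimension $n$, so $\mu(\tau)$ is encoded by an $n\times n$ matrix $M(\tau)$, and AS-regularity of $Q_{n,k}(E,\tau)$ reduces to the single condition $\det M(\tau)\ne 0$.

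The defining relations of $A$, and hence of $A^!$, arise as the image of the operator $R_{\tau}(\tau)$, whose matrix coefficients are theta functions of $\tau$. After a mild renormalisation and the choice of $\tau$-independent bases in $\bC^n$ and its tensor powers, the entries of $M(\tau)$ become meromorphic functions of $\tau\in\bC$, holomorphic on $S$. On each connected component of $S$ the zero locus of $\det M(\tau)$ is thus either discrete---hence at most countable---or the entire component. To exclude the second alternative it suffices to exhibit one $\tau_0\in S$ for which $A^!$ is Frobenius; taking $\tau_0=0\in\frac{1}{n}\Lambda\subseteq S$, the operator $R_{\tau}(\tau)$ specialises to a nonzero scalar multiple of the symmetric projector on $\bC^n\otimes\bC^n$, so $Q_{n,k}(E,0)$ is (isomorphic to) the polynomial ring $\bC[x_1,\ldots,x_n]$, whose quadratic dual is the exterior algebra on $n$ generators and hence Frobenius.

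The main obstacle is analytical: $\tau_0=0$ lies on the boundary of $\bC-\bigcup_{m\ge 1}\frac{1}{m}\Lambda$, and several theta functions entering $R_{\tau}(\tau)$ have zeros or poles at $\tau=0$. One must first rescale $R_{\tau}(\tau)$ by a suitable factor so that $M(\tau)$ extends holomorphically across $\tau_0$; once this is arranged, nonvanishing at $\tau_0$ propagates to a punctured neighborhood by the identity principle, and hence $\det M(\tau)\ne 0$ off a countable subset of $\bC$, proving the theorem.
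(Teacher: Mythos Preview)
Your overall strategy---reduce AS-regularity to the Frobenius property of $A^!$ via Koszulity, then argue by semicontinuity anchored at $\tau=0$ where $A^!$ is the exterior algebra---is exactly the paper's. Two points deserve comment.

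First, a gap: the claim that Frobeniusness reduces to the single pairing $A^!_1\otimes A^!_{n-1}\to A^!_n$ is unjustified. For a graded Artinian algebra $S$ with one-dimensional top $S_n$, the Frobenius property is equivalent to nondegeneracy of \emph{all} pairings $S_i\times S_{n-i}\to S_n$, $0\le i\le n$; generation in degree one does not collapse this to a single degree. (Equivalently, one needs the socle to lie entirely in degree $n$, which is a condition at every intermediate degree.) The paper's \cref{lem.S.frob} checks all $i$. The fix is easy---you obtain finitely many determinants instead of one, and the union of their zero loci is still thin---but the reduction as stated is not correct.

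Second, your ``mild renormalisation and $\tau$-independent bases'' glosses over the main technical point, since $A^!_i$ is a $\tau$-dependent subquotient of $V^{\otimes i}$. The paper resolves this via the concrete multiplication description in \cref{sect.mult.in.Q}: the identity $M_{n-i,i}(\tau)\cdot(F_i(\tau)\otimes F_{n-i}(\tau))=F_n(\tau)$ from \cref{le.mtt} reduces the $i$th pairing to the bilinear form $(u,v)\mapsto F_n(\tau)(u\otimes v)$ on the \emph{fixed} space $V^{\otimes i}\times V^{\otimes(n-i)}$. The resulting matrix entries are ratios of theta functions in $\tau$, hence elliptic, so the bad locus for the Frobenius property is in fact \emph{finite} in $E=\bC/\Lambda$---sharper than your countable bound, though the latter suffices for the stated theorem once combined with the countable exclusion coming from Koszulity.
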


\begin{proposition}
[\cref{lem.S.frob}]
For fixed $(n,k,E)$, $Q_{n,k}(E,\tau)^!$ is a Frobenius algebra for all but finitely many cosets $\tau+\Lambda$.
\end{proposition}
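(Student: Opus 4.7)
The plan is to combine the Koszul-duality dictionary between AS-regularity and the Frobenius property with a quasi-periodicity analysis in $\tau$. By \cref{thm.Koszul,th.fgl} and \cref{th.reg}, $Q_{n,k}(E,\tau)$ is both Koszul and AS-regular for all but countably many $\tau$. A standard consequence of Koszul duality is that, for a connected graded Koszul algebra $A$, AS-regularity of $A$ is equivalent to $A^{!}$ being a graded Frobenius algebra. This already yields the Frobenius property of $Q_{n,k}(E,\tau)^{!}$ on a cocountable set of $\tau$; the point is to sharpen \emph{cocountable} to \emph{cofinite among cosets $\tau+\Lambda$}.

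Setting $A:=Q_{n,k}(E,\tau)^{!}$, I would use \cref{thm.H.series.dual} to reduce the Frobenius property to a non-degeneracy condition. Outside the exceptional set there, the Hilbert series of $A$ is $(1+t)^n$, so $\dim A_n = 1$, and Frobenius-ness is equivalent to non-degeneracy of the multiplication pairings $A_i \otimes A_{n-i} \to A_n$ for $i=1,\dots,n-1$. Since the space of relations defining $Q_{n,k}(E,\tau)$ is the image of the operator $R_\tau(\tau)$, whose matrix entries are explicit theta functions of $\tau$, one can choose bases of the $A_i$ depending holomorphically on $\tau$ (at least locally on $\bC$) and express each pairing as a matrix $M_i(\tau)$. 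Simultaneous non-degeneracy then amounts to the non-vanishing of a single holomorphic function $D(\tau):=\prod_i \det M_i(\tau)$.

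The key step is to show that $D(\tau)$ descends, up to a nowhere-vanishing factor, to a non-zero holomorphic section of a line bundle on $E=\bC/\Lambda$. This uses the explicit transformation law of $R_\tau(\tau)$ under $\tau\mapsto\tau+\lambda$, $\lambda\in\Lambda$: its entries are theta-functional with well-understood automorphy factors, and tracking these through the computation of each $\det M_i(\tau)$ yields a section of an explicit line bundle on $E$. That section is not identically zero, by the cocountable result mentioned above, and compactness of $E$ then forces its zero locus to be finite; hence Frobenius-ness fails only at the images of those finitely many points, i.e.\ at finitely many cosets $\tau+\Lambda$.

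The main obstacle I expect is this last quasi-periodicity bookkeeping: choosing bases for the $A_i$ compatible with the $\Lambda$-action so that $D(\tau)$ actually descends to a section of a line bundle on $E$, rather than being merely a holomorphic function on $\bC$ which a priori could have infinitely many zeros. Once that is in place, the rest is an immediate consequence of the compactness of $E$ together with the single non-vanishing point supplied by \cref{th.reg}.
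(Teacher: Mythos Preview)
Your overall strategy---reduce Frobenius-ness to non-degeneracy of the pairings $A_i\times A_{n-i}\to A_n$, package this as the non-vanishing of a function on $E$, and invoke compactness---is the right one, and it is what the paper does. But there are two problems.

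First, a circularity: you appeal to \cref{th.reg} to produce a point where the pairing is non-degenerate, but in the paper \cref{th.reg} is deduced \emph{from} \cref{lem.S.frob} (via \cref{thm.as-reg.frob}). The fix is easy and is what the paper actually uses: at $\tau=0$ the dual is the exterior algebra, which is Frobenius, so $\tau=0$ supplies the non-vanishing point directly.

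Second, and this is the real gap, the step you yourself flag as the ``main obstacle''---making $D(\tau)=\prod_i\det M_i(\tau)$ descend to a section of a line bundle on $E$---is not carried out, and with your setup it is genuinely awkward. The matrices $M_i(\tau)$ are written in bases of the quotient spaces $A_i=A_i(\tau)$, which vary with $\tau$; the quasi-periodicity of $R_\tau(\tau)$ in $\tau$ involves conjugation by $S$ and $T$ (\cref{eq.tau+1/n,eq.tau+1/n.eta}), not just scalars, so tracking how your bases transform under $\tau\mapsto\tau+\lambda$ and then pushing that through the determinants is unpleasant bookkeeping that you have not done.

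The paper avoids this entirely by lifting the pairing to a \emph{fixed} space. Using the description of multiplication in \cref{prop.ell.shuffle.prod} together with the identity $M_{n-i,i}(\tau)\cdot(F_i(\tau)\otimes F_{n-i}(\tau))=F_n(\tau)$ from \cref{le.mtt}, non-degeneracy of $A_i\times A_{n-i}\to A_n$ becomes the statement that the bilinear form
\[
V^{\otimes i}\times V^{\otimes(n-i)}\;\longrightarrow\;\im F_n(\tau)\cong\bC,\qquad (v,w)\mapsto F_n(\tau)(v\otimes w),
\]
has rank $\binom{n}{i}$. Now the domain is independent of $\tau$. Fixing $x\in V^{\otimes n}$ with $F_n(0)(x)\ne 0$ and writing $F_n(\tau)(v_j\otimes w_k)=c_{jk}(\tau)\,F_n(\tau)(x)$, each $c_{jk}$ is a ratio of two outputs of the theta operator $F_n(\tau)$ and is therefore an honest elliptic function on $E$---the automorphy factors cancel, and no line-bundle argument is needed. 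The minors of $(c_{jk})$ are then meromorphic on the compact curve $E$, the rank is generically large, and it equals $\binom{n}{i}$ at $\tau=0$; hence it equals $\binom{n}{i}$ off a finite subset of $E$. The missing idea in your proposal is precisely this lift to $V^{\otimes n}$ via \cref{sect.mult.in.Q}, which turns your would-be section of a line bundle into a genuine function on $E$.
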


Although $Q_{n,k}(E,0)$ is a polynomial ring, most $Q_{n,k}(E,\tau)$'s are not  commutative.

The algebras $Q_{n,k}(E,\tau)$ depend on a pair of relatively prime integers $n>k \ge 1$\index{n@$n$}\index{k@$k$}, a point $\tau \in \bC$\index{tau@$\tau$}, and 
a complex elliptic curve $E:=\bC/\Lambda$\index{E@$E$}
where $\Lambda:=\bZ+\bZ\eta$\index{Lambda@$\Lambda$} is the lattice spanned by 1 and a point $\eta$\index{eta@$\eta$} lying in the upper half plane.
Fix a vector space $V \cong \bC^n$\index{V@$V$} with basis $x_0,\ldots,x_{n-1}$\index{x_i@$x_{i}$} indexed by the cyclic group $\bZ_n$\index{Z_n@$\bZ_{n}$}.
We fix this notation for the rest of the paper.

The algebra $Q_{n,k}(E,\tau)$\index{Q_n,k(E,tau)@$Q_{n,k}(E,\tau)$} is defined to be the quotient of the tensor algebra $TV$\index{TV@$TV$} modulo the ideal generated by 
the subspace $\rel_{n,k}(E,\tau) \subseteq V^{\otimes 2}$\index{rel_n,k(E,tau)@$\rel_{n,k}(E,\tau)$} spanned by the $n^2$ elements\index{r_ij@$r_{ij}$}
\begin{equation}
\label{the-relns-1}
	r_{ij}\; :=\; \sum_{r \in \bZ_n} \frac{\theta_{j-i+(k-1)r}(0)}{\theta_{j-i-r}(-\tau)\theta_{kr}(\tau)} \,\, x_{j-r} \otimes x_{i+r}	
\end{equation}
where the indices $i$ and $j$ belong to $\bZ_n$ and $\theta_0(z),\ldots,\theta_{n-1}(z)$ are certain theta functions of order $n$ (defined in \cite[Prop.~2.6]{CKS1} and \Cref{eq:official.theta_alpha} below), also indexed by $\bZ_n$, that are quasi-periodic with respect to $\Lambda$.

If $\tau \in \frac{1}{n}\Lambda$, then $\theta_{kr}(\tau)=0$ for some $r$ so the relations do not make sense.
Nevertheless, we can extend the definition of $Q_{n,k}(E,\tau)$  to all $\tau \in \bC$ (see \cref{sssec.def.rel.Qnk} and \cite[\S3.3]{CKS1}).

Up to isomorphism, $Q_{n,k}(E,\tau)$ depends only on the image of $\tau$ in $E$. 

\subsection{Elliptic solutions to the quantum Yang-Baxter equation}
The relations \cref{the-relns-1} defining $Q_{n,k}(E,\tau)$ come from an elliptic solution to the quantum Yang-Baxter equation; see \cref{eq:relns.QYBE} below.

For each $\tau \in \bC-\frac{1}{n}\Lambda$, and each $z \in \bC$, we define the linear operator
\begin{equation*}
  R(z)\; =\; R_{\tau}(z)\; =\; R_{n,k,\tau}(z):V^{\otimes 2} \to V^{\otimes 2}
\end{equation*}
by the formula\index{R(z), R_tau, R_n,k,tau@$R(z)$, $R_{\tau}(z)$, $R_{n,k,\tau}(z)$}
\begin{equation}\label{eq:odr}
  R(z)(x_i\otimes x_j)  \; := \; 
  \frac{\theta_0(-z) \cdots \theta_{n-1}(-z)}{\theta_1(0) \cdots \theta_{n-1}(0)}
 \sum_{r\in \bZ_n}
  \frac{\theta_{j-i+r(k-1)}(-z+\tau)}
  {\theta_{j-i-r}(-z)\theta_{kr}(\tau)} \,
  x_{j-r}\otimes x_{i+r}
\end{equation}
for all $(i,j) \in \bZ_n^2$. Since $\tau \notin \frac{1}{n}\Lambda$, the denominators $\theta_{kr}(\tau)$ are never zero;  
hence $z \mapsto R(z)$ is a holomorphic function $\bC \to \End_\bC(V^{\otimes 2})$.

The next result is the main result in \cref{se.main}. 

\begin{theorem}[\cref{th.qybe2}]
\label{thm.qybe}
For all $u,v \in \bC$, 
\begin{equation*}
R(u)_{12} R(u+v)_{23}R(v)_{12} \;=\;  R(v)_{23} R(u+v)_{12}R(u)_{23} 
\end{equation*}
as an operator on $V^{\otimes 3}$, where $R(z)_{12}:=R(z) \otimes I$, $R(z)_{23}:=I \otimes R(z)$,  and $I$ is the identity operator on $V$. 
\end{theorem}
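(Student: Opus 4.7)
The plan is to reduce the operator QYBE to a family of scalar identities among theta functions, and then to establish those identities by a Liouville-type argument exploiting the quasi-periodicity of the $\theta_\alpha$.

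First I would apply both sides of the QYBE to a basis vector $x_a \otimes x_b \otimes x_c$ and expand using \cref{eq:odr}. The manifest translation symmetry of $R(z)$ — the matrix coefficient in \cref{eq:odr} depends only on $j - i$ and the summation index $r$, so $R(z)$ commutes with the diagonal shift $x_i \otimes x_j \mapsto x_{i+1}\otimes x_{j+1}$ — implies that each side preserves the total index $a + b + c \bmod n$. After reparameterizing the summation indices, the coefficient of $x_{a'}\otimes x_{b'}\otimes x_{c'}$ on each side reduces to a single sum over $\bZ_n$ (rather than over $\bZ_n^3$) of triple products of theta-function ratios, multiplied by a common normalization factor in $u$, $v$, $u + v$. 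It suffices then to check, for each admissible output triple, the resulting scalar identity as a meromorphic function of $u$ and $v$.

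Second, I would view the two sides of each such scalar identity as meromorphic functions $F_{\pm}(u,v)$ on $\bC^2$. The key point is that, after stripping the common normalization, both $F_+$ and $F_-$ have poles only on the hyperplanes $u, v, u+v \in \frac{1}{n}\Lambda$, and under the lattice translations $u \mapsto u + 1$ and $u \mapsto u + \eta$ (and similarly in $v$) both transform by the same explicit quasi-periodicity factor inherited from the $\theta_\alpha$. For each fixed $v$ this confines $F_+ - F_-$ to a finite-dimensional space of theta functions in $u$, so equality reduces to checking it at finitely many specializations. The most useful of these is $u = 0$, where direct inspection of \cref{eq:odr} shows that $R(0)$ is a scalar multiple of the flip $x_i \otimes x_j \mapsto x_j \otimes x_i$ (the prefactor vanishes to the correct order to cancel the $r = j-i$ pole of the summand), so there the QYBE degenerates to the trivial braid relation for the flip operator.

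The main obstacle is the bookkeeping in the middle step: matching the quasi-periodicity factors on the two sides depends delicately on how the numerator indices $j-i+(k-1)r$ and the denominator indices $j-i-r$ and $kr$ interact after the three summations are combined and reparameterized, and the coprimality of $n$ and $k$ should enter crucially in this combinatorial analysis. An alternative, if the direct computation proves unmanageable, is to characterize $R(z)$ representation-theoretically as the unique (up to scalar) intertwiner between two twisted Heisenberg representations on $V \otimes V$ and then deduce the QYBE on $V^{\otimes 3}$ via Schur's lemma; this trades the collection of scalar identities for a single intertwining property, at the cost of introducing the necessary theta-group formalism.
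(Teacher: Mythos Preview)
Your proposal contains a concrete error and a structural gap. The error: $R(0)$ is the identity operator, not the flip. In \cref{eq:odr} the prefactor vanishes at $z=0$ because of the $\theta_0(0)$ factor, and this zero is cancelled only by the $r=j-i$ summand whose denominator contains $\theta_{j-i-r}(0)=\theta_0(0)$; for that term one gets $x_{j-r}\otimes x_{i+r}=x_i\otimes x_j$, with coefficient $1$ after the remaining $\theta_{k(j-i)}(\tau)$ factors cancel. So at $u=0$ the QYBE does not ``degenerate to the trivial braid relation for the flip''; instead both sides become $R(v)_{23}R(v)_{12}$, tautologically. The structural gap is that this single (tautological) zero is far too little for the Liouville step. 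The matrix entries of $R(z)$ are holomorphic theta functions of order $n^2$ with respect to $\Lambda$, so each matrix entry of $R(u)_{12}R(u+v)_{23}R(v)_{12}$ is, for fixed $v$, a theta function of order $2n^2$ in $u$; to force the difference of the two sides to vanish you would need $2n^2$ zeros in a fundamental parallelogram, not one. You yourself flag the combinatorial matching of indices as ``the main obstacle'', and indeed this is where the proof lives; without either producing the additional zeros or cancelling the residues at the remaining $\frac{1}{n}\Lambda$-translates, the argument does not close.

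The paper sidesteps this entirely. It does not attempt a direct Liouville argument on the matrix entries of $R_\tau(z)$. Instead it relates $R_\tau(z)$ to Belavin's elliptic $R$-matrix $S(z)=\sum_{p\in\bZ_n^2} w_p(z)\,I_p\otimes I_p^{-1}$, which is already known to satisfy (\ref{QYBE1}) by work of Cherednik, Chudnovsky--Chudnovsky, and Tracy. The computational content is \cref{prop.reln.btw.R(z).and.Tk(z)}: one rewrites the theta functions with characteristics in $S_k(z)$ (the variant of $S(z)$ obtained by replacing the Heisenberg generator $h$ by $h^{-k'}$, which still satisfies (\ref{QYBE1}) by \cref{cor:tracy-plus}) in terms of the $\theta_\alpha$'s via \cref{le.factor}, and obtains $S_k(-nz)=n\,e(\tfrac12 n(n+1)z)\,PR_{n,k,\tau}(z)$. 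Since $S_k$ satisfies (\ref{QYBE1}), $PR_\tau$ does too, and then \cref{prop.R.R'.QYBE} converts this into (\ref{QYBE2}) for $R_\tau$. Your fallback suggestion (characterize $R(z)$ as a Heisenberg intertwiner and invoke Schur) is closer in spirit to this than the Liouville approach, though the paper's reduction to a known theorem is more direct than a from-scratch intertwiner argument.
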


In other words, $R(z)$ satisfies the quantum Yang-Baxter equation with spectral parameter.

Comparing the formula for $R(z)$ with the defining relations for $Q_{n,k}(E,\tau)$ in \cref{the-relns-1}, one sees that
\begin{equation}
\label{eq:relns.QYBE}
\rel_{n,k}(E,\tau) \;=\;   \operatorname{span}\{r_{ij} \; | \; i,j \in \bZ_n\} \;=\; \text{the image of $R(\tau)$}.
\end{equation}
This equality and the fact that $R(z)$ satisfies the quantum Yang-Baxter equation is used in all our main results.

It is easy to see that $R(0)=I \otimes I$ where $I$ denotes the identity operator on $V$\index{I@$I$} (see \cref{rmk.R(0)}).  
As a consequence of this and \cref{thm.qybe}, for each $z \in \bC$, $R(z)$ is either an isomorphism or satisfies $R(z)R(-z)=0$ 
(\cref{lem.comp.pm}). \cref{cor.R.isom} shows that $R_{\tau}(z)$ is a non-isomorphism if and only if $z \in \pm \, \tau +\frac{1}{n}\Lambda$. That result also shows that $\rank R_\tau(\tau+\zeta)=\tbinom{n}{2}$
for all $\zeta \in \frac{1}{n}\Lambda$.\footnote{This implies that the dimension of $\rel_{n,k}(E,\tau)$ is $\tbinom{n}{2}$ which is the first step toward proving \cref{thm.intro.H.series}.}  It follows  that
$$
\text{the image of $R_\tau(z)$} \;=\; \text{the kernel of $R_\tau(-z)$}
$$
for all $z \in\pm\tau+\frac{1}{n}\Lambda$. 
The next result is a consequence of these remarks and the calculation before \cref{th:is-kszdl}.

\begin{theorem}[\cref{th:is-kszdl}]
The quadratic dual $Q_{n,k}(E,\tau)^!$ is isomorphic to the tensor algebra $TV$ modulo the ideal generated by
the kernel of the operator $R_{n,n-k,\tau}(\tau):V^{\otimes 2} \to V^{\otimes 2} $. 
\end{theorem}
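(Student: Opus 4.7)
The plan is to show that, under the basis identification $V^*\cong V$ via $x_i^*\mapsto x_i$, the defining relations $\rel_{n,k}(E,\tau)^\perp$ of $Q_{n,k}(E,\tau)^!$ -- taken orthogonally with respect to the standard pairing $\langle x_a\otimes x_b,\, x_c\otimes x_d\rangle := \delta_{ac}\delta_{bd}$ on $V^{\otimes 2}$ -- coincide with $\ker R_{n,n-k,\tau}(\tau)$. By \cref{eq:relns.QYBE}, $\rel_{n,k}(E,\tau) = \im R_{n,k,\tau}(\tau)$; hence standard linear algebra gives
\[
\rel_{n,k}(E,\tau)^\perp \;=\; \ker R_{n,k,\tau}(\tau)^t,
\]
with $R^t$ the transpose in the orthonormal basis $\{x_a\otimes x_b\}$. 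A dimension check confirms that both $\rel_{n,k}(E,\tau)^\perp$ and $\ker R_{n,n-k,\tau}(\tau)$ have dimension $\binom{n+1}{2}$: since $(n,n-k)$ is coprime, the rank formula mentioned after \cref{cor.R.isom} applies verbatim to $R_{n,n-k,\tau}(\tau)$ as well.

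The heart of the argument -- the ``calculation before \cref{th:is-kszdl}'' -- is the matrix identity
\[
R_{n,n-k,\tau}(\tau) \;=\; P\cdot R_{n,k,\tau}(\tau)^t\cdot P,
\]
where $P:V^{\otimes 2}\to V^{\otimes 2}$ is the tensor swap $x_i\otimes x_j\mapsto x_j\otimes x_i$. I would verify this by a direct entry-by-entry comparison using the formula \cref{eq:odr}, relying on the congruences $n-k+1\equiv-(k-1)\pmod n$ and $(n-k)r\equiv-kr\pmod n$ to match theta function indices, together with reflection symmetries of the form $\theta_\alpha(-z)=\pm\theta_{-\alpha}(z)$ for the theta functions defined in \cite{CKS1}. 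Granted this identity, $P^2 = I$ yields $\ker R_{n,n-k,\tau}(\tau) = P\bigl(\rel_{n,k}(E,\tau)^\perp\bigr)$, so the last step is to check $P$-invariance of $\rel_{n,k}(E,\tau)$: this should follow from an identity of the form $P(r_{ij}) \propto r_{ji}$ derived from \cref{the-relns-1} via the same theta reflection symmetries.

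The principal obstacle is bookkeeping with the theta reflection identities -- both for the main matrix identity and for the $P$-invariance of the relations; with those in hand, the remaining pieces are routine linear algebra and dimension counting.
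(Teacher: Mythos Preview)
Your matrix identity $R_{n,n-k,\tau}(\tau)=P\,R_{n,k,\tau}(\tau)^t\,P$ is correct; it follows from the paper's \cref{lem.R.transpose}, which gives $R_{n,k,\tau}(\tau)^t=e(-n^2\tau)\,R_{n,n-k,-\tau}(-\tau)$, combined with \cref{eq.-z}. With that identity you obtain $\ker R_{n,n-k,\tau}(\tau)=P\bigl(\rel_{n,k}(E,\tau)^\perp\bigr)$.

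The gap is the final step: $\rel_{n,k}(E,\tau)$ is \emph{not} $P$-invariant in general, so the hoped-for identity $P(r_{ij})\propto r_{ji}$ fails. For instance, when $n=3$, $k=1$, the relations take the form $a\,x_ix_{i+1}+b\,x_{i+1}x_i+c\,x_{i+2}^2$ with $a\ne b$ generically; applying $P$ gives $a\,x_{i+1}x_i+b\,x_ix_{i+1}+c\,x_{i+2}^2$, which lies in the relation span only if $a=b$. The theta reflection identity $\theta_\alpha(-z)=-e(-nz+\tfrac{\alpha}{n})\theta_{-\alpha}(z)$ flips both the index and the argument; when you try to compare $P(r_{ij})$ with $r_{ji}$ you are led to ratios like $\theta_{j-i-r}(\tau)/\theta_{j-i-r}(-\tau)$, which do depend on $r$.

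What your identity actually yields is $TV/(\ker R_{n,n-k,\tau}(\tau))=TV/\bigl(P(\rel^\perp)\bigr)=\bigl(Q_{n,k}(E,\tau)^!\bigr)^{\mathrm{op}}$. To finish you still need $\bigl(Q_{n,k}^!\bigr)^{\mathrm{op}}\cong Q_{n,k}^!$. The paper handles this by working with the transpose identity \emph{without} the $P$-conjugation: \cref{lem.R.transpose} gives $Q_{n,k}(E,\tau)^!=TV/(\ker R_{n,n-k,-\tau}(-\tau))=S_{n,n-k}(E,-\tau)$ directly, and then \cref{pr:sop} supplies the explicit isomorphism $S_{n,n-k}(E,-\tau)\cong S_{n,n-k}(E,\tau)$ via $N\colon x_\alpha\mapsto x_{-\alpha}$. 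You can salvage your approach by invoking the same $N$ (since $Q\cong Q^{\mathrm{op}}$ via $N$ implies $(Q^!)^{\mathrm{op}}=(Q^{\mathrm{op}})^!\cong Q^!$), but the $P$-invariance shortcut does not work.
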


\subsubsection{}
The fact that $Q_{n,k}(E,0)$ is a polynomial ring on $n$ variables (see \cite[Prop.~5.1]{CKS1} for a proof) is related to the fact that $\lim_{\tau \to 0} R_{\tau}(\tau)$ is the anti-symmetrization operator $v \otimes v' \mapsto v \otimes v'-v' \otimes v$. The fact that $Q_{n,k}(E,0)^!$ is an exterior algebra on $n$ variables is related to the fact that $\lim_{\tau \to 0} R_{\tau}(-\tau)$ is the symmetrization operator $v \otimes v' \mapsto v \otimes v' +v' \otimes v$.  These are special cases of \cref{le.ulim0} which shows that 
$\lim_{\tau \to 0} R_\tau(m\tau)$ is the 
skew-symmetrization operator $v \otimes v' \to v \otimes v' -mv' \otimes v$ for all $m \in \bZ$. This observation is used in an essential way in the proof of \cref{thm.intro.H.series}: it is used to show that the space of degree-$d$ relations for 
$Q_{n,k}(E,\tau)$ is the kernel of a certain operator $F_d(-\tau):V^{\otimes d} \to V^{\otimes d}$.  
Like $R_\tau(\tau)$, $F_d(-\tau)$ belongs to a family of operators $F_{d}(z)$, $z\in\bC$, and \cref{pr.flim} shows that the limits of 
$F_d(-\tau)$ and $F_d(\tau)$ as $\tau \to 0$ are the symmetrization and anti-symmetrization operators on $V^{\otimes d}$, respectively. 
This gives a heuristic explanation as to why we might expect that $Q_{n,k}(E,\tau)$ and 
$Q_{n,k}(E,\tau)^!$ are deformations of the polynomial and exterior algebras, respectively.

\subsection{Methods} 
\label{sect.methods}
In the hope that the methods we use might be useful in other situations we say a little about them.
For the purposes of the discussion we write $A(\tau)=Q_{n,k}(E,\tau)$. 
Thus, $A(0)$ is the polynomial ring $SV=\bC[x_{0},\ldots,x_{n-1}]$\index{SV@$SV$}.

The main results in this paper are of the  form: $A(\tau)$ has property ${\bf P}(\tau)$, 
where ${\bf P}(0)$ is a property of $A(0)$. In all cases of interest the property ${\bf P} (\tau)$
can be formulated as a statement that a certain subspace $S(\tau) \subseteq V^{\otimes d}$ has the 
same dimension as $S(0)$.

We realize $S(\tau)$  as the image or kernel of a linear operator 
$P(\tau'): V^{\otimes d} \to V^{\otimes d}$, where $\tau'$ is usually an integer multiple of $\tau$,
and reduce the question of interest to a question about the rank of $P(\tau')$.
In all cases of interest, $P(\tau')$ belongs to a family of linear operators $P(z): V^{\otimes d} \to V^{\otimes d}$, $z \in \bC$, 
whose matrix entries (with respect to some, hence every, basis) are theta functions with respect to $\Lambda$ having the same
quasi-periodicity properties. We call such a $P(z)$ a theta operator (see \cref{ssec.theta.operator}). 
The determinant $\det P(z)$ is then a theta function, of order $r$ say, and \cref{lem.theta.fns}  tells us that 
$\det P(z)$ has $r$ zeros (counted with multiplicity) in a fundamental parallelogram (and also tells us the sum of those zeros).    
In other words, if $ \mult_p(\det P(z))$ denotes the multiplicity of $p$ as a zero of $\det P(z)$, 
$\sum_p \mult_p(\det P(z))=r$ where the sum is taken over all points in a fundamental parallelogram.
By \cref{pr.hol-op}, $\mult_p(\det P(z))  \ge  \dim(\ker P(p))$.

Often, we are able to narrow down the possibilities for the zeros of $\det P(z)$ to a finite
number of $\frac{1}{n} \Lambda$-cosets of the form $m\tau+\frac{1}{n} \Lambda$; 
see \cref{pr.det-zeros,le.hdet} for example. We then obtain for ``enough'' of those $m$'s a result of the form
$\dim (\ker P(m\tau)) \ge$ some number, $d_m$ say. It then follows that
\begin{equation*}
r \;=\; \sum_p \mult_p(\det P(z)) \; \ge \;  \sum_p  \dim(\ker P(p))  \; \ge \;  \sum_m d_m.
\end{equation*}
If the right-most sum equals $r$, then these inequalities are equalities and we conclude that we have found all the zeros of
$\det P(z)$ and their individual multiplicities.  In particular, we now know  $\dim(\ker P(\tau'))$. 

Among the operators playing the role of $P(z)$  are: 
\begin{itemize}
  \item[$\hdot$]
  $R(z)$ in  \cref{se.det} where we show that $R(z)$ is not an isomorphism if and only if $z \in \pm\tau +\frac{1}{n}\Lambda$ and 
  that $\rank R(\tau)
  =\binom{n}{2}$; the dimension of the space of quadratic relations for  $Q_{n,k}(E,\tau)$ is therefore the same as for $SV$; 
  \item[$\hdot$] 
  $F_d(z)$ and $G_{\tau}(z)$ in \cref{sec.hilb.series} where we show that the dimension of the space of degree-$d$ relations for  
  $Q_{n,k}(E,\tau)$, which is the kernel of $F_d(-\tau)$,  is the same as for $SV$; 
  \item[$\hdot$] 
  $H_\tau(z)$ in \cref{se.ksz} where we prove that a certain lattice of subspaces of $V^{\otimes d}$ is distributive by showing that 
  certain elements of it have the same dimension as their counterparts for $SV$.
\end{itemize}
The operators $G_{\tau}(z)$ and $H_\tau(z)$ are not defined on all of $V^{\otimes d}$.

\subsection{Contents of this paper}
In \cref{se.main} we show that $R(z)=R_{n,k,\tau}(z)$ satisfies the QYBE (\cref{thm.qybe}). 
The proof relies on the known fact that the operator $S(z)$ defined in \cref{eq:wii} satisfies 
the QYBE. The operator $S(z)$ is defined in terms of theta functions with characteristics; 
the relation between those and  the $\theta_\a$'s appearing in the definition of $R(z)$ 
is stated in \cref{le.factor}. \cref{prop.reln.btw.R(z).and.Tk(z)} uses 
some calculations in \cite{ric-tra} to relate $R(z)$ to $S(z)$ (and its relatives). 

\Cref{se.hol} establishes some general results about a holomorphic linear operator 
$A(z)$ on a finite-dimensional vector space and relates the location and multiplicities of the zeros of $\det A(z)$ to the 
dimension of the kernel of $A(z)$. These results are used in \cref{se.det,sec.hilb.series,se.ksz}. We also introduce the notion of a
theta operator in this section.

\Cref{se.det} takes the first step toward showing that $Q_{n,k}(E,\tau)$ has the same Hilbert series as the polynomial ring $\bC[x_0,\ldots,x_{n-1}]$ by showing that the dimension of $\rel_{n,k}(E,\tau)$ is $\tbinom{n}{2}$. This is not straightforward.   
We must understand the kernel and image of $\lim_{\tau \to 0} R(\pm\tau+\zeta)$ when $\zeta \in \frac{1}{n}\Lambda$. To do this we show that $\det R(z)$ is a theta function with respect to  $\frac{1}{n}\Lambda$; we also need to know the location and multiplicities of the 
zeros of $\det R(z)$. Odesskii already knew this but he did not prove the formula for $\det R(z)$ in his survey \cite{Od-survey}  
so we do that in \cref{pr.det-zeros} (and in doing so make a small correction to his formula); that, and a proof that the dimension of $\rel_{n,k}(E,\tau)$ is $\tbinom{n}{2}$, 
are the main results in \cref{se.det}.

In \cref{sec.hilb.series} we show that the Hilbert series of $Q_{n,k}(E,\tau)$ is $(1-t)^{-n}$ for all $\tau\in(\bC-\bigcup_{m\geq 1}\frac{1}{m}\Lambda)\cup\frac{1}{n}\Lambda$. The method is a jazzed up 
version of the method in \cref{se.det}. The space of degree-$d$ relations for $Q_{n,k}(E,\tau)$ is realized as the 
kernel of the linear operator $F_d(-\tau):V^{\otimes d} \to V^{\otimes d}$, and the proof of the Hilbert series
result requires a careful analysis  of the theta operator 
$F_d(z)$ that is similar in spirit to some of the arguments in \cref{se.det}.
The argument we use to prove the Hilbert series result bears no resemblance to earlier arguments showing that  the Hilbert series of $Q_{n,1}(E,\tau)$ is $(1-t)^{-n}$. We make some further remarks about this in \cref{sect.H.series.remark}.

In \cref{subsec.dual} we show that the Hilbert series of $Q_{n,k}(E,\tau)^!$ is $(1+t)^n$ for all $\tau\in (\bC-\bigcup_{m=1}^{n+1}\frac{1}{mn}\Lambda) \cup \frac{1}{n}\Lambda$.
The methods there resemble those in  \cref{sec.hilb.series} but now the space of degree-$d$ relations for (a quotient of $TV$ that is 
isomorphic to) $Q_{n,k}(E,\tau)^!$  is realized as the kernel of $F_d(\tau)$.

Since the space of degree-$d$ relations for $Q_{n,k}(E,\tau)$ is the kernel of $F_d(-\tau)$, there is a canonical graded vector space isomorphism
$Q_{n,k}(E,\tau) \cong \bigoplus_{d=0}^\infty \im F_d(-\tau)$. The multiplication on  $Q_{n,k}(E,\tau)$ can therefore be transferred to 
this subspace of $TV$ in a canonical way. 
\Cref{sect.mult.in.Q} gives an explicit description of this multiplication via the operators $M_{a,b}$ defined there. 
This multiplication is analogous to the shuffle product  on the subspace of the tensor algebra consisting of the symmetric tensors.

In \cref{se.ksz}, we show $Q_{n,k}(E,\tau)$ is a Koszul algebra for all $\tau\in(\bC-\bigcup_{m\geq 1}\frac{1}{m}\Lambda)\cup\frac{1}{n}\Lambda$, by verifying the ``distributive lattice'' criterion. The proof does not rely on knowing the Hilbert series of $Q_{n,k}(E,\tau)^!$ so the result provides an independent proof of the fact that the Hilbert series of $Q_{n,k}(E,\tau)^!$ is $(1+t)^n$.  The operators $M_{a,b}$ defined in \cref{sect.mult.in.Q}, and others derived from them, play a crucial role. 
Once more, we use the methods described in \cref{sect.methods}.

In \cref{se.reg} we show that, for fixed $n$, $k$, and $E$, $Q_{n,k}(E,\tau)$ is an Artin-Schelter regular algebra for all but countably many 
$\tau$.

An index of notation appears just before the bibliography.

\subsection{Acknowledgements}

A.C. acknowledges support through NSF grant DMS-1801011.

R.K. was a JSPS Overseas Research Fellow, and supported by JSPS KAKENHI Grant Numbers JP16H06337, JP17K14164, and JP20K14288, Leading Initiative for Excellent Young Researchers, MEXT, Japan, and Osaka City University Advanced Mathematical Institute (MEXT Joint Usage/Research Center on Mathematics and Theoretical Physics JPMXP0619217849). R.K. would like to express his deep gratitude to Paul Smith for his hospitality as a host researcher during R.K.'s visit to the University of Washington.

\section{Preliminaries}
\label{se.prelim}

Whenever possible, the notation in this paper is the same as that in our earlier papers \cite{CKS1,CKS2,CKS3}.  (We will advise the reader to consult those papers when necessary.)  For example, we write\index{e(z)@$e(z)$}
\begin{equation*}
e(z)\;=\; e^{2\pi i z}.
\end{equation*}

We introduced the notation $(n,k,E)$, $\eta$, $\Lambda$, $\tau \in \bC$, and $\rel_{n,k}(E,\tau)$ in \cref{sect.intro.1}.  This notation will be fixed throughout the paper.
The space of theta functions $\Theta_n(\Lambda)$\index{Theta_n(Lambda)@$\Theta_{n}(\Lambda)$} and its distinguished basis $\theta_\a$, $\a \in \bZ_n$, are defined in \cite[\S2]{CKS1}. The basic properties of the $\theta_\a$'s appear in \cite[Prop.~2.6]{CKS1}, so we pause here briefly only to recall the definition:\index{theta_alpha(z)@$\theta_{\alpha}(z)$}
\begin{equation}
  \label{eq:official.theta_alpha}
  \theta_\a(z) \;:=\; e\left(\a z +\tfrac{\a}{2n}+\tfrac{\a(\a-n)}{2n}\eta\right) \prod_{m=0}^{n-1} \theta\!\left(z +\tfrac{m}{n}+\tfrac{\a}{n}\eta\right),
\end{equation}
where $\theta$ is the order-1 theta function\index{theta(z)@$\theta(z)$}
\begin{equation*}
  \theta(z) \;:=\; \sum_{n \in \bZ} (-1)^n e\big(nz + \tfrac{1}{2}n(n-1)\eta\big). 
\end{equation*}
An explicit formula for $\theta_\a(z)$ as an infinite exponential sum can be obtained by combining \cref{le.factor} with the definition of the 
function $\vartheta \hbox{$\left[ a \atop b \right]$}(z  \, | \, \eta)$ at the start of \cref{ssect.theta.fn.char}.

\subsection{Notation for linear operators}
\label{sect.notn}

Always, $V$ denotes a complex vector space of dimension $n$ with basis $x_{i}$, $i\in\bZ_{n}$.

We will write $I$ for the identity operator on $V$.

If $A:V \to V$ is a linear operator and $1 \le i \le d$, we write $A_i$\index{A_i@$A_i$} for the operator $I^{\otimes(i-1)}\otimes  A\otimes I^{\otimes(d-i)}$ on $V^{\otimes d}$.

If $A:V^{\otimes 2} \to V^{\otimes 2}$ is a linear operator and $1\leq i\leq d-1$, we write $A_{i,i+1}$\index{A_i,i+1@$A_{i,i+1}$} for the operator 
$I^{\otimes (i-1)} \otimes A \otimes I^{\otimes (d-i-1)}$ on $V^{\otimes d}$. 

Given integers  $0 \le p \le d$ and a linear operator $A:V^{\otimes p} \to V^{\otimes p}$,
we write $A^L$ (resp., $A^R$)\index{A^L, A^R@$A^L$, $A^R$} for the operator $A\otimes I^{\otimes(d-p)}$ (resp., $I^{\otimes(d-p)}\otimes A$) on $V^{\otimes d}$ given by $A$ acting on the left-most (resp., right-most) $p$ tensorands of $V^{\otimes d}$. For a family of linear operators $A(z_{1},\ldots,z_{p})$, we write $A^{L}(z_{1},\ldots,z_{p})$ for $A(z_{1},\ldots,z_{p})^{L}$. We also write $A^{R}(z_{1},\ldots,z_{p}):=A(z_{1},\ldots,z_{p})^{R}$.

Various linear operators of the form $A(z_1,\ldots,z_p)$ will be evaluated when several of its arguments are the same. If $i \le j$ and
$z_i=\cdots=z_j=\nu$ we write $A(z_1,\ldots,z_{i-1}, \nu^{j-i+1}, z_{j+1},\ldots,z_p)$ for $A(z_1,\ldots,z_p)$.

\subsection{The quantum Yang-Baxter equation with spectral parameter}
\label{sect.qybe}
The material in this subsection is standard.  

Let $A \in \End_\bC(V \otimes V)$. We define linear operators $A_{12},A_{23},A_{13} \in \End(V^{\otimes 3})$ by $A_{12}:=A \otimes I$, $A_{23}:=I \otimes A$, where $I$ is the identity operator on $V$, and $A_{13}$ acts as the identity on the middle $V$ and as $A$ does on the first and third factors of $V \otimes V \otimes V$.

A family of linear operators $R(z) \in \End(V \otimes V)$, parametrized by $z \in \bC$, satisfies the {\sf first quantum Yang-Baxter equation} if
\begin{equation}
  \label{QYBE1}
  \tag{QYBE1}
  R(u)_{12} R(u+v)_{13}R(v)_{23} \;=\;  R(v)_{23} R(u+v)_{13}R(u)_{12} 
\end{equation}
for all $u,v\in \bC$. 
We say that $R(z)$ satisfies the {\sf second quantum Yang-Baxter equation}  if 
\begin{equation}
  \label{QYBE2}
  \tag{QYBE2}
  R(u)_{12} R(u+v)_{23}R(v)_{12} \;=\;  R(v)_{23} R(u+v)_{12}R(u)_{23} 
\end{equation}
for all $u,v\in \bC$. The family of operators $R(z)$ satisfying (\ref{QYBE1}) or (\ref{QYBE2}) is called an \textsf{R-matrix}.

Let $P \in \End(V \otimes V)$ be the linear map $P(x \otimes y)=y \otimes x$. 
If $A \in \End(V \otimes V)$ we define 
\begin{equation*}
A'\; :=\; PA \qquad \text{and} \qquad A''\;:=\;AP.
\end{equation*}  
Multiplication by $P$ provides a bijection between solutions to (\ref{QYBE1}) and (\ref{QYBE2}).

\begin{proposition}
  \label{prop.R.R'.QYBE}
  A family of operators $R(z)$ satisfies (\ref{QYBE1}) if and only if $R(z)'$ satisfies (\ref{QYBE2}) if and only if $R(z)''$ satisfies  (\ref{QYBE2}).
\end{proposition}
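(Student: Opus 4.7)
The plan is to reduce both equivalences to the braid relation on $V^{\otimes 3}$ by ``absorbing'' the factor $P$ in $R' = PR$ or $R'' = RP$ into the tensor-slot permutations already implicit in the QYBE. The two ingredients are: (i) the braid relation $P_{12}P_{23}P_{12} = P_{23}P_{12}P_{23}$ in $\End(V^{\otimes 3})$; and (ii) the conjugation identities $P_{12}A_{23} = A_{13}P_{12}$, $P_{23}A_{12} = A_{13}P_{23}$, $P_{12}A_{13} = A_{23}P_{12}$, $P_{23}A_{13} = A_{12}P_{23}$, valid for any $A \in \End(V \otimes V)$ and obtained by direct computation on pure tensors. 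Note also that $P$ is an involution, so $R' = PR \Leftrightarrow R = PR'$ and $R'' = RP \Leftrightarrow R = R''P$, giving an involutive correspondence between the families.

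First I would prove that QYBE1 for $R$ is equivalent to QYBE2 for $R' = PR$. Substitute $R'_{i,i+1}(z) = P_{i,i+1}R_{i,i+1}(z)$ into both sides of (QYBE2) for $R'$ on $V^{\otimes 3}$, and then repeatedly apply the conjugation identities in (ii) to push every occurrence of $P_{i,i+1}$ leftward past each $R_{j,j+1}$-factor. At each step one must choose which of the two presentations $R_{13} = P_{12}R_{23}P_{12} = P_{23}R_{12}P_{23}$ to invoke so that the $P$'s migrate uniformly. When the dust settles, the left-hand side reads
\[
P_{12}P_{23}P_{12}\,R_{23}(u)\,R_{13}(u+v)\,R_{12}(v)
\]
and the right-hand side reads
\[
P_{23}P_{12}P_{23}\,R_{12}(v)\,R_{13}(u+v)\,R_{23}(u).
\]
By the braid relation the two $P$-prefixes coincide and are invertible, so the equation reduces to
\[
R_{23}(u)\,R_{13}(u+v)\,R_{12}(v) \;=\; R_{12}(v)\,R_{13}(u+v)\,R_{23}(u),
\]
which is (QYBE1) for $R$ after the harmless relabelling $u \leftrightarrow v$. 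Every step is reversible, so the equivalence is proved.

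Next I would handle the equivalence of QYBE1 for $R$ with QYBE2 for $R'' = RP$ by a mirror-image argument: substitute $R''_{i,i+1}(z) = R_{i,i+1}(z)P_{i,i+1}$ into (QYBE2) for $R''$ and push every $P$ to the \emph{right} end using the same conjugation identities. The residual $P$-tails turn out to be $P_{12}P_{23}P_{12}$ on the left and $P_{23}P_{12}P_{23}$ on the right; they coincide by the braid relation and are invertible, and the surviving identity is (QYBE1) for $R$. Alternatively, one can observe that $R'' = P(PR P) = P \widetilde{R}$ where $\widetilde{R}(z) := P R(z) P$ satisfies QYBE1 if and only if $R$ does (since conjugation by $P_{12}$ interchanges the roles of slots $1$ and $2$ in the braid-like identity), and then apply the first equivalence to $\widetilde R$.

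The computation is purely mechanical and there is no substantive obstacle. The only point requiring care is bookkeeping: at each step one must select the correct presentation of $R_{13}$ so that all $P$'s migrate to the same side, and verify that the two residual $P$-words are related by exactly one braid move rather than by a longer reduction. Once these choices are made consistently, the rest follows from the invertibility of the $P_{i,i+1}$.
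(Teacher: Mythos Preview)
Your proof is correct and follows essentially the same route as the paper: the paper packages the identical push-$P$-past-$R$ computation into a lemma about three arbitrary operators $A,B,C\in\End(V\otimes V)$, showing $A_{12}B_{13}C_{23}=C_{23}B_{13}A_{12}\iff A'_{12}B'_{23}C'_{12}=C'_{23}B'_{12}A'_{23}\iff A''_{12}B''_{23}C''_{12}=C''_{23}B''_{12}A''_{23}$, and then specializes to $A=R(u)$, $B=R(u+v)$, $C=R(v)$. Your direct computation with the conjugation identities and the braid relation for $P$ is exactly what underlies that lemma.
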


This is an immediate consequence of the following routine lemma.

\begin{lemma}
  Let $A,B,C \in \End(V \otimes V)$.
  Then $A_{12}B_{13}C_{23}=C_{23}B_{13}A_{12}$ if and only if 
  $A'_{12}B'_{23}C'_{12}=C'_{23}B'_{12}A'_{23}$ if and only if $A''_{12}B''_{23}C''_{12}=C''_{23}B''_{12}A''_{23}$
\end{lemma}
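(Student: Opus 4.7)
The proof is a direct manipulation via braid group actions on $V^{\otimes 3}$. Two ingredients are needed: (a) the conjugation formulas $P_{23}X_{12}P_{23}=X_{13}=P_{12}X_{23}P_{12}$ for any $X\in\End(V\otimes V)$, which follow immediately in any basis, and (b) the braid relation $P_{12}P_{23}P_{12}=P_{23}P_{12}P_{23}=P_{13}$. From (a) one extracts the four commutation rules $X_{12}P_{23}=P_{23}X_{13}$, $X_{13}P_{12}=P_{12}X_{23}$, $X_{23}P_{12}=P_{12}X_{13}$, and $X_{13}P_{23}=P_{23}X_{12}$, each of which is verified in one line by evaluating both sides on $v_1\otimes v_2\otimes v_3$.

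To prove the equivalence of the first two equations, I would substitute $A'_{ij}=P_{ij}A_{ij}$, $B'_{ij}=P_{ij}B_{ij}$, $C'_{ij}=P_{ij}C_{ij}$ into the identity $A'_{12}B'_{23}C'_{12}=C'_{23}B'_{12}A'_{23}$ and shunt every $P_{ij}$-factor to the far left using the rules above. On each side a leading length-three word in $P_{12}, P_{23}$ accumulates; by the braid relation both reduce to $P_{13}$, and this common factor cancels. What remains, after one tracks how the index labels of $A, B, C$ were transported by the commutations, is precisely $A_{12}B_{13}C_{23}=C_{23}B_{13}A_{12}$. Running the substitution in reverse supplies the other direction.

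The equivalence of the first and third equations is symmetric: use $A''_{ij}=A_{ij}P_{ij}$ (and analogously for $B''$, $C''$) and push all $P_{ij}$-factors to the right instead. The braid relation again produces a common trailing $P_{13}$ on each side, which cancels, and the surviving identity is again QYBE1 for $A,B,C$.

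The main obstacle is combinatorial bookkeeping: each application of a commutation rule changes the subscript on the operator being moved past, and these index shifts must be tracked through a short chain of braidings until the permutations are collected at one end. No deeper structural input is required beyond the elementary identities (a) and (b); the computation is entirely mechanical once those have been recorded.
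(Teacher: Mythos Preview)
Your approach is the natural one, and for the $''$ equivalence it works exactly as you say: writing $A''_{12}B''_{23}C''_{12}=A_{12}P_{12}B_{23}P_{23}C_{12}P_{12}$ and pushing the $P$'s rightward via $P_{12}B_{23}=B_{13}P_{12}$, $P_{23}C_{12}=C_{13}P_{23}$, $P_{12}C_{13}=C_{23}P_{12}$ gives $A_{12}B_{13}C_{23}\,P_{13}$, and the right-hand side gives $C_{23}B_{13}A_{12}\,P_{13}$, so the $''$ equation is indeed equivalent to $A_{12}B_{13}C_{23}=C_{23}B_{13}A_{12}$.

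However, your bookkeeping claim for the $'$ case is wrong. Carrying out the same procedure on $A'_{12}B'_{23}C'_{12}=P_{12}A_{12}P_{23}B_{23}P_{12}C_{12}$ by pushing the $P$'s leftward yields $P_{13}\,A_{23}B_{13}C_{12}$, and the right-hand side gives $P_{13}\,C_{12}B_{13}A_{23}$. After cancelling $P_{13}$ you obtain $A_{23}B_{13}C_{12}=C_{12}B_{13}A_{23}$, which is QYBE1 for the triple $(C,B,A)$, \emph{not} $(A,B,C)$. These are genuinely different conditions: take $V=\bC^2$, $A=E_{21}\otimes I$, $B=I\otimes I$, $C=I\otimes E_{12}$; then $A_{12}C_{23}=C_{23}A_{12}=E_{21}\otimes I\otimes E_{12}$ so QYBE1$(A,B,C)$ holds, but $C_{12}A_{23}=I\otimes E_{12}E_{21}\otimes I\ne I\otimes E_{21}E_{12}\otimes I=A_{23}C_{12}$, so QYBE1$(C,B,A)$ fails, and one checks directly that $A'_{12}B'_{23}C'_{12}\ne C'_{23}B'_{12}A'_{23}$ (e.g.\ evaluate both on $e_1\otimes e_1\otimes e_1$). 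So the $'$ equivalence in the lemma is actually \emph{false} for arbitrary $A,B,C$; the paper states it without proof and the statement is slightly too strong. The Proposition the paper draws from it is nonetheless correct: for a family $R(z)$, the QYBE is quantified over all $u,v$, and swapping $u\leftrightarrow v$ exchanges $R(u)$ and $R(v)$, so QYBE1 for $(R(u),R(u+v),R(v))$ for all $u,v$ is the same condition as QYBE1 for $(R(v),R(u+v),R(u))$ for all $u,v$.
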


The next result plays a crucial role in \cref{ssec.det.r.z}.

\begin{lemma}\label{lem.comp.pm}
  Let $R(z)$, $z \in \bC$, be a family of operators satisfying (\ref{QYBE2}). 
  If $R(0)=I\otimes I$, then there are scalars $c(z)\in\bC$ such that
  \begin{equation*}
    R(z)R(-z) \; =\; R(-z)R(z)\;=\; c(z)I\otimes I.
  \end{equation*}
  In particular,  if $R(z)$ is not an isomorphism, then $R(z)R(-z)=0=R(-z)R(z)$.
\end{lemma}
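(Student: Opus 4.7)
The plan is to specialize (QYBE2) by setting $v = -u$, so that the middle factor becomes $R(u+v) = R(0) = I \otimes I$ and drops out. What remains is
\[
R(u)_{12}\, R(-u)_{12} \;=\; R(-u)_{23}\, R(u)_{23},
\]
that is, $[R(u)R(-u)] \otimes I = I \otimes [R(-u)R(u)]$ in $\End(V^{\otimes 3})$. Replacing $u$ by $-u$ in this identity gives the companion relation $[R(-u)R(u)] \otimes I = I \otimes [R(u)R(-u)]$.

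Next I would record a short linear-algebra observation: if $X, Y \in \End(V^{\otimes 2})$ satisfy $X \otimes I = I \otimes Y$ on $V^{\otimes 3}$, then there is a unique $C \in \End(V)$ with $X = I \otimes C$ and $Y = C \otimes I$. The verification is a direct coordinate comparison: writing $X(e_i \otimes e_j) = \sum a_{ij}^{kl}\, e_k \otimes e_l$, matching basis coefficients of $X \otimes I$ and $I \otimes Y$ forces $a_{ij}^{kl} = \delta_{ki}\, c_j^l$ with $c_j^l$ independent of $i$, and symmetrically for $Y$. Applied to both of the collapsed identities above, this produces $C, D \in \End(V)$ with
\[
R(u)R(-u) \;=\; I \otimes C \;=\; D \otimes I
\qquad\text{and}\qquad
R(-u)R(u) \;=\; C \otimes I \;=\; I \otimes D.
\]
The identity $I \otimes C = D \otimes I$ in $\End(V^{\otimes 2})$ then forces $C = D = c(u)\, I_V$ for a scalar $c(u) \in \bC$, so $R(u)R(-u) = R(-u)R(u) = c(u)\, I \otimes I$, as claimed.

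For the ``in particular'' clause, if $R(z)$ is not an isomorphism then $\im R(z)$ is a proper subspace of the finite-dimensional space $V^{\otimes 2}$; since $\im(R(z)R(-z)) \subseteq \im R(z)$ is then also proper, and a nonzero scalar multiple of $I \otimes I$ is surjective, we must have $c(z) = 0$, so $R(z)R(-z) = 0 = R(-z)R(z)$.

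The main delicacy, rather than a serious obstacle, is the necessity of combining \emph{both} versions of the collapsed identity (with parameters $u$ and $-u$). A single application of the tensor-factorization observation only places $R(u)R(-u)$ in $I \otimes \End(V)$; it is the simultaneous condition that it also lies in $\End(V) \otimes I$ that promotes it from an arbitrary operator of the form $I \otimes C$ to a genuine scalar multiple of $I \otimes I$.
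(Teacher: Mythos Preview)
Your proof is correct and follows essentially the same route as the paper's: specialize (QYBE2) at $v=-u$ so the middle factor drops out, read off the resulting tensor identity to write $R(u)R(-u)=I\otimes C$ and $R(-u)R(u)=C\otimes I$, then repeat with $u\mapsto -u$ and use the elementary fact that $I\otimes C=D\otimes I$ forces $C=D$ to be a scalar. The paper's argument is organized identically (its $F(z)$ and $F(-z)$ are your $C$ and $D$), only stating the linear-algebra step more tersely.
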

\begin{proof}
  Since $R(0)=I \otimes I$, substituting $u=-v=z$ in (\ref{QYBE2}) yields
  \begin{equation}\label{eq.lem.comp.pm.part}
    R(z)_{12}R(-z)_{12} \;=\; R(-z)_{23}R(z)_{23}.
  \end{equation}
  We use the fixed basis $\{x_{i}\}_{i}$ for $V$. Applying both sides of \cref{eq.lem.comp.pm.part} to $x_{i}\otimes x_{j}\otimes x_{k}$ yields
  \begin{equation*}
    R(z)R(-z)(x_{i}\otimes x_{j})\otimes x_{k} \; = \; x_{i}\otimes R(-z)R(z)(x_{j}\otimes x_{k}).
  \end{equation*}
  Hence there is a linear map $F(u): V\to V$ such that
  \begin{equation*}
    R(z)R(-z)\otimes I \;= \; I \otimes F(z)\otimes I \;=\; I\otimes R(-z)R(z),
  \end{equation*}
  which implies $R(z)R(-z)=I\otimes F(z)$ and $R(-z)R(z)=F(z)\otimes I$. 
  The same argument for $-z$ implies $R(-z)R(z)=I\otimes F(-z)$ and $R(z)R(-z)=F(-z)\otimes I$. 
  Hence there is $c(z)\in\bC$ such that
  \begin{equation*}
    I\otimes F(z)=F(-z)\otimes I=c(z)I\otimes I.
  \end{equation*}
  Therefore $F(z)=c(z)I$.
\end{proof}

\begin{proposition}
\label{rmk.R(0)}
If $R_\tau(z)$ is the operator defined in \cref{eq:odr}, then
\begin{enumerate}
\item\label{item.prop.r.zero}
$R_\tau(0)\,=\, I \otimes I$ and 
\item\label{item.prop.r.pm}
$R_\tau(\tau)R_\tau(-\tau) \, = \, 0 \, = \, R_\tau(-\tau)R_\tau(\tau)$. 
\end{enumerate}
\end{proposition}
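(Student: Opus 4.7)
The argument splits naturally along the two assertions.

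For \cref{item.prop.r.zero}, the plan is to substitute $z=0$ into the defining formula \cref{eq:odr} and track the cancellations. The prefactor $(\theta_0(-z)\cdots\theta_{n-1}(-z))/(\theta_1(0)\cdots\theta_{n-1}(0))$ has a simple zero at $z=0$ coming from $\theta_0(-z)$, since $\theta_0(0)=0$ while $\theta_\a(0)\ne 0$ for $\a\ne 0$. Inside the sum $\sum_{r\in\bZ_n}$, the denominator factor $\theta_{j-i-r}(-z)$ vanishes at $z=0$ precisely when $r\equiv j-i\pmod n$; for every other $r$ the prefactor then kills the summand at $z=0$. For the unique surviving $r=j-i$, the $\theta_0(-z)$ in the prefactor cancels the $\theta_{j-i-r}(-z)=\theta_0(-z)$ in the denominator, leaving a finite limit. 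Evaluating at $z=0$ and using $(j-i)+(k-1)(j-i)=k(j-i)$, the surviving fraction reduces to $\theta_{k(j-i)}(\tau)/\theta_{k(j-i)}(\tau)=1$, with the remaining prefactor quotient also equal to $1$. Hence $R_\tau(0)(x_i\otimes x_j)=x_i\otimes x_j$ for every $(i,j)\in\bZ_n^2$, giving \cref{item.prop.r.zero}.

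For \cref{item.prop.r.pm}, the plan is to combine \cref{item.prop.r.zero} with \cref{thm.qybe} (the QYBE for $R_\tau$) to apply \cref{lem.comp.pm}, yielding
\[
R_\tau(\tau)\,R_\tau(-\tau) \;=\; R_\tau(-\tau)\,R_\tau(\tau) \;=\; c(\tau)\,I\otimes I
\]
for some $c(\tau)\in\bC$. It then remains to rule out $c(\tau)\ne 0$, equivalently that $R_\tau(\tau)$ is not an isomorphism. Inspection of \cref{eq:odr} at $z=\tau$ identifies $R_\tau(\tau)(x_i\otimes x_j)$ as a common nonzero scalar times $r_{ij}$ (this is the observation \cref{eq:relns.QYBE} already recorded above), so $\im R_\tau(\tau)=\rel_{n,k}(E,\tau)$. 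The strict inequality $\dim\rel_{n,k}(E,\tau)<n^2$ --- the main output of \cref{se.det} via the determinantal analysis of the theta operator $R_\tau(z)$ --- then forces non-invertibility and hence $c(\tau)=0$.

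The only step that requires real care is the cancellation bookkeeping in \cref{item.prop.r.zero}: one must simultaneously track which $\theta_\a(-z)$'s in the prefactor vanish and which $\theta_{j-i-r}(-z)$'s in the sum's denominator do, and verify that the two vanishing factors meet in exactly one term. Part \cref{item.prop.r.pm} is then essentially formal, the single external input being the strict dimension bound proved in \cref{se.det}.
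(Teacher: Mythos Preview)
Your argument for \cref{item.prop.r.zero} is correct and matches the paper's.

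For \cref{item.prop.r.pm} the overall structure is also the paper's: combine $R_\tau(0)=I\otimes I$ with the QYBE (\cref{th.qybe2}) to invoke \cref{lem.comp.pm}, and then argue that $R_\tau(\tau)$ is not invertible. The gap is in that last step. You cite the determinantal analysis of \cref{se.det} for the strict inequality $\dim\rel_{n,k}(E,\tau)<n^2$, but that is circular: the rank results there (\cref{prop.nullities}, \cref{thm.quad.relns}, \cref{pr.det-zeros}) all pass through \cref{lem.rz.dim.ker}, whose proof uses \cref{lem.transf.props.R.tau.z}\cref{item.prop.r.pm.tors}, which in turn cites \cref{rmk.R(0)} itself to know that $R_\tau(\tau)$ is not an isomorphism. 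So \cref{se.det} already assumes the very statement you are trying to establish here.

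The paper breaks the circle by importing one fact from outside the determinantal machinery: $Q_{n,k}(E,\tau)$ has a point module (see \cite[\S1.4]{CKS2}), hence an infinite-dimensional cyclic module, which forces $\rel_{n,k}(E,\tau)=\im R_\tau(\tau)\ne V^{\otimes 2}$; then \cref{lem.comp.pm} gives $c(\tau)=0$.
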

\begin{proof}
\cref{item.prop.r.zero}
Since the zeros of $\theta_\a(z)$ are the points in $-\frac{\a}{n}\eta + \bZ \eta + \frac{1}{n}\bZ$ (\cite[Prop.~2.6(6)]{CKS1}),
the $\theta_0(0)$ term appearing before the $\Sigma$ sign in the expression for $R_\tau(0)$ annihilates all the 
terms after the $\Sigma$ sign except the $r=j-i$ summand whose denominator, $\theta_{j-i-r}(0)$,  cancels the $\theta_0(0)$
term. Hence 
\begin{equation*}
  R_\tau(0)(x_i\otimes x_j)  \; = \; 
  x_{i}\otimes x_{j}
\end{equation*}
for all $i,j \in \bZ_n$.

\cref{item.prop.r.pm}
\Cref{th.qybe2}  below shows that the $R_\tau(z)$ defined in \cref{eq:odr} 
satisfies (\ref{QYBE2}) so, since $R_\tau(0)= I \otimes I$, the conclusion of \cref{lem.comp.pm} applies to $R_\tau(z)$.

Since $Q_{n,k}(E,\tau)$ has an infinite-dimensional cyclic module, namely a point module (see \cite[\S1.4]{CKS2}), 
$\rel_{n,k}(E,\tau) \ne V^{\otimes 2}$. But $\rel_{n,k}(E,\tau)$ is the image of $R_\tau(\tau)$ so the result
follows from \cref{lem.comp.pm}.
\end{proof}

\subsubsection{R-matrices in arbitrary algebras}\label{sssec:genr}

It will be convenient to generalize the setup for $R$-matrices and the quantum Yang-Baxter equation. Instead of operators
$R(z)$ in $\End(V \otimes V)$ we can take elements $R(z)\in S\otimes_Z S$ where $S$ is a $\bC$-algebra and 
$Z\subseteq S$ is a central subalgebra. There are obvious definitions of $R(z)_{ij}\in S\otimes_Z S\otimes_Z S$ for
$ (ij)\in \{(12),(13),(23)\}$. The equations \Cref{QYBE1,QYBE2} then acquire the obvious meanings. If $V$ is a left $S$-module, then 
the various  $R(z)_{ij}$ act on $V^{\otimes 3}$. If $S$ is a finite-dimensional $\bC$-algebra we can speak of holomorphic or meromorphic $R(z)$.

\Cref{se.main} uses this idea  with $S=\bC \Gamma$ for a finite group $\Gamma$ and $Z=\bC \Delta$ for a central subgroup $\Delta<\Gamma$.

\subsection{Theta functions in one variable}
We make frequent use of the following result. A proof of it appears in the appendix to \cite{CKS1}.  
  
\begin{lemma} 
\label{lem.theta.fns}
Assume $\Lambda =\bZ\eta_1 + \bZ\eta_2$ is a lattice in $\bC$ such that $\Im(\eta_2/\eta_1)>0$, 
and suppose $f$ is a non-constant holomorphic function on $\bC$.
If there are constants $a,b,c,d \in \bC$ such that 
\begin{align*}
f(z+\eta_1) & \;=\; e^{-2\pi i (az+b)}f(z) \qquad \hbox{and} \qquad
\\
f(z+\eta_2) & \;=\; e^{-2\pi i (cz+d)}f(z), 
\end{align*}
then
\begin{enumerate}
  \item 
 $c \eta_1 - a\eta_2 \in\bZ_{\geq 0}$, and
  \item 
  $f$ has  $c \eta_1 - a\eta_2$ zeros (counted with multiplicity) in every fundamental parallelogram for $\Lambda$, and
  \item 
the sum of those zeros is ${{1}\over{2}}(c \eta_1^2 - a\eta_2^2 ) + (c-a)\eta_1\eta_2 + b\eta_2 - d\eta_1 $ modulo $\Lambda$. 
\end{enumerate}
\end{lemma}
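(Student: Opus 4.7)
The plan is to count the zeros of $f$ and sum them by applying the argument principle to a fundamental parallelogram whose boundary avoids the zeros of $f$. Such a parallelogram exists because the zero set of $f$ is discrete (since $f$ is non-constant holomorphic) and is invariant under $\Lambda$-translation (immediate from the quasi-periodicity relations, whose exponential factors do not vanish), so a generic choice of basepoint works.

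First I would fix $z_0\in\bC$ so that the closed parallelogram $\mathcal{P}$ with vertices $z_0,\ z_0+\eta_1,\ z_0+\eta_1+\eta_2,\ z_0+\eta_2$, traversed counterclockwise, has no zero of $f$ on $\partial\mathcal{P}$. By the argument principle, the number $N$ of zeros of $f$ in $\mathcal{P}$ (counted with multiplicity) equals $\frac{1}{2\pi i}\oint_{\partial\mathcal{P}}\frac{f'(z)}{f(z)}\,dz$. Logarithmic differentiation of the two quasi-periodicity relations yields
\begin{equation*}
\frac{f'(z+\eta_1)}{f(z+\eta_1)}=\frac{f'(z)}{f(z)}-2\pi i a\qquad\text{and}\qquad \frac{f'(z+\eta_2)}{f(z+\eta_2)}=\frac{f'(z)}{f(z)}-2\pi i c.
\end{equation*}
Pairing the two sides parallel to $\eta_1$ and substituting the second identity makes the $f'/f$ terms cancel, leaving $\int_0^1 2\pi i c\,\eta_1\,dt=2\pi i c\eta_1$; the pair parallel to $\eta_2$ contributes $-2\pi i a\eta_2$ by the analogous calculation with the first identity. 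Hence $N=c\eta_1-a\eta_2$, and since $N$ is a nonnegative integer this gives (1) and (2).

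For (3) I would apply the weighted version of the argument principle: the sum $\Sigma$ of zeros of $f$ in $\mathcal{P}$ (with multiplicity) equals $\frac{1}{2\pi i}\oint_{\partial\mathcal{P}} z\,\frac{f'(z)}{f(z)}\,dz$. Pairing opposite sides as before, and writing the point on the outer side as $z+\eta_k$, each pair splits into (i) an elementary integral of $z$ (or a linear function of $z$) against the constant difference of logarithmic derivatives, and (ii) a factor $\pm\eta_k$ times $\int\frac{f'(w)}{f(w)}\,dw$ along a single side. The latter is the change of a continuous branch of $\log f$ along that side, which by the functional equations equals $-2\pi i(az_0+b)+2\pi i m_1$ for the $\eta_1$-side and $-2\pi i(cz_0+d)+2\pi i m_2$ for the $\eta_2$-side, with $m_1,m_2\in\bZ$ depending on the chosen branches. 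All $z_0$-dependence cancels and routine bookkeeping yields
\begin{equation*}
\Sigma \;=\; \tfrac{1}{2}(c\eta_1^2-a\eta_2^2)+(c-a)\eta_1\eta_2+b\eta_2-d\eta_1+(m_2\eta_1-m_1\eta_2),
\end{equation*}
and since $m_2\eta_1-m_1\eta_2\in\Lambda$ this is (3).

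The only mild obstacle is keeping the branches of $\log f$ consistent along the two chosen sides in the sum-of-zeros computation, but the integer ambiguities introduced contribute exactly the lattice element $m_2\eta_1-m_1\eta_2\in\Lambda$ that the statement of (3) already tolerates, so the argument is self-correcting.
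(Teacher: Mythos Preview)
Your argument is correct and is the standard contour-integration proof of this classical result. The paper itself does not prove the lemma: it states that ``a proof of it appears in the appendix to \cite{CKS1}.'' The argument-principle approach you give (pairing opposite sides of a fundamental parallelogram after logarithmic differentiation of the quasi-periodicity relations, then repeating with the weight $z$ for the sum of zeros) is essentially the only natural route to results of this form, so your proof almost certainly coincides with the one in the cited appendix.
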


 \subsection{Transformation properties of $R_\tau(z)$} 
Let $S,T,N \in \GL(V)$\index{S@$S$}\index{T@$T$}\index{N@$N$}, and $P \in \GL(V^{\otimes 2})$\index{P@$P$},  be the automorphisms 
\begin{equation*}
S \cdot x_\a=e\big(\tfrac{\a}{n}\big) x_\a, \quad T \cdot x_\a=x_{\a+1}, \quad N\cdot x_\a=x_{-\a}, \quad P(u \otimes v):=v \otimes u.
\end{equation*}
The group generated by $S$ and $T$ is the Heisenberg group $H_n$ of order $n^3$, and $V$ is an irreducible representation of $H_n$ (see \cref{eq:14}).  At \cite[\S1, Rmk.~2]{FO89}, Odesskii and Feigin observed that $S$ and $T$ extend to automorphisms of $Q_{n,k}(E,\tau)$ (see \cite[Prop.~3.23]{CKS1} for the details).  We will often use the projective representation of $\frac{1}{n}\Lambda/\Lambda$ on $V$ given by
\begin{equation*}
\tfrac{a}{n} \, + \, \tfrac{b}{n}\eta \; \mapsto \; T^bS^{ka}.
\end{equation*}

Let\index{b(z)@$b(z)$} 
\begin{equation}
\label{eq:defn.b(z)}
b(z) \; :=\;   e\big(-nz+\tau + \tfrac{1}{2}  -\tfrac{n+1}{2}\eta\big).
\end{equation}

\begin{proposition}
\label{lem.transf.props.R.tau.z}
Let $k' \in \bZ$\index{k'@$k'$} be the unique integer such that $n>k'\ge 1$ and $kk'=1$ in $\bZ_n$. 
Then
\begin{align}
R_\tau\big( z+\tfrac{1}{n}\big) & \;=\; (-1)^{n-1} (I \otimes S^{-k}) \, R_\tau(z) \, (S^k \otimes I),
\numberthis  \label{eq.z+1/n}
\\
R_\tau\big( z+\tfrac{1}{n}\eta\big) & \;=\; b(z) (I \otimes T^{-1}) \, R_\tau(z) \, (T \otimes I),
\numberthis  \label{eq.z+1/n eta}
\\
R_\tau(-z) & \;=\; e(n^2z) P \, R_{-\tau}(z) \, P,
\numberthis  \label{eq.-z}
\\
 R_{\tau}(-z) &  \;=\;  e(n^2z)(N \otimes N) \, R_{-\tau}(z) \, (N \otimes N),
\numberthis  \label{eq.-tau.-z}
\\
R_{\tau+\frac{1}{n}} ( z) & \;=\;  (S \otimes I) \, R_\tau(z) \, (S^{-1} \otimes I),
\numberthis  \label{eq.tau+1/n}
\\
R_{\tau+\frac{1}{n}\eta} ( z) & \;=\;  e(z) (I \otimes T^{-k'}) \, R_\tau(z) \, (I \otimes T^{k'}).
\numberthis  \label{eq.tau+1/n.eta}
\end{align}
Furthermore, 
\begin{enumerate}
\item\label{item.prop.rank.r.tw}
$\rank R_\tau(z+\zeta)\,=\, \rank R_\tau(z)$  for all $\zeta \in \frac{1}{n}\Lambda$;
\item\label{item.prop.r.pm.tors}
$R_{\tau}(z)R_{\tau}(-z)=0=R_{\tau}(-z)R_{\tau}(z)$ for all $z \in \, \pm \, \tau +  \frac{1}{n}\Lambda$;\footnote{It follows from \cref{cor.R.isom} that 
$R(z)R(-z)=0=R(-z)R(z)$ if and only if  $z \in \, \pm \, \tau +  \frac{1}{n}\Lambda$, and that $R(z)$ is an isomorphism if 
$z \notin \, \pm \, \tau +  \frac{1}{n}\Lambda$.}
\item\label{item.prop.s.t.comm}
$S\otimes S$ and $T \otimes T$ commute with $R_\tau(z)$.\footnote{This implies that $S$ and $T$ extend to 
 automorphisms of $Q_{n,k}(E,\tau)$ (cf.,\cite[Prop.~3.23]{CKS1}).}  
\end{enumerate}
\end{proposition}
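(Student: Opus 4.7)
The plan is to verify each of the six transformation identities by direct computation from the explicit formula \Cref{eq:odr} for $R_\tau(z)$, using the quasi-periodicity and symmetry properties of the distinguished basis $\theta_\alpha$ recorded in \cite[Prop.~2.6]{CKS1}. Writing
\[
R_\tau(z)(x_i\otimes x_j)\;=\;\Phi(z)\sum_{r\in\bZ_n}c_r(i,j,z,\tau)\,x_{j-r}\otimes x_{i+r},
\]
with $\Phi(z):=\theta_0(-z)\cdots\theta_{n-1}(-z)/(\theta_1(0)\cdots\theta_{n-1}(0))$ the scalar prefactor and $c_r(i,j,z,\tau):=\theta_{j-i+r(k-1)}(-z+\tau)/[\theta_{j-i-r}(-z)\theta_{kr}(\tau)]$, each identity reduces to comparing how the theta-function factors on both sides transform under a shift of $z$, or of $\tau$, or under $z\mapsto -z$, and matching that against the diagonal action of $S^{\pm k}$, $T^{\pm 1}$, $N$, or $P$ on the basis $\{x_i\otimes x_j\}$.

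For \cref{eq.z+1/n}, replacing $z$ by $z+1/n$ in each $\theta_\alpha(-z)$ and $\theta_\alpha(-z+\tau)$ produces a phase of the form $e(\pm\alpha/n)$ times the same theta function (\cite[Prop.~2.6]{CKS1}); in $\Phi$ these phases multiply to $e(-(1+2+\cdots+(n-1))/n)=(-1)^{n-1}$, while in the $r$th summand the phases in the numerator and denominator combine to $e(-kr/n)$, which is exactly the scalar one obtains on the right-hand side from $(S^k\otimes I)(x_i\otimes x_j)=e(ki/n)x_i\otimes x_j$ followed by $(I\otimes S^{-k})(x_{j-r}\otimes x_{i+r})=e(-k(i+r)/n)x_{j-r}\otimes x_{i+r}$. \cref{eq.z+1/n eta} is analogous, with the shift $z\mapsto z+\eta/n$ producing both phases and an index-shift $\theta_\alpha\mapsto\theta_{\alpha+1}$; the index-shift is absorbed by $T\otimes T^{-1}$ acting on the basis, while the accumulated phase in $\Phi$ and in the summand contribute the prefactor $b(z)$ of \cref{eq:defn.b(z)}. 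For \cref{eq.-z} one uses the identity $\theta_\alpha(-z)=-\theta_{n-\alpha}(z)\cdot(\text{phase})$ from \cite[Prop.~2.6]{CKS1} to convert $R_\tau(-z)$ into an expression involving $\theta_\gamma(z)$ and $\theta_\gamma(z-\tau)$; reindexing the sum $r\mapsto -r$ interchanges the two tensorands (explaining the appearance of $P$), flips the sign of $\tau$, and the residual phase equals $e(n^2z)$. \cref{eq.-tau.-z} combines \cref{eq.-z} with the observation that $P$ and $N\otimes N$ differ by a permutation of the basis of $V^{\otimes 2}$ that is absorbed by a reindexing $r\mapsto -r$ in the sum. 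Finally, \cref{eq.tau+1/n} and \cref{eq.tau+1/n.eta} are proved by the same bookkeeping applied now to the variable $\tau$: only the factors $\theta_{j-i+r(k-1)}(-z+\tau)$ and $\theta_{kr}(\tau)$ depend on $\tau$, and shifting $\tau$ by $1/n$ or $\eta/n$ produces a phase depending on $r$ (and, in the second case, an index-shift that, using $kk'\equiv 1\pmod n$, realigns precisely with $T^{k'}$).

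For the three labeled consequences: Part \cref{item.prop.rank.r.tw} is immediate from \cref{eq.z+1/n} and \cref{eq.z+1/n eta}, which together show that $R_\tau(z+\zeta)$ equals a nonzero scalar times $UR_\tau(z)V$ for invertible $U,V$ whenever $\zeta\in\frac{1}{n}\Lambda$, and hence has the same rank. Part \cref{item.prop.r.pm.tors} then follows from \cref{rmk.R(0)}\cref{item.prop.r.pm}, which gives the vanishing at $z=\pm\tau$: for $z=\pm\tau+\zeta$ with $\zeta\in\frac{1}{n}\Lambda$, the two transformation identities express $R_\tau(\pm\tau+\zeta)$ and $R_\tau(\mp\tau-\zeta)$ as conjugates of $R_\tau(\pm\tau)$ and $R_\tau(\mp\tau)$ by compatible invertible operators, so their product vanishes. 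Part \cref{item.prop.s.t.comm} is verified directly from the formula: the scalar $e(ki/n+kj/n)$ produced by $S\otimes S$ applied to $x_i\otimes x_j$ depends only on $i+j$, which equals $(j-r)+(i+r)$, the analogous scalar on each term $x_{j-r}\otimes x_{i+r}$; similarly, $T\otimes T$ shifts $(i,j)\mapsto(i+1,j+1)$, which preserves both $j-i$ and $r$, so every coefficient $c_r(i,j,z,\tau)$ is unchanged.

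The main obstacle is purely the bookkeeping of phases: to carry out the argument one must be meticulous about the exponential factors produced by the quasi-periodicity of each $\theta_\alpha$ (with its characteristic depending on $\alpha$), and ensure that the accumulated phase in the prefactor $\Phi(z)$ together with the phase in the $r$th summand combines to precisely the claimed scalar $(-1)^{n-1}$, $b(z)$, $e(n^2z)$, or $e(z)$. The identity \cref{eq.-z} is the most delicate since it combines a reflection with an index-reversal; once it is in hand, \cref{eq.-tau.-z} and the consequences are essentially formal.
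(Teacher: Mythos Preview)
Your approach is essentially the same as the paper's for the six transformation identities and for parts \cref{item.prop.rank.r.tw} and \cref{item.prop.s.t.comm}: direct computation from \cref{eq:odr} using the quasi-periodicity identities of \cite[Prop.~2.6]{CKS1}, together with the grading/index-shift interpretation of $S$, $T$, $N$, $P$. The paper carries this out explicitly term by term, exactly along the lines you sketch.

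For part \cref{item.prop.r.pm.tors} there is a small gap in your argument and the paper's route is cleaner. You write that \cref{eq.z+1/n} and \cref{eq.z+1/n eta} express $R_\tau(\pm\tau+\zeta)$ and $R_\tau(\mp\tau-\zeta)$ as ``conjugates'' of $R_\tau(\pm\tau)$ and $R_\tau(\mp\tau)$ by ``compatible invertible operators.'' But the transformation identities give $R_\tau(z+\zeta)=c\,(I\otimes C)^{-1}R_\tau(z)(C\otimes I)$, which is not a conjugation; when you multiply $R_\tau(\tau+\zeta)R_\tau(-\tau-\zeta)$ the middle factor is $(C\otimes I)(I\otimes C')^{-1}=C\otimes C'^{-1}$, and to push this past $R_\tau(\tau)$ you must invoke part \cref{item.prop.s.t.comm} (after observing that $C'^{-1}$ is a scalar multiple of $C$ via $ST=e(1/n)TS$). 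This works, but it is more than the phrase ``compatible'' conveys. The paper avoids this altogether: since $R_\tau(\tau)$ is not an isomorphism (\cref{rmk.R(0)}), part \cref{item.prop.rank.r.tw} gives that $R_\tau(\tau+\zeta)$ is not an isomorphism either, and then \cref{lem.comp.pm} (together with $R_\tau(0)=I\otimes I$) immediately yields $R_\tau(\tau+\zeta)R_\tau(-\tau-\zeta)=0$.
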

\begin{proof} 
We will use the notation $D:=\theta_1(0) \cdots \theta_{n-1}(0)$.

\noindent
{\bf Proof of \cref{eq.z+1/n}.}
Since $\theta_\a\left(z+\tfrac{1}{n}\right) =  e\left(\tfrac{\a}{n}\right)\theta_\a(z)$, by  \cite[Prop.~2.6(3)]{CKS1}, and 
\begin{equation*}
e(-\tfrac{kr}{n})  x_{j-r}\otimes x_{i+r} \;=\; e(\tfrac{ki}{n}) (I \otimes S^{-k})( x_{j-r}\otimes x_{i+r} ),
\end{equation*}
we see that $ R_\tau(z+\tfrac{1}{n})(x_i\otimes x_j)$ equals
\begin{align*}
 & 
\frac{1}{D}
\left(  \prod_{\a \in \bZ_n} \theta_\a(-z-\tfrac{1}{n})  \right)
 \sum_{r\in \bZ_n}
  \frac{\theta_{j-i+r(k-1)}(-z-\tfrac{1}{n}+\tau)}
  {\theta_{j-i-r}(-z-\tfrac{1}{n})\theta_{kr}(\tau)} \,
  x_{j-r}\otimes x_{i+r}
  \\
  & \;=\; 
e\big(-\tfrac{n-1}{2}\big) \frac{1}{D}
 \left(  \prod_{\a \in \bZ_n} \theta_\a(-z)   \right)
 \sum_{r\in \bZ_n}
  e\big(-\tfrac{kr}{n}\big)
  \frac{\theta_{j-i+r(k-1)}(-z+\tau)}
  {\theta_{j-i-r}(-z)\theta_{kr}(\tau)} \,
  x_{j-r}\otimes x_{i+r}
    \\
  & \;=\; 
(-1)^{n-1}  e(\tfrac{ki}{n}) \,
\frac{1}{D}
 \left(  \prod_{\a \in \bZ_n} \theta_\a(-z)   \right)
 \sum_{r\in \bZ_n} 
  \frac{\theta_{j-i+r(k-1)}(-z+\tau)}
  {\theta_{j-i-r}(-z)\theta_{kr}(\tau)} \,
  (I \otimes S^{-k})(  x_{j-r}\otimes x_{i+r} )
    \\
  & \;=\; 
(-1)^{n-1}  e(\tfrac{ki}{n}) \, (I \otimes S^{-k}) R_\tau(z)(x_i\otimes x_j) 
   \\
  & \;=\; 
(-1)^{n-1}  \, (I \otimes S^{-k}) R_\tau(z)(S^k \otimes I)(x_i\otimes x_j). 
  \end{align*}

\noindent
{\bf Proof of \cref{eq.z+1/n eta}.}
By \cite[Prop.~2.6(4)]{CKS1}, $\theta_\a\!\left(z-\tfrac{1}{n}\eta\right) = e\!\left(z+\tfrac{1}{2n}   - \tfrac{n+1}{2n}\eta \right) \theta_{\a-1}(z)$.

Therefore $R_\tau(z+\tfrac{1}{n}\eta)(x_i \otimes x_j)$ equals  
\begin{align*}
& \frac{1}{D} \left( \prod_{\a \in \bZ_n}  \theta_\a(-z-\tfrac{1}{n}\eta) \right)
\sum_{r\in \bZ_n}
  \frac{\theta_{j-i+r(k-1)}(-z-\tfrac{1}{n}\eta+\tau)}
  {\theta_{j-i-r}(-z-\tfrac{1}{n}\eta)\theta_{kr}(\tau)} \,
  x_{j-r}\otimes x_{i+r}
  \\
  & \;=\;
e\!\left(-nz+\tfrac{1}{2}   - \tfrac{n+1}{2}\eta \right)  \frac{1}{D} \left( \prod_{\a \in \bZ_n}  \theta_{\a-1}(-z) \right)
 \sum_{r\in \bZ_n}
 e(\tau) 
  \frac{\theta_{j-i+r(k-1)-1}(-z+\tau)}
  {\theta_{j-i-r-1}(-z)\theta_{kr}(\tau)} \,
  x_{j-r}\otimes x_{i+r}
    \\
  & \;=\;
b(z) \,  \frac{1}{D} \left( \prod_{\a \in \bZ_n}  \theta_{\a}(-z) \right)
 \sum_{r\in \bZ_n}
  \frac{\theta_{j-i+r(k-1)-1}(-z+\tau)}
  {\theta_{j-i-r-1}(-z)\theta_{kr}(\tau)} \,
(I\otimes T^{-1}) ( x_{j-r}\otimes x_{i+1+r})
   \\
  & \;=\;
b(z) \, (I\otimes T^{-1}) R_\tau(z)(T \otimes I)(x_{i} \otimes x_j).
\end{align*}

\noindent
{\bf Proof of \cref{eq.-z}.}
Since $\theta_\a(-z)=-e\big(-nz+\frac{\a}{n}\big)\theta_{-\a}(z)$ by \cite[Prop.~2.6(5)]{CKS1},  
\begin{align*}
 R_\tau(-z)(x_i\otimes x_j)  & \; = \; 
  \frac{\theta_0(z) \cdots \theta_{n-1}(z)}{\theta_1(0) \cdots \theta_{n-1}(0)} 
 \sum_{r\in \bZ_n}
  \frac{\theta_{j-i+r(k-1)}(z+\tau)}
  {\theta_{j-i-r}(z)\theta_{kr}(\tau)} \,
  x_{j-r}\otimes x_{i+r}
   \\
  & \; = \; 
  (-1)^{n}e(n^2z+\tfrac{n-1}{2})   \frac{\theta_0(-z) \cdots \theta_{n-1}(-z)}{\theta_1(0) \cdots \theta_{n-1}(0)} 
 \sum_{r\in \bZ_n}
 (-1) \frac{\theta_{i-j-r(k-1)}(-z-\tau)}
  {\theta_{i-j+r}(-z)\theta_{-kr}(-\tau)} \,
  x_{j-r}\otimes x_{i+r}
   \\
  & \; = \; 
   e(n^2z)   \frac{\theta_0(-z) \cdots \theta_{n-1}(-z)}{\theta_1(0) \cdots \theta_{n-1}(0)} 
 \sum_{r\in \bZ_n}
 \frac{\theta_{i-j+r(k-1)}(-z-\tau)}
  {\theta_{i-j-r}(-z)\theta_{kr}(-\tau)} \,
 P(x_{i-r} \otimes x_{j+r} )
   \\
  & \; = \; 
    e(n^2z) P \, R_{-\tau}(z) \, P (x_i\otimes x_j).  
\end{align*}

\noindent
{\bf Proof of \cref{eq.-tau.-z}.}
This follows from \cref{eq.-z} and
\begin{align*}
PR_{\tau}(z)P(x_{i}\otimes x_{j})
&\;=\;\frac{\theta_0(-z)\cdots\theta_{n-1}(-z)}{\theta_1(0)\cdots\theta_{n-1}(0)}
\sum_{r\in \bZ_n}
\frac{\theta_{i-j+r(k-1)}(-z+\tau)}{\theta_{i-j-r}(-z)\theta_{kr}(\tau)}\,
x_{j+r}\otimes x_{i-r}\\
&\;=\;(N\otimes N)R_{\tau}(z)(N\otimes N)(x_{i}\otimes x_{j}).
\end{align*}

\noindent
{\bf Proof of \cref{eq.tau+1/n}.}
Since $\theta_\a\left(z+\tfrac{1}{n}\right) =  e\left(\tfrac{\a}{n}\right)\theta_\a(z)$,  $R_{\tau+\frac{1}{n}}(z)(x_i \otimes x_j)$ equals  
\begin{align*}
& \frac{1}{D} \left( \prod_{\a \in \bZ_n}  \theta_\a(-z) \right)
\sum_{r\in \bZ_n}
  \frac{\theta_{j-i+r(k-1)}(-z+\tau+\tfrac{1}{n})}
  {\theta_{j-i-r}(-z)\theta_{kr}(\tau+\tfrac{1}{n})} \,
  x_{j-r}\otimes x_{i+r}
  \\
  & \;=\; 
  \frac{1}{D} \left( \prod_{\a \in \bZ_n}  \theta_\a(-z) \right)
\sum_{r\in \bZ_n}
e\big(\tfrac{j-i-r}{n}\big)  \frac{\theta_{j-i+r(k-1)}(-z+\tau)}
  {\theta_{j-i-r}(-z)\theta_{kr}(\tau)} \,
  x_{j-r}\otimes x_{i+r}
  \\
  & \;=\; 
 e\big(\tfrac{-i}{n}\big)  \frac{1}{D} \left( \prod_{\a \in \bZ_n}  \theta_\a(-z) \right)
\sum_{r\in \bZ_n}
 \frac{\theta_{j-i+r(k-1)}(-z+\tau)}
  {\theta_{j-i-r}(-z)\theta_{kr}(\tau)} \,
 (S \otimes I)( x_{j-r}\otimes x_{i+r})
 \\
  & \;=\; 
 (S \otimes I)R_\tau(z)(S^{-1}\otimes I)( x_{i}\otimes x_{j}).
\end{align*}

\noindent
{\bf Proof of \cref{eq.tau+1/n.eta}.}
Since $\theta_\a\!\left(z+\tfrac{1}{n}\eta\right) = e\!\left(-z-\tfrac{1}{2n} + \tfrac{n-1}{2n}\eta \right) \theta_{\a+1}(z)$, 
$R_{\tau+\frac{1}{n}\eta}(z)(x_i \otimes x_j)$ equals  
\begin{align*}
& \frac{1}{D} \left( \prod_{\a \in \bZ_n}  \theta_\a(-z) \right)
\sum_{r\in \bZ_n}
  \frac{\theta_{j-i+r(k-1)}(-z+\tau+\tfrac{1}{n}\eta)}
  {\theta_{j-i-r}(-z)\theta_{kr}(\tau+\tfrac{1}{n}\eta)} \,
  x_{j-r}\otimes x_{i+r}
  \\
  & \;=\; 
 e(z)  \frac{1}{D} \left( \prod_{\a \in \bZ_n}  \theta_\a(-z) \right)
\sum_{r\in \bZ_n}
 \frac{\theta_{j-i+r(k-1)+1}(-z+\tau)}
  {\theta_{j-i-r}(-z)  \theta_{kr+1}(\tau) } \,
  x_{j-r}\otimes x_{i+r}
    \\
  & \;=\; 
 e(z)  \frac{1}{D} \left( \prod_{\a \in \bZ_n}  \theta_\a(-z) \right)
\sum_{s\in \bZ_n}
 \frac{\theta_{j-i+k' +s(k-1)}(-z+\tau)}
  {\theta_{j-i+k'-s}(-z)  \theta_{ks}(\tau) } \,
  x_{j+k'-s}\otimes x_{i-k'+s}
  \quad \text{(where $s=r+k'$)}
    \\
  & \;=\; 
 e(z)  \frac{1}{D} \left( \prod_{\a \in \bZ_n}  \theta_\a(-z) \right)
\sum_{s\in \bZ_n}
 \frac{\theta_{j-i+k' +s(k-1)}(-z+\tau)}
  {\theta_{j-i+k'-s}(-z)  \theta_{ks}(\tau) } \,
 (I \otimes T^{-k'})(  x_{j+k'-s}\otimes x_{i+s})
   \\
  & \;=\;  e(z) (I \otimes T^{-k'}) \, R_\tau(z) \,(x_i \otimes x_{j+k'})
  \\
  & \;=\;  e(z) (I \otimes T^{-k'}) \, R_\tau(z) \, (I \otimes T^{k'})(x_i \otimes x_j).
\end{align*}

\cref{item.prop.rank.r.tw}
This is an immediate consequence of \cref{eq.z+1/n} and \cref{eq.z+1/n eta}.

\cref{item.prop.r.pm.tors}
We observed in \cref{rmk.R(0)} that $R_\tau(\tau)$ is  not an isomorphism. Hence $R_\tau(\tau+\zeta)$ is not an isomorphism
either. The result now follows from  \cref{lem.comp.pm}  and \cref{rmk.R(0)}\cref{item.prop.r.zero}.

\cref{item.prop.s.t.comm}
To show that $[R_\tau(z), S \otimes S]=0$ we make $V^{\otimes 2}$ a $\bZ_n$-graded vector space by setting
 $\deg(x_\a\otimes x_\b):=\a+\b$. Since the action of $R_\tau(z)$ preserves degree and the homogeneous components 
 are $S^{\otimes 2}$-eigenspaces, the actions of $R_\tau(z)$ and $S \otimes S$ on $V^{\otimes 2}$ commute with each other. 

Write $c_{i,j,r}$ for the coefficient of $x_{j-r} \otimes x_{i+r}$ in $R_\tau(z)(x_i \otimes x_j)$. Since $c_{i+1,j+1,r}=c_{i,j,r}$, 
\begin{align*}
R_\tau(z)(T \otimes T) (x_i \otimes x_j) & \;=\; R_\tau(z)(x_{i+1} \otimes x_{j+1}) 
\\
& \;=\; \sum_r c_{i+1,j+1,r} x_{j+1-r} \otimes x_{i+1+r}
\\
& \;=\; (T \otimes T) \Big(\sum_r c_{i,j,r} \, x_{j-r} \otimes x_{i+r}\Big).
\end{align*}
Hence $R_\tau(z)(T \otimes T) = (T \otimes T) R_\tau(z)$, as claimed.
\end{proof}

An induction argument using \cref{eq.z+1/n} and \cref{eq.z+1/n eta} proves the following.

\begin{corollary}
\label{cor.R.transl.props}
If $a,b \in \bZ$ and $\zeta =\tfrac{a}{n}+\tfrac{b}{n}\eta$, then 
\begin{equation*}
R_\tau(z+\zeta)  \;=\; f(z,\zeta,\tau) (I \otimes T^bS^{ka})^{-1} R_\tau(z) (T^bS^{ka} \otimes I)
\end{equation*} 
where $f(z,\zeta,\tau) = e(-bnz)e\big(b\tau +\tfrac{b+a(n-1)}{2} - \tfrac{b(n+b)}{2}\eta\big)$\index{f(z,zeta,tau)@$f(z,\zeta,\tau)$}.
\end{corollary}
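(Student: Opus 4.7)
The plan is to iterate the two base identities \cref{eq.z+1/n} and \cref{eq.z+1/n eta} separately and then combine them. A straightforward induction on $a\geq 0$ using \cref{eq.z+1/n} gives
\begin{equation*}
R_\tau\bigl(z+\tfrac{a}{n}\bigr) \;=\; (-1)^{a(n-1)}\,(I\otimes S^{-ka})\,R_\tau(z)\,(S^{ka}\otimes I),
\end{equation*}
since neither the scalar $(-1)^{n-1}$ nor the conjugating operator $S^k$ depends on $z$, so the factors compose multiplicatively.

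I would then induct on $b\geq 0$ using \cref{eq.z+1/n eta} to prove
\begin{equation*}
R_\tau\bigl(z+\tfrac{b}{n}\eta\bigr) \;=\; \beta_b(z)\,(I\otimes T^{-b})\,R_\tau(z)\,(T^b\otimes I),
\end{equation*}
where $\beta_0(z)=1$ and iterating \cref{eq.z+1/n eta} forces the recursion $\beta_{b+1}(z)=\beta_b\bigl(z+\tfrac{1}{n}\eta\bigr)\,b(z)$, hence $\beta_b(z)=\prod_{j=0}^{b-1}b\bigl(z+\tfrac{j}{n}\eta\bigr)$. Expanding this product using the definition \cref{eq:defn.b(z)} of $b(z)$, the linear-in-$z$ exponents sum to $-bnz$, the constants to $b\tau+\tfrac{b}{2}$, and---the step that requires the most care---the $\eta$ contributions $-\tfrac{n+1}{2}\eta$ from each of the $b$ factors together with the arithmetic progression $-j\eta$ (for $j=0,\dots,b-1$) coming from the $-nz$ piece evaluated at $z+\tfrac{j}{n}\eta$ combine to $-\tfrac{b(n+1)}{2}\eta-\tfrac{b(b-1)}{2}\eta=-\tfrac{b(n+b)}{2}\eta$. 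Hence
\begin{equation*}
\beta_b(z)\;=\;e\Bigl(-bnz + b\tau + \tfrac{b}{2} - \tfrac{b(n+b)}{2}\eta\Bigr).
\end{equation*}

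To finish, I would apply the $\tfrac{b}{n}\eta$-translation identity first and then the $\tfrac{a}{n}$-translation identity to $R_\tau(z+\zeta)=R_\tau\bigl((z+\tfrac{b}{n}\eta)+\tfrac{a}{n}\bigr)$, producing
\begin{equation*}
R_\tau(z+\zeta)\;=\;(-1)^{a(n-1)}\beta_b(z)\,(I\otimes S^{-ka}T^{-b})\,R_\tau(z)\,(T^bS^{ka}\otimes I).
\end{equation*}
Since $(T^bS^{ka})^{-1}=S^{-ka}T^{-b}$, this has the claimed form, and using $(-1)^{a(n-1)}=e\bigl(\tfrac{a(n-1)}{2}\bigr)$ one checks on the nose that $(-1)^{a(n-1)}\beta_b(z)=f(z,\zeta,\tau)$. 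The extension from $a,b\geq 0$ to all $a,b\in\bZ$ follows by inverting the two base identities (equivalently, by substituting $z\mapsto z-\zeta$ into the formula already established and rearranging). The only genuine obstacle is the exponent bookkeeping for $\beta_b(z)$; the rest is formal iteration.
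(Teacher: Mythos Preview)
Your proof is correct and is exactly the approach the paper intends: the paper's own proof is the single sentence ``An induction argument using \cref{eq.z+1/n} and \cref{eq.z+1/n eta} proves the following,'' and you have carried out precisely that induction, including the scalar bookkeeping for $\beta_b(z)$ and the verification that $(-1)^{a(n-1)}\beta_b(z)=f(z,\zeta,\tau)$.
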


\subsection{Theta functions with characteristics} 
\label{ssect.theta.fn.char}
The {\sf Jacobi theta function} with respect to $\Lambda$ is the holomorphic function\index{theta(z,eta)@$\vartheta(z\,\vert\,\eta)$}
$$
\vartheta(z \, | \, \eta)  \;  :=\;  \sum_{m \in \bZ}  e\big( mz  +  \hbox{$\frac{1}{2}$} m^2\eta\big).
$$
Clearly, $\vartheta (z+1  \, | \, \eta)= \vartheta (z  \, | \, \eta)$ and $\vartheta (z+\eta  \, | \, \eta) 
=e(-z-\tfrac{1}{2}\eta)\, \vartheta (z  \, | \, \eta)$.

For real numbers $a$ and $b$  the {\sf theta function with characteristics} $a$ and $b$ is\index{theta[ab](z,eta)@$\vartheta \hbox{$\left[ a \atop b \right]$}(z  \, \vert \, \eta)$} 
\begin{align*}
\vartheta \hbox{$\left[ a \atop b \right]$}(z  \, | \, \eta) & \;:=\;   e\big(a(z+b) + \hbox{$\frac{1}{2}$}a^2\eta \big) \vartheta(z+a\eta+b\,|\,\eta)
\\
& \;  =\;  \sum_{m \in \bZ}  e\big( (a+m)(z+b)  +  \hbox{$\frac{1}{2}$} (a+m)^2\eta\big).
\end{align*}
This is the same as the definition at \cite[(2.5)]{ric-tra}. In \cite[p.~10]{Mum07} and \cite[(3.1)]{tra}, $\vartheta \hbox{$\left[ a \atop b \right]$}(z  \, | \, \eta)$ is denoted by $\vartheta_{a,b}(z,\eta)$. The 
papers \cite{ric-tra} and \cite{tra} play a role in \cref{se.main}.

It is easy to see that
\begin{equation}
\label{eq:a+1.b+1}
\vartheta \hbox{$\left[ a+1 \atop b \right]$} (z  \, | \, \eta) \;=\; \vartheta \hbox{$\left[ a \atop b \right]$} (z  \, | \, \eta)
\qquad \text{and}  \qquad
\vartheta \hbox{$\left[ a \atop b+1 \right]$} (z  \, | \, \eta) \;=\; e(a) \vartheta \hbox{$\left[ a \atop b \right]$} (z  \, | \, \eta).
\end{equation}
Since $\vartheta(z \, | \, \eta)=0$ if and only if $z \in \frac{1}{2}(1+\eta) + \Lambda$ by \cref{lem.theta.fns}, 
$\vartheta \hbox{$\left[ a \atop b \right]$}(z  \, | \, \eta) =0$ if and only if 
\begin{equation*}
  z \, \in \, \tfrac{1}{2}(1+\eta) \,- \, (a\eta +b) \,+\, \Lambda .
\end{equation*}

\begin{proposition}
\label{prop.qp.theta.char}
If $s,t \in \bZ$, then 
$
\vartheta \hbox{$\left[ a \atop b \right]$} (z + s\eta+t  \, | \, \eta) \,=\, e\big(at-s(z+b) -\tfrac{1}{2}s^2\eta\big) \,
\vartheta \hbox{$\left[ a \atop b \right]$} (z  \, | \, \eta).
$
\end{proposition}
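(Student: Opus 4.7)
The plan is a direct computation starting from the Fourier-like series definition
\begin{equation*}
\vartheta \hbox{$\left[ a \atop b \right]$}(z  \, | \, \eta) \;=\;   \sum_{m \in \bZ}  e\big( (a+m)(z+b)  +  \tfrac{1}{2} (a+m)^2\eta\big),
\end{equation*}
substituting $z+s\eta+t$ in place of $z$ and extracting scalar factors from each term of the series until what remains, after a reindexing of the summation, is the original theta series.

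First I would separate the two pieces of the shift. The integer translation contributes $e((a+m)t)=e(at)\cdot e(mt)=e(at)$ for every $m\in\bZ$, since $mt\in\bZ$ and $e$ is $\bZ$-periodic; this is where the hypothesis $t\in\bZ$ is used, and it allows $e(at)$ to be pulled out of the sum. The $s\eta$-translation introduces an extra $(a+m)s\eta$ into each exponent. The algebraic step that makes everything work is to complete the square:
\begin{equation*}
(a+m)s\eta \; + \; \tfrac{1}{2}(a+m)^2\eta \;=\; \tfrac{1}{2}(a+m+s)^2\eta \; - \; \tfrac{1}{2}s^2\eta,
\end{equation*}
which produces the factor $e(-\tfrac{1}{2}s^2\eta)$ outside the sum.

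Finally I would reindex by $m'=m+s$. This is a bijection of $\bZ$ precisely because $s\in\bZ$ (the other place the hypothesis is used). The factor $(a+m)(z+b)$ becomes $(a+m')(z+b)-s(z+b)$, so $e(-s(z+b))$ pulls outside, and the surviving sum is exactly $\vartheta \hbox{$\left[ a \atop b \right]$}(z  \, | \, \eta)$. Multiplying the three scalar factors $e(at)$, $e(-\tfrac{1}{2}s^2\eta)$, and $e(-s(z+b))$ gives the asserted quasi-periodicity relation. There is no real obstacle; the argument is entirely formal manipulation of an absolutely convergent series, and the only thing to be careful about is that both steps that invoke $\bZ$-periodicity of $e$ and the reindexing genuinely require $s,t\in\bZ$.
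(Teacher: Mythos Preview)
Your proof is correct. It differs from the paper's in that you work directly with the infinite-sum expression for $\vartheta\!\left[a \atop b\right]$, completing the square in the exponent and reindexing by $m\mapsto m+s$. The paper instead first establishes the quasi-periodicity of the basic Jacobi function $\vartheta(z\,|\,\eta)$ under $z\mapsto z+s\eta+t$ (by induction on $s$), and then feeds that into the defining relation $\vartheta\!\left[a \atop b\right](z\,|\,\eta)=e\big(a(z+b)+\tfrac12 a^2\eta\big)\vartheta(z+a\eta+b\,|\,\eta)$. Your route is marginally more direct and avoids the intermediate induction; the paper's route makes the reduction to the characteristic-free theta function explicit. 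Both are equally elementary and equally valid.
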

\begin{proof}
We observed above that $\vartheta (z+1  \, | \, \eta)= \vartheta (z  \, | \, \eta)$ and $\vartheta (z+\eta  \, | \, \eta) 
=e(-z-\tfrac{1}{2}\eta)\, \vartheta (z  \, | \, \eta)$.
An induction argument shows that $\vartheta(z+s\eta \, | \, \eta)=e(-sz-\tfrac{1}{2}s^2\eta)\vartheta(z \, | \, \eta)$ for all integers $s$
and it follows from this that 
$\vartheta(z+s\eta+t \, | \, \eta)=e(-sz-\tfrac{1}{2}s^2\eta)\vartheta(z \, | \, \eta)$ for all integers $s$ and $t$.
Hence
\begin{align*}
\vartheta \hbox{$\left[ a \atop b \right]$} (z + s\eta+t  \, | \, \eta)
& \;=\;  e\big(a(z+s\eta+t+b) + \hbox{$\frac{1}{2}$}a^2\eta \big) \vartheta(z+s\eta+t+a\eta+b\,|\,\eta)
\\
& \;=\;  e\big(a(z+s\eta+t+b) + \hbox{$\frac{1}{2}$}a^2\eta \big) 
e(-s(z+a\eta+b)-\tfrac{1}{2}s^2\eta)
\vartheta(z+a\eta+b\,|\,\eta)
\\
& \;=\;  e(a(s\eta+t)) 
e(-s(z+a\eta+b)-\tfrac{1}{2}s^2\eta)
\vartheta \hbox{$\left[ a \atop b \right]$} (z  \, | \, \eta)
\\
& \;=\; e\big(at-s(z+b) -\tfrac{1}{2}s^2\eta\big) \,
\vartheta \hbox{$\left[ a \atop b \right]$} (z  \, | \, \eta)
\end{align*}
as claimed.
\end{proof}

The functions $\vartheta \! \left[ a \atop b \right]$ are related to the $\theta_\a$'s 
defined in \cite[Prop.~2.6]{CKS1} in the following way.

\begin{lemma}
\label{le.factor} 
There is a non-zero constant $c \in \bC$,  independent of $\a$ and $z$, such that
\begin{equation*}
 \vartheta  \! \left[ \frac{\a}{n}+\frac{1}{2} \atop \frac{1}{2} \right] \! (z \, | \, n\eta) \; = \; c^{-1}\, e(-\tfrac{1}{2}z) \, \theta_\a\big(\tfrac{z}{n}\big)
 \end{equation*}
 for all $\a \in \bZ$ and all $z \in \bC$.
\end{lemma}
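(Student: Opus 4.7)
The plan is to show that both sides of the claimed identity differ by a multiplicative constant $c_\a\in\bC^{\times}$, and then that this constant does not depend on $\a$. Constancy in $z$ will follow from Liouville's theorem applied to the ratio, once I verify that numerator and denominator share the same quasi-periodicities and zero divisor with respect to the lattice $\Lambda':=\bZ+\bZ n\eta$; the $\a$-independence will come from comparing how both sides transform under the shift $z\mapsto z+\eta$.

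For constancy in $z$, I would apply \cref{prop.qp.theta.char} (with modular parameter $n\eta$) to see that $\vartheta \hbox{$\left[ \frac{\a}{n}+\frac{1}{2} \atop \frac{1}{2} \right]$}(z\,|\,n\eta)$ picks up the factor $-e(\a/n)$ under $z\mapsto z+1$ and the factor $-e(-z-n\eta/2)$ under $z\mapsto z+n\eta$. For the right-hand side $e(-z/2)\,\theta_\a(z/n)$, the property $\theta_\a(w+\tfrac{1}{n})=e(\a/n)\theta_\a(w)$ from \cite[Prop.~2.6]{CKS1} combined with the prefactor yields the same multiplier under $z\mapsto z+1$; and the quasi-periodicity $\theta_\a(w+\eta)=-e(-nw)\theta_\a(w)$, obtained directly from $\theta(w+\eta)=-e(-w)\theta(w)$ applied to the product that defines $\theta_\a$, yields the same multiplier under $z\mapsto z+n\eta$. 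For the zeros: by \cite[Prop.~2.6]{CKS1}, $\theta_\a(z/n)$ vanishes precisely on $-\a\eta+\Lambda'$ with simple zeros, and the prefactor is nowhere zero; the zero-set description for $\vartheta \hbox{$\left[ a \atop b \right]$}$ recalled just before \cref{prop.qp.theta.char} (with $a=\tfrac{\a}{n}+\tfrac{1}{2}$, $b=\tfrac{1}{2}$, and modular parameter $n\eta$) places the zeros of the left-hand side on the same coset $-\a\eta+\Lambda'$, also simple. Hence the ratio
\begin{equation*}
c_\a \; := \; \frac{e(-z/2)\,\theta_\a(z/n)}{\vartheta \hbox{$\left[ \frac{\a}{n}+\frac{1}{2} \atop \frac{1}{2} \right]$}(z\,|\,n\eta)}
\end{equation*}
extends to a $\Lambda'$-doubly-periodic entire function of $z$, hence is constant by Liouville.

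To show $c_\a=c_{\a+1}$, I would apply the shift $z\mapsto z+\eta$. Since $\eta\notin\Lambda'$, this is not one of the periods used above, but a direct expansion of the defining series (essentially a ``change of characteristics'' identity) gives
\begin{equation*}
\vartheta \hbox{$\left[ \frac{\a}{n}+\frac{1}{2} \atop \frac{1}{2} \right]$}(z+\eta\,|\,n\eta)
\;=\; e\!\bigl(-\tfrac{1}{n}(z+\tfrac{1}{2})-\tfrac{\eta}{2n}\bigr)\,
\vartheta \hbox{$\left[ \frac{\a+1}{n}+\frac{1}{2} \atop \frac{1}{2} \right]$}(z\,|\,n\eta).
\end{equation*}
On the numerator side, the identity $\theta_\a(w-\tfrac{1}{n}\eta)=e(w+\tfrac{1}{2n}-\tfrac{n+1}{2n}\eta)\theta_{\a-1}(w)$ from \cite[Prop.~2.6]{CKS1} (quoted in the excerpt during the proof of \cref{eq.z+1/n eta}) rearranges to $\theta_\a(w+\tfrac{1}{n}\eta)=e(-w-\tfrac{1}{2n}+\tfrac{n-1}{2n}\eta)\theta_{\a+1}(w)$. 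Setting $w=z/n$ and substituting the claimed identity for index $\a$ into the expression obtained from the $\eta$-shift, the exponential factors cancel exactly, forcing $c_{\a+1}=c_\a$.

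The main obstacle is bookkeeping rather than anything conceptual: each shift or quasi-periodicity introduces an explicit exponential whose argument mixes $\a$, $n$, $z$, and $\eta$, and the desired cancellations — especially the $c_{\a+1}=c_\a$ step — hinge on these factors aligning exactly. Keeping the various rescalings (between the arguments $z$ and $z/n$, and between the periods $\eta$ and $n\eta$) consistent throughout is where the care is needed.
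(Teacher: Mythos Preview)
Your proposal is correct and essentially parallels the paper's proof, though you organize the argument slightly differently. The paper works with the rescaled function $e(\tfrac{1}{2}nz)\vartheta\!\left[\tfrac{\a}{n}+\tfrac{1}{2} \atop \tfrac{1}{2}\right](nz\mid n\eta)$ and verifies directly that it satisfies the two quasi-periodicity relations (under $z\mapsto z+\tfrac{1}{n}$ and $z\mapsto z+\tfrac{1}{n}\eta$) that characterize the family $\{\theta_\a\}$ up to a \emph{single} common scalar; that characterization from \cite{CKS1} then delivers both the constancy in $z$ and the $\a$-independence of $c$ in one stroke. You instead split the work: first you prove constancy in $z$ via matching quasi-periodicities with respect to the coarser lattice $\bZ+\bZ n\eta$ together with a zero-matching/Liouville argument, and then you handle the $\a$-independence separately via the $\eta$-shift. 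Your $\eta$-shift computation is exactly the paper's $z\mapsto z+\tfrac{1}{n}\eta$ step, so the core calculations coincide; the paper's route is somewhat shorter because it leans on the black-box characterization, while yours is more self-contained but checks one redundant shift (the $n\eta$-shift follows from iterating the $\eta$-shift).
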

\begin{proof}
  Since the functions $\theta_{\a}(z)$, $\a\in\bZ$, are characterized up to a common non-zero scalar multiple by their quasi-periodicity properties
  \begin{equation*}
    \theta_{\a}\big(z+\tfrac{1}{n}\big) \;=\; e\big(\tfrac{\a}{n}\big)\theta_{\a}(z)
    \qquad\text{and}\qquad
    \theta_{\a}\big(z+\tfrac{1}{n}\eta\big)\;=\; e\big(-z-\tfrac{1}{2n}+\tfrac{n-1}{2n}\eta\big)\theta_{\a+1}(z)
  \end{equation*}
  it suffices to show that the functions $e(\frac{1}{2}nz) \vartheta \! \left[ \frac{\a}{n}+\frac{1}{2} \atop \frac{1}{2} \right] \! (nz \, | \, n\eta)$ have the same quasi-periodicity properties.  The first equality in \cref{eq:a+1.b+1} implies that $e(\frac{1}{2}nz) \vartheta \! \left[ \frac{\a}{n}+\frac{1}{2} \atop \frac{1}{2} \right] \! (nz \, | \, n\eta)$ depends only on the image of $\alpha$ in $\bZ_{n}$.  By \cite[p.~10]{Mum07},
  \begin{equation*}
    \vartheta \hbox{$\left[ a \atop b \right] $}  (z+1\, | \, \eta)=e(a)\vartheta \hbox{$\left[ a \atop b \right] $}  (z\, | \,\eta)
    \quad\text{and}\quad
    \vartheta\hbox{$\left[ a \atop b \right] $} (z+\tfrac{1}{n}\eta\, | \,\eta)=
    e(-\tfrac{1}{n}z-\tfrac{b}{n}-\tfrac{1}{2n^{2}}\eta)\vartheta \hbox{$\left[ a +\frac{1}{n} \atop b \right] $} (z\, | \,\eta).
  \end{equation*}
  Therefore
  \begin{align*}
    e\big(\tfrac{1}{2}n(z+\tfrac{1}{n})\big)  \vartheta \! \left[ \frac{\a}{n}+\frac{1}{2} \atop \frac{1}{2} \right] \! \big(n(z+\tfrac{1}{n}) \, | \, n\eta\big)
    & \;=\;  e\big(\tfrac{1}{2}\big)  e\big(\tfrac{1}{2}nz\big)  e\big(\tfrac{\a}{n} + \tfrac{1}{2}\big)
      \vartheta \hbox{$\left[\frac{\a}{n}+\frac{1}{2}  \atop \frac{1}{2} \right] $}  (nz\, | \, n\eta) 
    \\
    & \;=\; e\big(\tfrac{\a}{n}\big)e\big(\tfrac{1}{2}nz\big)\vartheta \hbox{$\left[\frac{\a}{n}+\frac{1}{2}  \atop \frac{1}{2} \right] $}  (nz\, | \, n\eta)
  \end{align*}
  and
  \begin{align*}
 e\big(\tfrac{1}{2}n(z+\tfrac{1}{n}\eta)\big) \vartheta  \hbox{$\left[\frac{\a}{n}+\frac{1}{2}  \atop \frac{1}{2} \right] $}  (n(z+\tfrac{1}{n}\eta)\, | \,n\eta)
    &\; =\; e\big(\tfrac{1}{2}\eta\big)  e\big(\tfrac{1}{2}nz\big)  e\big(-z-\tfrac{1}{2n}-\tfrac{1}{2n}\eta\big)
      \vartheta \hbox{$\left[\frac{\a+1}{n}+\frac{1}{2}  \atop \frac{1}{2} \right] $} (nz\, | \,n\eta)
      \\
    &\; =\;  e\big(-z-\tfrac{1}{2n}+\tfrac{n-1}{2n}\eta\big )e\big(\tfrac{1}{2}nz\big) \vartheta \hbox{$\left[\frac{\a+1}{n}+\frac{1}{2}  \atop \frac{1}{2} \right] $} (nz\, | \,n\eta).
  \end{align*} 
    Thus, $e(\frac{1}{2}nz)  \vartheta \! \left[ \frac{\a}{n}+\frac{1}{2} \atop \frac{1}{2} \right] \! (nz  \, | \, n\eta)$ has the same 
    quasi-periodicity properties as $\theta_\a(z)$.
\end{proof}

\section{Elliptic solutions to the quantum Yang-Baxter equation}
\label{se.main}

In this section we assume $\tau \notin \frac{1}{n} \Lambda$ and set\index{xi@$\xi$}
\begin{equation*}
\xi \; :=\; \tau \, +\, \tfrac{1}{2}(1+\eta).
\end{equation*}
Notice that $\xi \notin \frac{1}{2}(1+\eta) + \frac{1}{n}\Lambda$. 

We will use the proof of \cref{th.qybe1} in \cite{ric-tra}  to show that $R(z)$ satisfies (\ref{QYBE2}), i.e., 
to prove \cref{th.qybe2} below.\footnote{The $\sum$ symbol in \cite[(3.11)]{ric-tra} should be  $\prod$, and the symbol $\gamma_0$ 
in that equation denotes a non-zero scalar.}

Following \cite[(3.2)]{ric-tra}, for each $(a,b) \in \bZ^2$, we define\index{w_(a,b)(z)@$w_{(a,b)}(z)$}
\begin{equation}
\label{defn.w_p}
  w_{(a,b)}(z) \;:=\; \frac{\vartheta \! \left[ a/n \atop b/n \right] \! (z+\xi \, | \, \eta)}{\vartheta \! \left[ a/n \atop b/n \right]\! (\xi \, | \, \eta)} \,\, .
\end{equation}
Since $\xi \notin \frac{1}{2}(1+\eta) + \frac{1}{n}\Lambda$, the denominator of $w_{(a,b)}(z)$ is non-zero whence $w_{(a,b)}(z)$ is a holomorphic function of $z$.  It follows from \cref{eq:a+1.b+1} that $w_{(a,b)}(z)$ depends only on the images of $a$ and $b$ in $\bZ_n$. Thus, if $p=(a,b) \in \bZ_n^2$, there is a well-defined holomorphic function $w_p(z)$.

\begin{theorem}
\cite[Thm.~4.4]{tra}
\label{th.qybe1}
For $p=(a,b) \in \bZ_n^2$, let $I_p:V \to V$\index{I_p, I_(a,b)@$I_p$, $I_{(a,b)}$} be the linear map $I_p(x_i)=\omega^{ib}x_{i-a}$, where $\omega:=e(\frac{1}{n})$.
The operator\index{S(z)@$S(z)$}
\begin{equation}\label{eq:wii}
 S(z) \; :=\;  \sum_{p\in \bZ_n^2}w_p(z)I_p\otimes I_p^{-1} 
\end{equation}
satisfies (\ref{QYBE1}).
\end{theorem}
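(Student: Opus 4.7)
The plan is to recognize that $S(z)$ is a form of the Belavin (elliptic) R-matrix associated to the finite Heisenberg group $\bZ_n \times \bZ_n$, for which the QYBE is classical; the essential content is an identity among theta functions with characteristics. Since the statement is attributed to \cite{tra}, the cleanest route is to cite it, but if I had to carry out a direct proof, here is how I would organize it.

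First, I would establish the projective representation structure of the $I_p$'s. A direct calculation on basis vectors gives $I_p I_q = \omega^{-a_q b_p} I_{p+q}$ for $p = (a_p, b_p)$ and $q = (a_q, b_q)$, where addition is in $\bZ_n^2$. Consequently $I_p \otimes I_p^{-1}$ and $I_q \otimes I_q^{-1}$ commute for all $p, q$, which is convenient because both sides of (\ref{QYBE1}) then lie in the commutative algebra generated by triple tensors built from the $I_p$'s. Expanding
\begin{equation*}
S(u)_{12} S(u+v)_{13} S(v)_{23}
\;=\;
\sum_{p,q,r\in\bZ_n^2} w_p(u) w_q(u+v) w_r(v)\, I_p I_q \otimes I_p^{-1} I_r \otimes I_q^{-1} I_r^{-1},
\end{equation*}
and similarly for the right-hand side, then using the multiplication rule collapses each product into a scalar multiple of a single tensor $I_a \otimes I_b \otimes I_c$ with $a + b + c = 0$.

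Second, I would match coefficients of each such $I_a \otimes I_b \otimes I_{-a-b}$ on both sides. This reduces (\ref{QYBE1}) to a family of scalar identities, one per pair $(a, b) \in (\bZ_n^2)^2$, of the shape
\begin{equation*}
\sum_{\substack{p+q=a \\ r-p=b}} \omega^{\phi_1(p,q,r)}\, w_p(u) w_q(u+v) w_r(v)
\;=\;
\sum_{\substack{q+r=a \\ p-r=b}} \omega^{\phi_2(p,q,r)}\, w_p(v) w_q(u+v) w_r(u),
\end{equation*}
where $\phi_1, \phi_2$ are explicit bilinear phases coming from the Heisenberg cocycle. Unpacking $w_p(z) = \vartheta\!\left[{p/n}\right](z+\xi\,|\,\eta) / \vartheta\!\left[{p/n}\right](\xi\,|\,\eta)$, each such scalar identity becomes a three-term relation among products of theta functions with $n$-torsion characteristics evaluated at the three arguments $u+\xi$, $u+v+\xi$, $v+\xi$.

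Third, I would verify the theta function identities. Here the main obstacle is genuinely analytic: one must show that two explicit holomorphic functions of the parameters agree. The standard approach is to apply \cref{prop.qp.theta.char} to check that both sides have the same quasi-periodicities in each of $u$ and $v$, invoke \cref{lem.theta.fns} to confirm the number and location of zeros match (the potential pole structure cancels because the denominators $\vartheta[p/n](\xi)$ are common to both sides up to reindexing the Heisenberg group), and then conclude proportionality. The constant is pinned down by specialization (for example $u = 0$ or $v = 0$, where $S(0) \propto I \otimes I$). This final analytic verification is the genuine Riemann-type theta identity underlying Belavin's R-matrix, and is exactly \cite[Thm.~4.4]{tra}.
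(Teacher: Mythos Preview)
The paper does not prove this theorem; it simply cites it, remarking that the $n=2$ case is due to Baxter, the general conjecture to Belavin, and the proof to Cherednik, Chudnovsky--Chudnovsky, and Tracy. You correctly identify that citing \cite{tra} is the intended route, so your proposal matches the paper's treatment.

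That said, your sketch of a direct proof contains a false step: the operators $I_p\otimes I_p^{-1}$ and $I_q\otimes I_q^{-1}$ do \emph{not} in general commute. From your own relation $I_pI_q=\omega^{-a_qb_p}I_{p+q}$ one computes
\[
(I_p\otimes I_p^{-1})(I_q\otimes I_q^{-1})=\omega^{\,a_pb_q-a_qb_p}\,I_{p+q}\otimes I_{p+q}^{-1},
\]
and interchanging $p,q$ gives the inverse phase, so commutativity would force $n\mid 2(a_pb_q-a_qb_p)$, which fails already for $n=3$, $p=(1,0)$, $q=(0,1)$. Fortunately you do not actually use this claim: your expansion of $S(u)_{12}S(u+v)_{13}S(v)_{23}$ is correct regardless, and the subsequent reduction to scalar theta identities indexed by pairs $(a,b)\in(\bZ_n^2)^2$ is exactly the standard strategy carried out in \cite{tra} and \cite{ric-tra}. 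So the outline is sound once you drop the commutativity remark.
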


For $n=2$, this result was proved by Baxter \cite{bax1,bax2,bax3}. It was formulated and conjectured to be true for all $n$ by Belavin \cite{bel}, and was subsequently proved by Cherednik \cite{cher}, Chudnovsky and Chudnovsky \cite{ch2}, and by Tracy \cite{tra}.

We need a slightly more elaborate version of \Cref{th.qybe1}. In \cite{tra,ric-tra}, and in the other papers showing that $S(z)$ satisfies (\ref{QYBE1}), the operators $I_{p}$ are defined after first realizing $V$ as an irreducible representation of the Heisenberg group\index{H_n@$H_n$}
\begin{equation}\label{eq:14}
  H_n \; := \; 
  \left\langle
    \gamma, \chi, \epsilon\ |\ \gamma^n=\chi^n=\epsilon^n=1,\ [\gamma,\epsilon]=[\chi,\epsilon]=1,\ [\gamma,\chi]=\epsilon
  \right\rangle.
\end{equation}
of order $n^3$.
The representation on $V$ is via operators $\gamma\mapsto g\in \End(V)$ and $\chi\mapsto h\in \End(V)$ where 
$g\cdot x_i:=\omega^i x_i$ and $h\cdot x_i:=x_{i-1}$ and $\omega=e\big(\frac{1}{n}\big)$; the central element $\epsilon\in H_n$ now acts as multiplication by $\omega^{-1}$, and we have $I_{(a,b)}=h^ag^b$.

We can now apply the discussion in \Cref{sssec:genr} to the group algebra $S:=\bC H_n$ with $Z:=\bC\langle\epsilon\rangle$, the group algebra of the center $\langle\epsilon\rangle<H_n$.  

\begin{theorem}\label{th:qybe-gp-alg}
  For $p=(a,b) \in \bZ_n^2$, let $J_p\in \bC H_n$ be the element $\chi^a\gamma^b$. The family of operators
\begin{equation}\label{eq:13}
  S(z) \; :=\;  \sum_{p\in \bZ_n^2}w_p(z)J_p\otimes J_p^{-1}
  \quad
  \in
  \quad
  \bC H_n\otimes_{ \bC \langle\epsilon\rangle} \bC H_n
\end{equation}
satisfies (\ref{QYBE1}).  
\end{theorem}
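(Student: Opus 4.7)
The plan is to decompose $\bC H_n \otimes_{\bC\langle\epsilon\rangle}\bC H_n\otimes_{\bC\langle\epsilon\rangle}\bC H_n$ according to the central character of $\epsilon$ and verify (\ref{QYBE1}) in each central summand. Since $\bC\langle\epsilon\rangle\cong\bigoplus_{\omega'} \bC$, where $\omega'$ ranges over the $n$-th roots of unity, and writing $B_{\omega'}:=\bC H_n/(\epsilon-\omega')$, we obtain
\[
\bC H_n \otimes_{\bC\langle\epsilon\rangle}\bC H_n\otimes_{\bC\langle\epsilon\rangle}\bC H_n \;\cong\; \bigoplus_{\omega'} B_{\omega'}^{\otimes_\bC 3},
\]
so (\ref{QYBE1}) for $S(z)$ is equivalent to (\ref{QYBE1}) holding in each $B_{\omega'}^{\otimes 3}$.

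The summand for $\omega'=\omega^{-1}$ is identified with $\End(V)^{\otimes 3}$ via the standard action of $H_n$ on $V$ (the map $B_{\omega^{-1}}\to\End(V)$ is an isomorphism by dimension count), sending $J_p\mapsto I_p$; so (\ref{QYBE1}) here is precisely \Cref{th.qybe1}. The summand for $\omega'=1$ has $B_1\cong\bC[\bZ_n\times\bZ_n]$ commutative with $J_pJ_q=J_{p+q}$ and $J_p^{-1}=J_{-p}$, so both sides of (\ref{QYBE1}) collapse to the same sum
\[
\sum_{p,q,r \in \bZ_n^2} w_p(u)\, w_q(u+v)\, w_r(v)\, J_{p+q}\otimes J_{r-p}\otimes J_{-q-r},
\]
and agree trivially.

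For the remaining $\omega'$, I would re-run the Tracy--Cherednik proof of \Cref{th.qybe1} from \cite{tra,ric-tra} inside $B_{\omega'}$: apply it to a faithful $B_{\omega'}$-module (irreducible and $n$-dimensional when $\omega'$ is primitive; decomposed block by block into matrix components otherwise) with the commutation constant $\omega$ replaced formally by $(\omega')^{-1}$. The key point is that the proof relies only on the abstract relation $I_p I_q = \omega^{\langle p,q\rangle}I_{p+q}$ with $\omega^n=1$, together with identities among the theta functions $w_p(z)$ that are independent of $\omega$; both features transfer verbatim after the substitution.

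The main obstacle is this universality of Tracy's argument: one must verify that nowhere in the original proof is the specific value of $\omega$ used beyond $\omega^n=1$. An equivalent packaging that sidesteps the case analysis: the difference of the two sides of (\ref{QYBE1}) expands as $\sum_{s,t,u} f_{s,t,u}(\epsilon)\,J_s\otimes J_t\otimes J_u$ with each $f_{s,t,u}(\epsilon)\in\bC[\epsilon]/(\epsilon^n-1)$, and the summand-by-summand analysis forces each $f_{s,t,u}$ to vanish at all $n$ points $\omega'$, hence to be identically zero.
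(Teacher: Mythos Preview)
Your proposal is correct, and the essential insight you need—that Tracy's proof uses nothing about $I_p$ beyond the relation $I_pI_q=\omega^{\langle p,q\rangle}I_{p+q}$ with $\omega^n=1$—is exactly the paper's argument. The paper, however, applies this observation more directly: since Tracy's computation never uses the concrete realization of $h,g$ on $V$, one can run it verbatim in $\bC H_n\otimes_{\bC\langle\epsilon\rangle}\bC H_n\otimes_{\bC\langle\epsilon\rangle}\bC H_n$ with the scalar $\omega$ replaced throughout by the central element $\epsilon$, and the identity holds there without any decomposition. Your central-character splitting is harmless but unnecessary: once you grant the universality of Tracy's argument (which you do, in your step for the ``remaining $\omega'$'' and in your final paragraph), there is no need to treat $\omega'=\omega^{-1}$ and $\omega'=1$ separately, nor to descend to representations at all. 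Your closing ``equivalent packaging'' is essentially the paper's proof.
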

\begin{proof}
  This is essentially what the proof of \cite[Thm.~4.4]{tra} shows; at no point does that proof use the specific realization of $h$ and $g$ as operators on $V$, beyond the fact that their commutator is a root of unity of order (dividing) $n$. One can therefore replace those operators with their abstract versions in \Cref{eq:14}, and $\omega$ with the generator $\epsilon$ of the center of $H_n$.
\end{proof}

As a consequence, we have the following generalization of \Cref{th.qybe1}.

\begin{corollary}\label{cor:tracy-plus}
  Let $\phi:H_n\to \End(V)$ be a representation such that $\epsilon$ acts on $V$ as a scalar multiplication and define $I_{(a,b)}^\phi :=\phi(J_{(a,b)})=\phi(\chi^a \gamma^b)$. The operator
\begin{equation}\label{eq:wii-more}
  S^\phi(z) \; :=\;  \sum_{p\in \bZ_n^2}w_p(z)I^{\phi}_p\otimes (I^{\phi}_p)^{-1}
\end{equation}
satisfies (\ref{QYBE1}).
\qedhere
\end{corollary}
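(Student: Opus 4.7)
The plan is to deduce this from \Cref{th:qybe-gp-alg} by pushing the abstract identity in $\bC H_n\otimes_{\bC\langle\epsilon\rangle}\bC H_n$ forward along the representation $\phi$. The key point is that the hypothesis ``$\epsilon$ acts on $V$ as a scalar'' is exactly what is needed to make $\phi\otimes\phi$ descend from the ordinary tensor product over $\bC$ to the relative tensor product over $\bC\langle\epsilon\rangle$.

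First, I would observe that if $\epsilon$ acts on $V$ by the scalar $\lambda\in\bC$, then for any $x,y\in\bC H_n$ we have $\phi(x\epsilon)=\lambda\phi(x)$ and $\phi(\epsilon y)=\lambda\phi(y)$, so the map
\begin{equation*}
\phi\otimes\phi\colon \bC H_n\otimes_{\bC}\bC H_n \;\longrightarrow\; \End(V)\otimes_{\bC}\End(V) \;=\; \End(V\otimes V)
\end{equation*}
vanishes on the relations $x\epsilon\otimes y - x\otimes \epsilon y$ defining the quotient to $\bC H_n\otimes_{\bC\langle\epsilon\rangle}\bC H_n$. It therefore factors through an algebra homomorphism
\begin{equation*}
\Phi_2\colon \bC H_n\otimes_{\bC\langle\epsilon\rangle}\bC H_n \;\longrightarrow\; \End(V\otimes V),
\end{equation*}
and by exactly the same argument one obtains a homomorphism
\begin{equation*}
\Phi_3\colon \bC H_n\otimes_{\bC\langle\epsilon\rangle}\bC H_n\otimes_{\bC\langle\epsilon\rangle}\bC H_n \;\longrightarrow\; \End(V^{\otimes 3}).
\end{equation*}
By construction $\Phi_2(S(z))=\sum_p w_p(z)\,I_p^\phi\otimes(I_p^\phi)^{-1}=S^\phi(z)$, and $\Phi_3$ carries the abstract leg-numbered elements $S(z)_{ij}\in(\bC H_n)^{\otimes_{\bC\langle\epsilon\rangle}3}$ to the operators $S^\phi(z)_{ij}\in\End(V^{\otimes 3})$ in the usual sense.

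Next, I would apply $\Phi_3$ to the identity
\begin{equation*}
S(u)_{12}\,S(u+v)_{13}\,S(v)_{23} \;=\; S(v)_{23}\,S(u+v)_{13}\,S(u)_{12}
\end{equation*}
provided by \Cref{th:qybe-gp-alg}. Since $\Phi_3$ is an algebra homomorphism it preserves products, and the computation above shows it sends $S(z)_{ij}$ to $S^\phi(z)_{ij}$; this immediately yields (\ref{QYBE1}) for $S^\phi(z)$.

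There is no real obstacle here: the only step that could fail is the well-definedness of $\Phi_2$ (and hence $\Phi_3$) on the relative tensor product, and the scalar-action hypothesis on $\epsilon$ was imposed precisely to handle it. In particular, this is why \Cref{th.qybe1} is a special case—in the representation used there, $\epsilon$ acts as $\omega^{-1}\,\mathrm{id}_V$, so the hypothesis is automatic.
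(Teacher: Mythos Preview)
Your proposal is correct and is precisely the argument the paper has in mind: the corollary is stated without proof (the \qedhere is placed in the statement itself), because it follows immediately from \Cref{th:qybe-gp-alg} by applying the representation $\phi$ to both sides of the abstract QYBE identity, exactly as you spell out. Your observation that the scalar-action hypothesis on $\epsilon$ is what allows $\phi\otimes\phi$ to descend to the relative tensor product over $\bC\langle\epsilon\rangle$ is the one point worth making explicit, and you handle it correctly.
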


Recall that $P\in\End(V\otimes V)$ is defined by $P(x\otimes y)=y\otimes x$.

\begin{proposition}
\label{prop.reln.btw.R(z).and.Tk(z)}
Let $k'$ be the unique integer such that $kk'=1$ in $\bZ_n$ and $n> k'\ge 1$, and define 
\begin{equation*} 
S_k(z)  \; := \; \sum_{(a,b) \in \bZ_n^2} w_{(a,b)}(z) I_{(-k'a,b)}\otimes I^{-1}_{(-k'a,b)}
\end{equation*}
where $I_{(-k'a,b)}:V \to V$ is the operator $x_i \mapsto \omega^{ib}x_{i+k'a}$.\footnote{The operator $S_k(z)$ is defined in the same way as $S(z)$ after replacing the generator $h$ by the new generator  $h^{-k'}$.}
Then\index{S_k(z)@$S_k(z)$} 
\begin{equation}
\label{eq.reln.Sk.R} 
S_k(-nz) \; =\; n e(\tfrac{1}{2}n(n+1)z) \, PR_{n,k,\tau}(z).
\end{equation}
\end{proposition}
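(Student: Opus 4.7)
The plan is to match matrix entries on both sides of \cref{eq.reln.Sk.R} in the basis $\{x_i\otimes x_j\}_{i,j\in\bZ_n}$ of $V^{\otimes 2}$ and reduce the equality to a single scalar identity between a discrete sum of theta functions with characteristics and a product/ratio of the $\theta_\alpha$'s. First I would compute the left-hand side. Since $I_{(-k'a,b)}(x_i)=\omega^{ib}x_{i+k'a}$ and $I^{-1}_{(-k'a,b)}(x_j)=\omega^{-(j-k'a)b}x_{j-k'a}$, reindexing by $r:=k'a$ (so that $a=kr$) gives
\[
S_k(-nz)(x_i\otimes x_j)\;=\;\sum_{r\in\bZ_n}\Bigl(\sum_{b\in\bZ_n}w_{(kr,b)}(-nz)\,\omega^{(i-j+r)b}\Bigr)\, x_{i+r}\otimes x_{j-r}.
\]
On the right-hand side, the definition of $R_{n,k,\tau}(z)$ from \cref{eq:odr} together with $P(u\otimes v)=v\otimes u$ yields the coefficient
\[
\frac{\theta_{0}(-z)\cdots\theta_{n-1}(-z)}{\theta_{1}(0)\cdots\theta_{n-1}(0)}\cdot\frac{\theta_{j-i+r(k-1)}(-z+\tau)}{\theta_{j-i-r}(-z)\theta_{kr}(\tau)}
\]
in front of $x_{i+r}\otimes x_{j-r}$. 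Hence \cref{eq.reln.Sk.R} is equivalent to the coefficient-wise identity
\[
\sum_{b\in\bZ_n}w_{(kr,b)}(-nz)\,\omega^{(i-j+r)b}\;=\;n\,e\!\left(\tfrac12 n(n+1)z\right)\cdot\frac{\prod_{\alpha\in\bZ_n}\theta_\alpha(-z)}{\theta_{1}(0)\cdots\theta_{n-1}(0)}\cdot\frac{\theta_{j-i+r(k-1)}(-z+\tau)}{\theta_{j-i-r}(-z)\,\theta_{kr}(\tau)},
\]
for each fixed $i,j,r\in\bZ_n$.

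Next I would unwind the definition $w_{(kr,b)}(u)=\vartheta\!\left[{kr/n\atop b/n}\right]\!(u+\xi\,|\,\eta)/\vartheta\!\left[{kr/n\atop b/n}\right]\!(\xi\,|\,\eta)$ and substitute $u=-nz$. Writing $\vartheta\!\left[{kr/n\atop b/n}\right]\!(v\,|\,\eta)$ as an exponential sum over $\bZ$, splitting the summation index $m\in\bZ$ as $m=\ell+ns$ with $\ell\in\{0,\dots,n-1\}$ and $s\in\bZ$, and performing the discrete Fourier sum $\sum_{b\in\bZ_n}\omega^{(i-j+r)b}e(\tfrac{b}{n}(\tfrac{kr}{n}+\ell))$ (which selects a single residue class modulo $n$), one obtains the identity
\[
\sum_{b\in\bZ_n}\vartheta\!\left[{\tfrac{kr}{n}\atop \tfrac{b}{n}}\right]\!(v\,|\,\eta)\,\omega^{cb}\;=\;n\,\vartheta\!\left[{\tfrac{kr}{n}+c+\tfrac12\atop \tfrac12}\right]\!(nv\,|\,n\eta)\cdot(\text{explicit exponential prefactor}),
\]
for an appropriate exponential prefactor, applied both at $v=-nz+\xi$ (numerator) and $v=\xi$ (denominator). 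This is the Fourier-type identity that lies behind the calculations in \cite{ric-tra}; it is the key theta-function input.

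The final step is to invoke \cref{le.factor} to convert each $\vartheta\!\left[{\alpha/n+1/2\atop 1/2}\right]\!(nw\,|\,n\eta)$ into $c^{-1}e(-\tfrac{n}{2}w)\,\theta_\alpha(w)$, applied at $w=-z+\tau$, $w=\tau$, and $w=-z$. The resulting $\theta_\alpha$'s are exactly those in the numerator and denominator of the target ratio; the prefactors from \cref{le.factor} combine with the Fourier-sum prefactor and with the explicit shift $\vartheta\!\left[{\cdot\atop\cdot}\right]\!(-nz+\xi\,|\,\eta)$ via \cref{prop.qp.theta.char} to collapse into the scalar $n\,e(\tfrac{1}{2}n(n+1)z)$, together with the product $\prod_\alpha\theta_\alpha(-z)/\prod_{\alpha\neq 0}\theta_\alpha(0)$ (the ``missing'' $\theta_0(-z)$ comes from the $b$-sum, the ``missing'' $\theta_0(0)$ disappears after cancellation in $w_{(kr,b)}(-nz)$). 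The main obstacle is this bookkeeping of scalar prefactors: each ingredient (the Fourier sum, the quasi-periodicity shift by $\xi=\tau+\tfrac12(1+\eta)$, the change of lattice $\Lambda\rightsquigarrow\bZ+n\eta\bZ$ in \cref{le.factor}, and the substitution $z\mapsto -nz$) produces its own $e(\cdots)$ factor, and the cleanest way to keep track of them is to quote the corresponding computations in \cite[\S3]{ric-tra} almost verbatim, only adjusting indices to account for the change of generator $h\rightsquigarrow h^{-k'}$ built into the definition of $S_k(z)$.
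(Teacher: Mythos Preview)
Your overall strategy---compare matrix entries, evaluate the $b$-sum via the Richey--Tracy identity, then convert $\vartheta[\cdot]$'s to $\theta_\alpha$'s using \cref{le.factor}---is exactly what the paper does. The paper carries this out first for $k=-1$ (where $S_k=S$) by quoting \cite[(3.10)]{ric-tra} directly and then adjusts the indices for general $k$; you go straight to general $k$, which is fine.

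There is, however, a wrinkle in your description of the middle step. You propose to obtain the closed form for $\sum_b w_{(kr,b)}(-nz)\,\omega^{-b(j-i-r)}$ by applying a discrete Fourier sum to the exponential expansion of $\vartheta\!\left[{kr/n\atop b/n}\right]$ ``both at $v=-nz+\xi$ (numerator) and $v=\xi$ (denominator)''. That does not work as stated: $w_{(kr,b)}$ is a ratio whose \emph{denominator} $\vartheta\!\left[{kr/n\atop b/n}\right]\!(\xi\,|\,\eta)$ also depends on $b$, so you cannot split $\sum_b \frac{A(b)}{B(b)}\omega^{cb}$ into a ratio of two Fourier sums. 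The actual identity \cite[(3.10)]{ric-tra} is not obtained this way; the product $f(z)=\prod_\alpha\cdots$ out front arises from a more delicate reorganization (indeed the paper's subsequent, self-contained proof of the $T(z)$ version proceeds instead by matching quasi-periodicities, zeros, and one special value). Since you do, at the end, defer to ``quote the corresponding computations in \cite[\S3]{ric-tra} almost verbatim,'' your proof still goes through---but the Fourier-sum-on-numerator-and-denominator heuristic should be dropped, as it would lead you astray if you tried to execute it.
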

\begin{proof} 
We first prove the result for $k=-1$. When $k=-1$, $S_k(z)$ is the operator $S(z)$ in \cref{eq:wii}. 
The coefficient of $x_{i+r}\otimes x_{j-r}$ in $S(z)(x_i\otimes x_j)$ is
\begin{equation*}
S(z)^{i,j}_{i+r,j-r} \; :=\;   \sum_{b\in \bZ_n} w_{(-r,b)}(z)\, \omega^{-b(j-i-r)}.   
 \end{equation*}
This is the function $S^{r,j-i-r}(z,w,\ldots)$ in \cite[(3.4)]{ric-tra} (after replacing their $a$, $b$, $\alpha$, and $\tau$ in \cite[(3.4)]{ric-tra} by our
$r$, $j-i-r$, $b$, and $\eta$, respectively).  If, in \cite[(3.3)]{ric-tra}, we replace their $\tau$ and $w$ by our $\eta$ and $n\tau$, 
respectively, then their $\eta$ becomes our $\xi$. The second variable $w$ in $S^{r,j-i-r}(z,w,\cdots)$ becomes $n\tau$.
We now have  
\begin{align*} 
S(z)^{i,j}_{i+r,j-r} 
  & \;=\; 
      f(z) \,\,
   \frac  { \vartheta  \! \left[ \frac{j-i-2r}{n}+\frac{1}{2} \atop \frac{1}{2} \right] \! (z+n\tau \, | \, n\eta) }
{ \vartheta  \! \left[ -\frac{r}{n}+\frac{1}{2} \atop \frac{1}{2} \right] \! (n\tau \, | \, n\eta)   \cdot   \vartheta  \! \left[ \frac{j-i-r}{n}+\frac{1}{2} \atop \frac{1}{2} \right] \! (z\, | \, n\eta)   } 
\qquad\qquad \text{by \cite[(3.10)]{ric-tra}}
  \\
 & \;=\; 
      f(z) \,\,
\frac  
{c^{-1}e\big(-\frac{1}{2}(z+n\tau) \big)  \theta_{j-i-2r}(\tfrac{z}{n}+\tau) }
{c^{-1}e\big(-\frac{1}{2}n\tau \big)     \theta_{-r}(\tau)   \cdot  c^{-1}e\big(-\frac{1}{2}z \big)    \theta_{j-i-r}(\tfrac{z}{n})   } 
\qquad \text{by \cref{le.factor}}
 \\
   & \;=\; 
  c    f(z) \,\,
\frac  
{  \theta_{j-i-2r}(\tfrac{z}{n}+\tau) }
{  \theta_{-r}(\tau)  \theta_{j-i-r}(\tfrac{z}{n})   } 
\end{align*}
where $c$ is the constant in \Cref{le.factor} and 
\begin{align*}
f(z)  & \; = \; n e\big(\!-\tfrac{1}{2}z\big)  \,\,
 \vartheta \! \left[\frac{1}{2} \atop \frac{1}{2} \right] \! (z \, | \, n\eta)  \, 
\prod_{\a=1}^{n-1}  
\left(
 \frac{  \vartheta \! \left[ \frac{\a}{n}+\frac{1}{2} \atop \frac{1}{2} \right] \! (z \, | \, n\eta)   }
{  \vartheta \! \left[ \frac{\a}{n}+\frac{1}{2} \atop \frac{1}{2} \right] \! (0 \, | \, n\eta)    }
\right)
\qquad \text{by \cite[(3.10)]{ric-tra}}
\\
& \; = \; n e\big(\!-\tfrac{1}{2}z\big)    c^{-1}e(-\tfrac{1}{2}z)  \theta_0(\tfrac{z}{n} )  \, \,
\prod_{\a=1}^{n-1}  
\left(
 \frac{  e(-\tfrac{1}{2}z)  \theta_\a(\tfrac{z}{n})   }
{  \theta_\a(0)  } \right)
\qquad \text{by \Cref{le.factor}.}
\end{align*}
Therefore
\begin{align*}
S(-nz)^{i,j}_{i+r,j-r} 
  & \;=\; 
c \,  f(-nz) \,
  \frac  
{  \theta_{j-i-2r}(-z+\tau) }
{  \theta_{-r}(\tau)  \theta_{j-i-r}(-z)   }  
\\
 & \;=\; 
n \,  e(\tfrac{1}{2}n(n+1)z)  \,\,
\frac{\theta_0(-z) \ldots \theta_{n-1}(-z)} {\theta_1(0) \ldots \theta_{n-1}(0)}   
\, \,
\frac  
{  \theta_{j-i-2r}(-z+\tau) }
{  \theta_{-r}(\tau)  \theta_{j-i-r}(-z)   }  \, . 
\end{align*}
The last expression is $n  e(\tfrac{1}{2}n(n+1)z)$ times the coefficient of $x_{j-r}\otimes x_{i+r}$ in
$R(z)(x_i \otimes x_j)$  when $k=-1$ (see \cref{eq:odr}). Thus, the proposition is true for $k=-1$.

We now address the general case.

The coefficient of $x_{i+r} \otimes x_{j-r}$ in  $S_k(-nz)(x_i \otimes x_j)$ is
\begin{equation*}
S_k(-nz)^{i,j}_{i+r,j-r}   \; := \;  \sum_{b\in \bZ_n} w_{(kr,b)}(-nz)\, \omega^{-b(j-i-r)} \, .
\end{equation*}
A suitable adjustment to the arguments in \cite[\S3]{ric-tra} shows that
\begin{align*}
S_k(-nz)^{i,j}_{i+r,j-r} 
&   \;=\; 
      f(-nz) \,\,
         \frac  { \vartheta  \! \left[ \frac{j-i+r(k-1)}{n}+\frac{1}{2} \atop \frac{1}{2} \right] \! (-nz+n\tau \, | \, n\eta) }
{ \vartheta  \! \left[ \frac{kr}{n}+\frac{1}{2} \atop \frac{1}{2} \right] \! (n\tau \, | \, n\eta)   \cdot   \vartheta  \! \left[ \frac{j-i-r}{n}+\frac{1}{2} \atop \frac{1}{2} \right] \! (-nz\, | \, n\eta)   } 
\\
& \;=\; c f(-nz)    \,\,
\frac{
  {{\theta}}_{j-i+r(k-1)} (-z+\tau)
  }
  { 
  {{\theta}}_{kr}  (\tau)
  {{\theta}}_{j-i-r} (-z)
  }
  \\
  & \;=\;
 n \, e\big(\tfrac{1}{2}n(n+1)z\big)  \,  \theta_0(-z) \,
 \left(
\prod_{\a=1}^{n-1}  
 \frac{  
 \theta_\a(-z)
 }
{
\theta_\a(0)
}
\right)  
\frac{
  {{\theta}}_{j-i+r(k-1)} (-z+\tau)
  }
  { 
  {{\theta}}_{kr}  (\tau)
  {{\theta}}_{j-i-r} (-z)
  }
\end{align*}
where $c$ is the constant in \cref{le.factor}.  
Comparing this with the definition of $R(z)$ in \cref{eq:odr} completes the proof.
\end{proof}

\begin{proposition}
Let $k'$ be the unique integer such that $n>k'\ge 1$ and $kk'=1$ in $\bZ_n$.
Define 
\begin{equation*}
T(z) \; :=\;   \sum_{(a,b) \in \bZ_n^2} w_{(a,b)}(z) I_{(-k'a,b)}\otimes I^{-1}_{(-k'a,b)}.
\end{equation*}
Then\index{T(z)@$T(z)$}
\begin{equation}
\label{eq:reln.T(z).R(z)} 
T(-nz) \; =\; n \, e\big(\tfrac{1}{2}n(n+1)z\big) \, PR_{n,k,\tau}(z).
\end{equation}
\end{proposition}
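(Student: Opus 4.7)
The plan is essentially an observation together with a direct appeal to the previous proposition. By inspection, the operator $T(z)$ defined here is literally the operator $S_k(z)$ of \cref{prop.reln.btw.R(z).and.Tk(z)}: recall from \cref{th.qybe1} that $I_p(x_i)=\omega^{ib}x_{i-a}$ for $p=(a,b)\in\bZ_n^2$, so $I_{(-k'a,b)}(x_i)=\omega^{ib}x_{i+k'a}$, which is exactly the inline definition used in \cref{prop.reln.btw.R(z).and.Tk(z)}. Hence \cref{eq:reln.T(z).R(z)} is a restatement of \cref{eq.reln.Sk.R} and there is nothing further to prove.

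Should one prefer to view $T(z)$ instead as an instance of the operator $S^\phi(z)$ from \cref{cor:tracy-plus} for the representation $\phi:H_n\to\End(V)$ sending $\chi\mapsto h^{-k'}$, $\gamma\mapsto g$, and $\epsilon\mapsto\omega^{-k'}$, then the proof proceeds by direct coefficient comparison in the style of \cref{prop.reln.btw.R(z).and.Tk(z)}. First extract the coefficient of $x_{i+r}\otimes x_{j-r}$ in $T(-nz)(x_i\otimes x_j)$: the only surviving terms have $k'a=r$, i.e.\ $a=kr$ in $\bZ_n$, and the coefficient works out to
\begin{equation*}
T(-nz)^{i,j}_{i+r,j-r}\;=\;\sum_{b\in\bZ_n}w_{(kr,b)}(-nz)\,\omega^{-b(j-i-r)}.
\end{equation*}

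Next, apply the Fourier-style evaluation from \cite[(3.10)]{ric-tra} to collapse this sum into a single ratio of three theta functions with characteristics, and use \cref{le.factor} to rewrite each of those factors in terms of the $\theta_\alpha$'s. The scalar prefactors coming from \cref{le.factor} combine to produce precisely $n\,e\!\left(\tfrac{1}{2}n(n+1)z\right)$, while the remaining $\theta_\alpha$-factors reproduce, up to swapping the roles of $x_{i+r}\otimes x_{j-r}$ and $x_{j-r}\otimes x_{i+r}$, the coefficient of $x_{j-r}\otimes x_{i+r}$ in $R_{n,k,\tau}(z)(x_i\otimes x_j)$ as read off from \cref{eq:odr}. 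Because $P$ implements exactly that swap, the identity \cref{eq:reln.T(z).R(z)} follows.

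The only substantive obstacle is the bookkeeping of exponential prefactors from \cref{le.factor} and the index shift $a=kr$; this is identical to what is done for $S_k(z)$ in \cref{prop.reln.btw.R(z).and.Tk(z)}, so no new difficulty arises.
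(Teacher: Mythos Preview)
Your observation is correct: as written, $T(z)$ coincides literally with $S_k(z)$ from \cref{prop.reln.btw.R(z).and.Tk(z)}, so \cref{eq:reln.T(z).R(z)} is indeed just \cref{eq.reln.Sk.R} restated. Your proposal is therefore valid.

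However, the paper does \emph{not} argue this way. Instead of invoking \cref{prop.reln.btw.R(z).and.Tk(z)}, the paper gives an independent, self-contained proof by comparing matrix coefficients directly via the theory of theta functions. For each fixed $i,j,r$ it defines
\[
F(z)\;=\;\sum_{b\in\bZ_n} w_{(kr,b)}(-nz)\,\omega^{-b(j-i-r)}
\quad\text{and}\quad
G(z)\;=\;n\,e\big(\tfrac12 n(n+1)z\big)\cdot\big(\text{the corresponding coefficient in }PR_{n,k,\tau}(z)\big),
\]
and then shows $F=G$ by checking three things: (i) $F$ and $G$ have identical quasi-periodicity with respect to $\frac{1}{n}\bZ+\bZ\eta$; (ii) they have the same $n$ zeros in a fundamental parallelogram (found explicitly: $n-1$ of them at $\frac{m}{n}\eta$ for $m\ne s:=j-i-r$, and the remaining one forced by the sum-of-zeros formula in \cref{lem.theta.fns}); and (iii) they agree at the single point $z=\frac{s}{n}\eta$, which pins down the proportionality constant as $1$. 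This bypasses the Richey--Tracy identity \cite[(3.10)]{ric-tra} used in the proof of \cref{prop.reln.btw.R(z).and.Tk(z)} and instead relies only on elementary quasi-periodicity and zero-counting for theta functions.

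What each approach buys: yours is shorter and reduces the statement to something already proved, but that earlier proof leans on the external reference \cite{ric-tra}. The paper's argument here is longer but self-contained, providing an independent route to the same identity using only the basic tools assembled in \cref{se.prelim}.
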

\begin{proof}$\!\!$\footnote{When $k=n-1$, $T(z)$ equals the operator $S(z)$ in equation (3.1) of Richey and Tracy's paper \cite{ric-tra}. 
The arguments in this proof are similar to theirs. 
In particular, some of our calculations are similar to those that produce equations (3.3)-(3.12) in their paper.}
When the operators on the left- and right-hand sides of 
\cref{eq:reln.T(z).R(z)} are evaluated at $x_i \otimes x_j$ the result is a linear combination of $x_{i+r} \otimes x_{j-r}$, $r \in \bZ_n$.
Thus, to prove the proposition it suffices to show that the coefficients of all $x_{i+r} \otimes x_{j-r}$ in these evaluations are the same
(for all $i,j,r$). 
This is what we will prove.

For the remainder of the proof we fix $i,j,r$ and set $s:= j-i-r$.

The coefficient of $x_{i+r} \otimes x_{j-r}$ in  $T(-nz)(x_i \otimes x_j)$ is
\begin{equation*}
F(z)   \; := \;  \sum_{b\in \bZ_n} w_{(kr,b)}(-nz)\, \omega^{-bs} \, .
\end{equation*}
The coefficient of $x_{i+r} \otimes x_{j-r}$ in  $n e\big(\tfrac{1}{2}n(n+1)z\big) PR_{n,k,\tau}(z) (x_i \otimes x_j)$ is  
\begin{equation*}
G(z) \; :=\;  n \, e\big(\tfrac{1}{2}n(n+1)z\big) \,  \frac{\theta_0(-z) \ldots \theta_{n-1}(-z)}{\theta_1(0) \ldots \theta_{n-1}(0)}
  \frac{\theta_{s+kr}(-z+\tau)}
  {\theta_{s}(-z)\theta_{kr}(\tau)}\, .
\end{equation*}
We must show that $F(z)=G(z)$. To do this we will show that $F(z)$ and $G(z)$ have the same quasi-periodicity properties (with respect to the lattice $\frac{1}{n}\bZ+\bZ\eta$),  the same zeros, and that  $F\big( \tfrac{s}{n} \eta \big)=G\big( \tfrac{s}{n} \eta \big)$. 
It follows from the first two of these facts that $F(z)$ and $G(z)$ are scalar multiples of each other, 
and it then follows from the equality that this scalar is 1.

{\bf Quasi-periodicity properties of $G(z)$:}
Since $\theta_\a(z+\frac{1}{n})=e\big(\frac{\a}{n}\big) \theta_\a(z)$,
\begin{equation*}
G\big(z+\tfrac{1}{n}\big) \;=\;   e\big(-\tfrac{kr}{n}\big)  \,  G(z).
\end{equation*}
Since $\theta_\a(-z-\eta)=e\big(n(-z-\eta)  - \tfrac{1}{2}\big) \theta_\a(-z)$,
\begin{equation*}
\prod_{\substack{\a=0 \\ \a \ne s}}^{n-1} \theta_\a(-z-\eta)
 \;=\; e\big((n-1)n(-z-\eta)  - \tfrac{n-1}{2}\big)  \prod_{\substack{\a=0 \\ \a \ne s}}^{n-1} \theta_\a(-z)
\end{equation*}
and
\begin{equation*}
 \theta_{s+kr}(-z-\eta+\tau) \;=\;  e\big(n(-z-\eta+\tau)  - \tfrac{1}{2}\big) \theta_{s+kr}(-z+\tau).
\end{equation*}
Therefore
\begin{align*}
G(z+\eta)  & \;=\;  e\big(\tfrac{1}{2}n(n+1)\eta) \,  e\big(n^2(-z-\eta)  +n\tau - \tfrac{n}{2}  \big) \, G(z)
\\
& \;=\;  e\big(-n^2z -  \tfrac{1}{2}n^2\eta    +  \tfrac{n}{2}(1+\eta)   +n\tau  \big) \,  G(z).
\end{align*}

{\bf Computation of $G\big( \tfrac{s}{n} \eta \big)$:}
By \cite[Prop.~2.6(7)]{CKS1},  
\begin{equation*}
\theta_\a(z-\tfrac{s}{n}\eta) \; =\;  e\big(sz+\tfrac{s}{2n} - \tfrac{sn+s^2}{2n}\eta\big)\theta_{\a-s}(z),
\end{equation*}
whence
\begin{equation*}
\frac{\theta_{s+kr}(-\tfrac{s}{n}\eta +\tau)}{\theta_{kr}(\tau)} \;=\; e\big(s\tau+\tfrac{s}{2n} - \tfrac{sn+s^2}{2n}\eta\big)
\end{equation*}
and
\begin{equation*}
\theta_\a(-\tfrac{s}{n}\eta) \;=\;   e\big(\tfrac{s}{2n} - \tfrac{sn+s^2}{2n}\eta\big) \theta_{\a-s}(0).
\end{equation*}
The singularity of the function 
\begin{equation*}
 \frac{\theta_0(-z) \ldots \theta_{n-1}(-z)}  {\theta_s(-z)}
 \end{equation*}
 at $z =  \tfrac{s}{n}\eta$ is removable and the value of the associated holomorphic function at $z=\frac{s}{n}\eta$ is
\begin{align*}
\prod_{\substack{\a=0 \\ \a \ne s}}^{n-1} \theta_\a\big(-\tfrac{s}{n} \eta \big)& \;=\;  \prod_{\substack{\a=0 \\ \a \ne s}}^{n-1} e\big(\tfrac{s}{2n} - \tfrac{sn+s^2}{2n}\eta\big) \theta_{\a-s}(0).
\\
 & \;=\;       e\big((n-1)\big(\tfrac{s}{2n} - \tfrac{sn+s^2}{2n}\eta\big)\big)  \,    \prod_{\a=1}^{n-1}  \theta_{\a}(0)  .
\end{align*}
Therefore 
\begin{align*}
G\big(\tfrac{s}{n}\eta \big)  & \;  =\;  n \, e\big(\tfrac{1}{2}(n+1) s\eta\big)  \,   e\big((n-1)\big(\tfrac{s}{2n} - \tfrac{sn+s^2}{2n}\eta\big)\big) \,
e(s\tau+\tfrac{s}{2n} - \tfrac{sn+s^2}{2n}\eta)
\\
& \;  =\;  n \, e\big(\tfrac{s}{2} (\eta +1)   - \tfrac{s^2}{2}\,\eta+ s\tau \big).
\end{align*}

{\bf The zeros of $G(z)$:}
Since $\theta_\a(z)$ has zeros at points in $-\tfrac{\a}{n}\eta+\tfrac{1}{n}\bZ+\bZ \eta$,
$G(z)$ has zeros at the points in the set
\begin{equation*}
\big\{ \tau + \tfrac{s+kr}{n}\eta\big\} \, \cup \, 
\big\{0,\tfrac{1}{n}\eta,\ldots, \tfrac{n-1}{n}\eta\big\}  \, -\,  \big\{\tfrac{s}{n}\eta\big\}.
\end{equation*} 

{\bf Quasi-periodicity properties of $F(z)$:}
Since $  \vartheta \hbox{$\left[ a \atop b \right] $}  (z+1\, | \, \eta)=e(a)\vartheta \hbox{$\left[ a \atop b \right] $}  (z\, | \,\eta)$,
\begin{align*}
F(z+\tfrac{1}{n})  & \;=\; \sum_{b\in \bZ_n} w_{(kr,b)}(-nz-1)\, \omega^{-bs}
\\
& \;=\; \sum_{b\in \bZ_n} \frac{\vartheta \! \left[ kr/n \atop b/n \right] \! (-nz-1+\xi \, | \, \eta)}{\vartheta \! \left[ kr/n \atop b/n \right]\! (\xi \, | \, \eta)} \, \, \omega^{-bs}
\\
& \;=\; \sum_{b\in \bZ_n} e\big(-\tfrac{kr}{n}\big)  
 \frac{\vartheta \! \left[ kr/n \atop b/n \right] \! (-nz+\xi \, | \, \eta)}{\vartheta \! \left[ kr/n \atop b/n \right]\! (\xi \, | \, \eta)} \, \, \omega^{-bs}
\qquad 
\\
 & \;=\;    \omega^{-kr} F(z).
\end{align*}

If $p,q \in \bZ$, then 
$
\vartheta \hbox{$\left[ a \atop b \right]$} (z + p\eta+q \, | \, \eta) = e\big(aq-p(z+b) -\tfrac{p^2}{2}\eta\big) \,
\vartheta \hbox{$\left[ a \atop b \right]$} (z  \, | \, \eta)
$
by  \cref{prop.qp.theta.char}, so
\begin{align*}
F(z+\tfrac{m}{n}\eta)  & \;=\; \sum_{b\in \bZ_n} w_{(kr,b)}(-nz-m\eta)\, \omega^{-bs}
\\
& \;=\; \sum_{b\in \bZ_n} 
\frac
{\vartheta \! \left[ kr/n \atop b/n \right] \! (-nz-m\eta+\xi \, | \, \eta)}
{\vartheta \! \left[ kr/n \atop b/n \right]\! (\xi \, | \, \eta)} \, \, \omega^{-bs}
\\
& \;=\; \sum_{b\in \bZ_n}
e\big(m(-nz+\xi+ b/n) -\tfrac{m^2}{2}\eta\big) \,
 \frac{\vartheta \! \left[ kr/n \atop b/n \right] \! (-nz+\xi \, | \, \eta)}{\vartheta \! \left[ kr/n \atop b/n \right]\! (\xi \, | \, \eta)} \, \, \omega^{-bs}
 \\
& \;=\;  e\big(-nmz + m\xi  -\tfrac{m^2}{2}\eta\big)  \sum_{b\in \bZ_n}
e\big(\tfrac{bm}{n}) \,
 \frac{\vartheta \! \left[ kr/n \atop b/n \right] \! (-nz+\xi \, | \, \eta)}{\vartheta \! \left[ kr/n \atop b/n \right]\! (\xi \, | \, \eta)} \, \, \omega^{-bs}
  \\
& \;=\;  e\big(-nmz + m\xi  -\tfrac{m^2}{2}\eta\big)  \sum_{b\in \bZ_n}
 \frac{\vartheta \! \left[ kr/n \atop b/n \right] \! (-nz+\xi \, | \, \eta)}{\vartheta \! \left[ kr/n \atop b/n \right]\! (\xi \, | \, \eta)} \, \, \omega^{-b(s-m)}.
\end{align*}

Setting $m=n$, we see that
\begin{equation*}
F(z+\eta)   \;=\; 
e\big(-n^2z + n\tau+ \tfrac{n}{2}(1+\eta)   -\tfrac{n^{2}}{2}\eta\big) F(z).
\end{equation*}
Thus,  $F(z)$ and $G(z)$ have the same quasi-periodicity properties with respect to $\frac{1}{n}\bZ + \bZ \eta$.

{\bf The zeros of $F(z)$:}
It follows from the formulas for $F(z+\frac{1}{n})$ and $F(z+\eta) $ that 
$F(z)$ has $n$ zeros\footnote{Since $F(z+\frac{1}{n})=e(-\frac{kr}{n})F(z)$ we may apply \cref{lem.theta.fns}  to $F(z)$ 
with $\eta_1=\frac{1}{n}$, $\eta_2=\eta$, $a=0$, $b=\tfrac{kr}{n}$, $c=n^2$, and $d= n(-\tau-\tfrac{1}{2}(1+\eta)+\tfrac{n}{2}\eta)$. 
Thus, $c \eta_1 - a\eta_2 =n $ and 
\begin{align*}
\tfrac{1}{2}(c \eta_1^2 - a\eta_2^2 ) + (c-a)\eta_1\eta_2 + b\eta_2 - d\eta_1 & \;=\; \tfrac{1}{2} +n\eta + \tfrac{kr}{n}\eta
+\tau +\tfrac{1}{2}(1+\eta)-\tfrac{n}{2}\eta
\\
& \;=\; \tau+\tfrac{kr}{n}\eta   +  \tfrac{1}{2}(n+1)\eta \qquad \text{modulo $\tfrac{1}{n}\bZ+\bZ\eta$.}
\end{align*}
}
 in each fundamental parallelogram for $\frac{1}{n}\bZ+\bZ \eta$, and the sum of these zeros is 
$\tau+\tfrac{kr}{n}\eta  +  \tfrac{1}{2}( n+1)\eta$ modulo $\frac{1}{n}\bZ+\bZ \eta$.

Setting $z=0$ above, we see that 
\begin{equation*}
F(\tfrac{m}{n}\eta) \;=\; e\big(m\xi  -\tfrac{m^2}{2}\eta\big)  \sum_{b\in \bZ_n}
 \frac{\vartheta \! \left[ kr/n \atop b/n \right] \! (\xi \, | \, \eta)}{\vartheta \! \left[ kr/n \atop b/n \right]\! (\xi \, | \, \eta)} \, \, \omega^{-b(s-m)}
\end{equation*}
which is zero when $m \ne s$ in $\bZ_{n}$. 
Thus, $F(z)$ vanishes at the $n-1$ points in the set
\begin{equation}
\label{eq:zeros.F(z)}
\big\{0,\tfrac{1}{n}\eta,\ldots, \tfrac{n-1}{n}\eta\big\}  \, -\,  \big\{\tfrac{s}{n}\eta\big\}.
\end{equation}
These points belong to a single fundamental parallelogram for $\frac{1}{n}\bZ+\bZ\eta$ and their sum is
$
\tfrac{1}{2}(n-1)\eta \, -\,  \tfrac{s}{n}\eta.
$
Hence there is another zero at
$$
\tau+\tfrac{kr}{n}\eta  +  \tfrac{1}{2}( n+1)\eta  \, - \, \tfrac{1}{2}(n-1)\eta \, +\,  \tfrac{s}{n}\eta \; = \; \tau+\tfrac{s+kr}{n}\eta \qquad \text{modulo $\tfrac{1}{n}\bZ+\bZ\eta$} .
$$

{\bf Comparison of $F(z)$ and $G(z)$:}
Thus $F(z)$ and $G(z)$ have the same zeros. 
They also have the same quasi-periodicity properties with respect to $\frac{1}{n}\bZ+\bZ\eta$ so
their ratio is a doubly periodic meromorphic function without zeros or poles, and therefore a constant.
However, the formula for $F(\tfrac{s}{n}\eta)$ above gives
\begin{equation*}
F(\tfrac{s}{n}\eta)   \;=\;  n \, e\big(s\xi  -\tfrac{s^2}{2}\eta\big) \;=\;  n \, e\big(s\tau+\tfrac{s}{2}(1+\eta)  -\tfrac{s^2}{2}\eta\big) 
\end{equation*}
which equals $G(\tfrac{s}{n}\eta)$ so that constant is 1. The proof is complete.
\end{proof} 

\begin{corollary}
  $Q_{n,k}(E,\tau)^{\rm op}$ is the quotient of $TV$ by the ideal generated by the image of $S_k(-n\tau)$.
\end{corollary}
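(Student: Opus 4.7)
The plan is to combine two facts: the identity \cref{eq.reln.Sk.R} applied at $z=\tau$ (so specialized to $S_k(-n\tau)$), and the general observation that passing to the opposite of a quadratic algebra amounts to applying the swap $P$ to the space of relations.

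First I would set $z=\tau$ in \cref{eq:reln.T(z).R(z)} (equivalently in \cref{eq.reln.Sk.R}) to get
\begin{equation*}
S_k(-n\tau) \;=\; n\, e\bigl(\tfrac{1}{2}n(n+1)\tau\bigr)\, P\, R_{n,k,\tau}(\tau).
\end{equation*}
Since the prefactor is a nonzero scalar, $\operatorname{image}(S_k(-n\tau)) = \operatorname{image}(P R_{n,k,\tau}(\tau)) = P\bigl(\operatorname{image}(R_{n,k,\tau}(\tau))\bigr)$. By \cref{eq:relns.QYBE}, $\operatorname{image}(R_{n,k,\tau}(\tau)) = \rel_{n,k}(E,\tau)$, so
\begin{equation*}
\operatorname{image}(S_k(-n\tau)) \;=\; P\bigl(\rel_{n,k}(E,\tau)\bigr).
\end{equation*}

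Second, I would invoke the general principle: if $A=TV/(W)$ is a quadratic algebra with space of relations $W\subseteq V^{\otimes 2}$, then $A^{\rm op}$ is canonically isomorphic to $TV/(P(W))$. Indeed the identity map on $V$ extends to an anti-isomorphism $TV\to TV$ which sends $v_1\otimes v_2\mapsto v_2\otimes v_1$, i.e. it is $P$ on $V^{\otimes 2}$; under this anti-isomorphism, the relation ideal of $A$ corresponds to the ideal generated by $P(W)$, and this latter quotient carries the opposite product. Applying this to $A=Q_{n,k}(E,\tau)$ with $W=\rel_{n,k}(E,\tau)$ gives
\begin{equation*}
Q_{n,k}(E,\tau)^{\rm op} \;\cong\; TV\bigl/\bigl(P(\rel_{n,k}(E,\tau))\bigr).
\end{equation*}

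Combining the two displays yields the claim. The argument is essentially a one-line consequence of \cref{prop.reln.btw.R(z).and.Tk(z)} together with \cref{eq:relns.QYBE}; the only thing that needs any care is the standard identification of $A^{\rm op}$ with a quadratic algebra whose relations are obtained from those of $A$ by the swap, which I would state explicitly but not belabor. There is no real obstacle: the work has already been done in proving \cref{prop.reln.btw.R(z).and.Tk(z)}.
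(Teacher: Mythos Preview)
Your proof is correct and is exactly the argument the paper intends: the corollary is stated without proof immediately after \cref{prop.reln.btw.R(z).and.Tk(z)}, and your derivation from \cref{eq.reln.Sk.R} specialized at $z=\tau$ together with the standard identification $(TV/(W))^{\rm op}\cong TV/(P(W))$ is the expected one-liner.
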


\begin{theorem}
\label{th.qybe2}
The  family of operators $R_\tau(z):V^{\otimes 2} \to V^{\otimes 2}$ in \cref{eq:odr} satisfies (\ref{QYBE2}).
\end{theorem}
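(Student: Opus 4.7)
The plan is to reduce (\ref{QYBE2}) for $R_\tau(z)$ to (\ref{QYBE1}) for an operator already known to satisfy (\ref{QYBE1}), namely $T(z)$. First, by \Cref{prop.R.R'.QYBE}, $R_\tau(z)$ satisfies (\ref{QYBE2}) if and only if $PR_\tau(z)$ satisfies (\ref{QYBE1}): applying that proposition with $A(z) := PR_\tau(z)$ gives $A'(z) = PA(z) = P^2R_\tau(z) = R_\tau(z)$, so $A$ satisfies (\ref{QYBE1}) iff $R_\tau = A'$ satisfies (\ref{QYBE2}). Hence it suffices to verify (\ref{QYBE1}) for $PR_\tau(z)$.

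By the identity \cref{eq:reln.T(z).R(z)}, one has
\begin{equation*}
T(-nz) \;=\; n\, e\!\left(\tfrac{1}{2}n(n+1)z\right)\, PR_\tau(z),
\end{equation*}
so that $PR_\tau(z)$ is obtained from $T(w)$ by the affine change of spectral variable $w = -nz$ together with multiplication by a nowhere-vanishing holomorphic scalar. Both operations preserve (\ref{QYBE1}): the scalar $\mu(u)\mu(u+v)\mu(v)$ appears equally on both sides of (\ref{QYBE1}) and cancels, while the substitution $z \mapsto -nz$ preserves the additive structure $\sigma(u)+\sigma(v) = \sigma(u+v)$ on which the equation depends. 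Consequently it is enough to show that $T(z)$ itself satisfies (\ref{QYBE1}).

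Finally, the operator $T(z) = \sum_{(a,b)\in\bZ_n^2} w_{(a,b)}(z)\, I_{(-k'a,b)} \otimes I_{(-k'a,b)}^{-1}$ has exactly the form $S^\phi(z)$ of \Cref{cor:tracy-plus}, taken with respect to the representation $\phi \colon H_n \to \End(V)$ sending $\chi \mapsto h^{-k'}$ and $\gamma \mapsto g$; the central generator $\epsilon$ is then forced to act by the scalar coming from the commutation $gh^{-k'} = \omega^{-k'} h^{-k'}g$, which is the only hypothesis needed in \Cref{cor:tracy-plus}. Such a $\phi$ is well defined because $k' \in \bZ$ with $kk' \equiv 1 \pmod{n}$ exists (as $\gcd(n,k)=1$) and $\omega^{-k'}$ has order dividing $n$. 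Thus (\ref{QYBE1}) for $T(z)$ follows from \Cref{cor:tracy-plus}, and unwinding the reductions yields (\ref{QYBE2}) for $R_\tau(z)$.

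The substantive content is imported entirely from the Belavin/Tracy proof, repackaged into \Cref{th:qybe-gp-alg,cor:tracy-plus}; the only genuine task is the bookkeeping that lets us pivot from (\ref{QYBE1}) for $T(z)$ to (\ref{QYBE2}) for $R_\tau(z)$. The main pitfall is simply keeping track of the difference between the two versions of the equation—confusing the index patterns $12,13,23$ versus $12,23,12$—but \Cref{prop.R.R'.QYBE} encapsulates this cleanly via the $P$-twist, so once the identification $T(-nz) \propto PR_\tau(z)$ is in hand there is no serious obstacle remaining.
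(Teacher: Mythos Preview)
Your proof is correct and follows essentially the same route as the paper's own argument: identify $T(z)$ (equivalently $S_k(z)$) as $S^\phi(z)$ for the representation $\phi(\chi)=h^{-k'}$, $\phi(\gamma)=g$, invoke \Cref{cor:tracy-plus} to get (\ref{QYBE1}), then transfer to $R_\tau$ via \cref{eq:reln.T(z).R(z)} and \Cref{prop.R.R'.QYBE}. One trivial slip: the commutation is $gh^{-k'}=\omega^{k'}h^{-k'}g$, not $\omega^{-k'}$, but since \Cref{cor:tracy-plus} only requires $\epsilon$ to act as \emph{some} scalar, this does not affect the argument.
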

\begin{proof}
  Since $S_k(z)=S^\phi(z)$ where
  \begin{equation*}
    \phi:H_n\to \End(V)
  \end{equation*}
  is the representation
  \begin{equation*}
    \chi\mapsto h^{-k'},\ \gamma\mapsto g,
  \end{equation*}
  and $\epsilon$ acts as multiplication by $\omega^{k'}$,
  \cref{cor:tracy-plus} tells us that $S_k(z)$ satisfies (\ref{QYBE1}) and hence so does $S_k(-nz)$. It now follows from \cref{eq.reln.Sk.R} that $PR(z)$ satisfies (\ref{QYBE1}) and therefore $R(z)$ satisfies (\ref{QYBE2}) by \cref{prop.R.R'.QYBE}.
\end{proof}

\section{Families of linear operators}\label{se.hol}

In \cref{sec.hilb.series,subsec.dual,se.ksz}, we need to determine the zeros, and their multiplicities, of the determinants of certain linear operators $G_{\tau}(z)$, $G^{+}_{\tau}(z)$ and $H_\tau(z)$, on $V^{\otimes d}$. These operators, which are analytic functions of $z$, are compositions of operators of the form $I^{\otimes i-1} \otimes R(w) \otimes I^{d-i+1}$ for various $w$'s and $i$'s.

\subsection{Dimensions of kernels and the multiplicity of the zeros of the determinant}
The multiplicity of a point $p \in \bC$ as a zero of a meromorphic function $f(z)$ is denoted\index{mult_p f(z)@$\mult_p f(z)$}
\begin{equation*}
\mult_p f(z).
\end{equation*}
If $f(z)$ is identically zero in a neighborhood of $p$ we set $\mult_p f(z)=\infty$.

The next result is used in \cref{sec.hilb.series,subsec.dual,se.ksz}.

\begin{lemma}\label{pr.hol-op}
  Let $V$ be a  finite-dimensional complex vector space and $A:D\to \End(V)$ a holomorphic map 
  defined on a domain $D\subseteq \bC$.
  For all $p\in D$, 
  \begin{equation*}
\mult_p(\det A(z))  \; \ge  \; \nullity A(p).
  \end{equation*}
\end{lemma}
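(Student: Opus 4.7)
The plan is a one-line argument via a clever choice of basis, so there is essentially no ``hard step''; the only care required is to handle the trivial case $\det A\equiv 0$ separately.

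First, dispose of the degenerate case: if $\det A(z)$ is identically zero on some neighborhood of $p$, then $\mult_p(\det A(z))=\infty$ by convention and the inequality holds trivially. So assume $\det A(z)\not\equiv 0$ near $p$, and set $k:=\nullity A(p)=\dim\ker A(p)$.

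Next, choose a basis $e_1,\dots,e_n$ of $V$ whose first $k$ vectors span $\ker A(p)$, and represent $A(z)$ as a matrix $[a_{ij}(z)]$ in this basis. Since the first $k$ columns of $A(p)$ are zero, each function $a_{ij}(z)$ with $j\le k$ vanishes at $z=p$ and is therefore holomorphic and divisible by $(z-p)$ in a neighborhood of $p$. Factoring $(z-p)$ out of each of the first $k$ columns via multilinearity of the determinant gives
\begin{equation*}
\det A(z) \;=\; (z-p)^{k}\,\det B(z)
\end{equation*}
for some matrix $B(z)$ whose entries are holomorphic near $p$. Since $\det B(z)$ is holomorphic at $p$, its multiplicity at $p$ is nonnegative, and hence
\begin{equation*}
\mult_p(\det A(z)) \;\ge\; k \;=\; \nullity A(p),
\end{equation*}
as desired. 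The only conceptual point worth emphasizing is that because $A$ is an endomorphism we may use the same basis on source and target, which is what makes the single column-factorization legitimate; there is no substantive obstacle beyond this bookkeeping.
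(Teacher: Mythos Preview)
Your proof is correct and essentially identical to the paper's: both choose a basis adapted to $\ker A(p)$, observe that the first $k$ columns of the matrix vanish at $p$ and are therefore divisible by $z-p$, and factor these out of the determinant.
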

\begin{proof}
This is trivial if $\det A(z)$ is identically zero in a neighborhood of $p$, so we 
assume that $p$ is an isolated zero of $\det A(z)$.
  Let $e_1, \ldots, e_\ell$ be an ordered basis for $\ker A(p)$, and extend it to an ordered basis $e_1, \ldots, e_\ell,\ldots$  for $V$. 
 With respect to this basis, the entries of the matrix $A(z)$ are holomorphic functions whose first $\ell$ columns are divisible by $z-p$ (in the ring of functions holomorphic in a neighborhood of $p$). Hence $\det A(z)$ is divisible by $(z-p)^\ell$, finishing the proof.
\end{proof}

In \cref{se.ksz}, we need a stronger version of \Cref{pr.hol-op}. First, some terminology. 
If $A:D\to \End(V)$ is a holomorphic map as above and $p$ is a fixed point in $D$, we define
\begin{equation*}
  A_m(z) \; : = \frac{A(z)}{(z-p)^m} 
\end{equation*}
with the convention that $A_{-1}(z) \equiv 0$. 

\begin{definition}\label{def.sing-part}
  The {\sf singularity partition of $A(z)$ at a point $p\in D$} is the tuple $\sigma_p(A):=(\lambda_0\ge \lambda_1\ge\cdots)$\index{sigma_p(A)@$\sigma_p(A)$} of non-negative integers defined by
  \begin{equation*}
    \lambda_m \; := \; \text{the dimension of the kernel of }A_m(p) \big|_{\ker A_{m-1}(p)}.
  \end{equation*}
The  {\sf size} of the singularity partition is the number $|\sigma_p(A)|:=\sum_i \lambda_i$\index{_sigma_p(A)_@$\vert \sigma_p(A) \vert$}.
\end{definition}

\begin{remark}\label{re.tail}
Since $\l_m=0$ for $m \gg 0$, we ignore those  zeros and regard the partition as a {\it finite} tuple. 
\end{remark}

The next result, which improves on \Cref{pr.hol-op}, is used in the proof of \cref{pr.h-part}.

\begin{lemma}\label{pr.hol-op-bis}
  Let $V$ be a finite-dimensional complex vector space and $A:D\to \End(V)$ a holomorphic map 
  for a domain $D\subseteq\bC$. For all $p\in D$,  
  \begin{equation*}
\mult_p(\det A(z)) \; \ge \;    |\sigma_p(A)|.
  \end{equation*}
\end{lemma}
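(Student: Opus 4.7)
The plan is to refine the argument of \Cref{pr.hol-op} by building an ordered basis of $V$ adapted to the entire filtration produced by the recursive definition of $\sigma_p(A)$, rather than only to the ordinary kernel $\ker A(p)$, and then estimating $\mult_p\det A(z)$ column-by-column.

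First, I would record an intrinsic description of the iterated kernels. Set $K_{-1} := V$ and $K_m := \ker\bigl(A_m(p)\big|_{K_{m-1}}\bigr)$, so that $\lambda_m = \dim K_m$ and $V = K_{-1} \supseteq K_0 \supseteq K_1 \supseteq \cdots$ is a decreasing chain that eventually stabilizes at $0$. An induction on $m$ shows
\[
v \in K_m \quad \iff \quad A(z)v \text{ vanishes to order at least } m+1 \text{ at } z=p.
\]
Indeed, granted the claim for $m-1$, any $v \in K_{m-1}$ admits a factorization $A(z)v = (z-p)^m g(z)$ with $g$ holomorphic near $p$; hence $A_m(z)v = g(z)$ is well defined at $p$, and $A_m(p)v = 0$ precisely when $A(z)v$ vanishes to order strictly greater than $m$. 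This simultaneously confirms that $A_m(z)$ is holomorphic on $K_{m-1}$ in a neighbourhood of $p$, so that the recursive definition is meaningful.

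Next, I would choose an ordered basis $v_1, \ldots, v_n$ of $V$ adapted to the filtration: first a basis of the smallest nonzero $K_N$ (of size $\lambda_N$), then successive extensions through $K_{N-1}, K_{N-2}, \ldots, K_0$ and finally to all of $V$. Let $B \in \GL(V)$ be the change-of-basis matrix. The $j$-th column of $A(z)B$ is $A(z)v_j$, and by the intrinsic description above it vanishes to order at least $m+1$ whenever $v_j \in K_m$. Expanding $\det(A(z)B)$ as a signed sum over permutations of products taking exactly one entry from each column, every term has order of vanishing at $p$ bounded below by the sum of the column orders, so
\[
\mult_p \det(A(z)B) \;\geq\; (N+1)\lambda_N + \sum_{m=0}^{N-1}(m+1)(\lambda_m - \lambda_{m+1}) \;=\; \sum_{m \geq 0} \lambda_m \;=\; |\sigma_p(A)|,
\]
by telescoping. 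Since $\det B$ is a nonzero constant, the same bound holds for $\mult_p \det A(z)$, completing the proof.

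The only delicate point is the inductive identification of $K_m$ with the subspace of $v$ for which $A(z)v$ vanishes to order at least $m+1$; this is what guarantees that the $A_m(z)$ are actually holomorphic where they need to be, and it is also what lets the telescoping sum in the column-order estimate collapse to $|\sigma_p(A)|$. Once that is in hand, the rest is a routine adapted-basis determinant bound, analogous to the one used in \Cref{pr.hol-op} but carried out at every level of the filtration rather than only at the top.
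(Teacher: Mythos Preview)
Your proof is correct and follows essentially the same approach as the paper's: both arguments exploit the fact that vectors in the iterated kernel $K_m$ produce columns of $A(z)$ divisible by $(z-p)^{m+1}$, and then extract the total factor of $(z-p)^{|\sigma_p(A)|}$ from the determinant. The only difference is organizational: the paper peels off one level of the filtration at a time (split $V=\ker A(0)\oplus V_0$, factor out $z^{\lambda_0}$, then repeat on the divided left block), whereas you set up the entire adapted basis at once and do a single telescoping computation; your explicit intrinsic characterization of $K_m$ is a clean way to package the induction that the paper leaves implicit.
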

\begin{proof}
Without loss of generality, we assume that $p=0$. 
A direct sum decomposition $V=\ker A(0)\oplus V_0$  leads to a decomposition
  \begin{equation}\label{eq:5}
    A_0\; =\; A:D\to \Hom(\ker A(0),V)\oplus \Hom(V_0,V)
  \end{equation}
  whose left-hand component is a multiple of $z$, contributing $z^{\lambda_0}$ to $\det A(z)$ where
  \begin{equation*}
    \sigma_0(A) \;=\; (\lambda_0\ge \lambda_1\ge\cdots).
  \end{equation*}
  Dividing the left hand component of \Cref{eq:5} by $z$ and using a splitting $\ker A_{0}(0) = \ker A_1(0)\oplus V_1$,
  we obtain
  \begin{equation*}
    A_1:D\to \Hom(\ker A_1(0),V)\oplus \Hom(V_1,V),
  \end{equation*}
  with the left-hand component once more a multiple of $z$ contributing $z^{\lambda_1}$ to the determinant. Simply repeat the procedure until the singularity partition has been exhausted, noting that if at any point any of the functions
  \begin{equation*}
    A_m(0)\big|_{\ker A_{m-1}(0)}
  \end{equation*}
  vanish identically then $\det A(z)$ does too, making the statement trivial. 
\end{proof}

\subsection{Theta operators}
\label{ssec.theta.operator}
Assume $\Lambda =\bZ\eta_1 + \bZ\eta_2$ is a lattice in $\bC$ such that $\Im(\eta_2/\eta_1)>0$.
A {\sf theta function of order $r$} with respect to $\Lambda$ is a holomorphic function $f:\bC\to\bC$ satisfying the 
quasi-periodicity conditions $f(z+\eta_1) =e(-az-b)f(z)$ and $f(z+\eta_2)= e(-cz-d)f(z)$ in \cref{lem.theta.fns} for some constants
$a,b,c,d$ such that $c\eta_1-a\eta_2=r$. If $f$ is not the zero function it has $r$ zeros in every fundamental parallelogram for 
$\Lambda$. 
For example, the functions that belong to the space $\Theta_{r,c}(\Lambda)$\index{Theta_r,c(Lambda)@$\Theta_{r,c}(\Lambda)$}, defined in \cite[\S2.1]{CKS1}, 
are theta functions of order $r$. In particular, $\theta_\a$ is a theta function of order $n$ with respect to $\Lambda$ and $\{\theta_0, \ldots, \theta_{n-1}\}$ is a basis for $\Theta_n(\Lambda)=\Theta_{n,\frac{n-1}{2}}(\Lambda)$\index{Theta_n(Lambda)@$\Theta_{n}(\Lambda)$} (\cite[Prop.~2.6]{CKS1}). 

\begin{definition}
\label{def.thetaop}
A holomorphic map $A:\bC\to \End(V)$ is a {\sf theta operator of order $r$} with respect to $\Lambda$ 
if $A(z+\eta_1) =e(-az-b)A(z)$ and $A(z+\eta_2) =e(-cz-d)A(z)$ for some constants $a,b,c,d$ such that $c\eta_1-a\eta_2=r$. 
Equivalently, if $\braket{v^*,A(-)v}$ is a theta function of order $r$ having the same quasi-periodicity properties for all 
$v\in V$ and all $v^*\in V^*$, where $V^{*}$\index{V^*@$V^{*}$} is the dual vector space of $V$. Equivalently, the matrix entries for $A(z)$ with respect to any basis for $V$ are
theta function of order $r$ having the same quasi-periodicity properties. 
  \end{definition}

For example, $R(z)$ is a theta operator of order $n^2$ with respect to $\Lambda$
because its matrix entries belong to $\Theta_{n^2,n\tau-n^{2}\eta}(\Lambda)$ (see \cite[\S2.1.2]{CKS1}). 

If $A_i(z)$, $i=1,2$, are theta operators whose matrix entries belong to $\Theta_{r_i,c_i}(\Lambda)$, then 
$A_1(z)A_2(z)$ is a theta operator of order $r_1+r_2$ because its matrix entries belong to $\Theta_{r_1+r_2,c_1+c_2}(\Lambda)$.

If $A(z)$ is a theta operator whose matrix entries belong to $\Theta_{r,c}(\Lambda)$ and $d\in\bC$, then $A(z+d)$ is a theta operator of order $r$ because its matrix entries belong to $\Theta_{r,c-rd}(\Lambda)$.

\subsubsection{``Determinants''}
\label{rmk.det}
We often encounter theta operators that preserve a fixed subspace $W\subseteq V$ or, more generally, map it to a 
fixed subspace $W'\subseteq V$ of the same dimension. 
If $A(z)(W)\subseteq W'$ for all $z\in\bC$ and  $  \dim W =m=  \dim W'$, then $A$ induces a holomorphic function\index{det_W W'(A(z))@$\det_{W\to W'}(A(z))$}
\begin{equation}\label{eq:detf}
\textstyle  \det_{W\to W'} (A(z)) := \bigwedge^m A(z):\bigwedge^m W \, \longrightarrow \, \bigwedge^m W'
\end{equation}
that is well-defined up to a non-zero scalar multiple (depending on a choice of bases for $W$ and $W'$). 
We will often be interested in the location and multiplicities of the zeros of this function. That data does not depend on the choice of
bases. These remarks, and the next result, apply to the theta operators $G_\tau(z)$ defined in \cref{sect.defn.Gz}
and $H_\tau(z)$ defined in \cref{prop.defn.H}.

\begin{proposition}
\label{prop.hol.det.zeros}
Assume $W$ and $W'$ are subspaces of $V$ of the same dimension. If $A:\bC\to \End(V)$ is a theta operator of order $N$ 
with respect to $\Lambda$ such that $A(z)(W) \subseteq W'$ for all $z$, then $\det_{W\to W'} (A(z))$
\begin{enumerate}
  \item\label{item.hol-det} 
 is a theta function of order $N\dim W$ and
  \item\label{item.hol-zeros} 
 has $N\dim W$ zeros in every fundamental parallelogram for $\Lambda$ if it is not identically zero.   
\end{enumerate}
In particular, $\det A(z)$ is a theta function of order $N \dim V$ with respect to $\Lambda$. 
\end{proposition}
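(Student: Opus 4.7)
The whole statement is a direct unpacking of \Cref{def.thetaop} and an application of \Cref{lem.theta.fns}; there is no real obstacle, so the job is to organize the bookkeeping cleanly.

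First, I would fix bases $\{w_1,\ldots,w_m\}$ of $W$ and $\{w'_1,\ldots,w'_m\}$ of $W'$, where $m=\dim W=\dim W'$. Since $A(z)(W)\subseteq W'$ for every $z$, I can write
\begin{equation*}
  A(z)w_j \;=\; \sum_{i=1}^m a_{ij}(z)\, w'_i \qquad (1\le j\le m),
\end{equation*}
and then $\det_{W\to W'}(A(z))=\det\bigl(a_{ij}(z)\bigr)_{i,j}$ up to a non-zero scalar depending only on the chosen bases. Each $a_{ij}(z)$ is a particular matrix entry of $A(z)$ with respect to bases extending $\{w_j\}$ and $\{w'_i\}$, hence, by \Cref{def.thetaop}, is a theta function of order $N$ with respect to $\Lambda$ satisfying the common quasi-periodicity
\begin{equation*}
  a_{ij}(z+\eta_1) \;=\; e(-az-b)\, a_{ij}(z),\qquad a_{ij}(z+\eta_2) \;=\; e(-cz-d)\, a_{ij}(z),
\end{equation*}
with $c\eta_1-a\eta_2=N$.

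Next, I would compute the quasi-periodicity of $f(z):=\det_{W\to W'}(A(z))$. Writing $f(z)$ as the usual alternating sum of products of the $a_{ij}(z)$, each of the $m!$ terms is a product of $m$ theta functions sharing the same multipliers; consequently each term, and hence $f(z)$ itself, satisfies
\begin{equation*}
  f(z+\eta_1) \;=\; e(-maz-mb)\,f(z), \qquad f(z+\eta_2) \;=\; e(-mcz-md)\,f(z).
\end{equation*}
Since $f$ is a product/sum of holomorphic functions it is itself entire. This identifies $f$ as a theta function with respect to $\Lambda$, of order
\begin{equation*}
  (mc)\eta_1 - (ma)\eta_2 \;=\; m\bigl(c\eta_1 - a\eta_2\bigr) \;=\; mN,
\end{equation*}
proving~\cref{item.hol-det}. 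Statement~\cref{item.hol-zeros} is then immediate from \Cref{lem.theta.fns}: a non-zero theta function of order $mN$ has exactly $mN$ zeros, counted with multiplicity, in any fundamental parallelogram for $\Lambda$.

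Finally, for the ``in particular'' clause I would simply specialise to $W=W'=V$, which is trivially preserved by $A(z)$, so $\det A(z)=\det_{V\to V}(A(z))$ is a theta function of order $N\dim V$ with respect to $\Lambda$. The only point worth underlining in the write-up is that the scalar ambiguity inherent in \cref{eq:detf} is irrelevant for both the quasi-periodicity relations (a non-zero constant factor cancels) and for the location/multiplicities of the zeros, so the statement is intrinsic.
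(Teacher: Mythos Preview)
Your proof is correct and follows essentially the same approach as the paper's: expand the determinant as an alternating sum of products of $m$ matrix entries sharing the common quasi-periodicity multipliers, so each term (and hence the sum) is a theta function of order $mN$, and then invoke \Cref{lem.theta.fns}. The only cosmetic difference is that the paper first composes with an automorphism sending $W'$ back to $W$ so as to work with an honest determinant $\det(A(z)|_W)$, whereas you work directly with bases of $W$ and $W'$; your version is slightly more explicit about the multipliers.
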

\begin{proof}
\cref{item.hol-det} Composing $A$ with an automorphism of $V$ that maps $W'$ isomorphically onto $W$, we may as well assume $W=W'$,
  whence $\det_{W\to W} (A(z))$ becomes the usual determinant of $A(z)|_W$.

  Choose an ordered basis for $W$ and extend it to one for $V$. The operators $A(z)$ then have the shape
  \begin{equation*}
    \begin{pmatrix}
      A_{11}(z)& A_{12}(z)\\0 & A_{22}(z)
    \end{pmatrix},
  \end{equation*}
  and $\det( A(z)|_W)=\det A_{11}(z)$. Since the summands in the usual expression for $\det A_{11}(z)$ are products of 
  $\dim W$ theta functions of order $N$ having the same quasi-periodicity properties,
  those summands, and therefore their sum,  are theta functions of order $N \times \dim W$.
  
\cref{item.hol-zeros}
A non-zero theta function of order $r$ has $r$ zeros in a fundamental parallelogram (\cref{lem.theta.fns}).
\end{proof}

\subsection{Families of kernels and images}
Let $\Grass(d,W)$\index{Grass(d,W)@$\Grass(d,W)$} denote the Grassmannian of $d$-dimensional subspaces of a finite-dimensional $\bC$-vector space $W$.

In this subsection we consider algebraic or analytic morphisms $f_i$ from a complex variety (algebraic or analytic\footnote{We adopt the following convention: a (complex) \emph{algebraic variety} is a scheme over $\bC$ that is reduced, irreducible, separated, and of finite type. An \emph{analytic variety} is a (Hausdorff) analytic space that is reduced and irreducible.}, though we typically specialize to the algebraic case) to either
\begin{itemize}
\item the Grassmannian $\Grass(d,W)$, or
\item the space of linear maps $\Hom(W,W')$ for finite-dimensional $\bC$-vector spaces $W$ and $W'$.
\end{itemize}
We will be interested in the families of intersections (or sums) of $f_i(y)$ in the first case or $\ker f_i(y)$ (or $\im f_{i}(y)$) in the second.

\begin{proposition}
\cite[Prop.~13.4]{saltman}
\label{pr:optosp}
  Let $f:Y\to \Hom(W,W')$ be a morphism of algebraic or analytic varieties and write $r:=\max\{\rank f(y)\mid y\in Y\}$.
  \begin{enumerate}
  \item\label{item:1} 
  The set $U:=\{y\in Y\ |\ \rank f(y)=r\}$ is an open dense subset of $Y$. 
  \item\label{item:2} The map $\ker f: U\to\Grass(\dim W - r,W)$, $u\mapsto\ker f(u)$,
    is a morphism.
  \item\label{item:3}
    The map $\im f: U\to\Grass(r,W')$, $u\mapsto\im f(u)$, is a morphism.
  \end{enumerate}
\end{proposition}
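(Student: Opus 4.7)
The strategy is standard, so I sketch only the organization. For \cref{item:1}, observe that $\rank f(y) \ge r$ is defined by the non-vanishing of at least one $r \times r$ minor of the matrix of $f(y)$, taken with respect to fixed bases of $W$ and $W'$. Since $r$ is the maximum rank, $\rank f(y) \ge r$ is equivalent to $\rank f(y) = r$. Hence $U$ is the union of the open sets cut out by non-vanishing of individual $r \times r$ minors, so $U$ is open, and non-empty by the definition of $r$. Density then follows from irreducibility of $Y$.

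For \cref{item:2,item:3}, I work locally and exhibit explicit charts. Fix $u_0 \in U$. After reordering bases of $W$ and $W'$ (which is harmless, as the question is local), we may write $f(u_0)$ in block form so that its upper-left $r\times r$ block is invertible. By continuity, this block remains invertible on an open neighborhood $U_0 \subseteq U$ of $u_0$, so that on $U_0$ we can write
\[
f(u) \;=\; \begin{pmatrix} A(u) & B(u) \\ C(u) & D(u) \end{pmatrix},
\qquad A(u) \in \GL_r,
\]
with $A, B, C, D$ morphisms in $u$. Since $\rank f(u) = r$ throughout $U_0$, the Schur complement condition $D(u) = C(u)A(u)^{-1}B(u)$ holds identically.

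The first $r$ columns of $f(u)$ (i.e.\ $\begin{pmatrix}A(u)\\C(u)\end{pmatrix}$) are linearly independent on $U_0$ and span $\im f(u)$; this expresses $\im f$ on $U_0$ as a morphism into the standard affine chart of $\Grass(r,W')$ determined by the first $r$ coordinates. Dually, the columns of $\begin{pmatrix} -A(u)^{-1} B(u) \\ I_{\dim W - r} \end{pmatrix}$ span $\ker f(u)$ on $U_0$, giving $\ker f$ as a morphism into a standard affine chart of $\Grass(\dim W - r, W)$. Since morphisms can be checked locally on the source, this suffices.

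The only mildly delicate point is verifying that the Schur complement identity really does force the kernel to have the shape above (rather than a larger one); this is where we use that the rank is exactly $r$, not merely $\ge r$. The rest is bookkeeping: patching the local charts together on $U$, and observing that the construction is algebraic in the algebraic setting and analytic in the analytic setting, since inversion of an invertible matrix is algebraic in its entries.
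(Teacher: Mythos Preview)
Your argument is correct. For \cref{item:1} it matches the paper's proof essentially verbatim: the complement of $U$ is cut out by the vanishing of all $r\times r$ minors, hence closed; $U$ is nonempty by definition of $r$; density follows from irreducibility of $Y$. For \cref{item:2} and \cref{item:3} the paper simply cites \cite[Prop.~3.17]{CKS1} rather than giving an argument, whereas you supply the standard local-chart construction via an invertible $r\times r$ block and the Schur complement identity $D=CA^{-1}B$; your computation of the kernel as the column span of $\begin{pmatrix}-A^{-1}B\\ I\end{pmatrix}$ and of the image as the column span of $\begin{pmatrix}A\\ C\end{pmatrix}$ is exactly what the cited reference does, so in substance the two proofs agree.
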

\begin{proof}
As we said above, we focus on the algebraic situation. 

\Cref{item:1} Since $Y$ is a variety, and therefore irreducible, density follows from openness and non-emptiness. The latter holds by construction (since the maximal rank is, of course, achieved somewhere), so it remains to argue that $U\subseteq Y$ is open. This is clear from the fact that the condition rank $<r$ is expressible as a collection of algebraic equations (the vanishing of $r\times r$ minors).

Parts \Cref{item:2} and \Cref{item:3} are proved in \cite[Prop.~3.17]{CKS1}.
\end{proof}

To further strengthen the connection between families of subspaces and families of operators, we have a kind of converse to parts \Cref{item:2} and \Cref{item:3} of \Cref{pr:optosp}.

If $F:X \to  \Hom(W,W')$ is a function we write $\ker F$ and $\im F$ for the functions $x\mapsto \ker F(x)$ and $x \mapsto \im F(x)$,
respectively.

\begin{lemma}\label{pr:sptoop}
  Let $f:Y\to\Grass(d,W)$ be a morphism.
  \begin{enumerate}
    \item\label{item.sptoop.ker}
    Let $W'$ be a fixed vector space of dimension $\ge \dim W-d$, then $Y$ can be covered with open subvarieties $U$ for which there are morphisms 
  \begin{equation*}
    F_U:U\to \Hom(W,W')
  \end{equation*}
  such that $f|_U = \ker F_U$.
    \item\label{item.sptoop.im}
    Let $W''$ be a fixed vector space of dimension $\ge d$, then $Y$ can be covered with open subvarieties $U$ for which there are morphisms 
  \begin{equation*}
    G_{U}:U\to \Hom(W'',W)
  \end{equation*}
  such that $f|_{U} = \im G_{U}$.
  \end{enumerate}
\end{lemma}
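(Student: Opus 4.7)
My plan is to reduce both parts to the tautological case $Y = \Grass(d,W)$ with $f = \id$, and then obtain the general statement by pulling back along $f$. The open covering of $Y$ will then simply be the preimages under $f$ of a suitable open covering of the Grassmannian, which I will produce from the standard affine charts.

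For the charts, I fix a direct sum decomposition $W = W_1 \oplus W_2$ with $\dim W_1 = d$ and consider the open subvariety
\[
  U_{W_1,W_2} \; := \; \{\, S \in \Grass(d,W) \mid S \cap W_2 = 0 \,\}.
\]
Every $S \in U_{W_1,W_2}$ is the graph of a unique linear map $\phi_S \colon W_1 \to W_2$, and the rule $S \mapsto \phi_S$ identifies $U_{W_1,W_2}$ with the affine space $\Hom(W_1,W_2)$. Letting $(W_1,W_2)$ vary over all such decompositions produces an open cover of $\Grass(d,W)$.

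For \cref{item.sptoop.ker}, on each chart I will define
\[
  F \colon U_{W_1,W_2} \to \Hom(W,W_2), \qquad S \mapsto \bigl(w_1 + w_2 \mapsto w_2 - \phi_S(w_1)\bigr),
\]
which is polynomial in $\phi_S$ (hence a morphism) and satisfies $\ker F(S) = S$ by construction. When $\dim W' \ge \dim W - d = \dim W_2$, I choose once and for all an injection $\iota \colon W_2 \hookrightarrow W'$ and set $F_U := \iota \circ F$; post-composing with an injection does not change the kernel. For \cref{item.sptoop.im}, on the same chart I will use
\[
  G \colon U_{W_1,W_2} \to \Hom(W_1,W), \qquad S \mapsto \bigl(w_1 \mapsto w_1 + \phi_S(w_1)\bigr),
\]
which is again a morphism and satisfies $\im G(S) = S$; when $\dim W'' \ge d$, I pick any surjection $\pi \colon W'' \twoheadrightarrow W_1$ and put $G_U := G \circ \pi$, and pre-composition with a surjection preserves the image.

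The only verification needed is that $F$ and $G$ really are morphisms of varieties (algebraic or analytic, as the case may be), which is immediate because in each chart their formulas are literally polynomial in the matrix entries of $\phi_S$. There is no substantive obstacle here; the construction amounts to trivializing, on each standard affine chart, the tautological short exact sequence of bundles $0 \to \mathcal{S} \to W \otimes \mathcal{O} \to \mathcal{Q} \to 0$ on the Grassmannian, with $G$ arising from the inclusion of the sub-bundle and $F$ from the projection onto the quotient. Pulling everything back along $f$ then gives the asserted open cover $\{f^{-1}(U_{W_1,W_2})\}$ of $Y$ together with morphisms $F \circ f$ and $G \circ f$ with the required properties.
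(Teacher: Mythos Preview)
Your proposal is correct and takes essentially the same approach as the paper: both use the standard affine charts $U_\Gamma$ of the Grassmannian (your $W_2$ is the paper's $\Gamma$), realize each $d$-plane in such a chart as the kernel of the projection onto $\Gamma$ along that plane (your formula $w_1+w_2\mapsto w_2-\phi_S(w_1)$ is exactly this projection written in coordinates), post-compose with a fixed embedding $\Gamma\hookrightarrow W'$, and then pull back along $f$. The paper treats part~\cref{item.sptoop.im} by the phrase ``the dual argument'', which you have written out explicitly.
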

\begin{proof}
We prove \cref{item.sptoop.ker}; the dual argument shows \cref{item.sptoop.im} (using possibly different $U$'s). Cover the Grassmannian $G:=\Grass(d,W)$ with the affine open sets $U_{\Gamma}$ described in \cite[\S 3.2.2]{3264}, consisting of the $d$-subspaces of $W$ that intersect a fixed $(\dim W-d)$-dimensional subspace $\Gamma$ trivially. Pulling these back to $Y$, we may as well assume the image of $f$ lies entirely within a single open patch $U_{\Gamma}\subset G$ for a fixed $\Gamma$;
i.e., we are now assuming that  $f(y) \cap \Gamma=\{0\}$ for all $y\in Y$.

  Now $\Gamma$ is naturally isomorphic to $W/f(y)$ via the quotient map $W\to W/f(y)$ so, for each $y\in Y$, define $F(y)\in\Hom(W,W')$ to be the composition
  \begin{equation*}
    W\;\to\;W/f(y)\;\cong\;\Gamma\;\to\;W'
  \end{equation*}
  where $\Gamma\;\to\;W'$ is some fixed embedding. This is the desired morphism 
$F:Y\to \Hom(W,W')$.
\end{proof}

\begin{proposition}
\cite[Prop.~13.5]{saltman}
\label{pr:genker}
  Let $f_i$, $1\le i\le r$, be morphisms $Y\to \Grass(d,W)$. Then,
  \begin{enumerate}
  \item\label{item:4} The sets
    \begin{align*}
      U  & \;:=\;\Big\{y\in Y \, \Big\vert \, \bigcap_{i} f_i(y)\text{ has minimal dimension } e\Big\} \qquad \text{and}
      \\
      U' & \;:=\;\Big \{y\in Y  \, \Big\vert \,  \sum_{i} f_i(y)\text{ has maximal dimension } e' \Big\}
    \end{align*}
    are open dense subsets of $Y$.
  \item\label{item:7} The maps 
  \begin{align*}
  \bigcap_{i}f_{i}:\ &U\to\Grass(e,W), \quad y\mapsto \bigcap_{i} f_i(y),  \qquad \text{and}
  \\
  \sum_{i}f_i:\ &U'\to\Grass(e',W), \quad y\mapsto \sum_{i} f_i(y),
  \end{align*}
    are morphisms.
  \end{enumerate}
\end{proposition}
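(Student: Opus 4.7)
The plan is to reduce \Cref{pr:genker} to \Cref{pr:optosp} via \Cref{pr:sptoop}, treating intersections as kernels of direct-sum maps and sums as images of block maps. Since $Y$ is irreducible, the minimum of $\dim\bigcap_i f_i(y)$ and the maximum of $\dim\sum_i f_i(y)$ are well-defined integers $e$ and $e'$ independent of any open cover we choose.

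First I would tackle the intersection statement. Apply \Cref{pr:sptoop}\cref{item.sptoop.ker} to each $f_i$ with a fixed target $W'$ of dimension $\ge \dim W-d$, obtaining an open cover $\{U_\alpha\}$ of $Y$ such that on every $U_\alpha$ there are morphisms $F_i^\alpha:U_\alpha\to\Hom(W,W')$ with $f_i|_{U_\alpha}=\ker F_i^\alpha$ for all $i$. Define the assembled morphism
\[
F^\alpha:=(F_1^\alpha,\dots,F_r^\alpha)\colon U_\alpha\;\longrightarrow\;\Hom(W,(W')^{\oplus r}),
\]
so that $\ker F^\alpha(y)=\bigcap_{i=1}^r\ker F_i^\alpha(y)=\bigcap_i f_i(y)$. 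By \Cref{pr:optosp}\cref{item:1}, the set $U^\alpha$ on which $F^\alpha$ achieves its maximal rank is open and dense in $U_\alpha$, and by \Cref{pr:optosp}\cref{item:2} the map $y\mapsto\ker F^\alpha(y)$ is a morphism into the appropriate Grassmannian on $U^\alpha$. The generic intersection dimension on $U_\alpha$ (namely $\dim W$ minus that maximal rank) equals the global minimum $e$ because $Y$ is irreducible and the $U_\alpha$ cover $Y$. Gluing the $U^\alpha$ gives the desired open dense subset $U\subseteq Y$ and the morphism $\bigcap_i f_i\colon U\to\Grass(e,W)$; the gluing is automatic because the two formulas agree pointwise on overlaps.

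The sum statement is dual. Apply \Cref{pr:sptoop}\cref{item.sptoop.im} to each $f_i$ with a fixed source $W''$ of dimension $\ge d$, so on an open cover $\{U_\alpha\}$ we have morphisms $G_i^\alpha\colon U_\alpha\to\Hom(W'',W)$ with $f_i|_{U_\alpha}=\im G_i^\alpha$. Assemble them into
\[
G^\alpha\colon U_\alpha\;\longrightarrow\;\Hom((W'')^{\oplus r},W),\qquad G^\alpha(y)(w_1,\dots,w_r)=\sum_{i=1}^r G_i^\alpha(y)(w_i),
\]
so that $\im G^\alpha(y)=\sum_i f_i(y)$. Now \Cref{pr:optosp}\cref{item:1} and \Cref{item:3} produce the open dense locus where the rank (and hence the sum's dimension $e'$) is maximal and give the morphism into $\Grass(e',W)$. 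Irreducibility of $Y$ again pins down $e'$ globally and the pieces glue.

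The main conceptual obstacle is essentially bookkeeping: making sure the generic dimensions from the two open-cover constructions are well-defined integers and patch on overlaps. Irreducibility of $Y$ handles this, because on any two overlapping opens the maximal rank of the associated operators must agree (both compute the same pointwise-defined quantity on a common dense subset), so the locally defined morphisms into a common Grassmannian coincide where they overlap and glue to a single morphism on the union.
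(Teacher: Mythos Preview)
Your proposal is correct and follows essentially the same approach as the paper: reduce locally via \Cref{pr:sptoop} to realize each $f_i$ as a kernel (resp.\ image), assemble into a single map to $\Hom(W,(W')^{\oplus r})$ (resp.\ from $(W'')^{\oplus r}$), and apply \Cref{pr:optosp}. The paper's proof is terser, simply noting that the statements are local and then writing down the same direct-sum/block constructions, whereas you spell out the gluing and the role of irreducibility explicitly.
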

\begin{proof}
Both  \Cref{item:4} and  \Cref{item:7} are true if they are true locally so, after \Cref{pr:sptoop}, 
we can assume that there are morphisms 
$F_i:Y\to \Hom(W,W')$ and $G_i:Y\to \Hom(W'',W)$ such that $\ker F_i=f_i=\im G_{i}$. Now  
  \begin{equation*}
    \bigcap_i f_i \;= \;  \ker \big(F_1\oplus \cdots\oplus F_r: Y\to \Hom(W,W'^{\oplus r})\big),
  \end{equation*}
  and  
    \begin{equation*}
    \sum_i f_i \;= \;  \im \big( (G_1, \ldots, G_r): Y\to \Hom(W''^{\oplus r},W)\big),
  \end{equation*}
so \Cref{item:4} and \Cref{item:7} follow from  \Cref{pr:optosp}.
\end{proof}

\subsubsection{Generically large and generically small functions}
Let $X$ be a topological space. We say that a function $f:X \to \bR$ is {\sf generically large} 
(resp., {\sf generically small}) if $f^{-1}(c)$ 
is an open dense set  for some $c \in \bR$ and $f(x)<c$  (resp., $f(x)>c$) for all $x \notin f^{-1}(c)$.

In the setting of \cref{pr:optosp}, the function $\rank f(x)$ on $X$ is generically large (\emph{images are generically large}) and $\nullity f(x)$ is generically small (\emph{kernels are generically small}).

We also say that \emph{sums are generically large} and \emph{intersections are generically small} based on the facts in \cref{pr:genker}.

\subsubsection{}
We will apply these ideas to situations where we have two functions $f_1,f_2:X \to \bR$ with the following properties: $f_1$ is generically small (e.g., nullity); $f_2$ is generically large (e.g., rank); $f_1(x) \le f_2(x)$ for all $x$; $f_1(x)=f_2(x)$ on an open dense subset of $X$. It follows that $f_1(x)=f_2(x)$ for all $x$.

\section{The determinant of $R_\tau(z)$ and the space of quadratic relations for $Q_{n,k}(E,\tau)$ }\label{se.det}

In  \cref{sect.H.series} we will show,  for all $\tau\in(\bC-\bigcup_{m\geq 1}\frac{1}{m}\Lambda)\cup\frac{1}{n}\Lambda$, that the Hilbert 
series for $Q_{n,k}(E,\tau)$ is the same as that for the polynomial ring $\bC[x_0,\ldots,x_{n-1}]$.
In this section we prove that, for all $\tau\in\bC-(\frac{1}{2n}\Lambda-\frac{1}{n}\Lambda)$, the degree-two components of $Q_{n,k}(E,\tau)$ and  $\bC[x_0,\ldots,x_{n-1}]$ have the same dimension, 
namely $\binom{n+1}{2}$.  
Since $\rel_{n,k}(E,\tau)$ is the image of $R_\tau(\tau)$ (see \cref{sssec.def.rel.Qnk}), it suffices to show that the nullity of $R_\tau(\tau)$ is $\binom{n+1}{2}$. 
That is what we will do.

Since we showed that $Q_{n,k}(E,\tau)$ has the same Hilbert series as the polynomial ring on 
$n$ variables when $\tau \in \frac{1}{n}\Lambda$ in \cite[\S5]{CKS1},   
we only have to prove the result for $\tau \in \bC-\frac{1}{2n}\Lambda$.

\subsection{The limit of $R_\tau(m\tau+\zeta)$ as $\tau \to 0$}
As a function of $z$, $R_\tau(z)$ is not defined when $\tau \in \frac{1}{n}\Lambda$. Nevertheless, as we observed in
\cite[\S3.3.2]{CKS1}, the holomorphic function $R_\tau(\tau)$   on $\bC - \frac{1}{n}\Lambda$ extends in a unique way to a
 holomorphic function on $\bC$. We need a slightly more general result here. 

\begin{lemma}
\label{lem.U_tau}
Fix $\zeta\in\frac{1}{n}\Lambda$ and $m\in\bZ$.  As a function of $\tau$, the operator $R_{\tau}(m\tau+\zeta)$ is holomorphic on $\bC-\frac{1}{n}\Lambda$, and its singularities at $\frac{1}{n}\Lambda$ are removable; i.e., $R_{\tau}(m\tau+\zeta)$ extends in a unique
 way to a holomorphic function of $\tau$ on the entire complex plane.
\end{lemma}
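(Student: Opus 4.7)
The claim splits into two parts: holomorphicity of $\tau \mapsto R_\tau(m\tau + \zeta)$ on $\bC - \tfrac{1}{n}\Lambda$, and removability of its singularities at points of $\tfrac{1}{n}\Lambda$. The first part is essentially built into the defining formula \cref{eq:odr}. For $\tau \notin \tfrac{1}{n}\Lambda$, no denominator $\theta_{kr}(\tau)$ vanishes (the loci $\{\tau : \theta_{kr}(\tau) = 0\}$ taken together exhaust $\tfrac{1}{n}\Lambda$, since $\gcd(k,n) = 1$), and the factor $\theta_{j-i-r}(-z)$ in the denominator of each summand is cancelled by the identical factor in the prefactor $\prod_\alpha \theta_\alpha(-z)$. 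Hence $(\tau, z) \mapsto R_\tau(z)$ is jointly holomorphic on $(\bC - \tfrac{1}{n}\Lambda) \times \bC$, and its restriction along $z = m\tau + \zeta$ is holomorphic in $\tau$.

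For the removability claim, fix $\tau_0 \in \tfrac{1}{n}\Lambda$, write $\tau_0 = \tfrac{a}{n} + \tfrac{b}{n}\eta$, and set $\zeta_0 := m\tau_0 + \zeta \in \tfrac{1}{n}\Lambda$. With $\tau = \tau_0 + s$ we have $m\tau + \zeta = \zeta_0 + ms$. Iterating the transformation laws \cref{eq.tau+1/n} and \cref{eq.tau+1/n.eta} yields an identity of the form
\begin{equation*}
R_{\tau_0 + s}(w) \;=\; e(bw)\, U\, R_s(w)\, U^{-1}
\end{equation*}
for the fixed operator $U := S^a \otimes T^{-bk'} \in \End(V^{\otimes 2})$. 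Applying \cref{cor.R.transl.props} to shift the $z$-argument of $R_s$ by $\zeta_0$ contributes only the scalar $f(ms, \zeta_0, s)$ (holomorphic in $s$) and further fixed operator factors on the left and right. Substituting $w = \zeta_0 + ms$ and composing, we obtain
\begin{equation*}
R_\tau(m\tau + \zeta) \;=\; h(s)\, U_1\, R_s(ms)\, U_2,
\end{equation*}
with $h(s)$ holomorphic at $s = 0$ and $U_1, U_2$ operators independent of $s$. Thus the problem is reduced to showing that $R_s(ms)$ extends holomorphically across $s = 0$.

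For this reduced question, inspect $R_s(ms)(x_i \otimes x_j)$ summand by summand. The denominator $\theta_{kr}(s)$ is non-vanishing at $s = 0$ for every $r \neq 0$ (again since $\gcd(k,n) = 1$), so only the $r = 0$ summand can be singular. In that summand, cancelling the factor $\theta_{j-i}(-ms)$ shared by the prefactor and the denominator leaves only $\theta_0(s)$ in the denominator, with a simple zero at $s = 0$. If $i \neq j$, the remaining prefactor still contains $\theta_0(-ms)$, and both $\theta_0(-ms)$ and $\theta_0(s)$ vanish to first order at $s = 0$, so their ratio has finite limit $-m$. If $i = j$, the numerator contribution from the $r = 0$ term is $\theta_0((1-m)s)$; for $m \neq 1$ this has a simple zero at $s = 0$ cancelling $\theta_0(s)$, while for $m = 1$ the numerator is identically $\theta_0(0) = 0$, so the quotient is identically zero on a punctured neighborhood. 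In every case the prospective pole is removable, completing the proof.

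The main bookkeeping hurdle is assembling the various $\tau$- and $z$-translation formulas in the second paragraph to isolate $R_s(ms)$; once this reduction is in place, the final cancellation analysis at $s = 0$ is elementary.
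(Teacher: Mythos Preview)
Your proof is correct, but it takes a more roundabout route than the paper's. You first use the $\tau$-translation formulas \cref{eq.tau+1/n}, \cref{eq.tau+1/n.eta} and the $z$-translation formula \cref{cor.R.transl.props} to reduce the question at an arbitrary $\tau_0 \in \tfrac{1}{n}\Lambda$ to the single case of $R_s(ms)$ near $s=0$, and only then perform the cancellation analysis. The paper dispenses with this reduction entirely and works directly at a general $\tau_0$: it observes that whenever \emph{both} denominator factors $\theta_{j-i-r}(-m\tau-\zeta)$ and $\theta_{kr}(\tau)$ of a summand in \cref{eq:umts} vanish simultaneously, adding the two zero-locus congruences forces the numerator $\theta_{j-i+r(k-1)}((1-m)\tau-\zeta)$ to vanish as well, so every summand has at most a simple pole in $\tau$, which the simple zero of the prefactor $\prod_\alpha \theta_\alpha(-m\tau-\zeta)$ then cancels. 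Your reduction buys a notationally cleaner endgame (only the origin to consider, only the $r=0$ summand singular), at the cost of invoking the transformation machinery of \cref{lem.transf.props.R.tau.z} and \cref{cor.R.transl.props}; the paper's argument is a few lines and entirely self-contained. The underlying cancellation mechanism is the same in both approaches.
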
 \begin{proof}
 By definition, $R_{\tau}(m\tau+\zeta)(x_i\otimes x_j)$ is
\begin{equation}\label{eq:umts} 
  \frac{1}{\theta_1(0) \ldots \theta_{n-1}(0)} \Bigg(\prod_{i\in\bZ_n}\theta_{i}(-m\tau-\zeta)\Bigg)\sum_{r\in \bZ_n}
  \frac{\theta_{j-i+r(k-1)}((1-m)\tau-\zeta)}
  {\theta_{j-i-r}(-m\tau-\zeta)\theta_{kr}(\tau)}
  x_{j-r}\otimes x_{i+r}.
\end{equation}
  Suppose both theta functions in the denominator of a summand are zero at $\tau$. 
  Then $\tau=-\frac{kr}{n}$ and $-m\tau-\zeta=-\frac{j-i-r}{n}$ modulo $\frac{1}{n}\bZ +\bZ\eta$, so $(1-m)\tau-\zeta=-\frac{j-i+(k-1)}{n}$ modulo $\frac{1}{n}\bZ +\bZ\eta$; thus the numerator in the same summand is also zero; each summand therefore has at most 
  a pole of order one at $\tau$ and, since $m\tau+\zeta\in\frac{1}{n}\Lambda$,  
  such a pole is canceled out by the order-one zero in the term before the  $\Sigma$ sign.
 \end{proof}

\begin{proposition}\label{le.ulim0}
For all $m\in\bZ$,
\begin{equation}
\label{eq:lim.R(m.tau)}
\lim_{\tau \to 0} R_{\tau}(m\tau) \;=\; \sym_m
\end{equation}
where $\sym_m:V^{\otimes 2} \to V^{\otimes 2}$ is the skew-symmetrization operator\index{sym_m@$\sym_m$}  
\begin{equation*}
\sym_m(v \otimes v') \; :=\; v\otimes v' - m v'\otimes v.
\end{equation*}
\end{proposition}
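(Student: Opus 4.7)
The plan is to work directly with the explicit formula \cref{eq:umts} (with $\zeta=0$) and examine which summands survive in the limit. Applied to $x_i\otimes x_j$, the operator $R_\tau(m\tau)$ takes the form
\begin{equation*}
\frac{\prod_{\alpha\in\bZ_n}\theta_\alpha(-m\tau)}{\theta_1(0)\cdots\theta_{n-1}(0)}\sum_{r\in\bZ_n}\frac{\theta_{j-i+r(k-1)}((1-m)\tau)}{\theta_{j-i-r}(-m\tau)\,\theta_{kr}(\tau)}\,x_{j-r}\otimes x_{i+r}.
\end{equation*}
Since $\theta_\alpha$ vanishes on $-\tfrac{\alpha}{n}\eta+\tfrac{1}{n}\bZ+\bZ\eta$, the only factor in the prefactor that vanishes at $\tau=0$ is $\theta_0(-m\tau)$, with simple zero, so the prefactor behaves like $-m\,\theta_0'(0)\,\tau$ to first order. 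Thus every summand whose denominator is non-vanishing at $\tau=0$ contributes $0$ in the limit. The denominator vanishes exactly when $\theta_{kr}(\tau)$ or $\theta_{j-i-r}(-m\tau)$ is $\theta_0$ evaluated at a zero, i.e.\ precisely for $r=0$ (using $\gcd(k,n)=1$) or $r=j-i$ in $\bZ_n$.

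I would next compute the limit of these two critical summands separately, assuming first that $j\neq i$ so that the two critical indices are distinct. For $r=0$: the factor $\theta_0(-m\tau)$ cancels between prefactor and denominator, leaving
\begin{equation*}
\frac{\prod_{\alpha\neq 0}\theta_\alpha(-m\tau)}{\prod_{\alpha\neq 0}\theta_\alpha(0)}\cdot\frac{\theta_{j-i}((1-m)\tau)}{\theta_{j-i}(-m\tau)\,\theta_0(\tau)}\,x_j\otimes x_i,
\end{equation*}
whose limit is $\tfrac{\theta_{j-i}(0)}{\theta_{j-i}(0)\,\theta_0'(0)}\cdot(-m\theta_0'(0))\,x_j\otimes x_i=-m\,x_j\otimes x_i$ after recombining with the residual $-m\theta_0'(0)\tau$ coming from what remains of the prefactor. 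For $r=j-i$: the vanishing factor is now $\theta_0(-m\tau)$ in the denominator, which cancels directly against the prefactor; the rest is regular at $\tau=0$ and the limit computes to $x_i\otimes x_j$. Summing, we obtain $x_i\otimes x_j-m\,x_j\otimes x_i=\sym_m(x_i\otimes x_j)$, as desired.

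The only remaining case is $i=j$, where the two critical indices coincide at $r=0$ and the single surviving summand has a double zero in its denominator coming from $\theta_0(\tau)\theta_0(-m\tau)$. After canceling one $\theta_0(-m\tau)$ against the prefactor, we are left with
\begin{equation*}
\frac{\prod_{\alpha\neq 0}\theta_\alpha(-m\tau)}{\prod_{\alpha\neq 0}\theta_\alpha(0)}\cdot\frac{\theta_0((1-m)\tau)}{\theta_0(\tau)}\,x_i\otimes x_i,
\end{equation*}
whose limit, using $\theta_0(c\tau)\sim c\theta_0'(0)\tau$, is $(1-m)\,x_i\otimes x_i=\sym_m(x_i\otimes x_i)$. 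I expect the only subtlety to be bookkeeping around the $i=j$ coincidence and keeping track of which $\theta_0$'s cancel against the prefactor in each case; the computation itself is essentially a residue calculation at the simple zeros of $\theta_0$.
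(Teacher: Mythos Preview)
Your approach is correct and essentially identical to the paper's: isolate the $\theta_0(-m\tau)$ factor in the prefactor, observe that only the $r=0$ and $r=j-i$ summands survive, compute each via $\theta_0(c\tau)/\theta_0(\tau)\to c$, and handle $i=j$ separately. One small expository slip: in your $r=0$ discussion you say ``the factor $\theta_0(-m\tau)$ cancels between prefactor and denominator,'' but for $r=0$ the denominator is $\theta_{j-i}(-m\tau)\theta_0(\tau)$, which contains no $\theta_0(-m\tau)$; what actually happens (and what your subsequent ``recombining with the residual'' makes clear you understand) is that $\theta_0(-m\tau)$ from the prefactor pairs with $\theta_0(\tau)$ from the denominator to give the ratio $-m$ in the limit. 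The direct cancellation of $\theta_0(-m\tau)$ between prefactor and denominator occurs in the $r=j-i$ case, not the $r=0$ case. Fixing that sentence (or simply writing the full $r=0$ contribution with $\theta_0(-m\tau)$ still present and taking the limit of $\theta_0(-m\tau)/\theta_0(\tau)$) would make the argument clean.
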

\begin{proof}
The limit as $\tau \to 0$ of $R_{\tau}(m\tau)$ is the same as the limit as $\tau \to 0$ of the operator
\begin{equation*}
x_i \otimes x_j \; \mapsto \;  \theta_{0}(-m\tau) \sum_{r\in \bZ_n}   \frac{\theta_{j-i+r(k-1)}((1-m)\tau)}{\theta_{j-i-r}(-m\tau)\theta_{kr}(\tau)}
  x_{j-r}\otimes x_{i+r}.
\end{equation*}
As $\tau\to 0$, multiplication by $\theta_0(-m\tau)$ annihilates  those terms in the sum $\sum_{r\in \bZ_n}$ whose denominators do not vanish at $0$ so only the $r=0$ and $r=j-i$ terms contribute to $\lim_{\tau \to 0} R_\tau(m\tau)$. Hence 
$\lim_{\tau \to 0} R_\tau(m\tau) = \lim_{\tau \to 0} X_\tau(m\tau)$ where 
\begin{equation}
  \label{eq:xmt-ij}
X_\tau(m\tau)(x_i\otimes x_j) \; :=\; 
  \theta_0(-m\tau)\cdot
      \left(
    \frac{\theta_{k(j-i)}((1-m)\tau)}{\theta_0(-m\tau)\theta_{k(j-i)}(\tau)}x_i\otimes x_j
    \, + \, \frac{\theta_{j-i}((1-m)\tau)}{\theta_{j-i}(-m\tau)\theta_0(\tau)}
    x_j\otimes x_i
  \right)
\end{equation}
for $i \ne j$ and
\begin{equation}
  \label{eq:xmt-ii}
X_\tau(m\tau)(x_i\otimes x_i) \; :=\; 
    \theta_0(-m\tau)\cdot
    \frac{\theta_0((1-m)\tau)}{\theta_0(\tau)\theta_0(-m\tau)}
\,    x_i\otimes x_i.
\end{equation}

Assume $i\ne j$. The two $\theta_{k(j-i)}(\, \cdot \,)$ factors in the left-hand term of \Cref{eq:xmt-ij} cancel out as $\tau\to 0$ because 
both converge to $\theta_{k(j-i)}(0)$ which is non-zero; the two $\theta_0(-m\tau)$ terms also cancel out so the first term on
the right-hand side of \cref{eq:xmt-ij} converges to $x_i \otimes x_j$. 
Since $\theta_0(\tau)$ vanishes at $\tau=0$ with multiplicity 1, 
\begin{equation*}
\theta_0(\tau) = a_1\tau+a_2\tau^2+\cdots
\end{equation*}
with $a_1 \ne 0$. Hence the ratio $\frac{\theta_0(-m\tau)}{\theta_0(\tau)}$ converges to $-m$.  The two $\theta_{j-i}(\, \cdot \,)$ 
 factors in the right-hand term of \Cref{eq:xmt-ij} cancel out as $\tau\to 0$ because 
both converge to $\theta_{j-i}(0)$ which is non-zero. The second term on the right-hand side of \Cref{eq:xmt-ij} 
therefore converges to $-m x_j\otimes x_i$.
Thus, $X_\tau(m\tau) (x_i\otimes x_j)$ converges to $x_i\otimes x_j - m x_j\otimes x_i$ as $\tau \to 0$.

Assume $i=j$.
Similar analysis  shows that $X_\tau(m\tau) (x_i\otimes x_i)$ converges to $(1- m) x_i\otimes x_i$ as $\tau \to 0$. 

Combining the cases $i\ne j$ and $i=j$ gives the uniform result 
\begin{equation*}
X_{\tau}(m\tau)(x_i\otimes x_j) \; \longrightarrow \; x_i\otimes x_j - m x_j\otimes x_i
\end{equation*}
as $\tau \to 0$. The proof is now complete. 
\end{proof}

For each $\zeta\in\frac{1}{n}\Lambda$, we define\index{R_+(zeta), R_-(zeta)@$R_{+}(\zeta)$, $R_{-}(\zeta)$}
\begin{equation}\label{eq:mpm1}
	R_{+}(\zeta)\;:=\;\lim_{\tau\to 0}R_{\tau}(\tau+\zeta)\quad\text{and}\quad R_{-}(\zeta)\;:=\;\lim_{\tau\to 0}R_{\tau}(-\tau+\zeta).
\end{equation}

\begin{corollary}\label{cor.sym}
When $\zeta=0$, the operators in \Cref{eq:mpm1}  are $R_{\pm}(0)(x_i \otimes x_j)=x_i \otimes x_j \mp x_j \otimes x_i$.
\end{corollary}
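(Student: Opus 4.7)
The plan is to observe that this corollary is an immediate specialization of \Cref{le.ulim0}. Indeed, by the definitions in \cref{eq:mpm1},
\begin{equation*}
R_{+}(0)\;=\;\lim_{\tau\to 0} R_{\tau}(\tau)\qquad\text{and}\qquad R_{-}(0)\;=\;\lim_{\tau\to 0} R_{\tau}(-\tau),
\end{equation*}
which are precisely the cases $m=1$ and $m=-1$ of $\lim_{\tau\to 0} R_{\tau}(m\tau)$.

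Applying \Cref{le.ulim0} with $m=1$ yields $R_{+}(0) = \sym_{1}$, so
\begin{equation*}
R_{+}(0)(x_{i}\otimes x_{j})\;=\;x_{i}\otimes x_{j}-x_{j}\otimes x_{i},
\end{equation*}
and applying it with $m=-1$ yields $R_{-}(0)=\sym_{-1}$, so
\begin{equation*}
R_{-}(0)(x_{i}\otimes x_{j})\;=\;x_{i}\otimes x_{j}+x_{j}\otimes x_{i}.
\end{equation*}
Combining these two equalities gives the stated formula $R_{\pm}(0)(x_{i}\otimes x_{j})=x_{i}\otimes x_{j}\mp x_{j}\otimes x_{i}$. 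There is no real obstacle here: once \Cref{le.ulim0} is in hand, the corollary is a one-line substitution, and the only thing to check is that the sign conventions match (which they do, since $\sym_{m}(v\otimes v')=v\otimes v'-mv'\otimes v$ specializes correctly at $m=\pm 1$).
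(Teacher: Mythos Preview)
Your proposal is correct and follows exactly the same approach as the paper: apply \Cref{le.ulim0} with $m=1$ for $R_{+}(0)$ and with $m=-1$ for $R_{-}(0)$.
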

\begin{proof}
  Apply \Cref{le.ulim0} with $m=1$ for $R_+(0)$ and with $m=-1$ for $R_-(0)$. 
\end{proof}

\begin{lemma}\label{lem.rz.dim.ker.tors}
	For all $\zeta\in\frac{1}{n}\Lambda$, 
	\begin{align*}
		\ker R_{+}(\zeta)  & \; = \; \im R_{-}(-\zeta),
		\\
		 \im R_{+}(\zeta) & \; =\;  \ker R_{-}(-\zeta),
		\\ 
		\nullity  R_{+}(\zeta)   & \; = \; \tbinom{n+1}{2},
		\\
		\nullity  R_{-}(\zeta)  & \; = \; \tbinom{n}{2}.
	\end{align*}
\end{lemma}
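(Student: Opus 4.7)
The plan is to reduce everything to the case $\zeta = 0$, which is already handled by \Cref{cor.sym}: $R_+(0)$ is the antisymmetrization operator $v \otimes v' \mapsto v \otimes v' - v' \otimes v$ and $R_-(0)$ is the symmetrization $v \otimes v' \mapsto v \otimes v' + v' \otimes v$, so their kernels and images are $\Sym^2 V$ and $\bigwedge^{\!2} V$ (of dimensions $\binom{n+1}{2}$ and $\binom{n}{2}$), with kernel/image pairs swapped between the two operators. All four claimed identities therefore hold at $\zeta = 0$.

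For general $\zeta = \frac{a}{n} + \frac{b}{n}\eta$, set $A := T^{b}S^{ka}$ and apply \Cref{cor.R.transl.props}. With $z = \tau$ and shift $\zeta$, this gives
\[
R_\tau(\tau + \zeta) \;=\; f(\tau, \zeta, \tau)\,(I \otimes A)^{-1} R_\tau(\tau)\,(A \otimes I),
\]
while with $z = -\tau$ and shift $-\zeta$ it gives
\[
R_\tau(-\tau - \zeta) \;=\; f(-\tau, -\zeta, \tau)\,(I \otimes A)\, R_\tau(-\tau)\,(A^{-1} \otimes I).
\]
The explicit exponential formula for $f(z,\zeta,\tau)$ in \Cref{cor.R.transl.props} shows that both scalars $f(\pm\tau, \pm\zeta, \tau)$ have nonzero limits as $\tau \to 0$, so by \Cref{lem.U_tau} I pass to the limit to obtain
\[
R_+(\zeta) = c_+ (I \otimes A^{-1}) R_+(0) (A \otimes I),
\qquad
R_-(-\zeta) = c_- (I \otimes A) R_-(0) (A^{-1} \otimes I)
\]
for some nonzero constants $c_\pm$.

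From these factorizations the nullity claims are immediate: pre- and post-composition with invertible operators preserves rank, so $\nullity R_\pm(\zeta) = \nullity R_\pm(0)$, which were computed above. For the remaining two identities, the factorizations give
\[
\ker R_+(\zeta) = (A^{-1} \otimes I)(\Sym^2 V), \qquad \im R_-(-\zeta) = (I \otimes A)(\Sym^2 V),
\]
and similarly $\im R_+(\zeta) = (I \otimes A^{-1})(\bigwedge^{\!2} V)$ and $\ker R_-(-\zeta) = (A \otimes I)(\bigwedge^{\!2} V)$. The key observation is that $A \otimes A$ commutes with the swap $P$ on $V^{\otimes 2}$ and therefore preserves both $\Sym^2 V$ and $\bigwedge^{\!2} V$; consequently
\[
(A^{-1} \otimes I)(\Sym^2 V) = (A^{-1} \otimes I)(A \otimes A)(\Sym^2 V) = (I \otimes A)(\Sym^2 V),
\]
yielding $\ker R_+(\zeta) = \im R_-(-\zeta)$, and the identical manipulation with $\bigwedge^{\!2} V$ yields $\im R_+(\zeta) = \ker R_-(-\zeta)$. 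I do not anticipate any serious obstacle; the only points requiring a moment of care are the nonvanishing of $\lim_{\tau \to 0} f(\pm\tau, \pm\zeta, \tau)$ (immediate from the exponential formula) and the $(A \otimes A)$-invariance of $\Sym^2 V$ and $\bigwedge^{\!2} V$ (immediate from $[A \otimes A,\, P] = 0$).
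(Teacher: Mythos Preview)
Your proof is correct and follows essentially the same route as the paper: both reduce to $\zeta=0$ via \Cref{cor.R.transl.props}, take the limit using the nonvanishing of $f$, and exploit that $A\otimes A$ preserves the symmetric and alternating tensors. The only cosmetic difference is that the paper first proves $R_+(\zeta)R_-(-\zeta)=0$ (using that $C\otimes C$ commutes with $R_\pm(0)$) and then combines this inclusion with the rank equality, whereas you identify the kernels and images directly as $(A^{-1}\otimes I)(\Sym^2 V)=(I\otimes A)(\Sym^2 V)$, etc.; the underlying observation is the same.
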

\begin{proof}
By \cref{le.ulim0},  
\begin{equation*}
R_{+}(0)(x_i \otimes x_j)=x_i \otimes x_j - x_j \otimes x_i 
\quad \text{and} \quad
R_{-}(0)(x_i \otimes x_j)=x_i \otimes x_j +  x_j \otimes x_i.
\end{equation*}
Therefore $\im R_+(0) = \ker R_-(0)$, $\im R_-(0) = \ker R_+(0)$,  $\nullity  R_{+}(0) =\tbinom{n+1}{2}$, and 
$\nullity  R_{-}(0) =\tbinom{n}{2}$. Thus, the lemma is true when $\zeta=0$.

We now consider an arbitrary $\zeta=\frac{a}{n}+\frac{b}{n}\eta$. 
The argument in the next two paragraphs will show that $R_+(\zeta) R_-(-\zeta)=R_-(-\zeta) R_+(\zeta)=0$.

Define $C:=T^bS^{ka}$, $C':=T^{-b}S^{-ka}$, $A:=C \otimes I$, 
$A'=C' \otimes I$, $B:=I \otimes C^{-1}$, and $B':=I \otimes C'^{-1}$.  By \cref{cor.R.transl.props},
\begin{equation*}
R_+(\zeta) \; = \; \lim_{\tau \to 0} f(\tau,\zeta,\tau) B R_\tau(\tau) A \; = \; f(0,\zeta,0) B  \big(\lim_{\tau \to 0} R_\tau(\tau) \big) A
 \; = \; f(0,\zeta,0) B  R_+(0) A 
\end{equation*}
and
\begin{equation*}
R_-(-\zeta) \; = \; \lim_{\tau \to 0} f(-\tau,-\zeta,\tau) B' R_\tau(-\tau) A'  \; = \; f(0,-\zeta,0) B' \big(\lim_{\tau \to 0} R_\tau(-\tau) \big) A'
 \; = \; f(0,-\zeta,0) B'  R_-(0) A'
\end{equation*}
where $f(z,\zeta,\tau)$ does not vanish at any point in $ \bC \times \frac{1}{n}\Lambda \times \bC$.  To show that $R_+(\zeta)R_-(-\zeta)=0$
it suffices to show that  $R_+(0)AB'R_-(0)=0$; i.e., that $R_+(0)  (C \otimes  C'^{-1})  R_-(0)  =0$. 

Let $\ve=e\big(\frac{1}{n}\big)$.  Since $ST=\ve TS$, $ C'^{-1}=S^{ka}T^b=\ve^{kab}T^bS^{ka} = \ve^{kab}C$.  Therefore
\begin{equation*}
R_+(0)  (C \otimes  C'^{-1})  R_-(0) \;=\; R_+(0)(I \otimes \ve^{kab})   (C \otimes  C)  R_-(0)
\;=\;  (C \otimes  C)  R_+(0)(I \otimes \ve^{kab})   R_-(0);
\end{equation*}
we used the fact that $S^{\otimes 2}$ and $T^{\otimes 2}$ commute with $R_\tau(z)$ and hence with limits of $R_\tau(m \tau + \zeta)$. Since $I \otimes \ve^{kab}$ is a scalar multiple of the identity, it also commutes with $R_{\pm}(0)$. Hence $R_+(0) (C \otimes C'^{-1}) R_-(0)=0$.  This completes the proof that $R_+(\zeta)R_-(-\zeta)=0$. A similar argument shows that $R_-(-\zeta)R_+(\zeta)=0$.

Since $A$ and $B$ are invertible operators and $f(0,\zeta,0)$ is a non-zero scalar, $\rank R_+(\zeta)=\rank R_+(0)$.  Similarly, $\rank R_-(\zeta)=\rank R_-(0)$.  The lemma therefore holds for all $\zeta \in \frac{1}{n}\Lambda$.
\end{proof}

\subsection{The ranks of $R_\tau(\tau)$ and $R_\tau(-\tau)$}

\begin{lemma}\label{lem.rz.dim.ker}
  Assume $\tau\in\bC-\frac{1}{n}\Lambda$. For all $\zeta\in\frac{1}{n}\Lambda$, 
  \begin{equation}\label{eq.lem.rz.dim.ker.incl}
    \im R_{\tau}(\tau+\zeta) \; \subseteq \; \ker R_{\tau}(-\tau-\zeta),
    \qquad
    \im R_{\tau}(-\tau-\zeta) \; \subseteq \; \ker R_{\tau}(\tau+\zeta),
  \end{equation}
  \begin{equation}
    \label{eq.lem.rz.dim.equal}
    \nullity R_{\tau}(\tau+\zeta) \; \geq \; \tbinom{n+1}{2}, \qquad\text{and}\qquad
    \nullity R_{\tau}(-\tau-\zeta)) \; \geq \; \tbinom{n}{2}.
  \end{equation}
\end{lemma}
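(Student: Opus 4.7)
The plan is to prove the inclusions in \eqref{eq.lem.rz.dim.ker.incl} first (they are essentially immediate) and then to deduce the nullity bounds \eqref{eq.lem.rz.dim.equal} by a lower semi-continuity argument for rank, applied to the holomorphic families $\tau\mapsto R_{\tau}(\pm(\tau+\zeta))$.

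For the inclusions, I will invoke \cref{lem.transf.props.R.tau.z}\cref{item.prop.r.pm.tors} with $z=\tau+\zeta\in\tau+\frac{1}{n}\Lambda$, which gives
\begin{equation*}
R_{\tau}(\tau+\zeta)R_{\tau}(-\tau-\zeta)\;=\;0\;=\;R_{\tau}(-\tau-\zeta)R_{\tau}(\tau+\zeta).
\end{equation*}
Both inclusions in \eqref{eq.lem.rz.dim.ker.incl} follow at once; in particular, $\rank R_{\tau}(\tau+\zeta)+\rank R_{\tau}(-\tau-\zeta)\le n^{2}$ for every $\tau\in\bC-\frac{1}{n}\Lambda$.

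For the nullity bounds, I will pass to the holomorphic extensions of the two families to all of $\bC$ provided by \cref{lem.U_tau}. By \eqref{eq:mpm1} their values at $\tau=0$ are $R_{+}(\zeta)$ and $R_{-}(-\zeta)$, whose ranks are $\binom{n}{2}$ and $\binom{n+1}{2}$ respectively, courtesy of \cref{lem.rz.dim.ker.tors} applied to $\zeta$ and to $-\zeta$. Since rank is lower semi-continuous on the irreducible complex-analytic variety $\bC$, \cref{pr:optosp} produces dense open subsets $U_{a},U_{b}\subseteq\bC$ on which each family attains a maximal rank $a^{*}$, $b^{*}$; specialization at $\tau=0$ forces $a^{*}\ge\binom{n}{2}$ and $b^{*}\ge\binom{n+1}{2}$. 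The open set $U_{a}\cap U_{b}\cap\bigl(\bC-\frac{1}{n}\Lambda\bigr)$ is still dense, and there the rank sum inequality from the previous paragraph gives
\begin{equation*}
a^{*}+b^{*}\;\le\;n^{2}\;=\;\tbinom{n}{2}+\tbinom{n+1}{2},
\end{equation*}
forcing $a^{*}=\binom{n}{2}$ and $b^{*}=\binom{n+1}{2}$. Because pointwise rank is bounded by the generic rank on a connected component, $\rank R_{\tau}(\tau+\zeta)\le\binom{n}{2}$ and $\rank R_{\tau}(-\tau-\zeta)\le\binom{n+1}{2}$ for every $\tau\in\bC-\frac{1}{n}\Lambda$, which is exactly \eqref{eq.lem.rz.dim.equal}.

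The only subtle point I anticipate is the interplay of domains: the vanishing identities, and hence the rank-sum inequality, live on $\bC-\frac{1}{n}\Lambda$, whereas the ranks at $\tau=0$ are read off from the holomorphic extension to $\bC$. Density of the generic-rank loci inside $\bC-\frac{1}{n}\Lambda$ is what allows these two pieces of information to be matched up to pin down $a^{*}$ and $b^{*}$.
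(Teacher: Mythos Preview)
Your proof is correct and follows essentially the same route as the paper's: both derive \eqref{eq.lem.rz.dim.ker.incl} immediately from \cref{lem.transf.props.R.tau.z}\cref{item.prop.r.pm.tors}, and both obtain \eqref{eq.lem.rz.dim.equal} by combining the values at $\tau=0$ from \cref{lem.rz.dim.ker.tors} with lower semi-continuity of rank over the holomorphic extension from \cref{lem.U_tau}. The only cosmetic difference is that the paper squeezes one nullity and one rank to $\binom{n+1}{2}$ using the inclusion $\im R_{\tau}(-\tau-\zeta)\subseteq\ker R_{\tau}(\tau+\zeta)$, whereas you equivalently work with both ranks and the constraint $a^{*}+b^{*}\le n^{2}$.
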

\begin{proof}
  \cref{eq.lem.rz.dim.ker.incl} is an immediate consequence of \cref{lem.transf.props.R.tau.z}\cref{item.prop.r.pm.tors}.

  By \cref{lem.U_tau}, $R_{\tau}(\tau+\zeta)$ and $R_{\tau}(-\tau-\zeta)$ extend to holomorphic functions of $\tau$ on the whole complex plane. By \cref{lem.rz.dim.ker.tors},
  \begin{equation*}
  \nullity\big(\lim_{\tau \to 0} R_\tau(\tau+\zeta)\big) \;=\; \tbinom{n+1}{2} \;=\; \rank\big(\lim_{\tau \to 0} R_\tau(-\tau-\zeta)\big).
  \end{equation*}
  Since nullity is generically small and rank is generically large it follows that
  \begin{equation*}
    \nullity R_{\tau}(\tau+\zeta) \; \leq \; \tbinom{n+1}{2} \; \leq \; \rank R_{\tau}(-\tau-\zeta)
  \end{equation*}
  for generic $\tau$. But $\im R_{\tau}(-\tau-\zeta) \subseteq \ker R_{\tau}(\tau+\zeta)$ so
  \begin{equation*}
    \nullity R_{\tau}(\tau+\zeta) \; = \; \tbinom{n+1}{2} \; = \; \rank  R_{\tau}(-\tau-\zeta)
  \end{equation*}
  for generic $\tau$.  However, nullity is generically small and rank is generically large so
  \begin{equation*}
  \nullity R_{\tau}(\tau+\zeta) \; \geq \; \tbinom{n+1}{2} \; \geq \; \rank  R_{\tau}(-\tau-\zeta)
  \end{equation*}
  for \textit{all} $\tau \in\bC$.  This proves the first inequality in \cref{eq.lem.rz.dim.equal}. A similar argument proves the second.\footnote{Alternatively, the second inequality in \cref{eq.lem.rz.dim.equal} follows from the first because
    \begin{equation*}
      \nullity R_{\tau}(-\tau-\zeta)\;=\;\dim V^{\otimes 2}-\rank  R_{\tau}(-\tau-\zeta) \; \geq \; n^{2}-\tbinom{n+1}{2}\;=\;\tbinom{n}{2}.
    \end{equation*}}
\end{proof}

\subsection{The determinant of $R_\tau(z)$}
\label{ssec.det.r.z}
Since $R(z)$ is a theta operator of order $n^2$ with respect to $\Lambda$, $\det R(z)$ is a theta function of order $n^4$ with respect to $\Lambda$ (\cref{prop.hol.det.zeros}).  The next result improves on this.

\begin{proposition}
  \label{prop.qper} 
  \label{cor.zeros}
  The function $\det R_\tau(z)$ is a theta function of order $n^2$ with respect to $\frac{1}{n}\Lambda$.
  \begin{enumerate}
  \item\label{item.qper.real} $\det R_\tau\big(z+\frac 1n\big) = \det R_\tau(z)$.
  \item\label{item.qper.eta} $\det R_\tau\big(z+\frac 1n \eta\big) = b(z)^{n^2} \det R_\tau(z)$.
  \item\label{item.qper.zeros} $ \det R_\tau(z)$ has $n^2$ zeros in every fundamental parallelogram for $\frac{1}{n}\Lambda$.
  \end{enumerate}
\end{proposition}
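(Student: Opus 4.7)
The plan is to derive all three claims from the transformation identities \cref{eq.z+1/n} and \cref{eq.z+1/n eta} in \Cref{lem.transf.props.R.tau.z}, together with \Cref{lem.theta.fns} applied to the lattice $\tfrac{1}{n}\Lambda = \bZ\tfrac{1}{n} + \bZ\tfrac{1}{n}\eta$.

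For \cref{item.qper.real}, I would take the determinant of both sides of
\begin{equation*}
R_\tau\big(z+\tfrac{1}{n}\big) \; = \; (-1)^{n-1}(I\otimes S^{-k})\,R_\tau(z)\,(S^k\otimes I).
\end{equation*}
Since $\det(S^k\otimes I)\det(I\otimes S^{-k}) = \det(S^k)^n\det(S^{-k})^n = 1$, and $((-1)^{n-1})^{n^2} = 1$ in both parities of $n$, the scalar factor collapses to $1$ and I get exactly $\det R_\tau(z+\tfrac{1}{n}) = \det R_\tau(z)$. For \cref{item.qper.eta}, the same recipe applied to
\begin{equation*}
R_\tau\big(z+\tfrac{1}{n}\eta\big) \; = \; b(z)(I\otimes T^{-1})\,R_\tau(z)\,(T\otimes I)
\end{equation*}
works: the two determinants involving $T$ cancel (as $T$ is a permutation operator, and the $I\otimes T^{-1}$ and $T\otimes I$ contribute reciprocal factors of $\det(T)^n$), leaving the scalar $b(z)^{n^2}$.

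Together, \cref{item.qper.real} and \cref{item.qper.eta} show that $\det R_\tau(z)$ is a theta function with respect to the lattice $\frac{1}{n}\Lambda$ with quasi-periodicity data
\begin{equation*}
\det R_\tau(z+\eta_1)\;=\;\det R_\tau(z),\qquad \det R_\tau(z+\eta_2)\;=\;e(-n^3z+n^2\tau+\tfrac{n^2}{2}-\tfrac{n^2(n+1)}{2}\eta)\det R_\tau(z),
\end{equation*}
where $\eta_1 = \tfrac{1}{n}$ and $\eta_2 = \tfrac{1}{n}\eta$, using the explicit formula for $b(z)$ from \cref{eq:defn.b(z)}. Comparing with the form assumed in \Cref{lem.theta.fns}, the constants are $a = 0$ and $c = n^3$, so this exhibits $\det R_\tau(z)$ as a theta function of order $c\eta_1 - a\eta_2 = n^2$ with respect to $\tfrac{1}{n}\Lambda$.

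For \cref{item.qper.zeros}, I would then invoke \Cref{lem.theta.fns} directly: provided $\det R_\tau(z)$ is not identically zero, part (2) of that lemma gives exactly $n^2$ zeros (counted with multiplicity) in every fundamental parallelogram for $\tfrac{1}{n}\Lambda$. To rule out vanishing on all of $\bC$ I can simply point to \Cref{rmk.R(0)}\cref{item.prop.r.zero}, which gives $R_\tau(0) = I\otimes I$, so $\det R_\tau(0) = 1 \neq 0$. No step here looks obstructive; the only mild subtlety is the sign bookkeeping $((-1)^{n-1})^{n^2} = 1$ and checking that the conjugating factors $S^{\pm k}$ and $T^{\pm 1}$ really do have determinants whose contributions cancel, but both are immediate.
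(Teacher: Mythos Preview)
Your proposal is correct and follows essentially the same route as the paper's proof: both take determinants of the identities \cref{eq.z+1/n} and \cref{eq.z+1/n eta}, observe that the conjugating operators contribute cancelling determinant factors and the sign $((-1)^{n-1})^{n^2}=1$, and then feed the resulting quasi-periodicity into \Cref{lem.theta.fns} with $\eta_1=\tfrac{1}{n}$, $\eta_2=\tfrac{1}{n}\eta$, $a=0$, $c=n^3$. Your version is slightly more careful in one respect: you explicitly rule out $\det R_\tau(z)\equiv 0$ via $R_\tau(0)=I\otimes I$, a point the paper's proof leaves implicit.
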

\begin{proof}
By  \cref{eq.z+1/n} and  \cref{eq.z+1/n eta}, 
$R_\tau\big( z+\tfrac{1}{n}\big)=(-1)^{n-1} (I \otimes S^{-k}) \, R_\tau(z) \, (S^k \otimes I)$ and 
$R_\tau\big( z+\tfrac{1}{n}\eta\big) =b(z) (I \otimes T^{-1}) \, R_\tau(z) \, (T \otimes I)$.
Since $\dim(V^{\otimes 2})=n^2$ implies 
$\det(-I \otimes I)^{n-1} =  1$, it follows that $\det R_\tau\big(z+\frac 1n\big) = \det R_\tau(z)$ and $\det R_\tau\big(z+\tfrac{1}{n} \eta \big)= b(z)^{n^2} \det R_\tau(z)$.

\cref{item.qper.zeros}
We note that $b(z)^{n^2}=e(-n^3z-B)$ as functions of $z$ for a suitable $B \in \bC$.
Applying \cref{lem.theta.fns} to the function $f(z)=\det R(z)$, with $\eta_1=\frac{1}{n}$, $\eta_2=\frac{1}{n}\eta$, 
$a=b=0$, $c=n^3$, and $d=B$, we see that the number of zeros (counted with multiplicity) in each fundamental parallelogram for 
$\frac{1}{n}\Lambda$ is  $c \eta_1 - a\eta_2= n^3\times \frac{1}{n} = n^2$. 
\end{proof}

\begin{theorem}
\label{prop.nullities}
If $\tau \in \bC- \frac{1}{2n}\Lambda$, then
\begin{enumerate}
\item
\label{item.cor.r.rk.isom}  
$R_\tau(z)$ is an isomorphism if and only if $z \notin \pm \tau + \frac{1}{n}\Lambda$;
\item
\label{item.cor.r.rk.ex} 
$\im R_\tau(\tau+\zeta)=\ker R_\tau(-\tau-\zeta)$ and   $\im R_\tau(-\tau-\zeta)=\ker R_\tau(\tau+\zeta)$ for all $\zeta \in \frac{1}{n}\Lambda$;
\item
\label{item.cor.r.rk.det}  
if $p \in \bC$, then 
$$
\mult_p(\det R_{\tau}(z)) \;= \; \nullity R_{\tau}(p)  \; = \;
\begin{cases}
\tbinom{n+1}{2} & \text{if $p \in \tau+\frac{1}{n}\Lambda$},
      \\
\tbinom{n}{2} & \text{if $p \in -\tau+\frac{1}{n}\Lambda$}   \phantom{\bigg\vert},
      \\
0 & \text{otherwise;}
\end{cases}
$$
\end{enumerate}
If $\tau \in \frac{1}{2n}\Lambda-\frac{1}{n}\Lambda$, then $\nullity R_\tau(\tau) =\nullity R_\tau(-\tau) \ge  \tbinom{n+1}{2}$.
\end{theorem}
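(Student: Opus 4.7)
The plan is to exploit the degree of $\det R_\tau(z)$ as a theta function in order to pin down exactly where the nullities are concentrated. By \cref{prop.qper}, $\det R_\tau(z)$ is a theta function of order $n^2$ with respect to $\frac{1}{n}\Lambda$, so it has exactly $n^2$ zeros (counted with multiplicity) in every fundamental parallelogram for $\frac{1}{n}\Lambda$. \Cref{pr.hol-op} gives the pointwise bound $\mult_p(\det R_\tau(z)) \geq \nullity R_\tau(p)$, and \cref{lem.rz.dim.ker} supplies the estimates $\nullity R_\tau(\tau+\zeta) \geq \tbinom{n+1}{2}$ and $\nullity R_\tau(-\tau-\zeta) \geq \tbinom{n}{2}$ for every $\zeta \in \frac{1}{n}\Lambda$. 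Together with the identity $\tbinom{n+1}{2}+\tbinom{n}{2}=n^2$, these facts drive the entire proof.

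Now suppose $\tau \in \bC - \frac{1}{2n}\Lambda$. Then $2\tau \notin \frac{1}{n}\Lambda$, so $\tau$ and $-\tau$ give distinct cosets modulo $\frac{1}{n}\Lambda$; choose representatives $p_+\in \tau+\frac{1}{n}\Lambda$ and $p_-\in -\tau+\frac{1}{n}\Lambda$ lying in a single fundamental parallelogram for $\frac{1}{n}\Lambda$. Summing the multiplicity inequality over that parallelogram gives
\[
n^2 \;=\; \sum_{p}\mult_p(\det R_\tau(z)) \;\geq\; \mult_{p_+}(\det R_\tau(z))+\mult_{p_-}(\det R_\tau(z)) \;\geq\; \tbinom{n+1}{2}+\tbinom{n}{2} \;=\; n^2.
\]
Every inequality must therefore be an equality, and $\det R_\tau(z)$ can have no zero outside $\pm\tau+\frac{1}{n}\Lambda$. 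This immediately yields part (1), proves part (3), and fixes $\dim \im R_\tau(\pm\tau+\zeta)$ as the value complementary to the corresponding nullity. For part (2), \cref{lem.transf.props.R.tau.z}\cref{item.prop.r.pm.tors} already gives $\im R_\tau(-\tau-\zeta)\subseteq \ker R_\tau(\tau+\zeta)$, and the two sides now both have dimension $\tbinom{n+1}{2}=n^2-\tbinom{n}{2}$, forcing equality; the other identity is symmetric.

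For the last assertion, assume $\tau \in \frac{1}{2n}\Lambda - \frac{1}{n}\Lambda$, so that $2\tau\in\frac{1}{n}\Lambda$ while $\tau\notin\frac{1}{n}\Lambda$. Then $-\tau = \tau+\zeta_0$ with $\zeta_0:=-2\tau\in\frac{1}{n}\Lambda$, so applying the first bound of \cref{lem.rz.dim.ker} once with $\zeta=0$ and once with $\zeta=\zeta_0$ gives $\nullity R_\tau(\tau)\geq \tbinom{n+1}{2}$ and $\nullity R_\tau(-\tau)\geq \tbinom{n+1}{2}$. The equality of the two nullities is then a consequence of \cref{lem.transf.props.R.tau.z}\cref{item.prop.rank.r.tw}, which says that $\rank R_\tau(z+\zeta)=\rank R_\tau(z)$ for every $\zeta\in\frac{1}{n}\Lambda$, applied to $z=\tau$ and $\zeta=\zeta_0$.

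There is essentially no serious obstacle here: the key fact is numerical, namely that the two generic-case lower bounds $\tbinom{n+1}{2}$ and $\tbinom{n}{2}$ add to exactly the order $n^2$ of $\det R_\tau(z)$ as a theta function with respect to $\frac{1}{n}\Lambda$, so the counting argument pins everything down as soon as the two cosets $\pm\tau+\frac{1}{n}\Lambda$ are distinct. In the degenerate case $2\tau\in\frac{1}{n}\Lambda$ those cosets collapse and the stronger lower bound at $-\tau$ is simply a translated instance of the bound at $\tau$.
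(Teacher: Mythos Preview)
Your proof is correct and follows essentially the same route as the paper's own argument: both combine the zero count for $\det R_\tau(z)$ from \cref{prop.qper}, the bound $\mult_p\ge\nullity$ from \cref{pr.hol-op}, and the nullity estimates from \cref{lem.rz.dim.ker}, then use the numerical coincidence $\tbinom{n+1}{2}+\tbinom{n}{2}=n^2$ to force all inequalities to be equalities. The only cosmetic difference is that the paper deduces part \cref{item.cor.r.rk.det} from part \cref{item.cor.r.rk.ex} together with \cref{lem.transf.props.R.tau.z}\cref{item.prop.rank.r.tw}, whereas you read it off directly from the counting; to make that step airtight you should note explicitly that both $\mult_p(\det R_\tau(z))$ and $\nullity R_\tau(p)$ are invariant under translation of $p$ by $\frac{1}{n}\Lambda$ (the first by the quasi-periodicity in \cref{prop.qper}, the second by \cref{lem.transf.props.R.tau.z}\cref{item.prop.rank.r.tw}).
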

\begin{proof}
If $\tau \in \bC- \frac{1}{2n}\Lambda$, then $\tau$ and $-\tau$ are distinct points modulo $\frac{1}{n}\Lambda$.

\cref{lem.rz.dim.ker,pr.hol-op} imply that
\begin{equation*}
\mult_{\tau}(\det R_\tau(z)) \; \ge \; \nullity R_\tau(\tau) \; \ge \; \tbinom{n+1}{2}
\end{equation*} 
and 
\begin{equation*}
\mult_{-\tau}(\det R_\tau(z))  \; \ge \; \nullity R_\tau(-\tau) \;\ge \; \tbinom{n}{2}.
\end{equation*}
But  $ \det R_\tau(z)$ has exactly $n^2= \binom{n+1}{2}+ \binom{n}{2}$ zeros in every fundamental parallelogram for 
$\frac{1}{n}\Lambda$ so the displayed inequalities are equalities, which then implies \cref{item.cor.r.rk.isom} and \cref{item.cor.r.rk.ex}.
Since the rank of $R_\tau(z)$ is the same as that of $R_\tau(z+\zeta)$ (\cref{lem.transf.props.R.tau.z}\cref{item.prop.rank.r.tw}),
part \cref{item.cor.r.rk.det} follows from \cref{item.cor.r.rk.ex} and the inclusions in \cref{eq.lem.rz.dim.ker.incl}.

Finally, if $\tau \in \frac{1}{2n}\Lambda-\frac{1}{n}\Lambda$, then $-\tau=\tau+\zeta$ for some $\zeta \in \frac{1}{n}\Lambda$ so
$ R_\tau(\tau)$ and $R_\tau(-\tau)$ have the same nullity by \cref{lem.transf.props.R.tau.z}\cref{item.prop.rank.r.tw}, and this is $\ge
\tbinom{n+1}{2}$ by \cref{eq.lem.rz.dim.equal}
\end{proof}

\begin{proposition}\label{pr.det-zeros}
For all $\tau \in \bC-\frac{1}{n}\Lambda$, 
 \begin{equation}
   \label{eq:det.U(z)}
   \det R_{\tau}(z) \;=\; 
 \left(  \prod_{\alpha\in\bZ_{n}}\frac{\theta_\alpha (-z-\tau)}{\theta_{\alpha}(-\tau)}\right)^{\frac{n(n-1)}2} 
\left( \prod_{\alpha\in\bZ_{n}}\frac{\theta_\alpha (-z+\tau)}{\theta_{\alpha}(\tau)}\right)^{\frac{n(n+1)}2}.
 \end{equation}
In particular, $\det R_{\tau}(z)$ does not depend on $k$.
\end{proposition}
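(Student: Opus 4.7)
The plan is to show that both sides of \Cref{eq:det.U(z)} are theta functions with respect to $\frac{1}{n}\Lambda$ having the same quasi-periodicity factors and the same zeros with multiplicities; their ratio is then a doubly periodic holomorphic function and hence constant, and the constant is pinned down by evaluating at $z=0$. I would first reduce to the case $\tau \in \bC - \frac{1}{2n}\Lambda$, where $\tau$ and $-\tau$ represent distinct cosets modulo $\frac{1}{n}\Lambda$; the remaining case $\tau \in \frac{1}{2n}\Lambda - \frac{1}{n}\Lambda$ then follows by analytic continuation in $\tau$, since both sides depend holomorphically on $\tau \in \bC - \frac{1}{n}\Lambda$ for fixed $z$.

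Write $f(z)$ for the right-hand side of \Cref{eq:det.U(z)}; since $\tau \notin \frac{1}{n}\Lambda$, the denominators $\theta_\alpha(\pm\tau)$ are nonzero, so $f$ is holomorphic on $\bC$. Because the unordered product $\prod_{\alpha \in \bZ_n} \theta_\alpha(w)$ is invariant under the reindexing $\alpha \mapsto \alpha + 1$ that occurs in the quasi-periodicity relation $\theta_\alpha(w+\tfrac{1}{n}\eta) = e(-w-\tfrac{1}{2n}+\tfrac{n-1}{2n}\eta)\theta_{\alpha+1}(w)$, each factor $\prod_\alpha \theta_\alpha(-z \mp \tau)$ is a theta function of order $1$ with respect to $\frac{1}{n}\Lambda$ having a single simple zero in a fundamental parallelogram, located at $z \equiv \mp \tau \pmod{\frac{1}{n}\Lambda}$. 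Taking the prescribed powers, $f$ is a theta function with a zero of multiplicity $\binom{n}{2}$ at $z \equiv -\tau$ and of multiplicity $\binom{n+1}{2}$ at $z \equiv \tau$, and no other zeros, so its order with respect to $\frac{1}{n}\Lambda$ is $\binom{n}{2}+\binom{n+1}{2} = n^2$. A direct calculation of the quasi-periodicity factor of $f$ under $z \mapsto z + \frac{1}{n}\eta$, using the identities $\binom{n}{2}+\binom{n+1}{2} = n^2$ and $\binom{n+1}{2}-\binom{n}{2} = n$ to combine the contributions from the two product factors, is expected to reproduce exactly $b(z)^{n^2}$; the factor under $z \mapsto z + \frac{1}{n}$ should be trivial. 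Hence $f$ will have the same quasi-periodicity data as $\det R_\tau(z)$ recorded in \Cref{cor.zeros}.

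By \Cref{prop.nullities}\cref{item.cor.r.rk.det}, the zeros of $\det R_\tau(z)$ in a fundamental parallelogram for $\frac{1}{n}\Lambda$, with their multiplicities, agree with those of $f(z)$. Thus $\det R_\tau(z) / f(z)$ is a holomorphic and $\frac{1}{n}\Lambda$-periodic function on $\bC$, hence constant. To compute the constant, evaluate at $z=0$: by \Cref{rmk.R(0)}\cref{item.prop.r.zero} we have $R_\tau(0) = I \otimes I$, so $\det R_\tau(0) = 1$, while inspection shows $f(0) = 1$ since each normalized ratio $\theta_\alpha(\mp\tau)/\theta_\alpha(\mp\tau)$ appearing in $f(0)$ equals $1$. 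The main obstacle is the bookkeeping for the quasi-periodicity calculation: one must combine the factors $e(-nz \mp n\tau + \tfrac{1}{2} - \tfrac{n+1}{2}\eta)$ raised respectively to the powers $\binom{n}{2}$ and $\binom{n+1}{2}$ and confirm that their product equals $b(z)^{n^2} = e(-n^3 z + n^2\tau + \tfrac{n^2}{2} - \tfrac{n^2(n+1)}{2}\eta)$. Once this is verified, the remainder is a clean counting argument for the zeros, and independence of $k$ is automatic from the form of the right-hand side.
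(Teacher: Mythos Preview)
Your proposal is correct and follows essentially the same route as the paper's proof: reduce to $\tau\notin\frac{1}{2n}\Lambda$, verify that both sides have the same quasi-periodicity with respect to $\frac{1}{n}\Lambda$ and the same zeros with multiplicities (via \Cref{cor.zeros} and \Cref{prop.nullities}\cref{item.cor.r.rk.det}), conclude the ratio is constant, pin it down using $R_\tau(0)=I\otimes I$, and extend by continuity. The only difference is cosmetic: you spell out the quasi-periodicity computation for the right-hand side explicitly, whereas the paper simply cites the properties of the $\theta_\alpha$'s.
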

\begin{proof}
We first prove this under the assumption that $\tau \notin \frac{1}{2n}\Lambda$.

  Let $D(z)$ denote the function on the right-hand side of \cref{eq:det.U(z)}. Both $\det R(z)$ and $D(z)$ are holomorphic functions of $z$.
 It follows from  \Cref{prop.qper} and \cite[Prop.~2.6]{CKS1} that 
$\det R(z)$ and $D(z)$  have the same quasi-periodicity properties with respect to the lattice $\frac{1}{n}\Lambda$.
 It follows from \Cref{prop.nullities}\cref{item.cor.r.rk.det}  and \cite[Prop.~2.6]{CKS1} that $\det R(z)$ and $D(z)$ have the same zeros with the same multiplicities; the ratio $(\det R(z))/D(z)$ is therefore a meromorphic function on the elliptic curve $\bC/\frac{1}{n}\Lambda$ with neither zeros nor poles, and therefore a constant. Since $R(0)=I\otimes I$ by \cref{rmk.R(0)}, $\det R(0)=1=D(0)$. So the constant is $1$.
  
The result is therefore true when $\tau \notin \frac{1}{2n}\Lambda$. 
By continuity, it also holds when $\tau \in  \frac{1}{2n}\Lambda -  \frac{1}{n}\Lambda$.
\end{proof}

\begin{corollary}\label{cor.R.isom}
Let $\tau\in\bC-\frac{1}{n}\Lambda$. Then $R_{\tau}(z)$ is an isomorphism if and only if $z\notin\pm\tau+\frac{1}{n}\Lambda$.
\end{corollary}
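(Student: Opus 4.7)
The plan is to deduce the corollary directly from the explicit determinant formula in \cref{pr.det-zeros}, since $R_\tau(z) \in \End(V^{\otimes 2})$ is an isomorphism if and only if $\det R_\tau(z) \neq 0$.

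First I would observe that \cref{pr.det-zeros} gives
\begin{equation*}
\det R_\tau(z) \;=\; \left(\prod_{\alpha \in \bZ_n} \frac{\theta_\alpha(-z-\tau)}{\theta_\alpha(-\tau)}\right)^{\binom{n}{2}} \left(\prod_{\alpha \in \bZ_n} \frac{\theta_\alpha(-z+\tau)}{\theta_\alpha(\tau)}\right)^{\binom{n+1}{2}}
\end{equation*}
for all $\tau \in \bC - \frac{1}{n}\Lambda$. Because $\tau \notin \frac{1}{n}\Lambda$ and the zero set of $\theta_\alpha$ is $-\frac{\alpha}{n}\eta + \frac{1}{n}\bZ + \bZ\eta$ (by \cite[Prop.~2.6]{CKS1}), none of the scalars $\theta_\alpha(\pm\tau)$ in the denominator vanishes, so the formula is well defined and $\det R_\tau(z) = 0$ if and only if $\theta_\alpha(-z\pm\tau) = 0$ for some $\alpha \in \bZ_n$ and some choice of sign.

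Next I would unpack when $\theta_\alpha(-z-\tau) = 0$ and $\theta_\alpha(-z+\tau) = 0$ hold. Using the description of the zeros of $\theta_\alpha$ again, the first equation is equivalent to $z \in -\tau - \frac{\alpha}{n}\eta + \frac{1}{n}\bZ + \bZ\eta$ and the second to $z \in \tau - \frac{\alpha}{n}\eta + \frac{1}{n}\bZ + \bZ\eta$. As $\alpha$ ranges over $\bZ_n$, the union of the sets $-\frac{\alpha}{n}\eta + \frac{1}{n}\bZ + \bZ\eta$ is exactly $\frac{1}{n}\Lambda$. Hence the vanishing locus of $\det R_\tau(z)$ is precisely $(\tau + \frac{1}{n}\Lambda) \cup (-\tau + \frac{1}{n}\Lambda) = \pm\tau + \frac{1}{n}\Lambda$, which gives the corollary.

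There is no real obstacle here: the heavy lifting — establishing the determinant formula — was done in \cref{pr.det-zeros}, and \cref{prop.nullities}\cref{item.cor.r.rk.isom} already handled the case $\tau \notin \frac{1}{2n}\Lambda$. The corollary extends that conclusion to the remaining finite set of cosets $\tau \in \frac{1}{2n}\Lambda - \frac{1}{n}\Lambda$, where $-\tau \equiv \tau \pmod{\frac{1}{n}\Lambda}$ and so $\pm\tau + \frac{1}{n}\Lambda$ collapses to a single coset; in these exceptional cases the determinant formula still pinpoints this coset as the full zero locus.
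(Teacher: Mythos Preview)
Your proof is correct and follows the same approach as the paper: both deduce the corollary directly from the explicit determinant formula in \cref{pr.det-zeros} by reading off the zero locus. You simply spell out in more detail why the zeros of $\prod_\alpha \theta_\alpha(-z\pm\tau)$ constitute exactly $\pm\tau + \frac{1}{n}\Lambda$, whereas the paper leaves this implicit.
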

\begin{proof}
\cref{pr.det-zeros} tells us that $\det R_{\tau}(z)=0$ if and only if $z\in\pm\tau+\frac{1}{n}\Lambda$. Thus the result follows.
\end{proof}

\subsection{The space of quadratic relations for $Q_{n,k}(E,\tau)$ has dimension $\tbinom{n}{2}$}\label{subsec.det.alt.proof}

This subsection completes the proof that the dimension of $\rel_{n,k}(E,\tau)$ is the same as that of the space of quadratic relations for the polynomial ring on $n$ variables when $\tau\notin(\frac{1}{2n}\Lambda-\frac{1}{n}\Lambda)$.

\subsubsection{Definition of $\rel_{n,k}(E,\tau)$ for all $\tau \in \bC$}
\label{sssec.def.rel.Qnk}
Since \cref{lem.U_tau} ensures that $R_{\tau}(\tau)$ extends in a unique way to a holomorphic function on $\bC$, we can now define, for all $\tau\in\bC$,\index{rel_n,k(E,tau)@$\rel_{n,k}(E,\tau)$}\index{Q_n,k(E,tau)@$Q_{n,k}(E,\tau)$}
\begin{align*}
\rel_{n,k}(E,\tau) & \; := \; \im R_\tau(\tau), 
\\
Q_{n,k}(E,\tau) & \; := \;  \frac{TV}{(\rel_{n,k}(E,\tau))}
\end{align*}
where, if $\tau\in\frac{1}{n}\Lambda$, then $R_{\tau}(\tau)$ is not defined so it should be regarded as the limit $R_{+}(\tau)$ in \cref{eq:mpm1}.\footnote{There are some other ways to extend the definition of $\rel_{n,k}(E,\tau)$ to all $\tau \in \bC$; see \cite[\S3.3]{CKS1} for more discussion.}

\begin{theorem}
\label{thm.quad.relns}
For all $\tau \in \bC-(\frac{1}{2n}\Lambda-\frac{1}{n}\Lambda)$, $\dim\rel_{n,k}(E,\tau) = \tbinom{n}{2}$.
\end{theorem}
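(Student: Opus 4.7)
The proof decomposes naturally into two cases, corresponding to the two pieces of the domain $\bC-(\tfrac{1}{2n}\Lambda-\tfrac{1}{n}\Lambda)=(\bC-\tfrac{1}{2n}\Lambda)\cup\tfrac{1}{n}\Lambda$. In both cases the strategy is the same: since $\rel_{n,k}(E,\tau)=\im R_\tau(\tau)$ lives in $V^{\otimes 2}$ of dimension $n^2=\binom{n+1}{2}+\binom{n}{2}$, showing $\dim\rel_{n,k}(E,\tau)=\binom{n}{2}$ is equivalent to showing $\nullity R_\tau(\tau)=\binom{n+1}{2}$.

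\textbf{Case 1:} $\tau\in\bC-\tfrac{1}{2n}\Lambda$ (in particular $\tau\notin\tfrac{1}{n}\Lambda$). Here $R_\tau(\tau)$ is defined by the explicit formula \cref{eq:odr}, and \cref{prop.nullities}\cref{item.cor.r.rk.det} applied with $p=\tau$ (which lies in $\tau+\tfrac{1}{n}\Lambda$) gives $\nullity R_\tau(\tau)=\binom{n+1}{2}$ directly, so $\dim\rel_{n,k}(E,\tau)=\binom{n}{2}$.

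\textbf{Case 2:} $\tau\in\tfrac{1}{n}\Lambda$. By the convention established in \cref{sssec.def.rel.Qnk}, $R_\tau(\tau)$ is interpreted as $R_+(\tau)=\lim_{\tau'\to 0}R_{\tau'}(\tau'+\tau)$. Then \cref{lem.rz.dim.ker.tors}, applied with $\zeta=\tau\in\tfrac{1}{n}\Lambda$, yields $\nullity R_+(\tau)=\binom{n+1}{2}$, hence again $\dim\rel_{n,k}(E,\tau)=\binom{n}{2}$.

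Combining the two cases covers the entire set $\bC-(\tfrac{1}{2n}\Lambda-\tfrac{1}{n}\Lambda)$, completing the proof. There is no real obstacle: all the substantive work has already been done in \cref{lem.rz.dim.ker.tors} (which analyzed the $\tau\to 0$ limit via the skew-symmetrization operators $\sym_m$ and used the Heisenberg translation symmetries of $R_\tau(z)$ from \cref{lem.transf.props.R.tau.z}) and in the determinant computation culminating in \cref{prop.nullities}. The present theorem is purely an assembly of these ingredients at the single point $z=\tau$.
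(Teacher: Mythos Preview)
Your proof is correct and matches the paper's approach exactly for the main case $\tau\in\bC-\tfrac{1}{2n}\Lambda$, invoking \cref{prop.nullities}\cref{item.cor.r.rk.det}. For the boundary case $\tau\in\tfrac{1}{n}\Lambda$ the paper instead cites an earlier companion paper (where the full Hilbert series was computed for such $\tau$), whereas you use the internal \cref{lem.rz.dim.ker.tors}; your route is equally valid and has the minor advantage of being self-contained.
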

\begin{proof}
We proved this in \cite[\S5]{CKS1} for $\tau \in \frac{1}{n}\Lambda$. Suppose $\tau \in \bC-\frac{1}{2n}\Lambda$. Then 
$\rel_{n,k}(E,\tau)$ is the image of $R_{\tau}(\tau)$ and, by \cref{prop.nullities}\cref{item.cor.r.rk.det},
\begin{equation*}
\rank R_{\tau}(\tau)\;=\;\dim V^{\otimes 2}-\nullity R_{\tau}(\tau)\;=\;n^{2}-\tbinom{n+1}{2}\;=\;\tbinom{n}{2}.\qedhere
\end{equation*}
\end{proof}

\subsection{Some twists of $Q_{n,k}(E,\tau)$}
By \cref{lem.transf.props.R.tau.z}\cref{item.prop.rank.r.tw}, $\rank R_\tau(\tau+\zeta) =\rank R_\tau(\tau)$ for all $\zeta \in \frac{1}{n}\Lambda$.
Since $Q_{n,k}(E,\tau)= TV/(\im R_\tau(\tau))$  it is reasonable to ask whether the algebras $TV/(\im R_\tau(\tau+\zeta))$
are, perhaps, ``new'' elliptic algebras.

\begin{proposition}
\label{prop.twist}
Assume $a,b \in \bZ$. Let $\tau\in\bC$, $\zeta =\tfrac{a}{n}+\tfrac{b}{n}\eta$, and $\phi:=S^{-ka}T^{-b} \in \GL(V)$.
\begin{enumerate}
\item\label{item.prop.qnk.tau.tw}
The map $\phi$  extends in a unique way to an algebra automorphism of $TV$ that descends to an automorphism
of $Q_{n,k}(E,\tau)$ (that we also denote by $\phi$) and
\begin{equation*}
\frac{TV}{(\im R_\tau(\tau+\zeta))} \; \cong \; Q_{n,k}(E,\tau)^\phi
\end{equation*}
where $Q_{n,k}(E,\tau)^\phi$\index{Q_n,k(E,tau)^phi@$Q_{n,k}(E,\tau)^\phi$} is the twist of  $Q_{n,k}(E,\tau)$ in the sense of \cite[\S4.1]{CKS1}. 
\item\label{item.prop.gr.morita.equiv}
The categories of $\bZ$-graded left modules over $TV/(\im R_\tau(\tau+\zeta))$ and $Q_{n,k}(E,\tau)$ are equivalent.
\item\label{item.prop.qnk.comparison}
If $k+1$ is a unit in $\bZ_n$, then $TV/(\im R_\tau(\tau+\zeta))$ is isomorphic to $Q_{n,k}(E,\tau+\zeta')$ where $\zeta'=-\frac{kc}{n}-\frac{kd}{n}\eta$ and
$c$ and $d$ are arbitrary integers such that $(k+1)c=a$ and $(k+1)d=b$ in $\bZ_n$.
\end{enumerate}
\end{proposition}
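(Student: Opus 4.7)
The plan is to read the statement off the transformation identities already derived for $R_\tau(z)$, with \cref{cor.R.transl.props} doing almost all of the work for \cref{item.prop.qnk.tau.tw} and iteration of \cref{eq.tau+1/n} and \cref{eq.tau+1/n.eta} handling \cref{item.prop.qnk.comparison}.

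For \cref{item.prop.qnk.tau.tw}, I will first use \cref{lem.transf.props.R.tau.z}\cref{item.prop.s.t.comm} to observe that $S\otimes S$ and $T\otimes T$ commute with every $R_\tau(z)$ and thus stabilize $\rel_{n,k}(E,\tau)$; since $\phi^{\otimes 2}=(S^{-ka}T^{-b})^{\otimes 2}$ is, up to a scalar, a product of powers of these two operators, $\phi\in\GL(V)$ preserves the quadratic relations and therefore extends uniquely to a graded algebra automorphism of $TV$ that descends to $Q_{n,k}(E,\tau)$. Setting $z=\tau$ in \cref{cor.R.transl.props} and recognising $(T^bS^{ka})^{-1}=\phi$ yields
\begin{equation*}
R_\tau(\tau+\zeta)\;=\;f(\tau,\zeta,\tau)\,(I\otimes\phi)\,R_\tau(\tau)\,(\phi^{-1}\otimes I),
\end{equation*}
and hence $\im R_\tau(\tau+\zeta)=(I\otimes\phi)(\rel_{n,k}(E,\tau))$. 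Matching this with the definition of the twist recalled in \cite[\S 4.1]{CKS1} gives the identification $TV/(\im R_\tau(\tau+\zeta))\cong Q_{n,k}(E,\tau)^\phi$. Part \cref{item.prop.gr.morita.equiv} is then immediate from the standard fact that a Zhang twist of a $\bZ$-graded algebra by a single graded algebra automorphism induces an equivalence between the categories of $\bZ$-graded modules.

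For \cref{item.prop.qnk.comparison} I will write $\zeta'=c'/n+(d'/n)\eta$ and iterate \cref{eq.tau+1/n} and \cref{eq.tau+1/n.eta} to produce, up to a nonvanishing holomorphic factor in $z$,
\begin{equation*}
R_{\tau+\zeta'}(z)\;=\;(S^{c'}\otimes T^{-d'k'})\,R_\tau(z)\,(S^{-c'}\otimes T^{d'k'}).
\end{equation*}
Specialising to $z=\tau+\zeta'$ and using \cref{item.prop.qnk.tau.tw} to rewrite $\im R_\tau(\tau+\zeta')$ yields
\begin{equation*}
\im R_{\tau+\zeta'}(\tau+\zeta')\;=\;\bigl(S^{c'}\otimes T^{-d'k'}S^{-kc'}T^{-d'}\bigr)(\rel_{n,k}(E,\tau)).
\end{equation*}
An algebra isomorphism $Q_{n,k}(E,\tau+\zeta')\xrightarrow{\sim}TV/(\im R_\tau(\tau+\zeta))$ then amounts to producing $\psi\in\GL(V)$ with $(\psi\otimes\psi)$ carrying the first displayed subspace to $(I\otimes\phi)(\rel_{n,k}(E,\tau))$. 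Because $\rel_{n,k}(E,\tau)$ is stabilised by $(S\otimes S)^m(T\otimes T)^n$ for every $m,n$, this collapses to an identity in the Heisenberg group $H_n$ modulo its center; writing $\psi=S^\alpha T^\beta$ and collecting exponents via $ST=e(1/n)TS$ turns that identity into a pair of congruences modulo $n$ of the shape $c'(k+1)\equiv\pm ka$ and $d'(k+1)\equiv\pm kb$, which are solvable exactly when $k+1\in\bZ_n^\times$, with solutions matching $\zeta'=-kc/n-(kd/n)\eta$ for any integers $c,d$ satisfying $(k+1)c\equiv a$ and $(k+1)d\equiv b\pmod n$.

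I expect the main obstacle to lie in \cref{item.prop.qnk.comparison}: accounting for the precise powers of $e(1/n)$ that arise each time an $S$ is commuted past a $T$ is where the sign conventions have to be pinned down and where the hypothesis $k+1\in\bZ_n^\times$ genuinely enters, since otherwise the reduced congruences have no solutions in $\bZ_n$ and the identification with a genuine $Q_{n,k}(E,\tau+\zeta')$ breaks down.
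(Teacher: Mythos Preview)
Your argument for \cref{item.prop.qnk.tau.tw} and \cref{item.prop.gr.morita.equiv} is essentially the paper's: the paper too uses \cref{lem.transf.props.R.tau.z}\cref{item.prop.s.t.comm} to see that $\phi^{\otimes 2}$ stabilises $\rel_{n,k}(E,\tau)$, then computes $(I\otimes\phi)\bigl(\im R_\tau(\tau)\bigr)=\im R_\tau(\tau+\zeta)$ from \cref{cor.R.transl.props} and invokes the description \cite[Lem.~4.1]{CKS1} of the relations of a twist, while \cref{item.prop.gr.morita.equiv} is cited from \cite[Cor.~8.5]{ATV2}.

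For \cref{item.prop.qnk.comparison} your route genuinely differs from the paper's, and the paper's is much shorter. Rather than iterating \cref{eq.tau+1/n}, \cref{eq.tau+1/n.eta} and then solving a Heisenberg-group matching problem, the paper simply observes that in $H_n$ modulo its centre one has $\phi=\sigma^{k+1}$ with $\sigma:=S^{-kc}T^{-d}$ (this is exactly where the hypothesis $k+1\in\bZ_n^\times$ enters: it guarantees a $(k{+}1)$-th root of $\phi$), and then applies \cite[Thm.~4.3]{CKS1}, which already identifies $Q_{n,k}(E,\tau)^{\sigma^{k+1}}$ with $Q_{n,k}(E,\tau+\zeta')$ for $\zeta'=-\tfrac{kc}{n}-\tfrac{kd}{n}\eta$. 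Your approach is in effect reproving the relevant instance of that theorem by hand; it can be made to work, but as written the sign bookkeeping is not nailed down (your ``$\pm$'' in the congruences is hiding a genuine check: with $c'=-kc$, $d'=-kd$ one finds $c'(k+1)\equiv -ka$ and $d'(k'+1)\equiv -b$, not $+ka$ and $+b$, so one must be careful about which direction the isomorphism goes or use the stabiliser of $\rel$ more carefully). The paper's $(k{+}1)$-th–root trick bypasses all of this.
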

\begin{proof}
We use the convention that, for $\tau\in\frac{1}{n}\Lambda$, $R_{\tau}(\tau)$ and $R_{\tau}(\tau+\zeta)$ mean $R_{+}(\tau)$ and $R_{+}(\tau+\zeta)$, respectively.

\cref{item.prop.qnk.tau.tw}
Let $\fa$ denote the ideal in $TV$ generated by $\im R_\tau(\tau)$; thus  $Q_{n,k}(E,\tau)=TV/\fa$.

Certainly $\phi$ extends in a unique way to an algebra automorphism of $TV$ that we continue to denote by $\phi$.
By \cref{lem.transf.props.R.tau.z}\cref{item.prop.s.t.comm}, the operator $\phi \otimes \phi$ on $V\otimes V$ commutes with $R_\tau(\tau)$ so $\im R_\tau(\tau)$ is
 stable under the action of $\phi \otimes \phi$. 
The automorphism $\phi$ of $TV$ therefore preserves $\fa$, i.e., $\phi(\fa)=\fa$, and so descends to an automorphism of 
$Q_{n,k}(E,\tau)$ (that we continue to denote by $\phi$).
We define the algebra $Q_{n,k}(E,\tau)^\phi$ as in \cite[\S4]{CKS1}. 

By \cite[Lem.~4.1]{CKS1}, $(TV/\fa)^\phi \cong  TV/\phi'(\fa)$ where $\phi'(\fa)$ denotes the image of $\fa$ 
under the action of 
the linear map $\phi'$ that is $I \otimes \phi \otimes \cdots \otimes \phi^{d-1}$ on each $V^{\otimes d}$. Since our $\fa$ is generated by its degree-two
component, $\fa_2$, $\phi'(\fa)$ is generated by $(I \otimes \phi)(\fa_2)$.
But 
\begin{align*}
(I \otimes \phi)(\fa_2) & \; = \; (I \otimes \phi)R_\tau(\tau)(V^{\otimes 2})
\\
& \; = \; (I \otimes \phi)R_\tau(\tau) (\phi^{-1} \otimes I)(V^{\otimes 2})
\\
& \; = \;  \im R_\tau(\tau+\zeta)     \qquad \text{by \cref{cor.R.transl.props}}.
\end{align*}
Hence
\begin{equation*}
\frac{TV}{(\im R_\tau(\tau+\zeta))} \;= \; \frac{TV}{(I \otimes \phi)(\fa_2)} \; =\;  \frac{TV}{\phi'(\fa)}  \; \cong \; Q_{n,k}(E,\tau)^\phi.
\end{equation*}

\cref{item.prop.gr.morita.equiv}
This is an immediate  consequence of \cite[Cor.~8.5]{ATV2}.

\cref{item.prop.qnk.comparison}
Assume $k+1$ is a unit in $\bZ_n$ (equivalently, $k'+1$ is a unit in $\bZ_n$).

Let $c,d \in \bZ$ be such that $c=(k+1)^{-1}a$ and $d=(k+1)^{-1}b$ in $\bZ_n$.
Define $\sigma:=S^{-kc}T^{-d}$. Then $\phi =\s^{k+1}$, and 
if $\zeta'=-\frac{kc}{n} - \frac{kd}{n}$,
\begin{equation*}
Q_{n,k}(E,\tau)^{\phi} \; = \;  Q_{n,k}(E,\tau)^{\s^{k+1}} \; \cong \;  Q_{n,k}(E,\tau + \zeta') 
\end{equation*}
by \cite[Thm.~4.3]{CKS1}.
\end{proof}

\subsection{The relation between $R_\tau(z)$ and Odesskii's $R$-matrix}
\label{subsec.rel.r.od}

Odesskii defines a family of operators that we will denote by $R^{\rm Od}(z)$\index{R^Od(z)@$R^{\rm Od}(z)$} at \cite[p.~1145]{Od-survey}.  
His family is related to ours by the formula
\begin{equation}
\label{eq:Od.p(z)}
R_\tau(z) \; =\; \left( \prod_{\a \in \bZ_n} \frac{\theta_\a(-z+\tau)}{\theta_\a(\tau)}\right) \, R^{\rm Od}(z) .  
\end{equation}

Odesskii says that $R^{\rm Od}(z)$ satisfies (\ref{QYBE2}) but no proof is provided or indicated.  
Nevertheless, it is easy to see that $R^{\rm Od}(z)$ satisfies (\ref{QYBE2}) if and only if $R_{\tau}(z)$ does so Odesskii's statement is
correct. However, $R^{\rm Od}(z)$ is not defined for all $z\in \bC$ so one must place some restrictions on the points 
$u,v \in \bC$ when saying that $R^{\rm Od}(z)$ satisfies (\ref{QYBE2}); in contrast, $R_\tau(z)$ satisfies (\ref{QYBE2}) for all $u,v \in \bC$.

There were other reasons we chose $R_{\tau}(z)$ as we did. We wanted $R_\tau(0)$ to be the identity (\cref{rmk.R(0)}).  We also wanted the equality $\lim_{\tau \to 0} R_{\tau}(m\tau)=\sym_m$ in \cref{le.ulim0} and the equalities in \cref{pr.flim}; other choices of $R_{\tau}(z)$ 
would only give those equalities up to a non-zero scalar multiple.

\begin{proposition}
\label{prop.O's.formula.locn.zeros}
The determinant of the operator $R^{\rm Od}(z)$ defined at \cite[p.~1145]{Od-survey} is
  \begin{equation}
  \label{eq:almost.det.R(z)}
(-1)^{\frac{n^{2}(n-1)}{2}} e\left(    \tfrac{n^3(n-1)}{2} \, \tau   \right)
 \left( \frac{\theta_0(-z-\tau) \cdots \theta_{n-1}(-z-\tau)}{\theta_0(-z+\tau) \cdots \theta_{n-1}(-z+\tau)} \right)^{\!\frac{n(n-1)}{2}}.
  \end{equation}
  \end{proposition}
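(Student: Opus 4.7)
The plan is to derive the formula as a direct consequence of the relation \cref{eq:Od.p(z)} between $R_\tau(z)$ and $R^{\rm Od}(z)$, combined with the determinant formula for $R_\tau(z)$ established in \cref{pr.det-zeros}.

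First, set
\begin{equation*}
  c(z)\;:=\;\prod_{\a\in\bZ_n}\frac{\theta_\a(-z+\tau)}{\theta_\a(\tau)},
\end{equation*}
so that \cref{eq:Od.p(z)} reads $R_\tau(z)=c(z)R^{\rm Od}(z)$. Since $V^{\otimes 2}$ has dimension $n^2$, taking determinants yields
\begin{equation*}
  \det R^{\rm Od}(z) \;=\; c(z)^{-n^2}\det R_\tau(z).
\end{equation*}
Substitute the right-hand side of \cref{eq:det.U(z)}; the exponent of $c(z)=\prod_\a\theta_\a(-z+\tau)/\theta_\a(\tau)$ simplifies to $\tfrac{n(n+1)}{2}-n^2=-\tfrac{n(n-1)}{2}$, so after reorganizing the resulting product one obtains
\begin{equation*}
  \det R^{\rm Od}(z) \;=\; \left(\prod_{\a\in\bZ_n}\frac{\theta_\a(-z-\tau)}{\theta_\a(-z+\tau)}\right)^{\!n(n-1)/2} \cdot \left(\prod_{\a\in\bZ_n}\frac{\theta_\a(\tau)}{\theta_\a(-\tau)}\right)^{\!n(n-1)/2}.
\end{equation*}

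The first factor already matches the $z$-dependent product appearing in the target formula \cref{eq:almost.det.R(z)}, so the remaining task is to identify the second factor with the claimed scalar. For this, I would apply the quasi-periodicity identity $\theta_\a(-z)=-e(-nz+\tfrac{\a}{n})\theta_{-\a}(z)$ from \cite[Prop.~2.6(5)]{CKS1} at $z=\tau$, reindex $\a\mapsto -\a$ in the product, and use $\sum_{\a=0}^{n-1}\tfrac{\a}{n}=\tfrac{n-1}{2}$ to obtain a closed form for $\prod_\a\theta_\a(\tau)/\theta_\a(-\tau)$ as a scalar of the form $\pm e(n^2\tau)$. Raising to the power $\tfrac{n(n-1)}{2}$ then produces the factor $e\bigl(\tfrac{n^3(n-1)}{2}\tau\bigr)$ together with the appropriate sign.

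The computation is otherwise routine; the only place demanding care is the bookkeeping of the $\pm$ signs and phases from iterating the $\theta_\a(-z)$-identity $n$ times, which is the step where a ``small correction to Odesskii's formula'' (mentioned in the discussion preceding \cref{pr.det-zeros}) would manifest itself. Beyond that, no new analytic input is needed: \cref{pr.det-zeros} and the defining relation \cref{eq:Od.p(z)} do all the work.
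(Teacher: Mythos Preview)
Your approach is exactly the paper's: take determinants in \cref{eq:Od.p(z)}, substitute the formula from \cref{pr.det-zeros}, and simplify. The paper in fact stops one step earlier than you do, writing down the expression $c(z)^{-n^2}\det R_\tau(z)$ explicitly and then leaving ``to the reader the pleasant task'' of reducing it to the stated scalar times the $z$-dependent product; your outline of that reduction via $\theta_\a(-z)=-e(-nz+\tfrac{\a}{n})\theta_{-\a}(z)$ is the intended computation.
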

  \begin{proof}
  It follows from \cref{eq:det.U(z)} that  
\begin{equation*}
\det R^{\rm Od}(z) \;=\;   
\left(  \prod_{\a \in \bZ_n} \frac{\theta_\a(\tau)}{  \theta_\a(-z+\tau)}\right)^{\!n^2} \,
 \left(  \prod_{\alpha\in\bZ_{n}}\frac{\theta_\alpha (-z-\tau)}{\theta_{\alpha}(-\tau)}\right)^{\!\frac{n(n-1)}2} 
\left( \prod_{\alpha\in\bZ_{n}}\frac{\theta_\alpha (-z+\tau)}{\theta_{\alpha}(\tau)}\right)^{\!\frac{n(n+1)}2}.
\end{equation*}
We leave to the reader the pleasant task of showing this equals the expression in \cref{eq:almost.det.R(z)}.
\end{proof}

The formula for  $\det R^{\rm Od}(z)$  at \cite[p.~1145]{Od-survey} omits the term 
$(-1)^{\frac{n^{2}(n-1)}{2}} e\left(    \tfrac{n^3(n-1)}{2} \, \tau   \right)$.

\section{The Hilbert series of $Q_{n,k}(E,\tau)$}
\label{sec.hilb.series}

This section shows $Q_{n,k}(E,\tau)$ has the same Hilbert series as the polynomial ring $SV$  for all $\tau\in(\bC-\bigcup_{m\geq 1}\frac{1}{m}\Lambda)\cup\frac{1}{n}\Lambda$.

\subsection{Introduction}
\Cref{se.det} showed that, for all $\tau\in\bC-(\frac{1}{2n}\Lambda-\frac{1}{n}\Lambda)$, the degree-two component of $Q_{n,k}(E,\tau)$ has the ``right'' dimension, namely $\tbinom{n+1}{2}$, by showing in \cref{prop.nullities}\cref{item.cor.r.rk.ex} that $Q_{n,k}(E,\tau)$'s space of quadratic relations, which is, by definition, the image of $R_\tau(\tau)$, is equal to the kernel of $R_\tau(-\tau)$.  A similar idea is used in this section: we show, for all $\tau\in\bC-\bigcup_{m=1}^{d}\frac{1}{mn}\Lambda$, that the space of degree-$d$ relations
for $Q_{n,k}(E,\tau)$, which is $\sum_{s+t+2=d} V^{\otimes s} \otimes \im R_\tau(\tau) \otimes V^{\otimes t}$, is the kernel of the operator $F_d(-\tau): V^{\otimes d} \to V^{\otimes d}$ defined in \cref{defn.F_d} below.

The analogy with the polynomial ring $SV$ is helpful. The space of degree-$d$ relations for $SV$, i.e., the kernel of the natural map $V^{\otimes d} \to S^dV$, is the kernel of the symmetrization operator $ \sum_{\s \in S_d} \s$ acting in the natural way on $V^{\otimes d}$.  \cref{pr.flim} shows that $\lim_{\tau \to 0} F_d(-\tau)$ is a non-zero scalar multiple of $\sum_{\s \in S_d} \s$.

\subsection{The linear operators $T_d$, $S_{d \to 1}$, $S^{\rm rev}_{d \to 1}$, $F_d$, etc.,  on $V^{\otimes d}$}
\label{se:symmetrizers}
The results in this subsection apply to any family of linear operators $R(z):V^{\otimes 2} \to V^{\otimes 2}$, $z\in\bC$, satisfying (\ref{QYBE2}), which is the parametrized braid relation
\begin{equation*}
R(u)_{12}R(u+v)_{23}R(v)_{12} \;=\; R(v)_{23}R(u+v)_{12}R(u)_{23}
\end{equation*}
for all $u,v\in\bC$.

If $t_p,\ldots,t_q \in \bC$ we will write $\Sigma_p^q:=t_p+\cdots + t_q$\index{Sigma_p^q@$\Sigma_p^q$}. Let $i$, $j$, and $d$ be positive integers with $i\le j\le d$.  
We will use the following operators on $V^{\otimes d}$:\index{S_ij, S^rev_ij@$S_{i \to j}$, $S^{\rm rev}_{i \to j}$}
\begin{align*}
S_{i\to j}(t_i,\cdots,t_{j-1})    & \; :=\;  R\big(\Sigma_i^{j-1}\big)_{\!i,i+1} \ldots R\big(\Sigma_{q}^{j-1}\big)_{\! q,q+1}
\ldots R\big(t_{j-1}\big)_{\! j-1,j},
\\
S_{j\to i}(t_{j-1}, \ldots, t_{i}) & \; :=\; R\big(\Sigma_{i}^{j-1}\big)_{\! j-1,j} \ldots R\big(\Sigma_{i}^{q}\big)_{\! q,q+1} 
\ldots R\big(t_{i}\big)_{\! i,i+1},
\\
S_{i\to j}^{\rm rev}(t_i,\cdots,t_{j-1})   & \; :=\; R\big(t_i\big)_{\!i,i+1}  \ldots  R\big(\Sigma_{i}^{q}\big)_{\! q,q+1}
\ldots R\big(\Sigma_i^{j-1}\big)_{\! j-1,j},
\\
S_{j\to i}^{\rm rev}(t_{j-1}, \ldots, t_{i}) &  \; :=\; R\big(t_{j-1}\big)_{\! j-1,j} \ldots R\big(\Sigma^{j-1}_{q}\big)_{\! q,q+1}
 \ldots  R\big(\Sigma^{j-1}_i\big)_{\! i,i+1},
\end{align*}
with the convention that these are the identity operators when $i=j$. For example,
\begin{equation*}
	S_{1\to 4}(t_{1},t_{2},t_{3})\;=\;R(t_{1}+t_{2}+t_{3})_{1,2}R(t_{2}+t_{3})_{2,3}R(t_{3})_{3,4}.
\end{equation*}
Each of these is a theta operator (\cref{def.thetaop}) of order $(i-j)n^2$ with respect to $\Lambda$. 
We also define\index{T_d(z_1,...,z_d-1)@$T_d(z_{1},\ldots,z_{d-1})$}
\begin{equation}\label{eq.def.td}
  T_{d}(z_{1},\ldots,z_{d-1}) \; := \; S_{2\to 1}(z_{1})S_{3\to 1}(z_{1},z_{2})\cdots S_{d\to 1}(z_{1},\ldots,z_{d-1}).
\end{equation}
and\index{F_d@$F_d(z)$}
\begin{equation}
\label{defn.F_d}
	F_{d}(z) \; := \; T_{d}(z,\ldots,z).
\end{equation}
When $d=0,1$ we declare that these operators are the identity.

The choice of labeling for the arguments in the $S$-operators allows the elegant factorizations 
\begin{align*}
S_{i\to k}(t_i,\cdots,t_{k-1})  & \; = \;  S_{i\to j}(t_i,\cdots,t_{j-2},\Sigma_{j-1}^{k-1}\big) S_{j\to k}(t_j,\cdots,t_{k-1}),
\\
S_{k \to i}(t_{k-1},\ldots,t_{i})  &  \;=\; S_{k \to j}\big(t_{k-1},\ldots,t_{j+1},\Sigma_{i}^{j}\big) S_{j\to i}\big(t_{j-1},\ldots,t_{i}\big), 
\\
S^{\rm rev}_{i\to k}(t_i,\cdots,t_{k-1})  & \; = \;   S^{\rm rev}_{i\to j}(t_i,\cdots,t_{j-1}) S^{\rm rev}_{j\to k}(\Sigma_{i}^{j},t _{j+1},\cdots,t_{k-1}),
\\
S^{\rm rev}_{k \to i}(t_{k-1},\ldots,t_{i})  &  \;=\;  S_{k\to j}^{\rm rev}(t_{k-1}, \ldots, t_{j}) 
S^{\rm rev}_{j\to i}\big(\Sigma^{k-1}_{j-1}, t_{j-2},\ldots,t_{i}\big),
\end{align*}
when $1\le i\le j\le k$. These factorizations make no use of the Yang-Baxter equation.
We also note that 
\begin{align*}
S_{i\to j}(t_i,\cdots,t_{j-1})    & \; =\;     S_{i \to j}^{\rm rev}\big(\Sigma_i^{j-1}, -t_{i},\ldots , -t_{j-2}\big).
\numberthis  \label{eq.S.rev.2}
\\
S_{j\to i}(t_{j-1}, \ldots, t_{i}) &  \;=\; S_{j \to i}^{\rm rev}\big(\Sigma_i^{j-1}, -t_{j-1},\ldots , -t_{i+1}\big)
\numberthis  \label{eq.S.rev.1}
\end{align*}
These equalities make no use of the Yang-Baxter equation either.

Recall the notation in \cref{sect.notn}: $T^L_{d-1}$ (resp., $T^R_{d-1}$) denotes $T_{d-1}$ applied to the left-most (resp., right-most) 
  $d-1$ tensorands of $V^{\otimes d}$. The identities in the next result will be used repeatedly in   subsequent sections.

\begin{lemma}\label{le.f-id}
For all $d \ge 2$ and all $z_{1},\ldots,z_{d-1}\in\bC$,
\begin{align*}
T_{d}(z_{1},&\ldots,z_{d-1}) 
\\
&\;=\; T^{L}_{d-1}(z_{1},\ldots,z_{d-2})S_{d\to 1}(z_{1},\ldots,z_{d-1})
\;=\; T^{R}_{d-1}(z_{2},\ldots,z_{d-1})S_{1\to d}(z_{d-1},\ldots,z_{1})
\\
&\;=\; S^{\mathrm{rev}}_{1\to d}(z_{1},\ldots,z_{d-1})T^{L}_{d-1}(z_{2},\ldots,z_{d-1})
\;=\; S^{\mathrm{rev}}_{d\to 1}(z_{d-1},\ldots,z_{1})T^{R}_{d-1}(z_{1},\ldots,z_{d-2}).
\end{align*} 
\end{lemma}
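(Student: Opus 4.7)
The four identities express $T_d$ in different reduced-expression forms that, in the braid-group analogy underlying (QYBE2), all represent the same ``longest-element'' action on $V^{\otimes d}$. The plan is to establish them in order, using induction on $d$ together with (QYBE2) and the factorization formulas preceding the lemma.

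Identity (1) is immediate from the definition: by definition
\begin{equation*}
T_d \;=\; S_{2\to 1}(z_1)\,S_{3\to 1}(z_1,z_2)\cdots S_{d-1\to 1}(z_1,\ldots,z_{d-2})\cdot S_{d\to 1}(z_1,\ldots,z_{d-1}),
\end{equation*}
and each of the first $d-2$ factors involves only $R$-operators $R(-)_{p,p+1}$ with $p\le d-2$, hence acts as the identity on the $d$-th tensorand. Their product on $V^{\otimes(d-1)}$ is by definition $T_{d-1}(z_1,\ldots,z_{d-2})$, which embedded in $V^{\otimes d}$ is precisely $T^L_{d-1}(z_1,\ldots,z_{d-2})$.

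For identity (2), I would proceed by induction on $d$, the base $d=2$ being trivial. In the inductive step, expand $T_d = T^L_{d-1}(z_1,\ldots,z_{d-2})\cdot S_{d\to 1}(z_1,\ldots,z_{d-1})$ via (1), then apply the inductive hypothesis to $T^L_{d-1}$. On the right-hand side of (2), expand $T^R_{d-1}(z_2,\ldots,z_{d-1})$ via (1) applied to $T_{d-1}(z_2,\ldots,z_{d-1})$ shifted one tensorand to the right. Both sides then contain the common ``middle'' factor $T_{d-2}(z_2,\ldots,z_{d-2})$ acting on tensorands $2,\ldots,d-1$, and the claim reduces to the pure $R$-identity
\begin{equation*}
S_{1\to d-1}(z_{d-2},\ldots,z_1)\cdot S_{d\to 1}(z_1,\ldots,z_{d-1}) \;=\; S_{d\to 2}(z_2,\ldots,z_{d-1})\cdot S_{1\to d}(z_{d-1},\ldots,z_1).
\end{equation*}
This is itself established by an inner induction on $d$, with base case $d=3$ given by a single application of (QYBE2) (with $u=z_1,\ v=z_2$); the inductive step uses the factorization formulas preceding the lemma to isolate the triple of $R$-factors on which (QYBE2) can be applied, reducing to the $(d-1)$-case.

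Identities (3) and (4) are the ``reverse'' counterparts of (1) and (2), and can be proved by analogous inductions in which $S^{\mathrm{rev}}$-operators play the role of $S$-operators. More economically, one can derive (3) and (4) from (1) and (2) by combining them with the conversion formulas \eqref{eq.S.rev.1} and \eqref{eq.S.rev.2}, which rewrite $S$-operators as $S^{\mathrm{rev}}$-operators with suitably shifted arguments. The main obstacle is organizing the inner induction for (2) so that each application of (QYBE2) is localized to a single triple of consecutive positions without disturbing the arguments carried by the remaining $R$-factors; this bookkeeping is routine but requires care.
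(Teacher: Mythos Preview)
Your approach to identities (1) and (2) is essentially the paper's. Identity (1) is immediate from the definition, and for identity (2) the paper also argues by induction on $d$: after applying (1) and the inductive hypothesis, one is left with exactly the $S$-identity you isolate (shifted by one in the index), and the paper proves it by iteratively applying (QYBE2) to triples of consecutive $R$-factors---your ``inner induction'' in other words.

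The discrepancy is in (3) and (4). Your claim that these follow from (1) and (2) via the conversion formulas \eqref{eq.S.rev.1}--\eqref{eq.S.rev.2} does not work: those formulas rewrite, e.g., $S_{d\to 1}(z_1,\ldots,z_{d-1})$ as $S^{\rm rev}_{d\to 1}(\Sigma_1^{d-1},-z_{d-1},\ldots,-z_2)$, whose arguments do not match those in (3) or (4), and they leave the $S^{\rm rev}$-factor on the wrong side of $T_{d-1}$. Your fallback of ``analogous inductions'' would go through but is more than is needed. The paper proves (3) directly, without induction and without (QYBE2): factor each $S_{i\to 1}(z_1,\ldots,z_{i-1})$ in the definition of $T_d$ as $R(z_1+\cdots+z_{i-1})_{i-1,i}\cdot S_{i-1\to 1}(z_2,\ldots,z_{i-1})$; observe that $S_{i-1\to 1}(z_2,\ldots,z_{i-1})$ acts only on tensorands $1,\ldots,i-1$ and therefore commutes past every $R(\cdot)_{j-1,j}$ with $j>i$; then collect the $R$-factors on the left to form $S^{\rm rev}_{1\to d}(z_1,\ldots,z_{d-1})$ and the remaining $S_{\bullet\to 1}$'s on the right to form $T^L_{d-1}(z_2,\ldots,z_{d-1})$. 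Identity (4) is the symmetric manipulation. This is worth internalizing: (3) and (4) are pure commutation of operators on disjoint tensorands, and only (2) genuinely requires the Yang--Baxter equation.
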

\begin{proof}
(1)
The equality $T_{d}(z_{1},\ldots,z_{d-1})= T^L_{d-1}(\cdots)S_{d \to 1}(\cdots)$  follows at once from the definition of $T_{d}(z_{1},\ldots,z_{d-1})$.
	
(2) We will now show that $T_{d}(z_{1},\ldots,z_{d-1})S^{\mathrm{rev}}_{1\to d}(z_{1},\ldots,z_{d-1})T^{L}_{d-1}(z_{2},\ldots,z_{d-1})$.
	
We first replace each factor $S_{i\to 1}(z_{1},\ldots,z_{i-1})$ in \cref{eq.def.td} by
	\begin{equation*} 
		R(z_{1}+\cdots+z_{i-1})_{i-1,i}S_{i-1\to 1}(z_{2},\ldots,z_{i-1}).
	\end{equation*}
Since $S_{i-1\to 1}(z_{2},\ldots,z_{i-1})$ acts on the left-most $i-1$ tensorands of $V^{\otimes d}$, it commutes with $R(z_{1}+\cdots+z_{j-1})_{j-1,j}$ whenever $i<j$. Therefore
	\begin{align*}
		T_{d}(z_{1},\ldots,z_{d-1})
		&\; =\; R(z_{1})_{12}\cdot R(z_{1}+z_{2})_{23}S_{2\to 1}(z_{2})\cdots R(z_{1}+\cdots+z_{d-1})_{d-1,d}S_{d-1\to 1}(z_{2},\ldots,z_{d-1})\\
		&\;= \; R(z_{1})_{12}\cdots R(z_{1}+\cdots+z_{d-1})_{d-1,d}\cdot S_{2\to 1}(z_{2})\cdots S_{d-1\to 1}(z_{2},\ldots,z_{d-1})\\
		&\; =\; S^{\mathrm{rev}}_{1\to d}(z_{1},\ldots,z_{d-1})T^{L}_{d-1}(z_{2},\ldots,z_{d-1}).
	\end{align*}
	
(3)
We now prove $T_{d}(z_{1},\ldots,z_{d-1}) = T^{R}_{d-1}(\cdots)S_{1\to d}(\cdots)$ by induction on $d$. 
The case $d=2$ is trivial, so we assume the equality holds for all integers $\le d$ and prove it for $d+1$.
  
By (1) and the induction hypothesis, 
	\begin{align*}
		T_{d+1}(z_{1},\ldots,z_{d})
		&\;=\;T^{L}_{d}(z_{1},\ldots,z_{d-1})S_{d+1\to 1}(z_{1},\ldots,z_{d})\numberthis\label{eq.lem.ts.tss}\\
		&\;=\;T_{d-1}^{M}(z_{2},\ldots,z_{d-1})S_{1\to d}(z_{d-1},\ldots,z_{1})S_{d+1\to 1}(z_{1},\ldots,z_{d})
	\end{align*}
	where $T_{d-1}^{M}$ denotes $T_{d-1}$ applied to the middle $d-1$ tensorands of $V^{\otimes (d+1)}$. 
	
	The product $S_{1\to d}(z_{d-1},\ldots,z_{1}) S_{d+1\to 1}(z_{1},\ldots,z_{d}) $ equals
	\begin{align*}
		  & S_{1\to d}(z_{d-1},\ldots,z_{1})R(z_{1}+\cdots+z_{d})_{d,d+1}S_{d\to 1}(z_{2},\ldots,z_{d})
		 \numberthis\label{eq.lem.ts.srs}
		 \\
		\;=\; &S_{1\to d-1}(z_{d-1},\ldots,z_{3},z_{1}+z_{2}) 
		 \, R(z_{1})_{d-1,d} \, R\bigg(\sum_{i=1}^d z_{i}\bigg)_{\!d,d+1}R\bigg(\sum_{i=2}^d z_{i}\bigg)_{\!d-1,d} 
		\, S_{d-1\to 1}(z_{3},\ldots,z_{d}).
	\end{align*}
	By \Cref{QYBE2}, the product of the three $R$'s in the middle is
  \begin{equation}\label{eq:rdd}
    R(z_{2}+\cdots+z_{d})_{d,d+1}R(z_{1}+\cdots+z_{d})_{d-1,d}R(z_{1})_{d,d+1}.
  \end{equation}
	Since $R_{d,d+1}$ commutes with $S_{1\to d-1}$ and $S_{d-1\to 1}$, \cref{eq.lem.ts.srs} equals
	\begin{equation*}
		R(z_{2}+\cdots+z_{d})_{d,d+1}S_{1\to d-1}(z_{d-1},\ldots,z_{3},z_{1}+z_{2})R(z_{1}+\cdots+z_{d})_{d-1,d}S_{d-1\to 1}(z_{3},\ldots,z_{d})R(z_{1})_{d,d+1}.
	\end{equation*}
	The product of the three factors in the middle has the same form as the second line of 
	\cref{eq.lem.ts.srs}, so we can repeat the procedure. Eventually we see that \cref{eq.lem.ts.srs} equals
	$S_{d+1\to 2}(z_{2},\ldots,z_{d})S_{1\to d+1}(z_{d},\ldots,z_{1})$.
	Hence \cref{eq.lem.ts.tss} equals
	\begin{equation*}
		T_{d-1}^{M}(z_{2},\ldots,z_{d-1})S_{d+1\to 2}(z_{2},\ldots,z_{d})S_{1\to d+1}(z_{d},\ldots,z_{1})\;=\;T^{R}_{d}(z_{2},\ldots,z_{d})S_{1\to d+1}(z_{d},\ldots,z_{1})
	\end{equation*}
	as desired.
	
(4)
An argument like that in (2) shows that $T^{R}_{d-1}(\cdots)S_{1\to d}(\cdots) = S^{\mathrm{rev}}_{d\to 1}(\cdots)T^{R}_{d-1}(\cdots)$.
\end{proof}

\begin{lemma}\label{le.shfl.gen}
	For every $1\le i\le d-1$,     
	\begin{equation*}
		T_d(z_{1},\ldots,z_{d-1}) \;=\;  R(z_{i})_{i,i+1}Q_{i} \; =\; Q_{i}'R(z_{d-i})_{i,i+1}
	\end{equation*}
	where $Q_{i}$ and $Q_{i}'$ are products of terms $R(w)_{j,j+1}$ for various integers $1\leq j\leq d-1$ and partial sums $w$ of $z_{1},\ldots,z_{d-1}$.
\end{lemma}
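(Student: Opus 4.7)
The plan is to prove both factorizations simultaneously by induction on $d$, using \cref{le.f-id} as the main tool. The base case $d=2$ is immediate since $T_2(z_1) = R(z_1)_{1,2}$, with $Q_1 = Q_1' = I$.

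For the \emph{left} factorization $T_d = R(z_i)_{i,i+1}Q_i$, the case $i = d-1$ is obtained directly from the identity
\[
T_d(z_1,\ldots,z_{d-1}) \;=\; S^{\rm rev}_{d\to 1}(z_{d-1},\ldots,z_1)\cdot T^R_{d-1}(z_1,\ldots,z_{d-2})
\]
of \cref{le.f-id}, since the leading factor of $S^{\rm rev}_{d\to 1}(z_{d-1},\ldots,z_1)$ is $R(z_{d-1})_{d-1,d}$ by definition. For $i\in\{1,\ldots,d-2\}$, I would group the first $i$ of the $S_{j\to 1}$-factors in the defining formula \Cref{eq.def.td} for $T_d$ into $T_{i+1}^L(z_1,\ldots,z_i)$:
\[
T_d(z_1,\ldots,z_{d-1}) \;=\; T_{i+1}^L(z_1,\ldots,z_i)\cdot S_{i+2\to 1}(z_1,\ldots,z_{i+1})\cdots S_{d\to 1}(z_1,\ldots,z_{d-1}).
\]
Since $i+1<d$, the inductive hypothesis (left form at position $i$) gives $T_{i+1}(z_1,\ldots,z_i) = R(z_i)_{i,i+1}\tilde Q$; absorbing $\tilde Q$ together with the trailing $S_{j\to 1}$-factors yields the required $Q_i$.

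For the \emph{right} factorization $T_d = Q_i' R(z_{d-i})_{i,i+1}$, the case $i = 1$ comes directly from the identity $T_d = T^L_{d-1}(z_1,\ldots,z_{d-2})\cdot S_{d\to 1}(z_1,\ldots,z_{d-1})$ of \cref{le.f-id}, whose trailing factor inside $S_{d\to 1}(z_1,\ldots,z_{d-1})$ is $R(z_{d-1})_{1,2}$. For $i\in\{2,\ldots,d-1\}$, I would invoke the same identity used in the left-form case $i=d-1$ and apply the inductive hypothesis (right form at position $j' = i-1 \in\{1,\ldots,d-2\}$) to $T_{d-1}(z_1,\ldots,z_{d-2})$, obtaining $T_{d-1}(z_1,\ldots,z_{d-2}) = Q\cdot R(z_{d-i})_{i-1,i}$. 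Promoting this identity to the $T^R_{d-1}$ action on tensorands $\{2,\ldots,d\}$ shifts every tensorand index up by one, so the trailing factor becomes $R(z_{d-i})_{i,i+1}$; multiplying on the left by $S^{\rm rev}_{d\to 1}(z_{d-1},\ldots,z_1)$ and by the shifted $Q$ gives the required $Q_i'$.

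Throughout, every operator appearing in the various $S$-factors and in each $Q$ supplied by the inductive hypothesis is, by construction, a product of operators $R(w)_{m,m+1}$ with $w$ a partial sum of the $z_p$'s, so the same is true of the $Q_i$ and $Q_i'$ assembled at each step. The main (very mild) obstacle is the index bookkeeping in the right-form step: one must check that the shift by one between $T_{d-1}$'s internal tensorand indexing and that of $T^R_{d-1}$ is exactly compensated by the shift from position $i$ to $i-1$, so that the subscript $d-i$ of $z$ is unchanged. No further use of the quantum Yang-Baxter equation is required beyond what is already packaged inside \cref{le.f-id}.
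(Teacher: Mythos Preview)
Your proof is correct and follows essentially the same approach as the paper: induction on $d$, with the inductive step powered by the factorizations of $T_d$ in \cref{le.f-id}. The paper's own proof is a one-liner (``the first equality follows from the first and second identities in \cref{le.f-id}, and the second equality follows from the third and fourth''), and your argument is a correct fleshing-out of that, with only cosmetic differences in which identity you pick at each boundary case (e.g., you read off $i=d-1$ directly from identity~4 rather than via identity~2 plus induction, and you group to $T^L_{i+1}$ rather than $T^L_{d-1}$ for $i\le d-2$).
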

\begin{proof}
We argue by induction on $d$. The case $d=2$ is trivial. 
Assuming the claim up to and including $d-1$, the first equality follows from the first and second identities in \Cref{le.f-id}, 
and second equality follows from the third and fourth identities in \Cref{le.f-id}.
\end{proof}

\begin{proposition}\label{le.shfl}
Assume $1\le i\le d-1$.
\begin{enumerate}
\item\label{item.shfl.Q.m}
$F_{d}(-\tau)=Q R(-\tau)_{i,i+1} =R(-\tau)_{i,i+1}Q'$ for some $Q$ and $Q'$ that are products of terms $R(m\tau)_{j,j+1}$ for various integers $1\leq j\leq d-1$ and $m$.
\item\label{item.shfl.Q.p}
$F_{d}(\tau)=Q R(\tau)_{i,i+1} =R(\tau)_{i,i+1}Q'$ for some $Q$ and $Q'$ that are products of terms $R(m\tau)_{j,j+1}$ for various integers $1\leq j\leq d-1$ and $m$.
\item\label{item.shfl.im.int}
If $R(\tau)R(-\tau)=0$, then 
\begin{equation*}
\im F_{d}(-\tau) \; \subseteq \;  \bigcap_{s+t+2=d} V^{\otimes s} \otimes \ker R(\tau) \otimes V^{\otimes t}.
\end{equation*}
\item\label{item.shfl.ker.sum}
If $R(-\tau)R(\tau)=0$, then 
\begin{equation*}
\ker F_{d}(-\tau) \; \supseteq \;  \sum_{s+t+2=d} V^{\otimes s} \otimes \im R(\tau) \otimes V^{\otimes t}.
\end{equation*}
\end{enumerate}
\end{proposition}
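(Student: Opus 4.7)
The plan is to derive all four parts of Proposition~\ref{le.shfl} as direct consequences of Lemma~\ref{le.shfl.gen}. For \cref{item.shfl.Q.m}, I would specialize every argument $z_j$ in the identity $T_d(z_1,\ldots,z_{d-1})=R(z_i)_{i,i+1}Q_i=Q_i'R(z_{d-i})_{i,i+1}$ to $-\tau$. Since $F_d(-\tau)=T_d(-\tau,\ldots,-\tau)$ and every partial sum of $z_1,\ldots,z_{d-1}$ then becomes an integer multiple of $-\tau$ (hence of $\tau$), the factors $Q_i$ and $Q_i'$ automatically become products of operators of the form $R(m\tau)_{j,j+1}$ for $m\in\bZ$; the distinguished middle factor is $R(-\tau)_{i,i+1}$ because $z_i=z_{d-i}=-\tau$. \cref{item.shfl.Q.p} follows by the identical argument with $-\tau$ replaced throughout by $\tau$.

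For \cref{item.shfl.im.int}, observe that the hypothesis $R(\tau)R(-\tau)=0$ implies $\im R(-\tau)\subseteq \ker R(\tau)$. Tensoring with the identity on the remaining slots gives
\begin{equation*}
\im R(-\tau)_{i,i+1}\;\subseteq\; \ker R(\tau)_{i,i+1}\;=\; V^{\otimes(i-1)}\otimes\ker R(\tau)\otimes V^{\otimes(d-i-1)}.
\end{equation*}
From the factorization $F_d(-\tau)=R(-\tau)_{i,i+1}Q'$ provided by \cref{item.shfl.Q.m}, one gets $\im F_d(-\tau)\subseteq \im R(-\tau)_{i,i+1}$, which is therefore contained in the term indexed by $(s,t)=(i-1,d-i-1)$ on the right-hand side. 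Intersecting over $i\in\{1,\ldots,d-1\}$, equivalently over all $(s,t)$ with $s+t+2=d$, yields the containment.

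\cref{item.shfl.ker.sum} is dual: the hypothesis $R(-\tau)R(\tau)=0$ gives $\im R(\tau)_{i,i+1}\subseteq \ker R(-\tau)_{i,i+1}$ for every $i$. Using the other factorization $F_d(-\tau)=QR(-\tau)_{i,i+1}$ from \cref{item.shfl.Q.m}, any element of $V^{\otimes(i-1)}\otimes\im R(\tau)\otimes V^{\otimes(d-i-1)}$ is annihilated by $R(-\tau)_{i,i+1}$ and hence by $F_d(-\tau)$. Summing these containments over $i$ yields the desired inclusion.

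There is no substantive obstacle here: the real work lies in Lemma~\ref{le.shfl.gen} (which in turn rests on the inductive identities of Lemma~\ref{le.f-id} and the braid relation (\ref{QYBE2})). Once that lemma is in hand, Proposition~\ref{le.shfl} is essentially a two-line consequence. The only point worth verifying is that specializing $z_j\mapsto\pm\tau$ preserves the prescribed shape of the auxiliary factors, which is automatic since any $\bZ$-linear combination of $\pm\tau$ is of the form $m\tau$ with $m\in\bZ$.
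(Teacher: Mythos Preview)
Your proposal is correct and follows essentially the same argument as the paper's proof: parts \cref{item.shfl.Q.m} and \cref{item.shfl.Q.p} are obtained by specializing \Cref{le.shfl.gen} at $z_j=\mp\tau$, and parts \cref{item.shfl.im.int} and \cref{item.shfl.ker.sum} follow immediately from those factorizations together with the hypothesis $R(\pm\tau)R(\mp\tau)=0$. The only cosmetic difference is that the paper phrases \cref{item.shfl.im.int} as $R(\tau)_{i,i+1}F_d(-\tau)=0$ directly rather than routing through $\im R(-\tau)_{i,i+1}\subseteq\ker R(\tau)_{i,i+1}$, but these are equivalent.
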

\begin{proof}
Both \cref{item.shfl.Q.m} and \cref{item.shfl.Q.p} are special cases of \cref{le.shfl.gen}.

\cref{item.shfl.im.int}
Since $F_{d}(-\tau)=R(-\tau)_{i,i+1}Q'$, $R(\tau)_{i,i+1}F_d(-\tau)=0$; i.e., $\im F_d(-\tau) \subseteq \ker R(\tau)_{i,i+1}$.
But
\begin{equation*}
	\ker R(\tau)_{i,i+1}\;=\;V^{\otimes (i-1)} \otimes \ker R(\tau) \otimes V^{\otimes (d-i-1)}
\end{equation*} 
so the result follows.

\cref{item.shfl.ker.sum}
Since $F_{d}(-\tau)=Q R(-\tau)_{i,i+1}$, $F_{d}(-\tau)R(\tau)_{i,i+1}=0$; i.e., $\ker F_d(-\tau) \supseteq \im R(\tau)_{i,i+1}$.
\end{proof}

In proving that $Q_{n,k}(E,\tau)$ has the ``right'' Hilbert series we will show that the inclusions in parts \cref{item.shfl.im.int} and \cref{item.shfl.ker.sum} of \cref{le.shfl}
are equalities when $R(z)$ is the  operator in \cref{eq:odr}.

\subsection{The limit $F_d(\pm \tau)$ as $\tau \to 0$}\label{subse.lim}

To determine the Hilbert series of $Q_{n,k}(E,\tau)$ and $Q_{n,k}(E,\tau)^!$ we must understand the limits of 
$F_{d}(\pm\tau)$ as $\tau\to 0$.

\begin{proposition}\label{pr.flim}
If $d$ is an integer $\ge 2$, then
  \begin{align*}
    \lim_{\tau\to 0}F_{d}(-\tau)  & \; = \; \prod_{m=1}^{d-1}m!\cdot\sum_{\sigma\in S_d} \sigma 
 \qquad\qquad \text{and} 
  \\
    \lim_{\tau\to 0}F_{d}(\tau) &  \; =\;  \prod_{m=1}^{d-1}m!\cdot\sum_{\sigma\in S_d} \sgn(\sigma)\sigma
  \end{align*}
where the symmetric group $S_d$\index{S_d@$S_{d}$} acts on $V^{\otimes d}$ by permuting tensorands.
\end{proposition}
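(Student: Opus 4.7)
The plan is to reduce the problem to an identity in the group algebra $\bC[S_{d}]$, by taking the limit factor-by-factor and then exploiting the ``braid factorization'' in \cref{le.shfl}.

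By \cref{le.ulim0}, for each integer $m$ we have $\lim_{\tau\to 0} R_{\tau}(m\tau)=\sym_{m}$. Writing $s_{i}\in\bC[S_{d}]$ for the transposition acting on $V^{\otimes d}$ by swapping the $i$-th and $(i+1)$-st tensorands, this translates to
$$\lim_{\tau\to 0} R_{\tau}(-m\tau)_{i,i+1}\;=\;I+m s_{i}\qquad\text{and}\qquad \lim_{\tau\to 0} R_{\tau}(m\tau)_{i,i+1}\;=\;I-m s_{i}.$$
From the definition of $T_{d}$ in \cref{eq.def.td,defn.F_d}, $F_{d}(z)$ is a product of such $R$-factors at arguments that are integer multiples of $z$, so the limits $A:=\lim_{\tau\to 0}F_{d}(-\tau)$ and $A':=\lim_{\tau\to 0}F_{d}(\tau)$ exist and lie in $\bC[S_{d}]$.

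The key step is to show $A$ is a scalar multiple of $\Sigma:=\sum_{\sigma\in S_{d}}\sigma$. By \cref{le.shfl}\cref{item.shfl.Q.m}, for every $i\in\{1,\ldots,d-1\}$ we may factor $F_{d}(-\tau)=Q\,R(-\tau)_{i,i+1}=R(-\tau)_{i,i+1}\,Q'$ where $Q,Q'$ are products of operators $R(m\tau)_{j,j+1}$. Taking $\tau\to 0$, $A=(\lim Q)(I+s_{i})=(I+s_{i})(\lim Q')$. Since $(I+s_{i})s_{i}=s_{i}(I+s_{i})=I+s_{i}$, multiplying on either side by $s_{i}$ gives $As_{i}=s_{i}A=A$. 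Hence $gA=A$ for every $g\in S_{d}$; comparing coefficients forces $A=c\,\Sigma$ for some $c\in\bC$.

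To pin down $c$, I apply the augmentation homomorphism $\epsilon:\bC[S_{d}]\to\bC$, $\sigma\mapsto 1$. Under $\epsilon$ each $j$-th block $\prod_{i=1}^{j-1}(I+i s_{i})$ (coming from the limit of $S_{j\to 1}(-\tau,\ldots,-\tau)$) maps to $\prod_{i=1}^{j-1}(1+i)=j!$, so $\epsilon(A)=\prod_{j=2}^{d}j!=d!\cdot\prod_{m=1}^{d-1}m!$, while $\epsilon(\Sigma)=d!$; thus $c=\prod_{m=1}^{d-1}m!$. For $A'$, the reasoning is the same word for word, now using \cref{le.shfl}\cref{item.shfl.Q.p}: the identities $(I-s_{i})s_{i}=s_{i}(I-s_{i})=-(I-s_{i})$ give $s_{i}A'=A's_{i}=-A'$, so $gA'=\sgn(g)A'$ for all $g$, forcing $A'=c'\sum_{\sigma}\sgn(\sigma)\sigma$; the sign homomorphism $\sgn:\bC[S_{d}]\to\bC$ sends $I-m s_{i}$ to $1+m$, reproducing the same scalar $c'=\prod_{m=1}^{d-1}m!$. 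There is no step that is genuinely difficult here; the only delicate point is the simultaneous use of the two factorizations in \cref{le.shfl} to get both left- and right-absorption of $s_{i}$, which is what packages the braid-like product $F_{d}(\pm\tau)$ into a multiple of the full (anti)symmetrizer.
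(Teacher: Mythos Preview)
Your proof is correct and takes a genuinely different route from the paper's.

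The paper argues by induction on $d$: using the factorization $F_d(-\tau)=S^{\mathrm{rev}}_{1\to d}(-\tau,\ldots,-(d-1)\tau)\,F_{d-1}^L(-\tau)$ from \cref{le.f-id}, it applies the induction hypothesis to the right-hand factor and then performs a direct combinatorial count to show that the product $\sym^{1,2}_{-1}\cdots\sym^{d-1,d}_{-(d-1)}$ applied to $\sum_{\sigma\in S_{d-1}}\sigma$ yields exactly $(d-1)!\sum_{\sigma\in S_d}\sigma$. The count tracks how $v_d$ is carried leftward past the other tensorands and tallies the resulting scalar factors.

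Your argument bypasses this induction and count entirely. By landing in $\bC[S_d]$ and exploiting both factorizations in \cref{le.shfl} to obtain $As_i=A$ for every $i$, you force $A$ into the one-dimensional space $\bC\Sigma$ at once; the constant then drops out of a character computation. (In fact you only need one of the two factorizations: $As_i=A$ for all $i$ already forces all coefficients of $A$ to coincide.) This is cleaner and more conceptual---it isolates exactly why the symmetrizer appears, namely the absorption property inherited from $R(-\tau)R(\tau)=0$ in the limit---whereas the paper's approach, though more hands-on, makes the combinatorics of the shuffle explicit and connects more visibly to the heuristic in \cref{sect.mult.in.Q} relating $M_{b,a}(-\tau)$ to the classical shuffle operator.
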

\begin{proof}
  We prove the proposition for $F_{d}(-\tau)$. The argument for $F_{d}(\tau)$  is virtually identical. 

We argue by induction on $d$. Since $F_2(z)=R(z)$, the $d=2$ case is a consequence of \Cref{cor.sym}. 
Assume $d \ge 3$. Since $ F_{d}(-\tau) = T_d(-\tau,\ldots,-\tau)$, it follows from \Cref{le.f-id,le.ulim0}
that
  \begin{align*}\label{eq:fusf}
\lim_{\tau \to 0}    F_{d}(-\tau) & \; =\;  \lim_{\tau \to 0}  R(-\tau)_{12}\cdots R(-(d-1)\tau)_{d-1,d}F_{d-1}^{L}(-\tau) \numberthis
    \\
               & \; = \; \sym^{1,2}_{-1}\cdots \sym^{d-1,d}_{-(d-1)} \cdot  \lim_{\tau \to 0}  F_{d-1}^{L}(-\tau)
  \end{align*}
  where the superscripts on the operators $\sym_m$ indicate which tensorands of $V^{\otimes d}$ they apply to.
  By the induction hypothesis, the factor $ \lim_{\tau \to 0}  F^{L}_{d-1}(-\tau)$ in \cref{eq:fusf} is the map
  \begin{equation}\label{eq:shfl(d-1)1}
    v_1\cdots v_{d-1} v_d \; \mapsto \;  \prod_{m=1}^{d-2}m!\cdot\sum_{\sigma\in S_{d-1}}v_{\sigma(1)}\cdots v_{\sigma(d-1)} v_d
  \end{equation}
where we have suppressed tensor symbols for readability. 

\textbf{Claim:} the product of the $\sym$ factors in \Cref{eq:fusf} sends the sum part of \Cref{eq:shfl(d-1)1} to
\begin{equation*}
  (d-1)!\sum_{\sigma\in S_d} v_{\sigma(1)}\cdots v_{\sigma(d)}. 
\end{equation*}
To see this, we will count, for every $t$ and every $\sigma'\in S_{d-1}$, the number of times the term
\begin{equation}\label{eq:dinter}
  v_{\s'(1)}\cdots v_{\s'(t-1)} \, v_d \, v_{\s'(t)}\cdots v_{\s'(d-1)}
\end{equation}
appears in 
\begin{equation}\label{eq:syms}
\sym^{1,2}_{-1}\cdots \sym^{d-1,d}_{-(d-1)} \Bigg(\sum_{\sigma\in S_{d-1}}v_{\sigma(1)}\cdots v_{\sigma(d-1)}v_d\Bigg).
\end{equation}

If we write $\sym_{-m} = I + mP$ using $I=\id_{V}$ and the flip $P\in \End(V\otimes V)$, then $\sym^{m,m+1}_{-m} = I^{\otimes d} + mP_{m,m+1}$,
 where $P_{m,m+1}$ interchanges the $m^{\rm th}$ and $(m+1)^{\rm th}$ tensorands
 in $V^{\otimes d}$. 
 Since $v_d$ starts out on the extreme right in \Cref{eq:syms} and crosses leftward past $d-t$ tensorands to reach its position in \Cref{eq:dinter}, the latter must be the result of applying the summands
\begin{equation*}
  (d-1)P_{d-1,d},\; (d-2)P_{d-2,d-1},\; \ldots,\; tP_{t,t+1} 
\end{equation*}
of the rightmost $(d-t)$ $\sym$ operators in \Cref{eq:syms}. These yield a factor of
\begin{equation}\label{eq:fact-incomplete}
(d-1)\cdots   (t+1)t \;=  \; \frac{(d-1)!}{(t-1)!}
\end{equation}
in front of \Cref{eq:dinter}, and we must show that the remaining $\sym$ operators
\begin{equation}
\label{eq:sym.factors}
  \sym^{1,2}_{-1}\ldots \sym^{t-1,t}_{-(t-1)}
\end{equation}
contribute the missing $(t-1)!$ factor to produce the requisite $(d-1)!$. 
Only the $I^{\otimes d}$ term in
\begin{equation*}
  \sym^{t-1,t}_{-(t-1)} \; = \; I^{\otimes d} \, + \,  (t-1)P_{t-1,t}
\end{equation*}
can contribute a \Cref{eq:dinter} term because $P_{t-1,t}$ would slide $v_d$ further to the left. On the other hand,
each of the product of the remaining $\sym$ factors in the product \cref{eq:sym.factors}, namely
\begin{equation*}
  \sym^{1,2}_{-1}\cdots \sym^{t-2,t-1}_{-(t-2)},
\end{equation*}
contributes terms of the form  \Cref{eq:dinter} via both its $I^{\otimes d}$ and $P$ terms, whence each
\begin{equation*}
  \sym^{j,j+1}_{-j} \;=\;  I^{\otimes d} + j P_{j,j+1},\quad 1\le j\le t-2,
\end{equation*}
supplements \Cref{eq:fact-incomplete} with an additional factor of $1+j$  scaling the \Cref{eq:dinter} term. 
The overall coefficient of \Cref{eq:dinter} is therefore
\begin{equation*}
  \frac{(d-1)!}{(t-1)!} \cdot (t-1)\cdots 2 \; =\;  (d-1)!
\end{equation*}
as claimed.
\end{proof}

\subsection{Computation of the Hilbert series for $Q_{n,k}(E,\tau)$}
\label{sect.H.series}

\begin{theorem}\label{thm.dim.rel.sp}
Let $d\geq 2$. Assume $\tau\in\bC-\bigcup_{m=1}^{d}\frac{1}{mn}\Lambda$.
  \begin{enumerate}
  \item\label{thm.dim.rel.sp.ker} We have
    \begin{equation}\label{eq.dim.rel.sp.ker}
      \ker F_{d}(-\tau)\; = \; \sum_{s+t+2=d}V^{\otimes s}\otimes\im R_{\tau}(\tau) \otimes V^{\otimes t}
    \end{equation}
    and its dimension is the same as the dimension of the space of degree-$d$ relations of a polynomial algebra in $n$ variables,
    namely $n^{d}-\tbinom{n+d-1}{d}$.
  \item\label{thm.dim.rel.sp.im} We have
    \begin{equation}\label{eq.dim.rel.sp.im}
      \im F_{d}(-\tau)\;=\;\bigcap_{s+t+2=d}V^{\otimes s}\otimes\ker R_{\tau}(\tau) \otimes V^{\otimes t}
    \end{equation}
    and its dimension is the same as the dimension of the degree-$d$ component of a polynomial algebra in $n$ variables, 
    namely $\tbinom{n+d-1}{d}$.
  \end{enumerate}
\end{theorem}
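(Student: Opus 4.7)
The plan is to prove part~\cref{thm.dim.rel.sp.ker}; part~\cref{thm.dim.rel.sp.im} then follows by rank--nullity since $n^{d}-\binom{n+d-1}{d}$ and $\binom{n+d-1}{d}$ sum to $n^{d}$. The inclusion ``$\supseteq$'' in~\cref{eq.dim.rel.sp.ker} is immediate from \cref{le.shfl}\cref{item.shfl.ker.sum} combined with the vanishing $R_{\tau}(-\tau)R_{\tau}(\tau)=0$ in \cref{lem.transf.props.R.tau.z}\cref{item.prop.r.pm.tors}. Writing $\Sigma_{d}(\tau)$ for the right-hand side of~\cref{eq.dim.rel.sp.ker}, it therefore remains to establish (A) $\nullity F_{d}(-\tau)=n^{d}-\binom{n+d-1}{d}$ and (B) $\dim\Sigma_{d}(\tau)\geq n^{d}-\binom{n+d-1}{d}$; together with the inclusion, these force $\Sigma_{d}(\tau)=\ker F_{d}(-\tau)$.

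For~(A), I would apply the general template outlined in~\cref{sect.methods}. Each factor $R(mz)_{i,i+1}$ in the product defining $F_{d}(z)$ is a theta operator of order $n^{2}$ with respect to $\Lambda$, so $F_{d}(z)$ is itself a theta operator on $V^{\otimes d}$ and $\det F_{d}(z)$ is a theta function of computable order. By~\cref{cor.R.isom}, the factor $R(mz)_{i,i+1}$ fails to be an isomorphism exactly when $mz\in\pm\tau+\frac{1}{n}\Lambda$, so the zeros of $\det F_{d}(z)$ are confined to the cosets $\pm\frac{\tau}{m}+\frac{1}{mn}\Lambda$ for $m=1,\ldots,d-1$; the hypothesis $\tau\notin\bigcup_{m=1}^{d}\frac{1}{mn}\Lambda$ is precisely what makes these cosets disjoint and prevents accidental coincidence of zeros coming from different factors. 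At each candidate zero $p$, \cref{pr.hol-op} gives $\mult_{p}\det F_{d}(z)\geq\nullity F_{d}(p)$; matching lower bounds on the individual nullities are obtained by applying \cref{le.shfl} (with $m$ chosen so that the singular factor appears explicitly) together with rescaled versions of the limit formula~\cref{pr.flim} applied to $F_{d}(m\tau)$ for various integers $m$. Summing these lower bounds against the total number of zeros in a fundamental parallelogram predicted by~\cref{lem.theta.fns} forces every inequality to be an equality, extracting the exact value of $\nullity F_{d}(-\tau)$.

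The main obstacle is~(B): establishing the lower bound $\dim\Sigma_{d}(\tau)\geq n^{d}-\binom{n+d-1}{d}$ at every, not merely generic, $\tau$ in the stated avoidance set. Sums of subspaces are generically large by~\cref{pr:genker}, and the correct dimension holds at the boundary point $\tau=0$ because $\lim_{\tau\to 0}\im R_{\tau}(\tau)=\Lambda^{2}V$ by~\cref{le.ulim0}, so that $\Sigma_{d}(0)$ is the space of degree-$d$ relations of $SV$; this yields~(B) for generic $\tau$. Upgrading ``generic'' to ``every non-excluded $\tau$'' is the role of the auxiliary theta operator ``$G_{\tau}(z)$'' promised in~\cref{sect.methods}: one introduces an operator acting on a canonical lift of the family of subspaces $\Sigma_{d}(\tau)$ (hence not defined on all of $V^{\otimes d}$) whose ``determinant'' in the sense of~\cref{rmk.det} encodes $\dim\Sigma_{d}(\tau)$. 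Running the same multiplicity-matching argument on $G_{\tau}(z)$, using~\cref{lem.theta.fns} and~\cref{pr.hol-op} with the base case at $\tau=0$, then pins $\dim\Sigma_{d}(\tau)$ from below at every non-excluded $\tau$ and completes the proof.
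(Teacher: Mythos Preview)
Your plan to analyze $\det F_d(z)$ directly does not work as stated. The factors of $F_d(z)$ are $R(mz)_{i,i+1}$ for $m=1,\ldots,d-1$, and $R(mz)$ as a function of $z$ is a theta operator of order $m^2 n^2$ with respect to $\Lambda$, not order $n^2$; moreover its quasi-periodicity with respect to $\frac{1}{n}\Lambda$ involves conjugation by different powers of $S,T$ for different $m$, so the factors do not share a common quasi-periodicity behaviour and $\det F_d(z)$ is not a theta function with respect to $\frac{1}{n}\Lambda$. Even granting a workable zero-count, the potential zeros of $\det F_d(z)$ lie in $\bigcup_{m=1}^{d-1}\bigl(\pm\frac{\tau}{m}+\frac{1}{mn}\Lambda\bigr)$, cosets of \emph{different} sublattices, and you would need nullity lower bounds at every such point. \Cref{le.shfl} only factorizes $F_d(\pm\tau)$ and \Cref{pr.flim} only controls limits as $\tau\to 0$; neither gives information about $\nullity F_d(\pm\tau/m)$ for $m\ge 2$, so the multiplicity-matching cannot close.

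The paper proceeds by induction on $d$. The operator $G_\tau(z)$ is not a lift of $\Sigma_d(\tau)$; it is the restriction of $S^{\rm rev}_{1\to d}(z,-\tau,\ldots,-\tau)=R(z)_{12}R(z-\tau)_{23}\cdots R(z-(d-2)\tau)_{d-1,d}$ to the subspace $\im F_{d-1}(-\tau)\otimes V$, whose dimension $n\tbinom{n+d-2}{d-1}$ is known by the induction hypothesis. Because every factor here is $R(z+\text{const})$, $G_\tau(z)$ is a genuine theta operator of order $(d-1)n^2$ with respect to $\Lambda$ (hence order $(d-1)$ with respect to $\frac{1}{n}\Lambda$ after rescaling), and its potential zeros all lie in $\{-\tau,\tau,2\tau,\ldots,(d-1)\tau\}+\frac{1}{n}\Lambda$. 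The lower nullity bounds at these points (Lemmas~\ref{lem.dim.ker.ineq.p}--\ref{lem.dim.ker.ineq.m}) use the factorizations in \Cref{le.f-id,le.shfl} together with the induction hypothesis, and the multiplicity-matching (\Cref{lem.dim.ker.eq}) then pins down $\rank G_\tau(-\tau)=\rank F_d(-\tau)$ exactly.

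Finally, \cref{thm.dim.rel.sp.im} does not follow from \cref{thm.dim.rel.sp.ker} by rank--nullity alone: that gives $\rank F_d(-\tau)=\tbinom{n+d-1}{d}$ and hence, via \Cref{le.shfl}\cref{item.shfl.im.int}, a \emph{lower} bound on $\dim\bigcap V^{\otimes s}\otimes\ker R_\tau(\tau)\otimes V^{\otimes t}$, but intersections are generically small, so the reverse inequality needs an independent argument. In the paper this comes from the same multiplicity-matching: one shows $\ker G_\tau((d-1)\tau)\supseteq\bigcap V^{\otimes s}\otimes\ker R_\tau(\tau)\otimes V^{\otimes t}$ and then uses the exact value $\nullity G_\tau((d-1)\tau)=\tbinom{n+d-1}{d}$ obtained in \Cref{lem.dim.ker.eq}. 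Thus both parts are proved together through $G_\tau$, not separately.
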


We assume that $\tau\in\bC-\bigcup_{m=1}^{d}\frac{1}{mn}\Lambda$ until the end of the proof, i.e., $\pm\tau,\pm 2\tau,\ldots,\pm d\tau\notin\frac{1}{n}\Lambda$.

When $d=2$, \cref{thm.dim.rel.sp} follows from \cref{prop.nullities} since $F_2(\tau)=R(\tau)$. We now argue by induction on $d$.

\subsubsection{The operators $G_{\tau}(z)$}
\label{sect.defn.Gz}

Taking $(z_1,\ldots,z_{d-1})=(z,-\tau,\ldots,-\tau)$ in \cref{le.f-id}  we see that
\begin{align*}
	T_{d}(z,-\tau,\ldots,-\tau)
	&\;=\;   S^{\mathrm{rev}}_{1\to d}(z,-\tau,\ldots,-\tau)T^{L}_{d-1}(-\tau,\ldots,-\tau)\\
	&\; =\; T^{R}_{d-1}(-\tau,\ldots,-\tau)S_{1\to d}(-\tau,,\ldots,-\tau,z)
\end{align*}
which implies that the operator
\begin{equation}\label{eq.comp.r.subsp}
  S^{\mathrm{rev}}_{1\to d}(z,-\tau,\ldots,-\tau) \; = \; R(z)_{12}R(z-\tau)_{23}\cdots R(z-(d-2)\tau)_{d-1,d}
\end{equation}
on $V^{\otimes d}$ restricts to a linear map\index{G_tau(z)@$G_\tau(z)$}
\begin{equation}\label{eq:fww'}
G_{\tau}(z):  \im F_{d-1}(-\tau)\otimes V \, \longrightarrow \, V\otimes\im F_{d-1}(-\tau).
\end{equation}
Since $T_{d}(z,-\tau,\ldots,-\tau)=S^{\mathrm{rev}}_{1\to d}(z,-\tau,\ldots,-\tau)T^{L}_{d-1}(-\tau,\ldots,-\tau)$,
\begin{equation}
\label{eq:imG.imF}
\im G_{\tau}(z)  \;=\;  \im T_{d}(z,-\tau,\ldots,-\tau).
\end{equation} 
In particular,  $\im G_{\tau}(-\tau) =\im F_{d}(-\tau)$. 

Since $R_\tau(z)$ (resp., $\det R_\tau(z)$) is a theta operator (resp., function) of order $n^2$ with respect to $\Lambda$
(resp., $\frac{1}{n}\Lambda$), the operator (resp., determinant of the operator) in \cref{eq.comp.r.subsp}
is a theta operator (resp., function) of order $(d-1)n^2$ with respect to $\Lambda$ (resp., $\frac{1}{n}\Lambda$).

\subsubsection{The ``determinant'' of $G_\tau(z)$}
By the induction hypothesis, $\rank F_{d-1}(-\tau)=\tbinom{n+d-2}{d-1}$ so
\begin{equation*}
\dim(\im F_{d-1}(-\tau)\otimes V) \;=\;  \dim(V\otimes\im F_{d-1}(-\tau)) \;=\; n\tbinom{n+d-2}{d-1}.
\end{equation*}
We fix arbitrary bases for the subspaces $\im F^L_{d-1}(-\tau)$ and $\im F^R_{d-1}(-\tau)$ of $V^{\otimes d}$
and write $\det G_{\tau}(z)$ for the determinant of the matrix for $G_{\tau}(z)$ with respect to those bases; 
although $\det G_{\tau}(z)$ depends on the choice of bases, the location and multiplicities of its zeros do not
(see \cref{rmk.det}).

\begin{proposition}\label{lem.gz.zeros}
  $\det G_{\tau}(z)$ is a theta function with respect to $\frac{1}{n}\Lambda$ and has exactly $(d-1)n\tbinom{n+d-2}{d-1}$ zeros in every fundamental parallelogram for $\frac{1}{n}\Lambda$.
\end{proposition}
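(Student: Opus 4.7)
The plan is to compute the quasi-periodicity of $G_\tau(z)$ under the two generating shifts of $\tfrac{1}{n}\Lambda$, then apply \cref{lem.theta.fns} to count zeros. By the induction hypothesis \cref{thm.dim.rel.sp}\cref{thm.dim.rel.sp.im} for $d-1$, the source $\im F_{d-1}(-\tau)\otimes V$ and the target $V\otimes \im F_{d-1}(-\tau)$ of $G_\tau(z)$ have the same dimension $K:=n\tbinom{n+d-2}{d-1}$.

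For the shift $z\mapsto z+\tfrac{1}{n}$, apply \cref{eq.z+1/n} to each of the $d-1$ factors of
\begin{equation*}
	S^{\mathrm{rev}}_{1\to d}(z,-\tau,\ldots,-\tau)\;=\;\prod_{j=0}^{d-2}R_\tau(z-j\tau)_{j+1,j+2};
\end{equation*}
the factor at index $j$ becomes $(-1)^{n-1}(S^{-k})_{j+2}R_\tau(z-j\tau)_{j+1,j+2}(S^k)_{j+1}$. Each single-tensorand operator $(S^{\pm k})_\ell$ commutes with every $R_\tau$ block that does not act on position $\ell$ by \cref{lem.transf.props.R.tau.z}\cref{item.prop.s.t.comm}, so sliding them to the two ends yields
\begin{equation*}
	G_\tau\!\big(z+\tfrac{1}{n}\big)\;=\;(-1)^{(d-1)(n-1)}\big(I\otimes(S^{-k})^{\otimes(d-1)}\big)G_\tau(z)\big((S^k)^{\otimes(d-1)}\otimes I\big).
\end{equation*}
Because $S^{\otimes(d-1)}$ commutes with every $R_\tau(w)_{j,j+1}$ and therefore with $F_{d-1}(-\tau)$, the two flanking operators preserve the target and source respectively, so this is a genuine identity of maps between the relevant subspaces. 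An entirely parallel computation using \cref{eq.z+1/n eta} gives
\begin{equation*}
	G_\tau\!\big(z+\tfrac{1}{n}\eta\big)\;=\;\Big(\prod_{j=0}^{d-2}b(z-j\tau)\Big)\big(I\otimes(T^{-1})^{\otimes(d-1)}\big)G_\tau(z)\big(T^{\otimes(d-1)}\otimes I\big),
\end{equation*}
and the scalar product evaluates via \cref{eq:defn.b(z)} to $e(-n(d-1)z+\gamma)$ for a constant $\gamma$ depending on $\tau$, $d$, $\eta$ but not on $z$.

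Taking the determinants on the $K$-dimensional source and target, the flanking operators contribute only non-zero constants, so $\det G_\tau(z+\tfrac{1}{n})=c_1\det G_\tau(z)$ and $\det G_\tau(z+\tfrac{1}{n}\eta)=e(-n(d-1)Kz+\gamma')\det G_\tau(z)$; thus $\det G_\tau(z)$ is a theta function with respect to $\tfrac{1}{n}\Lambda$. Applying \cref{lem.theta.fns} with $\eta_1=\tfrac{1}{n}$, $\eta_2=\tfrac{1}{n}\eta$, $a=0$, $c=n(d-1)K$ yields $c\eta_1-a\eta_2=(d-1)K=(d-1)n\tbinom{n+d-2}{d-1}$ zeros per fundamental parallelogram, provided $\det G_\tau(z)\not\equiv 0$. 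Non-vanishing holds because, under the standing assumption $\tau\notin\bigcup_{m=1}^{d}\tfrac{1}{mn}\Lambda$, the finitely many points $z-j\tau$, $0\le j\le d-2$, avoid $\pm\tau+\tfrac{1}{n}\Lambda$ for generic $z$, so each factor $R_\tau(z-j\tau)$ is invertible by \cref{cor.R.isom}; consequently $S^{\mathrm{rev}}_{1\to d}(z,-\tau,\ldots,-\tau)$ is an automorphism of $V^{\otimes d}$ at generic $z$ and its restriction $G_\tau(z)$ between equidimensional subspaces is then an isomorphism. The main obstacle is the combinatorial bookkeeping in the commutation step: one must check that the many scattered single-tensorand factors $(S^{\pm k})_\ell$ and $(T^{\pm 1})_\ell$ produced by \cref{eq.z+1/n,eq.z+1/n eta} can be reorganized cleanly into the two balanced blocks on the left-most and right-most $d-1$ tensorands without interfering with any $R_\tau$ factor.
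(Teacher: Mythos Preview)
Your argument is correct and follows essentially the same strategy as the paper's proof: both establish the quasi-periodicity of $\det G_\tau(z)$ with respect to $\tfrac{1}{n}\Lambda$ and then count zeros via \cref{lem.theta.fns}, and both handle non-vanishing by noting that each $R_\tau(z-j\tau)$ is invertible for generic $z$. The only difference is in packaging: the paper first invokes the general \cref{prop.hol.det.zeros} to get that $\det G_\tau(z)$ is a theta function of order $(d-1)n^3\tbinom{n+d-2}{d-1}$ with respect to $\Lambda$, and then asserts the refinement to $\tfrac{1}{n}\Lambda$, whereas you carry out the $\tfrac{1}{n}\Lambda$ quasi-periodicity computation directly using \cref{eq.z+1/n,eq.z+1/n eta} and verify explicitly that the conjugating operators $(S^{\pm k})^{\otimes(d-1)}$ and $(T^{\pm 1})^{\otimes(d-1)}$ preserve $\im F_{d-1}(-\tau)$---which is exactly the content needed to justify the paper's passage from $\Lambda$ to $\tfrac{1}{n}\Lambda$.
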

\begin{proof}
Let $W$ and $W'$ denote the  domain and codomain in \Cref{eq:fww'}.
By the induction hypothesis,
\begin{equation*}
\dim W \;=\; \dim W' \;=\;    n\tbinom{n+d-2}{d-1}.
\end{equation*}
As remarked above,  the operator in \cref{eq.comp.r.subsp} is a theta operator of order $(d-1)n^2$ with respect to $\Lambda$. 
It now follows from \Cref{prop.hol.det.zeros} applied to $A(z)=G_\tau(z)$ that $\det G_{\tau}(z)$ is a theta function of order
$(d-1)n^3\tbinom{n+d-2}{d-1}$ with respect to $\Lambda$. However, because the determinant of the operator in \cref{eq.comp.r.subsp}
 is a theta function with respect to $\frac{1}{n}\Lambda$ so is $\det G_\tau(z)$, and it has exactly 
 $(d-1)n\tbinom{n+d-2}{d-1}$ zeros in every fundamental parallelogram for $\frac{1}{n}\Lambda$.
(Note that $\det G_{\tau}(z)$ does not vanish identically 
because each factor in \cref{eq.comp.r.subsp} is an isomorphism for all but finitely many $z$.)
\end{proof}

\begin{lemma}\label{lem.dim.ker.ineq.p}
For all $m=1,\ldots,d-2$, $G_{\tau}(m\tau)=0$.
\end{lemma}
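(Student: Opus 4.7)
The plan is to prove the stronger statement that $T_d(m\tau,-\tau,\ldots,-\tau)=0$ as an operator on $V^{\otimes d}$; since $\im G_\tau(z)=\im T_d(z,-\tau,\ldots,-\tau)$ by \cref{eq:imG.imF}, this immediately gives $G_\tau(m\tau)=0$. The underlying idea is to exhibit $T_d(m\tau,-\tau,\ldots,-\tau)$ as a product containing the vanishing factor $R(\tau)_{m,m+1}R(-\tau)_{m,m+1}$ (see \cref{rmk.R(0)}\cref{item.prop.r.pm}), after shuffling via trivial commutations.

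First I would use the third identity in \cref{le.f-id} to write
\begin{equation*}
T_d(m\tau,-\tau,\ldots,-\tau) \;=\; S^{\mathrm{rev}}_{1\to d}(m\tau,-\tau,\ldots,-\tau)\cdot F^L_{d-1}(-\tau).
\end{equation*}
Expanding the definition, the factor of $S^{\mathrm{rev}}_{1\to d}(m\tau,-\tau,\ldots,-\tau)$ at position $(i,i+1)$ is $R((m-i+1)\tau)_{i,i+1}$. In particular, the factor at position $(m,m+1)$ is $R(\tau)_{m,m+1}$, and the factor immediately to its right, at position $(m+1,m+2)$, is $R(0)_{m+1,m+2}=I\otimes I$ by \cref{rmk.R(0)}\cref{item.prop.r.zero}. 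Next, because $m\in\{1,\ldots,d-2\}$ falls in the range for which \cref{le.shfl}\cref{item.shfl.Q.m} applies to $F_{d-1}(-\tau)$, I can factor
\begin{equation*}
F^L_{d-1}(-\tau)\;=\;R(-\tau)_{m,m+1}\,\tilde{Q}'
\end{equation*}
for some operator $\tilde{Q}'$ on $V^{\otimes d}$.

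The key step is then to slide this $R(-\tau)_{m,m+1}$ leftward past the factors of $S^{\mathrm{rev}}_{1\to d}$ strictly to the right of $R(\tau)_{m,m+1}$, so that it becomes adjacent to $R(\tau)_{m,m+1}$. Those intermediate factors are $R(0)_{m+1,m+2}=I\otimes I$, $R(-\tau)_{m+2,m+3}$, $R(-2\tau)_{m+3,m+4}$, $\ldots$, $R((m-d+2)\tau)_{d-1,d}$. The first is the identity and commutes trivially; every other factor acts on a pair of tensorands with indices $\geq m+2$, hence disjoint from $\{m,m+1\}$, so it commutes with $R(-\tau)_{m,m+1}$. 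After sliding, the product contains the adjacent pair $R(\tau)_{m,m+1}R(-\tau)_{m,m+1}$, which vanishes by \cref{rmk.R(0)}\cref{item.prop.r.pm}, and hence $T_d(m\tau,-\tau,\ldots,-\tau)=0$.

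There is no serious obstacle: the argument uses neither the quantum Yang-Baxter equation nor any analytic ingredient beyond the identities $R(\tau)R(-\tau)=0$ and $R(0)=I\otimes I$. The only point requiring care is the elementary partial-sum check identifying the factor $R(\tau)_{m,m+1}$ and verifying that the neighbor $R(0)_{m+1,m+2}$ is the identity; both follow at once from $m-i+1=1$ when $i=m$ and $m-i+1=0$ when $i=m+1$, and the resulting slide succeeds precisely because $m$ lies in the range $\{1,\ldots,d-2\}$ (so both positions $(m,m+1)$ and $(m+1,m+2)$ are valid in $V^{\otimes d}$).
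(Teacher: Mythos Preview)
Your proof is correct and follows essentially the same approach as the paper's: both rewrite $G_\tau(m\tau)$ via the product $S^{\mathrm{rev}}_{1\to d}(m\tau,-\tau,\ldots,-\tau)\cdot F^L_{d-1}(-\tau)$, locate the factor $R(\tau)_{m,m+1}$, use $R(0)_{m+1,m+2}=I$ and disjointness of indices to commute it next to an $R(-\tau)_{m,m+1}$ coming from the factorization of $F_{d-1}(-\tau)$, and conclude from $R(\tau)R(-\tau)=0$. The only differences are cosmetic: the paper slides $R(\tau)_{m,m+1}$ to the right whereas you slide $R(-\tau)_{m,m+1}$ to the left, and the paper treats $m=d-2$ separately while your formulation handles all $m\in\{1,\ldots,d-2\}$ uniformly.
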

\begin{proof}
By definition, $G_\tau(m\tau)$ is the restriction of $R(m\tau)_{12}R((m-1)\tau)_{23}\ldots R((m-d+2)\tau)_{d-1,d}$ to 
$ \im F_{d-1}(-\tau)\otimes V$ so, to prove the lemma, it suffices to show that
\begin{equation}
\label{eq:G(FoI)}
R(m\tau)_{12}R((m-1)\tau)_{23}\cdots R((m-d+2)\tau)_{d-1,d} \, \cdot \, ( F_{d-1}(-\tau) \otimes I) \;=\; 0.
\end{equation}

	Assume $1 \le m\le d-3$. The product  $R(\tau)_{m,m+1}R(0)_{m+1,m+2}R(-\tau)_{m+2,m+3}$ is a factor of the operator 
	in \cref{eq:G(FoI)}. But $R(0)_{m+1,m+2}$ is the identity so
	\begin{align*}
	R(\tau)_{m,m+1}R(0)_{m+1,m+2}R(-\tau)_{m+2,m+3} & \;=\; R(0)_{m+1,m+2}R(\tau)_{m,m+1}R(-\tau)_{m+2,m+3}
	\\
	& \;= \; R(0)_{m+1,m+2}R(-\tau)_{m+2,m+3}R(\tau)_{m,m+1}.
	\end{align*}
	In fact, $R(\tau)_{m,m+1}$ commutes with all the $R$-factors to the right of it in \cref{eq:G(FoI)} so
	\cref{eq:G(FoI)} can be written as  $Q'' R(\tau)_{m,m+1} \cdot \, ( F_{d-1}(-\tau) \otimes I)$ for some $Q''$.
	However, by \cref{le.shfl}, there is a factorization of the form $F_{d-1}(-\tau)=R(-\tau)_{m,m+1}Q'$ and hence a factorization $F_{d-1}(-\tau) \otimes I=R(-\tau)_{m,m+1}(Q' \otimes I)$. 
	The product in \cref{eq:G(FoI)} is therefore of the form
	\begin{equation*}
	Q'' R(\tau)_{m,m+1} \,\cdot \,  R(-\tau)_{m,m+1}(Q' \otimes I),
	\end{equation*}
	and this product is zero (\cref{lem.rz.dim.ker} tells us that $R(\tau)_{i,i+1}  R(-\tau)_{i,i+1}=0$ for all 	$i$).

	Assume $m=d-2$. The left-hand side of  \cref{eq:G(FoI)} is now 
\begin{equation*}
R((d-2)\tau)_{12}R((d-3)\tau)_{23}\cdots  R(\tau)_{d-2,d-1}   R(0)_{d-1,d} \, \cdot \, ( F_{d-1}(-\tau) \otimes I)
\end{equation*}
which equals $Q''R(\tau)_{d-2,d-1} \, \cdot \, ( F_{d-1}(-\tau) \otimes I)$ for some $Q''$.
However, by \cref{le.shfl}, $F_{d-1}(-\tau) = R(-\tau)_{d-2,d-1} Q$ for some $Q$ so, as before,  \cref{eq:G(FoI)} is zero.
\end{proof}

\begin{lemma}\label{lem.dim.ker.ineq.h}
$\nullity  G_{\tau}((d-1)\tau) \geq\tbinom{n+d-1}{d}$.
\end{lemma}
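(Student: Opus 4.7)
The plan is to reduce the computation of $\nullity G_{\tau}((d-1)\tau)$ to a dimension bound on a specific intersection of subspaces of $V^{\otimes d}$, and then to establish that bound by a semicontinuity argument anchored at $\tau=0$.

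First, I would observe that under the hypothesis $\tau\notin\bigcup_{m=1}^{d}\frac{1}{mn}\Lambda$, the points $m\tau$ for $m=2,\ldots,d-1$ all lie outside $\pm\tau+\frac{1}{n}\Lambda$ (since $(m\mp 1)\tau\notin\frac{1}{n}\Lambda$), so by \cref{cor.R.isom} each $R_{\tau}(m\tau)$ is an isomorphism. Hence in the explicit factorization
\begin{equation*}
	G_{\tau}((d-1)\tau) \;=\; R((d-1)\tau)_{12}\,R((d-2)\tau)_{23}\cdots R(2\tau)_{d-2,d-1}\,R(\tau)_{d-1,d}\Big|_{\im F_{d-1}(-\tau)\otimes V}
\end{equation*}
every factor except $R(\tau)_{d-1,d}$ is an isomorphism, and with $K:=\ker R_{\tau}(\tau)$ this gives
\begin{equation*}
	\ker G_{\tau}((d-1)\tau) \;=\; \bigl(\im F_{d-1}(-\tau)\otimes V\bigr)\cap \bigl(V^{\otimes(d-2)}\otimes K\bigr).
\end{equation*}
Applying the inductive hypothesis (\Cref{thm.dim.rel.sp}\cref{thm.dim.rel.sp.im} at level $d-1$), one has $\im F_{d-1}(-\tau)=\bigcap_{s+t+2=d-1}V^{\otimes s}\otimes K\otimes V^{\otimes t}$; tensoring with $V$ on the right reindexes this as $\bigcap_{s+t+2=d,\,t\geq 1}V^{\otimes s}\otimes K\otimes V^{\otimes t}$, and intersecting with the $t=0$ piece $V^{\otimes(d-2)}\otimes K$ yields
\begin{equation*}
	\ker G_{\tau}((d-1)\tau) \;=\; \bigcap_{s+t+2=d} V^{\otimes s}\otimes K\otimes V^{\otimes t}.
\end{equation*}

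Next, I would bound the dimension of this intersection from below. By \cref{le.shfl}\cref{item.shfl.im.int} the inclusion $\im F_d(-\tau)\subseteq \bigcap_{s+t+2=d} V^{\otimes s}\otimes K\otimes V^{\otimes t}$ holds, so $\rank F_d(-\tau)\le \dim\bigcap$ pointwise. By \cref{lem.U_tau}, both $\tau\mapsto K_{\tau}$ (on the locus where $\rank R_{\tau}(\tau)=\binom{n}{2}$, which by \cref{thm.quad.relns} and the convention in \cref{sssec.def.rel.Qnk} includes a connected open neighborhood of $0$) and $\tau\mapsto F_d(-\tau)$ extend holomorphically to $\tau=0$. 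At $\tau=0$: \cref{cor.sym} gives $K_{0}=\sym^2 V$, so the intersection becomes $\sym^d V$, of dimension $\binom{n+d-1}{d}$; and \cref{pr.flim} shows $F_d(-\tau)|_{\tau=0}$ is a nonzero scalar multiple of the symmetrizer $\sum_{\sigma\in S_d}\sigma$, whose rank is $\binom{n+d-1}{d}$. Lower semicontinuity of rank and upper semicontinuity of intersection dimension (the ``generically large/small'' phenomena of \cref{pr:optosp,pr:genker}) then yield
\begin{equation*}
	\tbinom{n+d-1}{d} \;\le\; \text{generic rank }F_d(-\tau) \;\le\; \text{generic }\dim\bigcap \;\le\; \tbinom{n+d-1}{d},
\end{equation*}
forcing equality throughout; hence the generic value of $\dim\bigcap$ equals $\binom{n+d-1}{d}$, and by upper semicontinuity $\dim\bigcap_{s+t+2=d} V^{\otimes s}\otimes K\otimes V^{\otimes t}\ge \binom{n+d-1}{d}$ at every $\tau$ in the domain, in particular at the given one.

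The main obstacle will be administrative rather than conceptual: one must verify that $\tau\mapsto K_{\tau}$ is a genuine morphism to the Grassmannian $\Grass\bigl(\tbinom{n+1}{2},V^{\otimes 2}\bigr)$ on a connected open neighborhood of $0$ that also contains the given $\tau$, so that the semicontinuity machinery of \cref{pr:optosp,pr:genker} applies on a single connected analytic variety. This requires the extended $R_{\tau}(\tau)$ to have constant rank $\binom{n}{2}$ on that neighborhood, which holds on $\bC-(\tfrac{1}{2n}\Lambda\setminus\tfrac{1}{n}\Lambda)$ by \cref{thm.quad.relns} together with the extension $R_+(\zeta)$ at torsion points recalled in \cref{sssec.def.rel.Qnk}; once this bookkeeping is in place, the chain of inequalities above closes the argument.
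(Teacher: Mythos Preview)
Your proof is correct and follows essentially the same approach as the paper's: establish that $\ker G_\tau((d-1)\tau)$ contains $\bigcap_{s+t+2=d}V^{\otimes s}\otimes\ker R_\tau(\tau)\otimes V^{\otimes t}$, which in turn contains $\im F_d(-\tau)$, and then use the sandwich between a generically-large quantity ($\rank F_d(-\tau)$) and a generically-small one ($\dim\bigcap$), both equal to $\tbinom{n+d-1}{d}$ at $\tau=0$, to pin down the generic value and hence the lower bound everywhere. You go slightly further than the paper by proving the \emph{equality} $\ker G_\tau((d-1)\tau)=\bigcap$ via the observation that the factors $R(m\tau)_{j,j+1}$ for $2\le m\le d-1$ are isomorphisms; the paper is content with the inclusion $\supseteq$, which already suffices, and is terser about the semicontinuity step (it only states the ``generically large'' half explicitly, leaving the generically-small part implicit---compare the fuller write-up in the proof of the companion \cref{lem.dim.ker.ineq.m}).
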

\begin{proof}
  When $z=(d-1)\tau$, the right-most factor in \cref{eq.comp.r.subsp} is $R(\tau)_{d-1,d}$ so
  \begin{align*}
    \ker G_{\tau}((d-1)\tau)
    &\; \supseteq \; \ker R(\tau)_{d-1,d}\cap(\im F_{d-1}(-\tau)\otimes V)\\
    & \; =\; \bigcap_{s+t+2=d}V^{\otimes s}\otimes\ker R_{\tau}(\tau)\otimes V^{\otimes t}\\
    &\; \supseteq \; \im F_{d}(-\tau)
  \end{align*}
  where the equality comes from the induction hypothesis \cref{eq.dim.rel.sp.im} applied to $\im F_{d-1}(-\tau)$. 
  The operator $F_{d}(-\tau)$ can be extended to all $\tau$ and its rank is generically large. Thus the desired inequality follows from the fact that 
 $\rank(\lim_{\tau\to 0}F_{d}(-\tau)) \;=\; \tbinom{n+d-1}{d}$,
  which is a consequence of \Cref{pr.flim}.
\end{proof}

\begin{lemma}\label{lem.dim.ker.ineq.m}
  $\nullity G_{\tau}(-\tau)  \geq n\tbinom{n+d-2}{d-1}-\tbinom{n+d-1}{d}$.
\end{lemma}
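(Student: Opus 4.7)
The plan is to reformulate the desired inequality as the upper bound $\rank G_\tau(-\tau)\le\binom{n+d-1}{d}$ and to establish it by a degeneration at $\tau=0$. The first step is to observe that, by the factorization $T_d(z,-\tau,\ldots,-\tau)=S^{\mathrm{rev}}_{1\to d}(z,-\tau,\ldots,-\tau)\,T^L_{d-1}(-\tau,\ldots,-\tau)$ used to define $G_\tau(z)$, the image of $G_\tau(-\tau)$ coincides with $\im F_d(-\tau)$ (specializing \cref{eq:imG.imF} at $z=-\tau$). Hence $\rank G_\tau(-\tau)=\rank F_d(-\tau)$, and the task reduces to proving $\rank F_d(-\tau)\le\binom{n+d-1}{d}$.

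Next I would apply \cref{le.shfl}\cref{item.shfl.im.int} to get the containment
\begin{equation*}
\im F_d(-\tau)\;\subseteq\;\bigcap_{s+t+2=d} V^{\otimes s}\otimes\ker R_\tau(\tau)\otimes V^{\otimes t},
\end{equation*}
and then compute the right-hand side at $\tau=0$. By \cref{prop.nullities} (combined with \cref{lem.rz.dim.ker.tors} to cover $\tau=0$), the rank of $R_\tau(\tau)$ is constantly $\binom{n}{2}$ on $(\bC-\frac{1}{2n}\Lambda)\cup\{0\}$, so $\tau\mapsto\ker R_\tau(\tau)$ is a holomorphic family of subspaces near $\tau=0$. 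By \cref{cor.sym} the value at $\tau=0$ is $\ker R_+(0)=S^2V$, and therefore the intersection specializes at $\tau=0$ to $S^dV$, of dimension $\binom{n+d-1}{d}$.

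Because intersections of families of subspaces are generically small (\cref{pr:genker}), this forces the dimension of the intersection to be at most $\binom{n+d-1}{d}$ for generic $\tau$, and hence $\rank F_d(-\tau)\le\binom{n+d-1}{d}$ for generic $\tau$. Since $F_d(-\tau)$ extends to a holomorphic family on all of $\bC$ and rank is generically large (its generic value equals its maximum), this inequality is in fact a uniform bound: $\rank F_d(-\tau)\le\binom{n+d-1}{d}$ for every $\tau\in\bC$. Substituting into
\begin{equation*}
\nullity G_\tau(-\tau)\;=\;\dim\bigl(\im F_{d-1}(-\tau)\otimes V\bigr)\,-\,\rank G_\tau(-\tau)\;=\;n\tbinom{n+d-2}{d-1}-\rank F_d(-\tau)
\end{equation*}
yields the claim.

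The delicate point will be reconciling the two opposing semicontinuity principles---intersection dimensions are generically small while rank is generically large---via the degeneration at $\tau=0$, which serves as the pivot that converts a generic bound on the intersection dimension into a uniform bound on $\rank F_d(-\tau)$ valid for all $\tau$.
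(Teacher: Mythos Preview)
Your argument is correct and structurally identical to the paper's: both reduce to showing $\rank F_d(-\tau)\le\binom{n+d-1}{d}$ via a double semicontinuity pivoting at $\tau=0$, and both finish by combining $\im G_\tau(-\tau)=\im F_d(-\tau)$ with the induction hypothesis on $\rank F_{d-1}(-\tau)$. The only difference is which inclusion from \cref{le.shfl} is invoked: the paper uses part \cref{item.shfl.ker.sum} (the sum of $\im R(\tau)$-subspaces sits inside $\ker F_d(-\tau)$, sums are generically large, kernels generically small), whereas you use the dual part \cref{item.shfl.im.int} (the image of $F_d(-\tau)$ sits inside the intersection of $\ker R(\tau)$-subspaces, intersections are generically small, ranks generically large). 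These are mirror images of one another and yield the same bound with the same amount of work.
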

\begin{proof}
  By \cref{le.shfl}, 
  \begin{equation}\label{eq.lem.dim.ker.ineq.m}
    \sum_{s+t+2=d}V^{\otimes s}\otimes\im R(\tau)\otimes V^{\otimes t} \; \subseteq \; \ker F_{d}(-\tau).
  \end{equation}
  Extending the function $R_\tau(\tau)$ to all $\tau\in\bC$, the left-hand side of \cref{eq.lem.dim.ker.ineq.m} makes sense for all $\tau$ and its dimension is generically large by \Cref{pr:optosp,pr:genker}. When $\tau=0$ the left-hand side of \cref{eq.lem.dim.ker.ineq.m} is the space of degree-$d$ relations for the polynomial ring $SV$ by \Cref{cor.sym}, so its dimension is $n^{d}-\tbinom{n+d-1}{d}$.  Hence $\dim(\ker F_{d}(-\tau)) \ge n^{d}-\tbinom{n+d-1}{d}$ for generic $\tau$; but the dimension of this kernel is generically small by \Cref{pr:optosp}, so this inequality holds for all $\tau$. Therefore
  \begin{align*}
    \dim(\ker G_{\tau}(-\tau))
    &\;=\; \dim(\im F_{d-1}(-\tau)\otimes V) \, -\, \dim(\im G_{\tau}(-\tau)) \\
    &\; =\; n\dim(\im F_{d-1}(-\tau)) \, -\, \dim(\im F_{d}(-\tau))\\
    &\;\geq \; n\tbinom{n+d-2}{d-1}-\tbinom{n+d-1}{d}
  \end{align*}
  where the inequality comes from the induction hypothesis $\dim(\ker F_{d-1}(-\tau))=n^{d-1}-\tbinom{n+d-2}{d-1}$.
\end{proof}

\begin{lemma}
  \label{lem.G.nullity}
  For all $\zeta \in \frac{1}{n}\Lambda$,  
  $G_\tau(z+\zeta) $ and $G_\tau(z)$ have the same nullity. 
\end{lemma}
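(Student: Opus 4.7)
The plan is to exhibit, for any $\zeta\in\frac{1}{n}\Lambda$, a non-zero scalar $f\in\bC$ and invertible operators $A,B$ on $V^{\otimes d}$ such that
\begin{equation*}
G_\tau(z+\zeta)\;=\;f\cdot A\cdot G_\tau(z)\cdot B,
\end{equation*}
where $B$ restricts to an automorphism of $\im F_{d-1}(-\tau)\otimes V$ and $A$ to an automorphism of $V\otimes \im F_{d-1}(-\tau)$. Such a factorization forces $G_\tau(z+\zeta)$ and $G_\tau(z)$ to have the same rank, hence the same nullity.

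To build $A$ and $B$, write $\zeta=\tfrac{a}{n}+\tfrac{b}{n}\eta$, set $C:=T^bS^{ka}\in \GL(V)$, and define $\tilde C_i:=I^{\otimes(i-1)}\otimes C\otimes I^{\otimes(d-i)}$ on $V^{\otimes d}$. First I would apply \Cref{cor.R.transl.props} factor-by-factor in
\begin{equation*}
G_\tau(z+\zeta)\;=\;R_\tau(z+\zeta)_{12}\,R_\tau(z+\zeta-\tau)_{23}\,\cdots\, R_\tau(z+\zeta-(d-2)\tau)_{d-1,d}
\end{equation*}
(restricted to $\im F_{d-1}(-\tau)\otimes V$), rewriting each $R_\tau(w+\zeta)_{i,i+1}$ as $f(w,\zeta,\tau)\,\tilde C_{i+1}^{-1}\,R_\tau(w)_{i,i+1}\,\tilde C_i$. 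Since $f(w,\zeta,\tau)$ never vanishes, collecting the scalar prefactors across the $d-1$ factors produces a non-zero scalar $f$.

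Next I would slide every $\tilde C_j$ to one of the two ends of the product. Two commutation facts are at play: $\tilde C_j$ commutes with $R_\tau(w)_{i,i+1}$ for $j\notin\{i,i+1\}$ trivially, and $\tilde C_i\tilde C_{i+1}$ commutes with $R_\tau(w)_{i,i+1}$ because $C\otimes C$ commutes with $R_\tau(w)$ by \Cref{lem.transf.props.R.tau.z}\cref{item.prop.s.t.comm}. A straightforward induction on $d$ then yields
\begin{equation*}
G_\tau(z+\zeta)\;=\;f\cdot (\tilde C_2\cdots \tilde C_d)^{-1}\cdot \prod_{i=1}^{d-1}R_\tau(z-(i-1)\tau)_{i,i+1}\cdot (\tilde C_1\cdots \tilde C_{d-1}).
\end{equation*}

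Finally, set $A:=(\tilde C_2\cdots \tilde C_d)^{-1}$ and $B:=\tilde C_1\cdots \tilde C_{d-1}$. The operator $B$ acts as $C^{\otimes(d-1)}\otimes I$; since $C\otimes C$ commutes with every $R_\tau(w)$, the diagonal $C^{\otimes(d-1)}$ commutes with $F_{d-1}(-\tau)$ and so preserves $\im F_{d-1}(-\tau)$, whence $B$ restricts to an automorphism of $\im F_{d-1}(-\tau)\otimes V$. By the symmetric argument, $A$ restricts to an automorphism of $V\otimes \im F_{d-1}(-\tau)$, so restricting the displayed identity to $\im F_{d-1}(-\tau)\otimes V$ gives the desired factorization. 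The only delicate point is the inductive bookkeeping in the middle step; the conceptual content is just that shifting $z$ by $\zeta\in\tfrac{1}{n}\Lambda$ amounts to twisting $G_\tau$ by the diagonal action of the Heisenberg group, which is innocuous for rank computations.
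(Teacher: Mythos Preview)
Your proof is correct and follows essentially the same approach as the paper's own proof: both apply \Cref{cor.R.transl.props} factor-by-factor to obtain the identity $S^{\mathrm{rev}}_{1\to d}(z+\zeta,\ldots)=g\cdot(\tilde C_2\cdots\tilde C_d)^{-1}\,S^{\mathrm{rev}}_{1\to d}(z,\ldots)\,(\tilde C_1\cdots\tilde C_{d-1})$ and then use that $C^{\otimes(d-1)}$ commutes with $F_{d-1}(-\tau)$ (from \Cref{lem.transf.props.R.tau.z}\cref{item.prop.s.t.comm}) to conclude that the domain and codomain are preserved. The only cosmetic difference is that you package the conclusion as a factorization $G_\tau(z+\zeta)=f\,A\,G_\tau(z)\,B$ with invertible $A,B$, whereas the paper phrases it as equality of nullities directly; also note that the rearrangement step only needs the trivial commutation $\tilde C_j$ with $R_{i,i+1}$ for $j\notin\{i,i+1\}$, so your second commutation fact is used only in the final step, not in the sliding.
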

\begin{proof} 
  Assume $\zeta =\tfrac{a}{n}+\tfrac{b}{n}\eta$ where $a,b \in \bZ$, and let $C=T^bS^{ka}:V \to V$.
  In this proof we use the notation $C_i:= I^{\otimes (i-1)} \otimes C \otimes I^{\otimes (d-i)}$. 
  
  By \cref{cor.R.transl.props},   $R_\tau(z+\zeta) = f(z,\zeta,\tau) C_2^{-1} R_\tau(z) C_1$ 
  where $f(z,\zeta,\tau)$ is a nowhere vanishing function.  

  By definition, $G_\tau(z)$ is the restriction of $S^{\rm rev}_{1 \to d}(z,-\tau,\ldots,-\tau)$ to the image of $F^L_{d-1}(-\tau)$.
  Therefore $G_\tau(z+\zeta)$  is the restriction of 
  \begin{align*}
    & R(z+\zeta)_{12}R(z-\tau+\zeta)_{23}\ldots R(z-(d-2)\tau+\zeta)_{d-1,d}
    \\
    & \;=\; g(z,\zeta,\tau) C_2^{-1}R(z)_{12}C_1\cdot C_3^{-1}R(z-\tau)_{23}C_2\cdots C_d^{-1}R(z-(d-2)\tau)_{d-1,d}C_{d-1}
    \\
    & \;=\; g(z,\zeta,\tau) (C_2C_3 \ldots C_d)^{-1} \, R(z)_{12} (z-\tau)_{23}\cdots R(z-(d-2)\tau)_{d-1,d}\, (C_1 \ldots C_{d-1})
    \\
    & \;=\; g(z,\zeta,\tau) (C_2C_3 \ldots C_d)^{-1} \, S^{\rm rev}_{1 \to d}(z,-\tau,\ldots,-\tau) \, (C_1 \ldots C_{d-1})
  \end{align*}
  to  the image of $F^L_{d-1}(-\tau)$, where the function $g(z,\zeta,\tau)$ is a product of various $f(\cdot,\cdot,\cdot)$'s and therefore 
  never vanishes.
  It follows that $G_\tau(z+\zeta)$ and the restriction of 
  $S^{\rm rev}_{1 \to d}(z,-\tau,\ldots,-\tau) \, (C_1 \ldots C_{d-1})$ to  the image of $F^L_{d-1}(-\tau)$
  have the same nullity. 
  Equivalently, the nullity of $G_\tau(z+\zeta)$  equals the nullity of the restriction of 
  $S^{\rm rev}_{1 \to d}(z,-\tau,\ldots,-\tau) $ to the image of $C_1 \ldots C_{d-1} F^L_{d-1}(-\tau)$. 

  By \cref{lem.transf.props.R.tau.z}\cref{item.prop.s.t.comm}, $C^{\otimes {d-1}}$ commutes with $R(z)_{i,i+1}$ for all $z$ and all $1\le i \le d-2$,
  and therefore with $F_{d-1}(z)$. The image of $C_1 \ldots C_{d-1} F^L_{d-1}(-\tau)$ is therefore the same as
  the image of $F^L_{d-1}(-\tau)C_1 \ldots C_{d-1}$ which is, since $C$ is an automorphism of $V$, the same as the image of 
  $F^L_{d-1}(-\tau)$. Thus $G_\tau(z+\zeta)$ has the same nullity as $G_\tau(z)$.
\end{proof}

By \cref{lem.G.nullity}, the results in \cref{lem.dim.ker.ineq.p,lem.dim.ker.ineq.m,lem.dim.ker.ineq.h} hold when $\zeta \in \frac{1}{n}\Lambda$ is added to the input of $G_{\tau}(z)$.
Therefore
\begin{equation}\label{eq:dim.ker.ineq}
  \nullity G_{\tau}(z) \; \geq \;
  \begin{cases}
    \tbinom{n+d-1}{d} & \text{if $z\in (d-1)\tau+\frac{1}{n}\Lambda$,} \\
    n\tbinom{n+d-2}{d-1} & \text{if $z\in m\tau+\frac{1}{n}\Lambda$ for some $m=1,\ldots,d-2$,} \\
    n\tbinom{n+d-2}{d-1}-\tbinom{n+d-1}{d} & \text{if $z\in-\tau+\frac{1}{n}\Lambda$,}\\
    0 & \text{otherwise.}
  \end{cases}
\end{equation}

\begin{lemma}\label{lem.dim.ker.eq}
	Equality holds in \cref{eq:dim.ker.ineq}. Consequently,
	\begin{equation*}
		\rank  T_{d}(z,-\tau,\ldots,-\tau)\;=\;
		\begin{cases}
			n\tbinom{n+d-2}{d-1}-\tbinom{n+d-1}{d} & \text{if $z\in (d-1)\tau+\frac{1}{n}\Lambda$,} \\
			0 & \text{if $z\in m\tau+\frac{1}{n}\Lambda$ for some $m=1,\ldots,d-2$,} \\
			\tbinom{n+d-1}{d} & \text{if $z\in-\tau+\frac{1}{n}\Lambda$,}\\
			n\tbinom{n+d-2}{d-1} & \text{otherwise.}
		\end{cases}
	\end{equation*}
\end{lemma}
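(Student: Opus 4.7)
The plan is to combine the zero-count from \cref{lem.gz.zeros} with the nullity lower bounds already assembled in \cref{lem.dim.ker.ineq.p,lem.dim.ker.ineq.h,lem.dim.ker.ineq.m} and show that equality is forced by a dimension count. Since $\nullity G_{\tau}(z)$ is invariant on $\tfrac{1}{n}\Lambda$-cosets by \cref{lem.G.nullity}, each coset $m\tau+\tfrac{1}{n}\Lambda$ we know to be ``special'' contributes exactly one point inside any fundamental parallelogram for $\tfrac{1}{n}\Lambda$. By \cref{pr.hol-op}, the multiplicity of $\det G_{\tau}(z)$ at such a point is at least the nullity value listed in \cref{eq:dim.ker.ineq}.

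First I will check that, under the hypothesis $\tau\in\bC-\bigcup_{m=1}^{d}\tfrac{1}{mn}\Lambda$, the cosets of $\tfrac{1}{n}\Lambda$ represented by $-\tau,\tau,2\tau,\ldots,(d-1)\tau$ are pairwise distinct; this follows because the difference of any two such representatives is $j\tau$ for some $j\in\{1,\ldots,d\}$, which does not lie in $\tfrac{1}{n}\Lambda$ by hypothesis. Next I will sum the lower bounds from \cref{eq:dim.ker.ineq} over these $d$ cosets:
\begin{equation*}
\tbinom{n+d-1}{d} \;+\; (d-2)\cdot n\tbinom{n+d-2}{d-1} \;+\;\Big(n\tbinom{n+d-2}{d-1}-\tbinom{n+d-1}{d}\Big) \;=\; (d-1)n\tbinom{n+d-2}{d-1}.
\end{equation*}
By \cref{lem.gz.zeros}, this is exactly the total number of zeros of $\det G_{\tau}(z)$ in any fundamental parallelogram for $\tfrac{1}{n}\Lambda$. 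Consequently, all the inequalities from \cref{pr.hol-op} must be equalities, every zero of $\det G_{\tau}(z)$ lies in one of these cosets, and the nullity equals the listed value on each of them; outside these cosets $\det G_{\tau}(z)\ne 0$, so $\nullity G_{\tau}(z)=0$ there.

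For the rank statement, I will use that $\im T_{d}(z,-\tau,\ldots,-\tau)=\im G_{\tau}(z)$ by \cref{eq:imG.imF}, together with the fact that the domain of $G_{\tau}(z)$ has dimension $n\tbinom{n+d-2}{d-1}$ by the induction hypothesis. Thus $\rank T_{d}(z,-\tau,\ldots,-\tau)=n\tbinom{n+d-2}{d-1}-\nullity G_{\tau}(z)$, and substituting the four values just obtained produces the four stated cases, with a short arithmetic check that $n\tbinom{n+d-2}{d-1}-\bigl(n\tbinom{n+d-2}{d-1}-\tbinom{n+d-1}{d}\bigr)=\tbinom{n+d-1}{d}$ in the $-\tau$-coset case.

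No step is particularly technical: the bookkeeping part is the coset-distinctness argument (which is where the hypothesis on $\tau$ is used), and the conceptual part is the ``all inequalities are equalities'' step that makes the zero-count rigid. The main obstacle, such as it is, lies in ensuring the hypothesis $\tau\notin\bigcup_{m=1}^{d}\tfrac{1}{mn}\Lambda$ is strong enough to separate \emph{every} coset appearing in \cref{eq:dim.ker.ineq}, including the $-\tau$ coset against each $m\tau$ for $m=1,\ldots,d-1$; the case analysis above confirms that the difference range $\{1,2,\ldots,d\}$ is covered precisely.
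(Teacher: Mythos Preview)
Your proof is correct and follows essentially the same approach as the paper: summing the nullity lower bounds over the distinct cosets $-\tau,\tau,\ldots,(d-1)\tau$ modulo $\tfrac{1}{n}\Lambda$, matching the total against the zero count from \cref{lem.gz.zeros}, and concluding that all inequalities are equalities via \cref{pr.hol-op}. The only cosmetic difference is that the paper does the count in a fundamental parallelogram for $\Lambda$ (picking up an extra factor of $n^{2}$ on both sides), whereas you work directly in one for $\tfrac{1}{n}\Lambda$; your version is slightly cleaner.
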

\begin{proof}
	By \cref{pr.hol-op}, the sum of the dimensions of $\ker G_{\tau}(z)$, as $z$ runs over all zeros of $\det G_{\tau}(z)$ in a 
	fundamental parallelogram for $\Lambda$, does not exceed the number of zeros of $\det G_{\tau}(z)$ in that parallelogram, which is $(d-1)n^{3}\tbinom{n+d-2}{d-1}$ by \cref{lem.gz.zeros}. 
	By the assumption $\tau\in\bC-\bigcup_{m=1}^{d}\frac{1}{mn}\Lambda$, the cosets $m\tau+\frac{1}{n}\Lambda$, $-1\leq m\leq d-1$, are pairwise disjoint.
	Since the sum of the three non-zero numbers appearing on the right-hand side of \cref{eq:dim.ker.ineq} is
	\begin{equation*}
		\tbinom{n+d-1}{d}+n\tbinom{n+d-2}{d-1}\cdot(d-2)+\big(n\tbinom{n+d-2}{d-1}-\tbinom{n+d-1}{d}\big)=(d-1)n\tbinom{n+d-2}{d-1}
	\end{equation*}
	and the fundamental parallelogram for $\Lambda$ contains exactly 
	$n^{2}$ points in $\frac{1}{n}\Lambda$, every inequality in \cref{eq:dim.ker.ineq} is an equality.
	Therefore
	\begin{align*}
		\dim(\im T_{d}(z,-\tau,\ldots,-\tau)) &\;=\; \dim(\im G_{\tau}(z))\qquad\text{by \cref{eq:imG.imF}}
		\\
		& \; =\; \dim(\im F_{d-1}(-\tau)\otimes V) \, -\, \dim(\ker G_{\tau}(z))
		\\
		&\;=\; n\tbinom{n+d-2}{d-1} \, - \, \dim (\ker G_{\tau}(z))
	\end{align*}
	where the last equality follows from the induction hypothesis.
\end{proof}

\begin{proof}[Proof of \cref{thm.dim.rel.sp}\cref{thm.dim.rel.sp.im}] 
	We observed in the proof of \cref{lem.dim.ker.ineq.h} that
	\begin{equation*}
	 \ker G_{\tau}((d-1)\tau)  \; \supseteq \; 
	 \bigcap_{s+t+2=d}V^{\otimes s}\otimes\ker R_{\tau}(\tau)\otimes V^{\otimes t}
	  \; \supseteq \; \im F_{d}(-\tau).
	 \end{equation*}
	 But $\dim(\ker G_{\tau}((d-1)\tau)) = \tbinom{n+d-1}{d}$ and
	 $\dim(\im F_{d}(-\tau))=\tbinom{n+d-1}{d}$ by \cref{lem.dim.ker.eq} since 
	 $F_{d}(-\tau)=T_{d}(-\tau,\ldots,-\tau)$, so these three subspaces of $V^{\otimes d}$ coincide. 
\end{proof}	

\begin{proof}[Proof of \cref{thm.dim.rel.sp}\cref{thm.dim.rel.sp.ker}] 	
We must show that the inclusion 
	\begin{equation}\label{eq:desired.equality}
		\sum_{s+t+2=d}V^{\otimes s}\otimes\im R(\tau)\otimes V^{\otimes t} \; \subseteq \; \ker F_{d}(-\tau)
	\end{equation}
(in \cref{le.shfl}\cref{item.shfl.ker.sum}) is an equality. We will do this by showing that the dimension of the left-hand side is $\ge$ the dimension of the right-hand side.  

By the induction hypothesis, the left-hand side of \cref{eq:desired.equality} equals 
$
\ker F_{d-1}^{R}(-\tau)+\im R(\tau)_{12}
$
and its dimension is 
\begin{align*}
		& \dim(\ker F_{d-1}^{R}(-\tau))+\dim(\im R(\tau)_{12}) \,-\, \dim(\ker F_{d-1}^{R}(-\tau)
		\cap\im R(\tau)_{12})
		\\
		& \quad \;=\; n\big(n^{d-1}\,-\,\tbinom{n+d-2}{d-1}\big)\, +\, n^{d-2}\tbinom{n}{2}
		\,-\, \dim(\ker F_{d-1}^{R}(-\tau) \cap\im R(\tau)_{12}),
\end{align*}
so it suffices to show that 
\begin{align*} 
		n\big(n^{d-1}\,-\,\tbinom{n+d-2}{d-1}\big) & \, +\, n^{d-2}\tbinom{n}{2}
		\,-\, \dim(\ker F_{d-1}^{R}(-\tau) \cap\im R(\tau)_{12})
		\\
		&  \; \ge \;  \dim(\ker F_{d}(-\tau))
		\\
		&  \; = \;  n^d \,-\, \dim(\im F_{d}(-\tau))
		\\
		&  \; = \;  n^d \,-\, \tbinom{n+d-1}{d}
\end{align*}
where the last equality appears in  the proof of \cref{thm.dim.rel.sp.im}. Equivalently, it suffices to show that  
\begin{align*}
 \dim(\ker F_{d-1}^{R}(-\tau) \cap\im R(\tau)_{12}) &  \; \le \;  n\big(n^{d-1}\,-\,\tbinom{n+d-2}{d-1}\big)\, +\, n^{d-2}\tbinom{n}{2} 
 \,-\, \big(n^d-\tbinom{n+d-1}{d} \big)
 \\
 & \;=\; \,-\,n\tbinom{n+d-2}{d-1}\, +\, n^{d-2}\tbinom{n}{2}  \,+\, \tbinom{n+d-1}{d}.
\end{align*}
Since
\begin{align*}
\dim(\ker F_{d-1}^{R}(-\tau)\cap \im R(\tau)_{12}) 
&  \;=\; \dim(\ker F_{d-1}^{R}(-\tau)R(\tau)_{12}) \,-\,  \dim(\ker R(\tau)_{12})
\\
&  \;=\; \dim(\ker F_{d-1}^{R}(-\tau)R(\tau)_{12}) \,-\, \big(n^d-n^{d-2} \tbinom{n}{2}),
\end{align*}
it suffices to show that 
\begin{align*}
 \dim(\ker F_{d-1}^{R}(-\tau)R(\tau)_{12})
 & \; \le \;
 \,-\,n\tbinom{n+d-2}{d-1}\, +\, n^{d-2}\tbinom{n}{2}  \,+\, \tbinom{n+d-1}{d}  \, + \,  \big(n^d-n^{d-2} \tbinom{n}{2}\big)
 \\
  & \; = \;
n^d \,-\,n\tbinom{n+d-2}{d-1}  \,+\, \tbinom{n+d-1}{d}  \,  .
\end{align*}

By \cref{le.f-id},
\begin{equation*}
T_d((d-1)\tau,-\tau,\ldots,-\tau) \;=\; F^R_{d-1}(-\tau) R(\tau)_{12}R(2\tau)_{23} \ldots R((d-1)\tau)_{d-1,d}
\end{equation*}
so 
\begin{equation*}
 \im F^R_{d-1}(-\tau) R(\tau)_{12} \; \supseteq \; \im T_d((d-1)\tau,-\tau,\ldots,-\tau)
\end{equation*}
and
\begin{align*}
\dim(\ker F^R_{d-1}(-\tau) R(\tau)_{12}) & \; \le \;  \dim(\ker T_d((d-1)\tau,-\tau,\ldots,-\tau))
\\
& \;=\; n^{d}\, - \, n\tbinom{n+d-2}{d-1}\,+\,\tbinom{n+d-1}{d}
\end{align*}
where the equality comes from \cref{lem.dim.ker.eq}.
The proof is now complete.
\end{proof}

\begin{theorem}
\label{thm.H.series}
For all $\tau\in(\bC-\bigcup_{m\geq 1}\frac{1}{m}\Lambda)\cup\frac{1}{n}\Lambda$, the Hilbert series of $Q_{n,k}(E,\tau)$ is the same as that of the polynomial ring on $n$ variables.
\end{theorem}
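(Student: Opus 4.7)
The plan is to deduce this theorem directly from Theorem~\ref{thm.dim.rel.sp}, which already does essentially all of the work: it computes the dimension of the degree-$d$ component of the ideal of relations (and of its complement, up to the expected dimension) for each $d$. The two hypotheses on $\tau$ in the statement are handled separately, and the $\tau\in\frac{1}{n}\Lambda$ case is not new.

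First I would dispose of the torsion case $\tau\in\frac{1}{n}\Lambda$ by appealing to \cite[\S5]{CKS1}, where the Hilbert series of $Q_{n,k}(E,\tau)$ was already shown to be $(1-t)^{-n}$ for such $\tau$; this is also reiterated at the beginning of \cref{se.det}. So we may restrict attention to $\tau\in\bC-\bigcup_{m\geq 1}\frac{1}{m}\Lambda$. The key observation is that for any $m,d\geq 1$ one has $\tfrac{1}{mn}\Lambda\subseteq \tfrac{1}{mn}(\bZ+\bZ\eta)\subseteq\bigcup_{m'\geq 1}\tfrac{1}{m'}\Lambda$, so
\[
\bC-\bigcup_{m\geq 1}\tfrac{1}{m}\Lambda \;\subseteq\; \bC-\bigcup_{m=1}^{d}\tfrac{1}{mn}\Lambda
\]
for every $d\geq 2$. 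Consequently such a $\tau$ satisfies the hypothesis of \cref{thm.dim.rel.sp} in every degree simultaneously.

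Next I would translate \cref{thm.dim.rel.sp} into a dimension count. By definition (see \cref{sssec.def.rel.Qnk}), $Q_{n,k}(E,\tau)$ is $TV$ modulo the two-sided ideal generated by $\rel_{n,k}(E,\tau)=\im R_\tau(\tau)$; its degree-$d$ component is therefore
\[
Q_{n,k}(E,\tau)_d \;=\; V^{\otimes d}\bigg/\sum_{s+t+2=d} V^{\otimes s}\otimes\im R_\tau(\tau)\otimes V^{\otimes t}.
\]
Now \cref{thm.dim.rel.sp}\cref{thm.dim.rel.sp.ker} identifies the denominator with $\ker F_d(-\tau)$ and asserts that its dimension is $n^{d}-\tbinom{n+d-1}{d}$. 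Hence $\dim Q_{n,k}(E,\tau)_d = \tbinom{n+d-1}{d}$ for all $d\geq 2$, while for $d=0,1$ the equality $\dim Q_{n,k}(E,\tau)_d = \tbinom{n+d-1}{d}$ is trivial. Summing,
\[
\sum_{d=0}^{\infty}\dim Q_{n,k}(E,\tau)_d \,t^d \;=\; \sum_{d=0}^{\infty}\tbinom{n+d-1}{d}t^d \;=\; (1-t)^{-n},
\]
which is the Hilbert series of the polynomial ring on $n$ generators.

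There is essentially no obstacle at this stage beyond bookkeeping: all the real work has been absorbed into \cref{thm.dim.rel.sp}, and the only subtlety is verifying the containment of exceptional loci, which is immediate. The hard part of the argument was in \cref{se.det,sec.hilb.series}---showing that $R_\tau(\tau)$ has the expected rank and, more importantly, that the degree-$d$ relation space coincides exactly with $\ker F_d(-\tau)$---and that has already been accomplished by the inductive determinant analysis of the theta operators $G_\tau(z)$.
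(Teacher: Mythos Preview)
Your proposal is correct and follows exactly the same approach as the paper's proof: dispose of $\tau\in\frac{1}{n}\Lambda$ by citing \cite[\S5]{CKS1}, then invoke \cref{thm.dim.rel.sp}\cref{thm.dim.rel.sp.ker} degree by degree. You supply a bit more detail than the paper does (the explicit containment $\bigcup_{m=1}^{d}\tfrac{1}{mn}\Lambda\subseteq\bigcup_{m\geq 1}\tfrac{1}{m}\Lambda$ and the summation of $\binom{n+d-1}{d}t^d$), but the argument is identical.
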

\begin{proof}
We proved this in \cite[\S5]{CKS1} for $\tau\in\frac{1}{n}\Lambda$, so we assume that $\tau\in\bC-\bigcup_{m\geq 1}\frac{1}{m}\Lambda$. By \cref{thm.dim.rel.sp}\cref{thm.dim.rel.sp.ker}, the space of degree-$d$ relations for $Q_{n,k}(E,\tau)$ has the same dimension as that for a polynomial algebra in $n$ variables. Hence the result follows.
\end{proof}

\subsubsection{Remarks}
\label{sect.H.series.remark}

Our proof  that $Q_{n,k}(E,\tau)$ has the ``right'' Hilbert series has nothing in common with earlier 
proofs that  $Q_{n,1}(E,\tau)$ has the ``right'' Hilbert series. 
The proofs for $n=3$ and $n=4$, and $k=1$, by Artin-Tate-Van den Bergh \cite{ATV1} and Smith-Stafford  \cite{SS92}, respectively,
relied on the following facts, none of which is guaranteed to hold for other $Q_{n,k}(E,\tau)$'s: (1)
$Q_{3,1}(E,\tau)$ has a central element of degree 3 (\cite[p.~211]{AS87}, \cite[Thm.~4.4]{dL})\footnote{Artin-Schelter's proof is ``by computer''. De Laet's is ``by algebra''.} and the quotient by it is a twisted homogeneous coordinate ring for $E$;
$Q_{4,1}(E,\tau)$ has a regular sequence consisting of two degree-2 central elements (\cite[Thm.~2]{Skl82}, \cite[Thm.~6.5]{dL})
  and the quotient by them is a twisted homogeneous coordinate ring for $E$;\footnote{These facts are analogues of the fact that when $E$ is embedded in $\bP^2$ or $\bP^3$ as an elliptic
normal curve of degree 3, or 4, respectively, it is a complete intersection. However, when $E$ is embedded in $\bP^{n-1}$ 
as an  elliptic normal curve of degree $n \ge 5$ it is not a complete intersection.} (2) the Riemann-Roch theorem for curves allows one to 
determine the Hilbert series of these two twisted homogeneous coordinate rings; (3) a tricky induction argument 
then allows one to ``climb up the regular sequence'' to show that the dimension of the degree-$i$ component $Q_{n,1}(E,\tau)_i$ 
(for $n=3,4$) is ``right'' and, simultaneously, that the central elements form a regular sequence with respect to homogeneous elements of degree $<i$. In  \cite{TvdB96}, Tate-Van den Bergh proved that 
$Q_{n,1}(E,\tau)$ has the ``right'' Hilbert series when $n \ge 5$. Their argument relies on modules of $I$-type (a notion they introduce) 
and a geometric definition of the defining relations (\cite[(4.2)]{TvdB96} (at the end of their section 1,  
they  suggest that $Q_{n,k}(E,\tau)$ might be amenable to their techniques).

It would be good to know whether the methods in this paper apply to other graded algebras whose defining 
relations are the image of a specialization of a family of operators $R(z)$ satisfying the QYBE.

\section{The Hilbert series for $Q_{n,k}(E,\tau)^!$}
\label{subsec.dual}

The argument we use  in this section to show that the Hilbert series of $Q_{n,k}(E,\tau)^!$ is $(1+t)^n$ is modeled on   the argument 
we used to show  that the Hilbert series of $Q_{n,k}(E,\tau)$ is $(1-t)^{-n}$.

\subsection{The algebras $S_{n,k}(E,\tau)$}
\label{sect.algs.Snk}

By definition, $Q_{n,k}(E,\tau)=TV/(\im R_{\tau}(\tau))$ with the convention that $R_{\tau}(\tau)$ is replaced by $R_{+}(\tau)$ in \cref{eq:mpm1} when $\tau\in\frac{1}{n}\Lambda$. We now define\index{S_n,k(E,tau)@$S_{n,k}(E,\tau)$}
\begin{equation*}
  S_{n,k}(E,\tau) \; := \; \frac{TV}{(\ker R_{\tau}(\tau))}
\end{equation*}
with the same convention. \cref{th:is-kszdl}  shows that $Q_{n,k}(E,\tau)^! \cong S_{n,n-k}(E,\tau)$.

By \cite[Prop.~3.22]{CKS1},  the automorphism $N(x_\a)=x_{-\a}$ of $V$ extends to algebra isomorphisms 
$Q_{n,k}(E,\tau) \to Q_{n,k}(E,-\tau)$ and  $Q_{n,k}(E,\tau) \to Q_{n,k}(E,\tau)^{\rm op}$.  
There is a similar result for $S_{n,k}(E,\tau)$.

\begin{proposition}\label{pr:sop}
Let $N \in \GL(V)$ be the map $N(x_\a)=x_{-\a}$. For all $\tau \in \bC$, $N$ extends to algebra isomorphisms
$S_{n,k}(E,\tau)\to S_{n,k}(E,-\tau)$ and
 $S_{n,k}(E,\tau)\to S_{n,k}(E,\tau)^{\rm op}$. In particular,
   \begin{equation*}
  S_{n,k}(E,\tau)  \; \cong \;  S_{n,k}(E,\tau)^{\rm op}  \;=\;  S_{n,k}(E,-\tau).
  \end{equation*}
\end{proposition}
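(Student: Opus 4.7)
The plan is to verify each of the two isomorphism claims by showing that an appropriate endomorphism of the tensor algebra $TV$ preserves or exchanges the ideals of relations. Since the defining relations of $S_{n,k}(E,\tau)$ are concentrated in degree two, both claims reduce to statements about how $N\otimes N$ and $P(N\otimes N)$ act on $\ker R_\tau(\tau)\subseteq V^{\otimes 2}$. Throughout, we use the convention that $R_\tau(\tau)$ means the holomorphic extension $R_+(\tau)$ when $\tau\in\tfrac{1}{n}\Lambda$; since the transformation identities \cref{eq.-z,eq.-tau.-z} of \cref{lem.transf.props.R.tau.z} are equalities of holomorphic functions of $(z,\tau)$, they pass to the boundary case by continuity.

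For the first isomorphism, substituting $z=-\tau$ into \cref{eq.-tau.-z} (and using $(N\otimes N)^{2}=I$) yields
\begin{equation*}
R_{-\tau}(-\tau) \;=\; e(n^2\tau)\,(N\otimes N)\, R_\tau(\tau)\, (N\otimes N).
\end{equation*}
Because $N\otimes N$ is an involution and $e(n^2\tau)$ is nowhere zero, $N\otimes N$ restricts to a bijection $\ker R_\tau(\tau)\xrightarrow{\sim}\ker R_{-\tau}(-\tau)$. Consequently, the algebra endomorphism of $TV$ extending $N$ -- which acts as $N^{\otimes d}$ on $V^{\otimes d}$ -- sends the ideal $(\ker R_\tau(\tau))$ bijectively onto $(\ker R_{-\tau}(-\tau))$, and hence descends to an algebra isomorphism $S_{n,k}(E,\tau)\to S_{n,k}(E,-\tau)$.

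For the second isomorphism, the anti-algebra endomorphism of $TV$ extending $N$ acts as $P(N\otimes N)$ on $V^{\otimes 2}$, so it suffices to show that $P(N\otimes N)$ preserves $\ker R_\tau(\tau)$. The analogous substitution $z=-\tau$ in \cref{eq.-z} produces
\begin{equation*}
R_{-\tau}(-\tau) \;=\; e(n^2\tau)\, P\, R_\tau(\tau)\, P,
\end{equation*}
whence $P$ too restricts to a bijection $\ker R_\tau(\tau)\xrightarrow{\sim}\ker R_{-\tau}(-\tau)$. Composing this bijection with the inverse of the bijection from the previous paragraph -- and using that $P$ and $N\otimes N$ commute and are both involutions -- we obtain $P(N\otimes N)(\ker R_\tau(\tau))=\ker R_\tau(\tau)$. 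Therefore the anti-algebra extension of $N$ descends to an anti-automorphism of $S_{n,k}(E,\tau)$, equivalently, to an algebra isomorphism $S_{n,k}(E,\tau)\to S_{n,k}(E,\tau)^{\mathrm{op}}$. Finally, the equality $S_{n,k}(E,\tau)^{\mathrm{op}}=S_{n,k}(E,-\tau)$ is just the standard identification $TV^{\mathrm{op}}=TV$ (identity on $V$, reversing tensor order in products), under which the relation ideal of the left-hand side is generated by $P(\ker R_\tau(\tau))=\ker R_{-\tau}(-\tau)$. No step presents a real obstacle; the whole argument is a direct unpacking of the transformation identities already established in \cref{lem.transf.props.R.tau.z}.
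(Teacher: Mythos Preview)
Your proof is correct and follows essentially the same approach as the paper: both arguments rest on the two transformation identities \cref{eq.-z} and \cref{eq.-tau.-z}, specialized at $z=-\tau$, to show that $P$ and $N\otimes N$ each carry $\ker R_\tau(\tau)$ bijectively onto $\ker R_{-\tau}(-\tau)$. The only cosmetic difference is that the paper first establishes the equality $S_{n,k}(E,\tau)^{\rm op}=S_{n,k}(E,-\tau)$ and the isomorphism $N:S_{n,k}(E,\tau)\to S_{n,k}(E,-\tau)$ separately and then composes, whereas you make the anti-automorphism explicit by verifying directly that $P(N\otimes N)$ stabilizes $\ker R_\tau(\tau)$.
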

\begin{proof}
It is clear that
\begin{equation*}
S_{n,k}(E,\tau)^{\rm op} \; =\; \frac{TV}{(\ker R_\tau(\tau)P)}
\end{equation*}
where $P$ is the operator $P(u \otimes v)=v \otimes u$.
By \cref{eq.-z}, $R_\tau(-\tau)= e(n^2\tau) PR_{-\tau}(\tau) P$ for all $\tau \in \bC-\frac{1}{n}\Lambda$, and thus for all $\tau\in\bC$ (we define $R_\tau(-\tau)$ and $R_{-\tau}(\tau)$ as limits when $\tau\in\frac{1}{n}\Lambda$).
Hence  $\ker R_\tau(-\tau)P = \ker R_{-\tau}(\tau)$ for all $\tau \in \bC$. This concludes that $S_{n,k}(E,\tau)^{\rm op} =  S_{n,k}(E,-\tau)$.

By \cref{eq.-tau.-z}, 
\begin{equation*}
(N \otimes N) R_\tau(\tau) \;=\; e(-n^2\tau) R_{-\tau}(-\tau)\, (N \otimes N)
\end{equation*} 
for all $\tau\in\bC$.
It follows that $\ker R_{-\tau}(-\tau)\, (N \otimes N) = \ker R_{\tau}(\tau)$.

Thus, $N \otimes N$ is an automorphism of  $V^{\otimes 2}$ that sends $\ker R_{\tau}(\tau)$, the space of quadratic relations for $S_{n,k}(E,\tau)$, to $\ker R_{-\tau}(-\tau)$, the space of quadratic relations for $S_{n,k}(E,-\tau)$, isomorphically. Therefore $N$ induces an isomorphism $S_{n,k}(E,\tau)\to S_{n,k}(E,-\tau)$.
\end{proof}

\subsection{The quadratic dual of $Q_{n,k}(E,\tau)$}
Let $\langle \, \cdot \, ,  \, \cdot \, \rangle:V \times V \to \bC$\index{<.,.>@$\langle\,\cdot\,,\,\cdot\,\rangle$} and 
$\langle \, \cdot \, , \, \cdot \, \rangle:V^{\otimes 2} \times V^{\otimes 2} \to \bC$
be the non-degenerate symmetric bilinear forms $\langle x_i, x_j \rangle = \delta_{ij}$ and
$\langle x_i \otimes x_k, x_j \otimes x_\ell \rangle = \delta_{ij}\delta_{k\ell}$.
The maps 
\begin{align*}
V \to V^*,  \; & \qquad v \mapsto \langle v ,  \, \cdot \, \rangle,
\\
V\otimes V \to (V\otimes V)^*,  \; & \qquad u \otimes v \mapsto \langle u \otimes v ,  \, \cdot \, \rangle,
 \\
 V^* \otimes V^* \to (V\otimes V)^*,  \; & \qquad
   \langle u,  \, \cdot \, \rangle \otimes \langle v ,  \, \cdot \, \rangle 
    \mapsto
    \langle u \otimes v ,  \, \cdot \, \rangle,
 \end{align*}
are isomorphisms. We will treat them as identifications. The third isomorphism is the composition of $V^{*}\otimes V^{*}\to V\otimes V$, induced by the first, and the second. We also define the isomorphism $V^{\otimes d}\to (V^{*})^{\otimes d}$ for each $d\geq 3$ in the same way as $d=2$ and identify $TV$ with $TV^{*}$.

If $R$ is a subspace of $V\otimes V$, then the \textsf{quadratic dual} $A=TV/(R)$ is defined to be $A^{!}:=TV^{*}/(R^{\perp})$\index{A^"!@$A^{"!}$}, where $R^{\perp}$\index{R^perp@$R^{\perp}$} is the annihilator with respect to the form $\langle\,\cdot\,,\,\cdot\,\rangle:V^{\otimes 2} \times V^{\otimes 2} \to \bC$ and 
is regarded as a subspace of $V^{*}\otimes V^{*}$.

Given a linear operator $R: V^{\otimes 2} \to V^{\otimes 2}$, we denote by  
$R^{\hdot}: V^{\otimes 2} \to V^{\otimes 2}$\index{R^.@$R^{\hdot}$} the unique linear map 
such that $\langle R^{\hdot}x, y \rangle=\langle x, Ry \rangle$ for all $x,y \in V^{\otimes 2}$.

\begin{lemma}
\label{lem.quad.dual}
If $V$ is a finite-dimensional vector space and $R^{*}:(V\otimes V)^* \to (V\otimes V)^*$ is the dual of a linear map
$R:V^{\otimes 2} \to V^{\otimes 2}$, then
\begin{enumerate}
  \item\label{item.quad.dual.imker}
  $(\im R)^\perp= \ker  R^{*}$ and
  \item\label{item.quad.dual.kerim}
  $(\ker R)^\perp = \im R^{*}$.
\end{enumerate}
With the conventions stated just before this lemma, 
\begin{equation*}
\left(  \frac{TV}{(\im R)} \right)^{\!!} \; = \;  \frac{TV}{(\ker R^\hdot)} \, .
\end{equation*}
\end{lemma}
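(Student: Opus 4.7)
The plan is to treat both parts of the lemma as routine linear-algebra facts about adjoints with respect to a non-degenerate symmetric bilinear form, and then to assemble them into the stated identification of quadratic duals. First I would prove \cref{item.quad.dual.imker} by unpacking definitions: an element $y \in V^{\otimes 2}$ lies in $(\im R)^\perp$ if and only if $\langle y, Rx\rangle = 0$ for all $x \in V^{\otimes 2}$; using the identification $V^{\otimes 2} \cong (V^{\otimes 2})^*$ supplied by the form $\langle\,\cdot\,,\,\cdot\,\rangle$, this is the same as $(R^*y)(x) = 0$ for all $x$, which is equivalent to $R^*y = 0$. For \cref{item.quad.dual.kerim} I would run the same calculation for $R^*$ in place of $R$ to obtain $\ker R = (\im R^*)^\perp$, and then invoke the standard finite-dimensional fact $(W^\perp)^\perp = W$ (non-degeneracy of the form) to conclude $(\ker R)^\perp = \im R^*$.

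For the final identification, I would pass between $R^*$ and $R^\hdot$. Under the isomorphism $V^{\otimes 2} \to (V^{\otimes 2})^*$ sending $v \mapsto \langle v,\,\cdot\,\rangle$, the defining identity $\langle R^\hdot x, y\rangle = \langle x, Ry\rangle$ says exactly that $R^\hdot$ corresponds to $R^*$; in particular $\ker R^* \subseteq (V^{\otimes 2})^*$ gets identified with $\ker R^\hdot \subseteq V^{\otimes 2}$. Combined with the identification $TV^* = TV$ already fixed before the lemma, part \cref{item.quad.dual.imker} then yields
\begin{equation*}
\left(\frac{TV}{(\im R)}\right)^{\!!} \;=\; \frac{TV^*}{((\im R)^\perp)} \;=\; \frac{TV^*}{(\ker R^*)} \;=\; \frac{TV}{(\ker R^\hdot)},
\end{equation*}
which is what is claimed.

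There is essentially no obstacle here beyond keeping the several canonical identifications straight; the only point that deserves explicit checking is that the adjoint $R^\hdot$ with respect to the symmetric form and the dual map $R^*$ between dual spaces really do correspond under $v \mapsto \langle v,\,\cdot\,\rangle$, which is immediate from symmetry of $\langle\,\cdot\,,\,\cdot\,\rangle$. Everything else is pure bookkeeping, and the finite-dimensionality of $V$ is what makes the biorthogonality step $(W^\perp)^\perp = W$ available.
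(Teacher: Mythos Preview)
Your proposal is correct and follows the same approach as the paper, which simply dismisses parts \cref{item.quad.dual.imker} and \cref{item.quad.dual.kerim} as ``basic linear algebra'' and then obtains the displayed equality via the identification $TV^*=TV$ exactly as you do. You have merely supplied the details of the adjoint calculation and the passage from $R^*$ to $R^\hdot$ that the paper leaves implicit.
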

\begin{proof}
Parts \cref{item.quad.dual.imker} and \cref{item.quad.dual.kerim} are basic linear algebra. The displayed equality follows because the left-hand side of it equals $TV^{*}/(\ker R^{*})$ which 
we are identifying with the right-hand side (by convention).
\end{proof}

\begin{lemma}
\label{lem.R.transpose}
For all $\tau\in\bC-\frac{1}{n}\Lambda$ and $z\in\bC$,
\begin{equation*}
R_{n,k,\tau}(z)^\hdot \;=\;  e(-n^{2}z)R_{n,n-k,-\tau}(-z).
\end{equation*}
\end{lemma}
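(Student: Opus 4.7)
With respect to the standard basis $\{x_i\otimes x_j\}$ of $V^{\otimes 2}$, the bilinear form $\langle\,\cdot\,,\,\cdot\,\rangle$ is the identity, so $R_{n,k,\tau}(z)^{\hdot}$ is simply the matrix transpose of $R_{n,k,\tau}(z)$. Thus the claim is purely an identity between explicit matrix entries, and the plan is to check it coefficient-by-coefficient using the transformation laws for the theta functions $\theta_\alpha$ recorded in \cite[Prop.~2.6]{CKS1}.

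The key identities are $\theta_\alpha(-w)=-e(-nw+\tfrac{\alpha}{n})\theta_{-\alpha}(w)$ (from \cite[Prop.~2.6(5)]{CKS1}) and the reduction of indices modulo $n$. First I would fix $i,j,r\in\bZ_n$ and write down, from the definition \cref{eq:odr}, the coefficient $C$ of $x_{j-r}\otimes x_{i+r}$ in $R_{n,k,\tau}(z)(x_i\otimes x_j)$; by the remark above, this is also the coefficient of $x_i\otimes x_j$ in $R_{n,k,\tau}(z)^{\hdot}(x_{j-r}\otimes x_{i+r})$. Then I would compute the coefficient $C'$ of $x_i\otimes x_j$ in $R_{n,n-k,-\tau}(-z)(x_{j-r}\otimes x_{i+r})$ using \cref{eq:odr} with $k$ replaced by $n-k$, $\tau$ by $-\tau$, $z$ by $-z$; setting the summation index $s=r$ picks out the relevant summand, with indices
\begin{equation*}
(i+r)-(j-r)+r(n-k-1)\equiv -(j-i+r(k-1)),\qquad (i+r)-(j-r)-r\equiv -(j-i-r),\qquad (n-k)r\equiv -kr
\end{equation*}
in $\bZ_n$. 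The goal is to show $C'=e(n^2z)C$, which is equivalent to the lemma.

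Applying $\theta_{-\alpha}(w)=-e(nw-\tfrac{\alpha}{n})\theta_\alpha(-w)$ to each of the three factors in $C'$ with negative index, and using the product identity
\begin{equation*}
\prod_{\alpha\in\bZ_n}\theta_\alpha(z)\;=\;-e(n^2z)\prod_{\alpha\in\bZ_n}\theta_\alpha(-z)
\end{equation*}
(which follows at once from the same transformation law together with $e(\tfrac{n-1}{2})=(-1)^{n-1}$), one finds that the four sign factors $(-1)$ combine to $+1$ and the exponential factors collapse to $e(n^2z)$, provided the telescoping
\begin{equation*}
(-\beta+\gamma+kr)/n\;=\;0\qquad\text{where }\beta:=j-i+r(k-1),\ \gamma:=j-i-r,
\end{equation*}
holds—and indeed $\beta-\gamma=rk$. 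This yields $C'=e(n^2z)C$ as required.

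The main obstacle is purely bookkeeping: tracking the signs and exponential prefactors produced by the four applications of $\theta_\alpha(-w)=-e(-nw+\tfrac{\alpha}{n})\theta_{-\alpha}(w)$ and verifying the cancellation in the exponent; conceptually there is nothing deeper going on. The hypothesis $\tau\in\bC-\tfrac{1}{n}\Lambda$ is used only to ensure the denominators $\theta_{kr}(\tau)$ and $\theta_{(n-k)r}(-\tau)$ are non-zero, so that both sides of the identity are genuinely defined entry by entry.
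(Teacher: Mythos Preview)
Your proposal is correct and proceeds, like the paper, by a direct coefficient-by-coefficient comparison of the two sides on the standard basis. The only difference is packaging: the paper invokes the already-established identity \cref{eq.-tau.-z} (equivalently $e(-n^2z)R_{n,n-k,-\tau}(-z)=PR_{n,n-k,\tau}(z)P$) to swap indices and then matches entries, whereas you unpack that same identity by applying $\theta_\alpha(-w)=-e(-nw+\tfrac{\alpha}{n})\theta_{-\alpha}(w)$ to each factor and tracking the signs and exponentials by hand; both routes encode exactly the same cancellation.
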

\begin{proof}
Let $\cB=\{x_i \otimes x_j \; | \; i,j \in \bZ_n\}$. This is a basis for $V^{\otimes 2}$.
We must show, for all $p,q,s,t \in \bZ_n$, that
\begin{equation}\label{eq.pairing.pqrs}
\langle x_p \otimes x_q,R_{n,k,\tau}(z)(x_s \otimes x_t) \rangle
\;=\;  
e(-n^{2}z)\langle R_{n,n-k,-\tau}(-z)(x_p \otimes x_q), x_s \otimes x_t  \rangle.
\end{equation}
or, equivalently, that the coefficient of $x_p \otimes x_q$ in $R_{n,k,\tau}(z)(x_s \otimes x_t)$ with respect to
$\cB$ equals $e(-n^2z)$ times the coefficient of $x_s \otimes x_t$ in $R_{n,n-k,-\tau}(-z)(x_p \otimes x_q)$ with respect to $\cB$.

If $p+q \ne s+t$ then both sides of \cref{eq.pairing.pqrs} are zero so we assume $p+q=s+t$ for the remainder of the proof.

By \cref{eq.-tau.-z}, the right-hand side of \cref{eq.pairing.pqrs} is equal to
\begin{equation*}
\langle R_{n,n-k,\tau}(z)(x_q \otimes x_p), x_t \otimes x_s  \rangle
\;=\;\frac{\theta_0(-z)\cdots\theta_{n-1}(-z)}{\theta_1(0)\cdots\theta_{n-1}(0)}\frac{\theta_{p-q+r(-k-1)}(-z+\tau)}{\theta_{p-q-r}(-z)\theta_{-kr}(\tau)}
\end{equation*}
where $r\in\bZ_{n}$ is determined by $p-r=t$ or, equivalently, by $q+r=s$. Since $p-q=t-s+2r$,
\begin{equation*}
\langle R_{n,n-k,\tau}(z)(x_q \otimes x_p), x_t \otimes x_s  \rangle
\;=\;\frac{\theta_0(-z)\cdots\theta_{n-1}(-z)}{\theta_1(0)\cdots\theta_{n-1}(0)}\frac{\theta_{t-s+(-r)(k-1)}(-z+\tau)}{\theta_{t-s-(-r)}(-z)\theta_{k(-r)}(\tau)},
\end{equation*}
which equals the left-hand side of \cref{eq.pairing.pqrs}.
\end{proof}

\begin{theorem}
\label{th:is-kszdl}
For all $\tau\in\bC$,
\begin{equation}
Q_{n,k}(E,\tau)^! \; \cong \; S_{n,n-k}(E,\tau).
\end{equation}
\end{theorem}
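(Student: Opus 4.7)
The plan is to unwind both sides according to their definitions and then identify them via the $R$-matrix transposition formula in \cref{lem.R.transpose}, finishing with the isomorphism $S_{n,n-k}(E,-\tau)\cong S_{n,n-k}(E,\tau)$ supplied by \cref{pr:sop}. Concretely, \cref{lem.quad.dual} gives
\[
Q_{n,k}(E,\tau)^{!} \;=\; \frac{TV}{\bigl(\ker R_{n,k,\tau}(\tau)^{\hdot}\bigr)},
\]
after using the identification $TV\cong TV^{*}$ fixed in the excerpt. By \cref{lem.R.transpose} (applied at $z=\tau$),
\[
R_{n,k,\tau}(\tau)^{\hdot}\;=\;e(-n^{2}\tau)\,R_{n,n-k,-\tau}(-\tau),
\]
so the two operators have the same kernel. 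Hence
\[
Q_{n,k}(E,\tau)^{!}\;=\;\frac{TV}{\bigl(\ker R_{n,n-k,-\tau}(-\tau)\bigr)}.
\]

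Next I would invoke the computation already carried out in the proof of \cref{pr:sop}: the identity $R_{\sigma}(-z)=e(n^{2}z)PR_{-\sigma}(z)P$ from \cref{eq.-z} yields, after right-multiplication by $P$ and substitution $\sigma=-\tau$, $z=\tau$, the equality $\ker R_{n,n-k,-\tau}(-\tau)=P\bigl(\ker R_{n,n-k,\tau}(\tau)\bigr)$. Therefore $TV/(\ker R_{n,n-k,-\tau}(-\tau))$ is the opposite algebra of $S_{n,n-k}(E,\tau)$, and \cref{pr:sop} produces an explicit algebra isomorphism $S_{n,n-k}(E,\tau)\cong S_{n,n-k}(E,\tau)^{\mathrm{op}}$ via $N\colon x_{\alpha}\mapsto x_{-\alpha}$. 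Stringing the identifications together delivers the desired isomorphism
\[
Q_{n,k}(E,\tau)^{!}\;\cong\;S_{n,n-k}(E,\tau).
\]

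The only subtle point is the torsion case $\tau\in\tfrac{1}{n}\Lambda$. Under the convention fixed in \cref{sssec.def.rel.Qnk}, the operator $R_{\tau}(\tau)$ is interpreted as the holomorphic extension $R_{+}(\tau)$ of \cref{eq:mpm1}, and similarly for $R_{n,n-k,-\tau}(-\tau)$ and $R_{n,n-k,\tau}(\tau)$ via \cref{lem.U_tau}. Since \cref{lem.R.transpose,eq.-z} are identities of holomorphic functions of $\tau$ on $\bC-\tfrac{1}{n}\Lambda$, both sides extend continuously to all of $\bC$, and kernels of the (finite-dimensional) limit operators coincide. This extension step is the only thing beyond bookkeeping, and it is routine given the holomorphicity statements already proved.

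I expect no serious obstacle: the substantive content is entirely contained in \cref{lem.R.transpose} (which transposes the $R$-matrix into one with $k$ replaced by $n-k$), and the rest is assembling isomorphisms that have already been established in this section. The main thing to be careful about is that every identification between $V$, $V^{*}$, $V^{\otimes d}$ and $(V^{\otimes d})^{*}$ is the one fixed just before \cref{lem.quad.dual}, so that the orthogonal complement $(\im R)^{\perp}$ really is realized as $\ker R^{\hdot}$ inside the same copy of $TV$ throughout.
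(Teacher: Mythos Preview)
Your proof is correct and follows essentially the same route as the paper: both use \cref{lem.quad.dual} and \cref{lem.R.transpose} to reach $Q_{n,k}(E,\tau)^{!}=TV/(\ker R_{n,n-k,-\tau}(-\tau))$, and both finish via \cref{pr:sop}. The only difference is that the paper simply recognizes $TV/(\ker R_{n,n-k,-\tau}(-\tau))$ as $S_{n,n-k}(E,-\tau)$ by definition and then invokes the isomorphism $S_{n,n-k}(E,-\tau)\cong S_{n,n-k}(E,\tau)$ from \cref{pr:sop}, whereas you take a small detour through $S_{n,n-k}(E,\tau)^{\mathrm{op}}$; this is not wrong, but it re-derives a step already contained in \cref{pr:sop} and can be omitted.
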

\begin{proof}
\cref{lem.R.transpose} implies that $R_{n,k,\tau}(\tau)^\hdot \;=\;  e(-n^{2}\tau)R_{n,n-k,-\tau}(-\tau)$ for all $\tau\in\bC-\frac{1}{n}\Lambda$, and thus for all $\tau\in\bC$ if we define both sides as limits when $\tau\in\frac{1}{n}\Lambda$.
Therefore, by \cref{lem.quad.dual},
\begin{align*}
Q_{n,k}(E,\tau)^!
\;=\; \frac{TV}{(\ker R_{n,k,\tau}(\tau)^{\hdot})}
\;=\; \frac{TV}{(\ker R_{n,n-k,-\tau}(-\tau))}
\;=\; S_{n,n-k}(E,-\tau).
\end{align*}
The desired isomorphism now follows from \cref{pr:sop}.
\end{proof}

\begin{corollary}\label{cor:cls-ext}
  For all $\tau\in\bC$, $S_{n,1}(E,\tau)\cong \Lambda V$\index{LambdaV@$\Lambda V$}, the exterior algebra on $V$.
\end{corollary}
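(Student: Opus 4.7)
The strategy is to prove the equality of subspaces $\ker R_{n,1,\tau}(\tau) = \Sym^{2}V$ inside $V^{\otimes 2}$ for every $\tau\in\bC$. Once this is in hand, the definition $S_{n,1}(E,\tau) = TV/(\ker R_{n,1,\tau}(\tau))$ immediately yields
\[
S_{n,1}(E,\tau) \;=\; TV/(\Sym^{2}V) \;=\; \Lambda V.
\]

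The crucial inclusion $\Sym^{2}V\subseteq \ker R_{n,1,\tau}(\tau)$ is verified by direct computation, exploiting the simplification $k = 1$. When $i=j$ every summand in $R_{n,1,\tau}(\tau)(x_i\otimes x_i)$ carries the factor $\theta_{j-i+r(k-1)}(0) = \theta_0(0) = 0$, so the vector is annihilated. When $i\ne j$ the prefactor $\theta_{j-i}(0)$ is independent of $r$, and after pulling it out one computes the coefficient of each $x_p\otimes x_{i+j-p}$ in $R_{n,1,\tau}(\tau)(x_i\otimes x_j)+R_{n,1,\tau}(\tau)(x_j\otimes x_i)$. The vanishing of this coefficient reduces to the identity
\[
\theta_{a}(0)\theta_{b}(-\tau)\theta_{a-b}(\tau) \,+\, \theta_{-a}(0)\theta_{b-a}(-\tau)\theta_{-b}(\tau) \;=\; 0
\]
(with $a=j-i$, $b=p-i$), which follows mechanically from the symmetries $\theta_{\alpha}(-z)=-e(-nz+\alpha/n)\theta_{-\alpha}(z)$ and $\theta_{\alpha}(0)=-e(\alpha/n)\theta_{-\alpha}(0)$ recorded in \cite[Prop.~2.6]{CKS1}. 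The inclusion then extends from $\bC-\tfrac{1}{n}\Lambda$ to all of $\bC$ by continuity, using the holomorphic extension of $R_{n,1,\tau}(\tau)$ in \Cref{lem.U_tau}.

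For the reverse inclusion it suffices to check $\dim\ker R_{n,1,\tau}(\tau) \le \binom{n+1}{2} = \dim\Sym^{2}V$. For $\tau\in(\bC-\tfrac{1}{2n}\Lambda)\cup\tfrac{1}{n}\Lambda$ this dimension equals $\binom{n+1}{2}$ by \Cref{prop.nullities} (combined with \Cref{lem.rz.dim.ker.tors} in the torsion case), giving the equality on that dense set. The main obstacle is the sparse 2-torsion locus $\tfrac{1}{2n}\Lambda-\tfrac{1}{n}\Lambda$, where \Cref{prop.nullities} only supplies $\dim\ker\ge\binom{n+1}{2}$. To handle those points I would consider the induced holomorphic family $\overline{R_{n,1,\tau}(\tau)}\colon V^{\otimes 2}/\Sym^{2}V\to V^{\otimes 2}$, which at $\tau=0$ reduces via the limit $R_+(0)$ of \Cref{cor.sym} to the injective antisymmetrization map; the determinant of a suitable square reduction of this family is a theta operator whose quasi-periodicity with respect to $\tfrac{1}{n}\Lambda$, combined with its non-vanishing at $0$ and its known zeros on $\tfrac{1}{n}\Lambda$, forces non-vanishing away from $\tfrac{1}{n}\Lambda$. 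This rules out any jump of the kernel at 2-torsion, completing the proof. Alternatively one can invoke the $H_n$-equivariance of $R_{n,1,\tau}(\tau)$ (\Cref{lem.transf.props.R.tau.z}\cref{item.prop.s.t.comm}) to argue that the potential excess $\ker R_{n,1,\tau}(\tau)/\Sym^2 V$ is an $H_n$-subrepresentation of $\Lambda^2 V$ forbidden by the generic behavior.
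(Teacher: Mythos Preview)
Your route is entirely different from the paper's. The paper's proof is two lines: by \Cref{th:is-kszdl}, $S_{n,1}(E,\tau)\cong Q_{n,n-1}(E,\tau)^!$; by \cite[Prop.~5.5]{CKS1}, $Q_{n,n-1}(E,\tau)\cong SV$ for \emph{every} $\tau\in\bC$; hence $S_{n,1}(E,\tau)\cong (SV)^!=\Lambda V$. No case analysis on $\tau$ is needed because the identification $Q_{n,n-1}\cong SV$ already holds uniformly.

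Your direct computation of the inclusion $\Sym^2 V\subseteq\ker R_{n,1,\tau}(\tau)$ is sound in principle, but the identity you display is mispaired. With $a=j-i$ and $b=p-i$, the coefficient condition one actually obtains is
\[
\theta_a(0)\,\theta_{b-a}(-\tau)\,\theta_{-b}(\tau)\;+\;\theta_{-a}(0)\,\theta_{b}(-\tau)\,\theta_{a-b}(\tau)\;=\;0,
\]
and this is what follows from the symmetries in \cite[Prop.~2.6]{CKS1}. In your version the two summands are miscombined and differ by the factor $-e(-2a/n)$ rather than $-1$, so the sum does not vanish in general.

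More seriously, your treatment of the $2$-torsion locus $\tfrac{1}{2n}\Lambda-\tfrac{1}{n}\Lambda$ is a genuine gap. Proposal (a) is internally inconsistent: you assert that the determinant of the induced family has ``known zeros on $\tfrac{1}{n}\Lambda$'' immediately after arguing it is nonzero at $\tau=0\in\tfrac{1}{n}\Lambda$. Moreover, $\tau\mapsto R_\tau(\tau)$ is \emph{not} a theta operator in the sense of \Cref{def.thetaop}: combining \cref{eq.z+1/n,eq.z+1/n eta} with \cref{eq.tau+1/n,eq.tau+1/n.eta} shows that the transformation under $\tau\mapsto\tau+\zeta$ involves conjugation by non-scalar operators, so \Cref{lem.theta.fns} does not apply to any minor of this family. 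Proposal (b) supplies no mechanism: $H_n$-equivariance only constrains the possible excess $\ker/\Sym^2 V$ to be an $H_n$-submodule of $\Alt^2 V$, and you give no reason why a nonzero such submodule cannot appear at an isolated point. The paper's approach via \Cref{th:is-kszdl} and \cite[Prop.~5.5]{CKS1} bypasses the $2$-torsion issue entirely.
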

\begin{proof}
By \cref{th:is-kszdl}, $S_{n,1}(E,\tau)$ is isomorphic to the quadratic dual of $Q_{n,n-1}(E,\tau)$.
However, $Q_{n,n-1}(E,\tau) \cong SV$ by \cite[Prop.~5.5]{CKS1} so the result follows from the well-known fact that 
the quadratic dual of the polynomial algebra $SV$ is the exterior algebra $\Lambda V^*$.
\end{proof}

\subsection{Computation of the Hilbert series for $Q_{n,k}(E,\tau)^!$}

In this subsection we prove \cref{thm.H.series.dual}. After \cref{th:is-kszdl} it suffices to show that the Hilbert series of $S_{n,k}(E,\tau)$ is $(1+t)^n$ for all $k$ and all $\tau\in (\bC-\bigcup_{m=1}^{n+1}\frac{1}{mn}\Lambda) \cup \frac{1}{n}\Lambda$.
The arguments we use to show this are similar to those in \cref{sect.H.series} with the essential change that images and kernels of $R_{\tau}(\tau)$ are replaced by images and kernels of $R_{\tau}(-\tau)$. 

We adopt the convention that $\tbinom{n}{e}=0$ when $e>n$.

\begin{theorem}\label{thm.dim.rel+.sp}
Let $d\geq 2$. Assume $\tau\in\bC-\bigcup_{m=1}^{d}\frac{1}{mn}\Lambda$.
  \begin{enumerate}
  \item\label{thm.dim.rel+.sp.ker} We have
    \begin{equation}\label{eq.dim.rel+.sp.ker}
      \ker F_{d}(\tau) \; =\; \sum_{s+t+2=d}V^{\otimes s}\otimes\im R_{\tau}(-\tau)\otimes V^{\otimes t}
    \end{equation}
    and its dimension is the same as the dimension of the space of degree-$d$ relations for the exterior algebra in $n$ variables,
     namely $n^{d}-\tbinom{n}{d}$.
  \item\label{thm.dim.rel+.sp.im} We have
    \begin{equation}\label{eq.dim.rel+.sp.im}
      \im F_{d}(\tau) \; =\; \bigcap_{s+t+2=d}V^{\otimes s}\otimes\ker R_{\tau}(-\tau)\otimes V^{\otimes t}
    \end{equation}
    and its dimension is the same as the dimension of the degree-$d$ component of an exterior algebra in $n$ variables, 
    namely $\tbinom{n}{d}$.
  \end{enumerate}  
\end{theorem}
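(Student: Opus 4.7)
I will argue by induction on $d$ in close parallel with the proof of \cref{thm.dim.rel.sp}, making the substitutions $-\tau\leftrightarrow\tau$ and $\tbinom{n+\bullet}{\bullet}\leftrightarrow\tbinom{n}{\bullet}$ dictated by the antisymmetrization limit $\lim_{\tau\to 0}F_{d}(\tau)=\prod_{m=1}^{d-1}m!\cdot\sum_{\sigma\in S_{d}}\sgn(\sigma)\sigma$ from \cref{pr.flim}, whose image has dimension $\tbinom{n}{d}$. The base case $d=2$ is immediate from \cref{prop.nullities}\cref{item.cor.r.rk.ex,item.cor.r.rk.det}. For the inductive step, I introduce the exterior counterpart of $G_{\tau}(z)$,
\begin{equation*}
G^{+}_{\tau}(z)\colon \im F_{d-1}(\tau)\otimes V\;\longrightarrow\; V\otimes\im F_{d-1}(\tau),
\end{equation*}
defined as the restriction of $S^{\mathrm{rev}}_{1\to d}(z,\tau,\ldots,\tau)=R(z)_{12}R(z+\tau)_{23}\cdots R(z+(d-2)\tau)_{d-1,d}$. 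From \cref{le.f-id}, $T_{d}(z,\tau,\ldots,\tau)=S^{\mathrm{rev}}_{1\to d}(z,\tau,\ldots,\tau)F^{L}_{d-1}(\tau)$, so $\im G^{+}_{\tau}(z)=\im T_{d}(z,\tau,\ldots,\tau)$ and in particular $\im G^{+}_{\tau}(\tau)=\im F_{d}(\tau)$. Arguing as in \cref{lem.gz.zeros}, $\det G^{+}_{\tau}(z)$ is a non-zero theta function with respect to $\tfrac{1}{n}\Lambda$ having exactly $(d-1)n\tbinom{n}{d-1}$ zeros per fundamental parallelogram.

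Next I locate those zeros. Since the factor $R(z+(j-1)\tau)_{j,j+1}$ fails to be invertible only when $z\in\{(2-j)\tau,-j\tau\}+\tfrac{1}{n}\Lambda$, all zeros of $\det G^{+}_{\tau}(z)$ must lie on the cosets $m\tau+\tfrac{1}{n}\Lambda$ with $m\in\{1,0,-1,\ldots,-(d-1)\}$, which are pairwise disjoint by the hypothesis on $\tau$. I then establish three nullity estimates mirroring \cref{lem.dim.ker.ineq.p,lem.dim.ker.ineq.m,lem.dim.ker.ineq.h}. First, for each $s\in\{1,\ldots,d-2\}$ the operator $G^{+}_{\tau}(-s\tau)$ is identically zero on its domain: the middle factor $R(0)_{s+1,s+2}$ is the identity, $R(-\tau)_{s,s+1}$ commutes with every factor strictly to its right (since those act on pairs of tensorands disjoint from $\{s,s+1\}$), and combining with the factorization $F_{d-1}(\tau)=R(\tau)_{s,s+1}Q'$ from \cref{le.shfl}\cref{item.shfl.Q.p} and the identity $R(-\tau)R(\tau)=0$ from \cref{rmk.R(0)}\cref{item.prop.r.pm} collapses $G^{+}_{\tau}(-s\tau)F^{L}_{d-1}(\tau)$ to zero. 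Second, at $z=\tau$ the straightforward adaptation of \cref{le.shfl}\cref{item.shfl.ker.sum} to $F_{d}(\tau)$ gives $\ker F_{d}(\tau)\supseteq\sum V^{\otimes s}\otimes\im R_{\tau}(-\tau)\otimes V^{\otimes t}$, whose $\tau\to 0$ specialization (via \cref{le.ulim0} with $m=-1$) is the sum of non-antisymmetric tensors of dimension $n^{d}-\tbinom{n}{d}$; generic largeness of sums plus generic smallness of nullities then yield $\nullity G^{+}_{\tau}(\tau)\geq n\tbinom{n}{d-1}-\tbinom{n}{d}$. Third, at $z=-(d-1)\tau$ the last factor becomes $R(-\tau)_{d-1,d}$, so the induction hypothesis identifies $\ker R_{\tau}(-\tau)_{d-1,d}\cap(\im F_{d-1}(\tau)\otimes V)$ with $\bigcap V^{\otimes s}\otimes\ker R_{\tau}(-\tau)\otimes V^{\otimes t}$, which contains $\im F_{d}(\tau)$ by the analog of \cref{le.shfl}\cref{item.shfl.im.int}; \cref{pr.flim} together with generic smallness of nullities then gives $\nullity G^{+}_{\tau}(-(d-1)\tau)\geq\tbinom{n}{d}$. \Cref{lem.G.nullity} carries over to $G^{+}_{\tau}(z)$ by the same twist conjugation with $T^{b}S^{ka}$ via \cref{cor.R.transl.props} and \cref{lem.transf.props.R.tau.z}\cref{item.prop.s.t.comm}, so each bound extends to the full coset $m\tau+\tfrac{1}{n}\Lambda$.

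Summing the three positive bounds gives $\bigl(n\tbinom{n}{d-1}-\tbinom{n}{d}\bigr)+(d-2)n\tbinom{n}{d-1}+\tbinom{n}{d}=(d-1)n\tbinom{n}{d-1}$, exactly matching the zero count of $\det G^{+}_{\tau}(z)$; hence by \cref{pr.hol-op} each of the above inequalities is an equality, and in particular $\dim\im F_{d}(\tau)=\tbinom{n}{d}$. Combined with the already-established containment $\im F_{d}(\tau)\subseteq\bigcap V^{\otimes s}\otimes\ker R_{\tau}(-\tau)\otimes V^{\otimes t}$ and the induction hypothesis identifying the dimension of the right-hand side, this proves \cref{thm.dim.rel+.sp.im}. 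Part \cref{thm.dim.rel+.sp.ker} then follows by repeating the dimension chase at the end of the proof of \cref{thm.dim.rel.sp.ker} verbatim, with the prefix $F^{R}_{d-1}(\tau)R(-\tau)_{12}$ of $T_{d}(-(d-1)\tau,\tau,\ldots,\tau)$ replacing the polynomial case's $F^{R}_{d-1}(-\tau)R(\tau)_{12}$, the equality $\dim\im T_{d}(-(d-1)\tau,\tau,\ldots,\tau)=n\tbinom{n}{d-1}-\tbinom{n}{d}$ coming from the third bound above, and the resulting binomial arithmetic being routine. The main obstacle is the shift-invariance extension for $G^{+}_{\tau}(z)$: the proof of \cref{lem.G.nullity} transposes structurally, but one must recheck that the scalar factor produced by \cref{cor.R.transl.props} is non-vanishing and that the twist $T^{b}S^{ka}$ preserves $\im F_{d-1}(\tau)$ at each of the special evaluations, after which the count $(d-1)n\tbinom{n}{d-1}$ exhausts the total zero multiplicity only because the hypothesis $\tau\in\bC-\bigcup_{m=1}^{d}\tfrac{1}{mn}\Lambda$ keeps all relevant cosets disjoint.
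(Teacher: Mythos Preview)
Your proof is correct and follows the paper's approach; the only difference is cosmetic. You take $G^{+}_{\tau}(z)$ to be the restriction of $S^{\mathrm{rev}}_{1\to d}(z,\tau,\ldots,\tau)$ to $\im F_{d-1}(\tau)\otimes V$ (the direct $-\tau\mapsto\tau$ analogue of $G_{\tau}$), whereas the paper uses the mirror-image version---the restriction of $S^{\mathrm{rev}}_{d\to 1}(z,\tau,\ldots,\tau)$ to $V\otimes\im F_{d-1}(\tau)$---and correspondingly works with $T_{d}(\tau,\ldots,\tau,z)$ rather than your $T_{d}(z,\tau,\ldots,\tau)$; both choices give the same nullity bounds at the same cosets $m\tau+\tfrac{1}{n}\Lambda$ and the same total zero count $(d-1)n\tbinom{n}{d-1}$.
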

\begin{proof}
The proof is like that for \Cref{thm.dim.rel.sp} with some natural changes. 
The binomial coefficients $\tbinom{n+e-1}{e}$ are replaced by $\tbinom{n}{e}$. 
The operator 
\begin{equation*}
G_{\tau}(z):  \im F_{d-1}(-\tau)\otimes V \, \longrightarrow \, V\otimes\im F_{d-1}(-\tau)
\end{equation*}
that is the restriction of  $S^{\mathrm{rev}}_{1\to d}(z,-\tau,\ldots,-\tau)$ is replaced by\index{G^+_tau(z)@$G^+_{\tau}(z)$}
  \begin{equation*}
    G^+_{\tau}(z):V\otimes \im F_{d-1}(\tau) \, \longrightarrow \, \im F_{d-1}(\tau)\otimes V
  \end{equation*}
which is the restriction of  $S^{\mathrm{rev}}_{d\to 1}(z,\tau,\ldots,\tau)$. The result in \cref{lem.gz.zeros} showing that 
  $\det G_{\tau}(z)$ has exactly $(d-1)n^{3}\tbinom{n+d-2}{d-1}$ zeros in a fundamental parallelogram for $\Lambda$
  is replaced by the result that $\det G^+_{\tau}(z)$ 
has exactly $(d-1)n^{3}\tbinom{n}{d-1}$ zeros in a fundamental parallelogram for $\Lambda$. The analogue of \Cref{eq:dim.ker.ineq}
is now
\begin{equation}\label{lem.dim+.ker.ineq}
  \dim(\ker G^+_{\tau}(z)) \; \ge\; 
  \begin{cases}
    \tbinom{n}{d} & \text{if $z\in -(d-1)\tau+\frac{1}{n}\Lambda$,} \\
    n\tbinom{n}{d-1} & \text{if $z\in -m\tau+\frac{1}{n}\Lambda$ for some $m=1,\ldots,d-2$,} \\
    n\tbinom{n}{d-1}-\tbinom{n}{d} & \text{if $z\in\tau+\frac{1}{n}\Lambda$,}\\
    0 & \text{otherwise.}
  \end{cases}
\end{equation}
(The hypothesis that $\tau \notin \bigcup_{m=1}^{d}\frac{1}{mn}\Lambda$ ensures that the four cases
in \cref{lem.dim+.ker.ineq} are pairwise disjoint.)
After these changes, the argument then proceeds as before since 
\begin{equation*}
  \rank \Big( \lim_{\tau\to 0}F_{d}(\tau)\Big) \;=\;  \tbinom{n}{d}
\end{equation*}
by \Cref{pr.flim}.
\end{proof}

In analogy with \cref{lem.dim.ker.eq}, 
\begin{equation}\label{eq.dim.ker.eq.plus}
  \dim(\im T_{d}(\tau,\ldots,\tau,z))\; = \; 
  \begin{cases}
    n\tbinom{n}{d-1}-\tbinom{n}{d} & \text{if $z\in -(d-1)\tau+\frac{1}{n}\Lambda$,} \\
    0 & \text{if $z\in -m\tau+\frac{1}{n}\Lambda$ for some $m=1,\ldots,d-2$,} \\
    \tbinom{n}{d} & \text{if $z\in\tau+\frac{1}{n}\Lambda$,}\\
    n\tbinom{n}{d-1} & \text{otherwise.}
  \end{cases}
\end{equation}

\begin{theorem}
\label{thm.H.series.dual}
If $\tau\in (\bC-\bigcup_{m=1}^{n+1}\frac{1}{mn}\Lambda) \cup \frac{1}{n}\Lambda$, 
then $Q_{n,k}(E,\tau)^!$ has the same Hilbert series as the exterior algebra on 
$n$ variables.
\end{theorem}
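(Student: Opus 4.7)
The plan is to use \cref{th:is-kszdl} to identify $Q_{n,k}(E,\tau)^!$ with $S_{n,n-k}(E,\tau)$ and then compute the Hilbert series of the latter. I split the argument along the two parts of the hypothesis.

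When $\tau\in\frac{1}{n}\Lambda$, \cite[\S5]{CKS1} shows that $Q_{n,k}(E,\tau)\cong SV$, so $Q_{n,k}(E,\tau)^{!}$ is its quadratic dual, namely the exterior algebra $\Lambda V^{*}$, whose Hilbert series is $(1+t)^n$.

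The substantive case is $\tau\in\bC-\bigcup_{m=1}^{n+1}\frac{1}{mn}\Lambda$. Since $\frac{1}{2n}\Lambda\subseteq\bigcup_{m=1}^{n+1}\frac{1}{mn}\Lambda$, I can apply \cref{prop.nullities}\cref{item.cor.r.rk.ex} to the $R$-matrix $R_{n,n-k,\tau}$ to obtain $\ker R_{n,n-k,\tau}(\tau)=\im R_{n,n-k,\tau}(-\tau)$. Hence the degree-$d$ space of relations for $S_{n,n-k}(E,\tau)$ is
\begin{equation*}
  R_d \; := \; \sum_{s+t+2=d} V^{\otimes s}\otimes\im R_{n,n-k,\tau}(-\tau)\otimes V^{\otimes t}.
\end{equation*}
For every $2\le d\le n+1$ the hypothesis on $\tau$ lets me invoke \cref{thm.dim.rel+.sp} (with $F_d(z)$ built from $R_{n,n-k,\tau}(z)$): part \cref{thm.dim.rel+.sp.ker} of that theorem identifies $R_d$ with $\ker F_d(\tau)$ and gives its dimension as $n^d-\tbinom{n}{d}$, so the degree-$d$ component of $S_{n,n-k}(E,\tau)$ has dimension $\tbinom{n}{d}$.

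Taking $d=n+1$ yields $\tbinom{n}{n+1}=0$, so the degree-$(n+1)$ component vanishes; since $S_{n,n-k}(E,\tau)$ is generated in degree one, all components in degrees $\ge n+1$ vanish as well. Therefore the Hilbert series of $Q_{n,k}(E,\tau)^{!}$ equals $\sum_{d=0}^{n}\tbinom{n}{d}t^d=(1+t)^n$, as claimed. The only real obstacle is \cref{thm.dim.rel+.sp} itself, whose proof parallels that of \cref{thm.dim.rel.sp} and rests on the careful analysis of the theta operator $G^{+}_{\tau}(z)$ and the location and multiplicities of the zeros of its determinant; once that input is granted, the present theorem reduces to packaging the dimension count, invoking the $\ker$/$\im$ duality from \cref{prop.nullities}, and using the single observation that $\tbinom{n}{n+1}=0$ cuts off the algebra in top degree.
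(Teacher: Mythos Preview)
Your argument for the main case $\tau\in\bC-\bigcup_{m=1}^{n+1}\frac{1}{mn}\Lambda$ is correct and is essentially the paper's own proof: identify $Q_{n,k}(E,\tau)^!$ with an $S_{n,k'}(E,\tau)$ via \cref{th:is-kszdl}, use \cref{prop.nullities}\cref{item.cor.r.rk.ex} to rewrite $\ker R(\tau)$ as $\im R(-\tau)$, invoke \cref{thm.dim.rel+.sp}\cref{thm.dim.rel+.sp.ker} for $2\le d\le n+1$, and then cut off at degree $n+1$ because $\tbinom{n}{n+1}=0$.

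There is a small but genuine gap in the case $\tau\in\frac{1}{n}\Lambda$. It is not true in general that $Q_{n,k}(E,\tau)\cong SV$; by \cite[Cor.~5.2]{CKS1} it is only a \emph{twist} of the polynomial ring (the isomorphism $Q_{n,k}(E,0)\cong SV$ holds at $\tau=0$, but a nontrivial twist of $SV$ is typically noncommutative). Consequently you cannot conclude directly that $Q_{n,k}(E,\tau)^!\cong\Lambda V^*$. The paper closes this gap by a short extra step: a twist has the same category of graded modules as the original, Koszulity depends only on that category, so $Q_{n,k}(E,\tau)$ is Koszul; the Koszul functional equation $H_{Q^!}(t)\,H_Q(-t)=1$ combined with $H_Q(t)=(1-t)^{-n}$ then yields $H_{Q^!}(t)=(1+t)^n$. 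With that correction your proof is complete.
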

\begin{proof}
If $\tau \in \frac{1}{n}\Lambda$, then $Q_{n,k}(E,\tau)$ is a twist of the polynomial ring on $n$ variables (\cite[Cor.~5.2]{CKS1}) so its category of graded 
modules is equivalent to the category of graded modules over that polynomial ring. 
But the Koszulity of a finitely generated connected graded algebra one depends only on its
category of graded modules (see the argument preceding \cite[Prop.~5.7]{Z-twist}). Since the polynomial ring is a Koszul algebra so is every twist of it. In particular, $Q_{n,k}(E,\tau)$ is a Koszul
algebra. The Hilbert series for $Q$ and $Q^!$ therefore satisfy the functional equation $H_{Q^!}(t) H_Q(-t)=1$. The Hilbert series for 
$Q_{n,k}(E,\tau)^!$ is therefore $(1+t)^n$.

For the rest of the proof we assume that $\tau \notin \bigcup_{m=1}^{n+1}\frac{1}{mn}\Lambda$.

Since  $\tau \notin \frac{1}{2n}\Lambda$, $\im R_{\tau}(-\tau)=\ker R_{\tau}(\tau)$ by \cref{prop.nullities}\cref{item.cor.r.rk.ex}.
By \cref{thm.dim.rel+.sp}\cref{thm.dim.rel+.sp.ker}, the degree-$d$ part of $S_{n,k}(E,\tau)$ has the same dimension as the degree-$d$ part of the exterior algebra on $n$ variables for all $0\leq d\leq n+1$. In particular, $S_{n,k}(E,\tau)_{n+1}=0$; since $S_{n,k}(E,\tau)$ is generated in degree-one, $S_{n,k}(E,\tau)_{d}=0$ for all $d\geq n+1$. Thus $S_{n,k}(E,\tau)$ has the same Hilbert series as the exterior algebra on $n$ variables for all $k$. The result now follows from \cref{th:is-kszdl}.
\end{proof}

\section{Multiplication in $Q_{n,k}(E,\tau)$}
\label{sect.mult.in.Q}

Let $\Sym^d V$\index{Sym^d V@$\Sym^d V$} (resp., $\Alt^d V$\index{Alt^d V@$\Alt^d V$}) denote the subspace of $V^{\otimes d}$ consisting of the symmetric (resp., anti-symmetric) tensors.
The restriction to $\Sym^d V$ of the natural map from the tensor algebra $TV$ to the symmetric algebra
 $SV:=TV/(\Alt^{2}V)$
is an isomorphism onto its image, $S^dV$,
 the degree-$d$ component of $SV$. The multiplication on $SV$ can therefore be transferred in a canonical way to a multiplication on
 $\Sym V := \bigoplus_{d \ge 0} \Sym^d V$\index{Sym V@$\Sym V$}. The induced multiplication is called the {\sf shuffle product}.

 In a similar way, the equality in \cref{eq.dim.rel.sp.ker} leads to a canonical isomorphism from $Q_{n,k}(E,\tau)$ to the subspace of $TV$
 that is the direct sum of the images of the operators $F_d(-\tau)$ which are, by \cref{pr.flim}, elliptic analogues of the symmetrization
 operators. Following this line of reasoning, the multiplication on $Q_{n,k}(E,\tau)$ can be transferred in a canonical way to this graded 
 subspace of $TV$.
 
 In this section we make this multiplication explicit in terms of certain operators, those in
 \cref{prop.ell.shuffle.prod}\cref{item.prop.ell.shuffle.prod.bilin}, that should be thought of as elliptic analogues of  the shuffle operators.

\subsection{The operators $M_{b,a}:V^{\otimes a} \otimes V^{\otimes b}  \to V^{\otimes (a+b)}$ }
At first sight, the calculations in this section might appear mysterious. They have been guided by a desire to find
an elliptic analogue of  the equality \cref{eq:shuffle.decomp} which says that the product on $\Sym V$ induced by 
the usual product on $SV$ {\it is} the shuffle product. We need some notation to explain this.

Let $a,b \in\bZ_{\ge 0}$. Let $S_{a+b}$ denote the group of permutations of $\{1,\ldots,a+b\}$. 
Define
\begin{align*}
S_{a|b}  &  \; :=\; \{ \s \in S_{a+b} \; | \; \s(1)<\cdots <\s(a) \text{ and }\s(a+1)<\cdots <\s(a+b)\},
\\
S_{a|\circ } & \; :=\;  \{ \s \in S_{a+b} \; | \; \s(i)=i \text{ for all } i\ge a+1\} ,
\\
S_{\circ | b} & \; :=\;  \{ \s \in S_{a+b} \; | \; \s(i)=i \text{  for all } i\le a \}.
\end{align*} 
Elements in $S_{a|b}$ are called {\sf shuffles}. 
If $\s \in S_{a+b}$, then there are unique elements $\omega \in S_{a|b}$, $\a \in S_{a |\circ}$, $\b \in S_{\circ |b}$ such that
$\s=\omega\a\b$. Hence, in the group algebra $\bC S_{a+b}$,  we have
\begin{equation}
\label{eq:shuffle.decomp}
\left( \sum_{\omega \in S_{a|b}} \omega \right) \left(  \sum_{\a \in S_{a|\circ}} \a \right)  \left(  \sum_{\b \in S_{\circ|b}} \b \right)
  \;=\; \sum_{\s \in S_{a+b}}   \s .
\end{equation}
The shuffle product $u \otimes v \mapsto u *v$ on $V^{\otimes (a+b)}$, and its restriction $\Sym^{a}V\otimes\Sym^{b}V\to\Sym^{a+b} V$, is given by 
$\frac{a!b!}{(a+b)!}$ times the  left-most term in \cref{eq:shuffle.decomp}. 

The second equality in \cref{le.mtt}, which is one of the main results in this section, namely
\begin{equation*}
M_{b,a}(-\tau) \cdot  \big(F_{a}(-\tau)\otimes F_{b}(-\tau)\big) \; =\;  F_{a+b}(-\tau),
\end{equation*}
is analogous to \cref{eq:shuffle.decomp} (with some factorial terms thrown in).
By \cref{pr.flim},  $\lim_{\tau\to 0}F_{a}(-\tau) = \sum_{\sigma\in S_a} \sigma$; i.e., $F_{a}(-\tau)$ is analogous  to the 
the middle term on the left-hand side of \cref{eq:shuffle.decomp}.  We now introduce the operator $M_{b,a}(-\tau)$ that 
will be analogous to the left-most term in \cref{eq:shuffle.decomp}. 

\begin{definition}
	Let $a,b\in\bZ_{\geq 0}$. Define the operator\index{M_a,b(z;x;y)@$M_{a,b}(z;\sfx;\sfy)$}
	\begin{equation*}
	M_{a,b}(z;x_{1},\ldots,x_{a-1};y_{1},\ldots,y_{b-1}):V^{\otimes (a+b)} \to V^{\otimes (a+b)}
	\end{equation*}
	to be
	\begin{equation*}
		\begin{matrix}
			R(z)_{a,a+1} & R(z+y_{1})_{a+1,a+2} & \cdots & R(z+\sum_{k}y_{k})_{a+b-1,a+b} \\
			R(z+x_{1})_{a-1,a} & R(z+x_{1}+y_{1})_{a,a+1} & \cdots & R(z+x_{1}+\sum_{k}y_{k})_{a+b-2,a+b-1} \\
			\vdots & \vdots & \ddots & \vdots \\
			R(z+\sum_{j}x_{j})_{12} & R(z+\sum_{j}x_{j}+y_{1})_{23} & \cdots & 
			R(z+\sum_{j}x_{j}+\sum_{k}y_{k})_{b,b+1}
		\end{matrix}
	\end{equation*}
	interpreted as either
	\begin{enumerate}
		\item the downward product of the rightward products along rows, or
		\item the rightward product of the downward products along columns.
	\end{enumerate}
	If $a=0$ or $b=0$, we regard the operator as the identity.
\end{definition}

For example,  $M_{2,3}(z;x;y_1,y_2)$ is
\begin{align*}
 R(z)_{23}&R(z+y_1)_{34}R(z+y_1+y_2)_{45} R(z+x)_{12}R(z+x+y_1)_{23} R(z+x+y_1+y_2)_{34}
\\
 =\;  R(z)_{23}&R(z+x)_{12}R(z+y_1)_{34} R(z+x+y_1)_{23}         R(z+y_1+y_2)_{45}     R(z+x+y_1+y_2)_{34}.
\end{align*}

Let $\sfx:=(x_{1},\ldots,x_{a-1})$ and $\sfy:=(y_{1},\ldots,y_{b-1})$. The interpretations in (1) and (2) yield
\begin{align}
	M_{a,b}(z;\sfx;\sfy)
	&\label{eq.M.incr}\textstyle \; =\; S^{\mathrm{rev}}_{a\to a+b}(z,\sfy)S^{\mathrm{rev}}_{a-1\to a+b-1}(z+x_{1},\sfy)\cdots S^{\mathrm{rev}}_{1\to b+1}(z+\sum_{j}x_{j},\sfy)\\
	&\label{eq.M.decr}\textstyle \; =\; S^{\mathrm{rev}}_{a+1\to 1}(z,\sfx)S^{\mathrm{rev}}_{a+2\to 2}(z+y_{1},\sfx)\cdots S^{\mathrm{rev}}_{a+b\to b}(z+\sum_{k}y_{k},\sfx)
\end{align}
respectively.
We leave the reader to verify that the procedures in (1) and (2) produce the same result (this does not involve using the Yang-Baxter
equation). The first step in verifying this is to notice that if one starts with the product produced by (1), then the factors 
$R(z+x_1+\cdots+x_j)_{a-j,a-j+1}$ coming from the left-most column commute with all the entries in the array that appear to the northeast 
of that factor; thus the product produced by (1) is equal to $S^{\rm rev}_{a+1 \to 1}(z, \sfx)$ times 
\begin{equation*}
S^{\mathrm{rev}}_{a+1\to a+b}(z+y_1,\ldots, y_{b-1})S^{\mathrm{rev}}_{a\to a+b-1}(z+x_{1}+y_1,y_2,\ldots, y_{b-1})\cdots 
S^{\mathrm{rev}}_{2\to b+1}(z+\sum_{j}x_{j}+y_1,y_2,\ldots, y_{b-1}).
\end{equation*}
One then treats this product in the same way, and so on.

We write $M_{a,b}(z)$\index{M_a,b(z)@$M_{a,b}(z)$} for $M_{a,b}(z;\sfx;\sfy)$ if $z=x_1 =\cdots = x_{a-1} = y_1 = \cdots y_{b-1}$.

\begin{lemma}\label{le.TM.MT}
	Let $\sfx=(x_{1},\ldots,x_{a-1})$ and $\sfy=(y_{1},\ldots,y_{b-1})$. As operators on $V^{\otimes(a+b)}$,
	\begin{equation}\label{eq.TLM.MTR}
		\textstyle T^{L}_{a}(\sfx)M_{a,b}(z+\sum_{j}x_{j};-\sfx;\sfy)\; =\; M_{a,b}(z;\sfx^{\mathrm{rev}};\sfy)T^{R}_{a}(\sfx)
	\end{equation}
	and
	\begin{equation}\label{eq.TRM.MTL}
		\textstyle T^{R}_{b}(\sfy)M_{a,b}(z+\sum_{k}y_{k};\sfx^{\mathrm{rev}};-\sfy^{\mathrm{rev}})\; =\; M_{a,b}(z;\sfx^{\mathrm{rev}};\sfy)T^{L}_{b}(\sfy)
	\end{equation}
	where $\sfx^{\mathrm{rev}}:=(x_{a-1},\ldots,x_{1})$ and $\sfy^{\mathrm{rev}}:=(y_{b-1},\ldots,y_{1})$.
\end{lemma}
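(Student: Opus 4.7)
The two identities \cref{eq.TLM.MTR} and \cref{eq.TRM.MTL} are symmetric to one another under the exchange $a\leftrightarrow b$ (with ``left'' swapped for ``right''), so it suffices to prove \cref{eq.TLM.MTR}; the second then follows by an entirely parallel argument. My plan is to prove \cref{eq.TLM.MTR} by a double induction: an outer induction on $b$ reducing to the single-column case, and an inner induction on $a$ for the single-column case itself, driven by (\ref{QYBE2}).

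For the outer induction on $b$, the base case $b=0$ is trivial since $M_{a,0}$ is the identity and $T^L_a(\sfx)=T^R_a(\sfx)$ as operators on $V^{\otimes a}$. For the inductive step I use \cref{eq.M.decr} to split off the last column,
\begin{equation*}
M_{a,b}(z;\sfu;\sfy) \;=\; M_{a,b-1}(z;\sfu;y_1,\ldots,y_{b-2})\cdot S^{\mathrm{rev}}_{a+b\to b}(z+y_1+\cdots+y_{b-1},\sfu),
\end{equation*}
apply the inductive hypothesis to the $M_{a,b-1}$ factor inside $T^L_a(\sfx)\,M_{a,b}(z+\sum_{j} x_j;-\sfx;\sfy)$, and observe that this converts $T^L_a(\sfx)$ into an occurrence of $T^R_a(\sfx)$ acting on positions $b,\ldots,a+b-1$. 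What remains is to push that $T^R_a(\sfx)$ across the last column $S^{\mathrm{rev}}_{a+b\to b}$ so that it ends up on positions $b+1,\ldots,a+b$; this residual identity is exactly an index-shifted instance of the $b=1$ case of the lemma.

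For the single-column identity
\begin{equation*}
\textstyle T^L_a(\sfx)\,S^{\mathrm{rev}}_{a+1\to 1}\left(z+\sum_{j} x_j,\,-\sfx\right) \;=\; S^{\mathrm{rev}}_{a+1\to 1}(z,\sfx^{\mathrm{rev}})\,T^R_a(\sfx)
\end{equation*}
in $V^{\otimes(a+1)}$, I induct on $a$. The base $a=2$ reduces literally to (\ref{QYBE2}) with $u=x_1$ and $v=z$. For the inductive step, one of the factorizations of $T_a$ in \cref{le.f-id} (for instance $T_a(\sfx)=S^{\mathrm{rev}}_{1\to a}(\sfx)\,T^L_{a-1}(x_2,\ldots,x_{a-1})$) peels off a single row of $R$-factors from $T^L_a(\sfx)$; the inductive hypothesis handles the $T_{a-1}$ piece, and the remaining row of $R$'s is pushed across $S^{\mathrm{rev}}_{a+1\to 1}$ by iterated applications of (\ref{QYBE2}), each time using commutation of operators with disjoint supports to bring the relevant triple of $R$'s into position.

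The main obstacle is not any single step but the bookkeeping of arguments: the reversal $\sfx\mapsto\sfx^{\mathrm{rev}}$, the sign flip $\sfx\mapsto-\sfx$, and the various shifts by partial sums of the $x_j$ and $y_k$ must all line up exactly with what the (\ref{QYBE2}) moves produce. Once the indexing conventions are pinned down, each local move is a routine application of (\ref{QYBE2}) or a commutation, and the two induction schemes knit together cleanly.
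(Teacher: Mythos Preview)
Your proposal is correct and follows essentially the same route as the paper. Both arguments decompose $M_{a,b}$ into its columns via \cref{eq.M.decr} and then push $T^{L}_{a}(\sfx)$ through one column at a time, converting it step by step into $T^{R}_{a}(\sfx)$ shifted to the right.

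The only difference is economy: your inner induction on $a$ for the single-column identity
\[
\textstyle T^L_a(\sfx)\,S^{\mathrm{rev}}_{a+1\to 1}\!\left(z+\sum_{j} x_j,\,-\sfx\right) \;=\; S^{\mathrm{rev}}_{a+1\to 1}(z,\sfx^{\mathrm{rev}})\,T^R_a(\sfx)
\]
is unnecessary, since this identity is already available. Using \cref{eq.S.rev.1} to rewrite $S^{\mathrm{rev}}_{a+1\to 1}(z+\sum_j x_j,-\sfx)$ as $S_{a+1\to 1}(\sfx,z)$, the single-column identity becomes exactly the equality of the first and fourth expressions in \Cref{le.f-id} (with $d=a+1$). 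The paper simply quotes this and applies it once per column; your outer induction on $b$ is the same iteration. So you can drop the inner induction on $a$ and cite \Cref{le.f-id} directly.
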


\begin{proof}
	By \cref{eq.M.decr} and \cref{eq.S.rev.1},  
	\begin{align*}
		&\textstyle M_{a,b}(z+\sum_{j}x_{j};-\sfx;\sfy)\\
		&\textstyle =\; S^{\mathrm{rev}}_{a+1\to 1}(z+\sum_{j}x_{j},-\sfx)S^{\mathrm{rev}}_{a+2\to 2}(z+\sum_{j}x_{j}+y_{1},-\sfx)\cdots S^{\mathrm{rev}}_{a+b\to b}(z+\sum_{j}x_{j}+\sum_{k}y_{k},-\sfx)\\
		&\textstyle =\; S_{a+1\to 1}(\sfx,z)S_{a+2\to 2}(\sfx,z+y_{1})\cdots S_{a+b\to b}(\sfx,z+\sum_{k}y_{k}).				
	\end{align*}
	By \cref{le.f-id}, 
\begin{equation}
\label{eq:T^L.S=Srev.T^R}
T^{L}_{d-1}(z_{1},\ldots,z_{d-2})S_{d\to 1}(z_{1},\ldots,z_{d-1}) \;=\; S^{\mathrm{rev}}_{d\to 1}(z_{d-1},\ldots,z_{1})T^{R}_{d-1}(z_{1},\ldots,z_{d-2}).
\end{equation}
Hence $T^{L}_{a}(\sfx)M_{a,b}(z+\sum_{j}x_{j};-\sfx;\sfy)$ equals
\begin{align*}
 & T^{L}_{a}(\sfx) S_{a+1\to 1}(\sfx,z)S_{a+2\to 2}(\sfx,z+y_{1})S_{a+3\to 3}(\sfx,z+y_{1}+y_2)\cdots 
\\
& \;=\; S^{\mathrm{rev}}_{a+1\to 1}(z,\sfx^{\rm rev})(I\otimes T_{a}(\sfx))^{L} S_{a+2\to 2}(\sfx,z+y_{1})S_{a+3\to 3}(\sfx,z+y_{1}+y_2) \cdots
\\
& \;=\; S^{\mathrm{rev}}_{a+1\to 1}(z,\sfx^{\rm rev})S^{\mathrm{rev}}_{a+1\to 1}(z+y_1,\sfx^{\rm rev}) (I^{\otimes 2}\otimes T_{a}(\sfx))^{L} S_{a+3\to 3}(\sfx,z+y_{1}+y_2) \cdots
\end{align*}
where the last equality is obtained by applying \cref{eq:T^L.S=Srev.T^R} after observing that the previous 
$(I \otimes T_{a}(\sfx))^L$ is of the form $T^{L}_{a}(\sfx)$ with respect to $S_{a+2\to 2}(\sfx,z+y_{1})$. Repeating this procedure we
eventually see that $T^{L}_{a}(\sfx)M_{a,b}(z+\sum_{j}x_{j};-\sfx;\sfy)$ equals
\begin{equation*}
\textstyle S^{\mathrm{rev}}_{a+1\to 1}(z,\sfx^{\rm rev})S^{\mathrm{rev}}_{a+2\to 2}(z+y_{1},\sfx^{\rm rev})\cdots S^{\mathrm{rev}}_{a+b\to b}(z+\sum_{k}y_{k},\sfx^{\rm rev})T^{R}_{a}(\sfx)
\end{equation*}
which is $M_{a,b}(z;\sfx^{\rm rev};\sfy)T^{R}_{a}(\sfx)$.

A similar argument proves \cref{eq.TRM.MTL}.
\end{proof}

\begin{lemma}\label{le.t-big}
For positive integers  $a,b$,  
  \begin{multline}
    \label{eq:tss}
    T^{L}_a(z_1,\ldots,z_{a-1}) S_{a+1\to 1}(z_1,\ldots,z_a) S_{a+2\to 2}(z_1,\ldots,z_{a-1},z_a+z_{a+1})\cdots\\
    \cdots S_{a+b\to b}(z_1,\ldots,z_{a-1},z_{a}+\cdots+z_{a+b-1})T^{L}_{b}(z_{a+1},\ldots,z_{a+b-1})  \;=\; 
    T_{a+b}(z_1,\ldots,z_{a+b-1}).   
  \end{multline}
\end{lemma}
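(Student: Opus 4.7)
The plan is to prove \cref{eq:tss} directly, by expanding $T_{a+b}$ according to its definition and then factoring and rearranging the pieces — no use of the Yang-Baxter equation is required, only the factorization identity for the $S$-operators stated at the start of \cref{se:symmetrizers} together with the commutation of $R$-factors acting on disjoint pairs of tensorands.

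First, I would write
\begin{equation*}
T_{a+b}(z_1,\ldots,z_{a+b-1}) \; = \; \prod_{d=2}^{a} S_{d\to 1}(z_1,\ldots,z_{d-1}) \cdot \prod_{\ell=1}^{b} S_{a+\ell \to 1}(z_1,\ldots,z_{a+\ell-1})
\end{equation*}
from the definition \cref{eq.def.td}. The first product is $T_a(z_1,\ldots,z_{a-1})$, viewed as $T_a^L(z_1,\ldots,z_{a-1})$ on $V^{\otimes(a+b)}$.

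Next, apply the factorization identity $S_{k\to i}(t_{k-1},\ldots,t_i)=S_{k\to j}(t_{k-1},\ldots,t_{j+1},\Sigma_i^j)S_{j\to i}(t_{j-1},\ldots,t_i)$, with $k=a+\ell$, $i=1$, and $j=\ell$, to each factor in the second product. Translating into the $z$-indexing (so that $t_m=z_{a+\ell-m}$), this produces
\begin{equation*}
S_{a+\ell \to 1}(z_1,\ldots,z_{a+\ell-1}) \;=\; S_{a+\ell\to \ell}\!\big(z_1,\ldots,z_{a-1},\,z_a+\cdots+z_{a+\ell-1}\big)\,S_{\ell\to 1}(z_{a+1},\ldots,z_{a+\ell-1}).
\end{equation*}

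The main step is then a commutation argument. The operator $S_{\ell\to 1}(z_{a+1},\ldots,z_{a+\ell-1})$ is a product of $R$-factors acting on positions $(i,i+1)$ with $1\le i\le \ell-1$; the operator $S_{a+\ell'\to \ell'}(\cdots)$ is a product of $R$-factors acting on positions $(j,j+1)$ with $\ell'\le j\le a+\ell'-1$. Whenever $\ell'>\ell$ (and since $a\ge 1$), these two ranges are separated by at least $2$, so the two operators commute. Using this, I would move each $S_{\ell\to 1}(z_{a+1},\ldots,z_{a+\ell-1})$ rightward past every $S_{a+\ell'\to\ell'}(\cdots)$ with $\ell'>\ell$, producing
\begin{equation*}
T_{a+b}(z_1,\ldots,z_{a+b-1}) \;=\; T_a^L(z_1,\ldots,z_{a-1}) \cdot \prod_{\ell=1}^{b} S_{a+\ell\to \ell}\!\big(z_1,\ldots,z_{a-1},z_a+\cdots+z_{a+\ell-1}\big) \cdot \prod_{\ell=1}^{b} S_{\ell\to 1}(z_{a+1},\ldots,z_{a+\ell-1}).
\end{equation*}

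Finally, I would identify the right-hand product: setting $w_i:=z_{a+i}$, the rightmost product is exactly $\prod_{\ell=2}^{b} S_{\ell\to 1}(w_1,\ldots,w_{\ell-1})=T_b(w_1,\ldots,w_{b-1})$, which acts on the first $b$ tensorands only and so equals $T_b^L(z_{a+1},\ldots,z_{a+b-1})$. This matches the left-hand side of \cref{eq:tss}. The only mildly delicate point is verifying the commutation range, but it is essentially automatic once the position-supports of the two operator types are written down; there is no real obstacle, and in particular the argument uses neither \Cref{QYBE2} nor any of the theta-function analysis of earlier sections.
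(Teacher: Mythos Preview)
Your proof is correct and is essentially the paper's proof run in reverse: the paper starts from the left-hand side, combines $T^L_a \cdot S_{a+1\to 1}$ into $T^L_{a+1}$, expands $T^L_b$ as a product of $S_{j\to 1}$'s, commutes each $S_{j\to 1}$ leftward past the $S_{a+k\to k}$ with $k>j$, and uses the same factorization identity to reassemble $T_{a+b}$; you start from $T_{a+b}$, split each $S_{a+\ell\to 1}$ as $S_{a+\ell\to\ell}\,S_{\ell\to 1}$, and commute the $S_{\ell\to 1}$'s rightward. The ingredients (the $S$-factorization identity, the disjoint-support commutation, and the identification of $\prod_\ell S_{\ell\to 1}$ with $T^L_b$) are identical, and as you note, no use of \Cref{QYBE2} is made in either argument.
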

\begin{proof}
Since $T^{L}_{d-1}(z_{1},\ldots,z_{d-2})S_{d\to 1}(z_{1},\ldots,z_{d-1}) =T_{d}(z_{1},\ldots,z_{d-1} )$ (by \cref{le.f-id}),
the product of the two left-most factors on the left-hand side of \Cref{eq:tss} equals
\begin{equation*}
T^{L}_{a+1}(z_1,\ldots,z_a).
\end{equation*}
By definition \Cref{eq.def.td}, the right-most factor $T^{L}_{b}(z_{a+1},\ldots,z_{a+b-1})$ on the left-hand side is
  \begin{equation}\label{eq:ts-pl}
    T^{L}_{b}(z_{a+1},\ldots,z_{a+b-1})  = S_{2\to 1}(z_{a+1})S_{3\to 1}(z_{a+1},z_{a+2})\ldots S_{b\to 1}(z_{a+1},\ldots,z_{a+b-1}).
  \end{equation}
  Each of the $b-1$ resulting $S_{j\to 1}$ factors commutes with all $S_{a+k\to k}$ factors ending the left-hand side of \Cref{eq:tss} for $k>j$. Implementing this commutation for each of the $S$ factors in \Cref{eq:ts-pl} means attaching $S_{j\to 1}$ in \Cref{eq:ts-pl} to $S_{a+j\to j}$ in \Cref{eq:tss} to produce
  \begin{equation*}
    S_{a+j\to j}(z_1,\ldots,z_{a-1},z_a+\cdots+z_{a+j-1}) S_{j\to 1}(z_{a+1},\ldots,z_{a+j-1}) =
    S_{a+j\to 1}(z_1,\ldots,z_{a+j-1}).
  \end{equation*}
  Multiplying these by the $T^{L}_{a+1}(z_1,\ldots,z_a)$ we already have and applying \Cref{le.f-id} successively now yields the right-hand side $T_{a+b}(z_1,\ldots,z_{a+b-1})$ of \Cref{eq:tss}, as claimed.
\end{proof}

\begin{lemma}\label{le.TMT}
	If $\sfx=(x_{1},\ldots,x_{a-1})$ and $\sfy=(y_{1},\ldots,y_{b-1})$, then
	\begin{equation*}
		\textstyle T_{a+b}(\sfx,z,\sfy) \; =\; M_{a,b}\big(z;\sfx^{\mathrm{rev}};\sfy\big)\cdot T^{R}_{a}(\sfx)\cdot T^{L}_{b}(\sfy).
	\end{equation*}
\end{lemma}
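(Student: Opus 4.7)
The plan is to derive the identity by combining the two preceding lemmas, \cref{le.t-big} and \cref{le.TM.MT}. Set $z_{1},\ldots,z_{a-1}:=x_{1},\ldots,x_{a-1}$, $z_{a}:=z$, and $z_{a+1},\ldots,z_{a+b-1}:=y_{1},\ldots,y_{b-1}$. Then \cref{le.t-big} reads
\begin{equation*}
T_{a+b}(\sfx,z,\sfy) \; = \; T^{L}_{a}(\sfx)\cdot \Big[S_{a+1\to 1}(\sfx,z)\,S_{a+2\to 2}(\sfx,z+y_{1})\cdots S_{a+b\to b}\big(\sfx,z+\textstyle\sum_{k} y_{k}\big)\Big]\cdot T^{L}_{b}(\sfy).
\end{equation*}
First, I would identify the middle bracketed product as a version of $M_{a,b}$.

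The computation at the start of the proof of \cref{le.TM.MT} (using $S^{\mathrm{rev}}_{d\to 1}(t_{d-1},\ldots,t_{1}) = S_{d\to 1}(-t_{d-1},\ldots,-t_{1})$ via \cref{eq.S.rev.1}, together with the expression \cref{eq.M.decr}) gives
\begin{equation*}
\textstyle M_{a,b}\big(z+\sum_{j}x_{j};-\sfx;\sfy\big) \; = \; S_{a+1\to 1}(\sfx,z)\,S_{a+2\to 2}(\sfx,z+y_{1})\cdots S_{a+b\to b}\big(\sfx,z+\sum_{k}y_{k}\big).
\end{equation*}
Substituting this into the displayed formula yields
\begin{equation*}
\textstyle T_{a+b}(\sfx,z,\sfy) \; = \; T^{L}_{a}(\sfx)\cdot M_{a,b}\big(z+\sum_{j}x_{j};-\sfx;\sfy\big)\cdot T^{L}_{b}(\sfy).
\end{equation*}

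Finally, I would push $T^{L}_{a}(\sfx)$ past $M_{a,b}$ using \cref{eq.TLM.MTR}, which gives
\begin{equation*}
\textstyle T^{L}_{a}(\sfx)\,M_{a,b}\big(z+\sum_{j}x_{j};-\sfx;\sfy\big) \; = \; M_{a,b}(z;\sfx^{\mathrm{rev}};\sfy)\,T^{R}_{a}(\sfx).
\end{equation*}
Since $T^{R}_{a}(\sfx)$ acts on the right-most $a$ tensorands and $T^{L}_{b}(\sfy)$ acts on the left-most $b$ tensorands of $V^{\otimes(a+b)}$, they act on disjoint tensor factors and therefore commute; the two factorizations $T^{R}_{a}(\sfx)\cdot T^{L}_{b}(\sfy)$ and $T^{L}_{b}(\sfy)\cdot T^{R}_{a}(\sfx)$ agree. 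The desired identity follows.

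Since every step is a direct citation of a previously proved identity, there is no real obstacle: the only thing to verify carefully is the bookkeeping of arguments in the $S$-operators (in particular, the sign changes that convert $S_{\bullet\to\bullet}$ into $S^{\mathrm{rev}}_{\bullet\to\bullet}$), and the matching of the reversed tuple $\sfx^{\mathrm{rev}}$ produced by \cref{eq.TLM.MTR} with the $\sfx^{\mathrm{rev}}$ appearing in the statement.
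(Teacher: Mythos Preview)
Your proof is correct and follows exactly the same route as the paper's: restate \cref{le.t-big} as $T_{a+b}(\sfx,z,\sfy)=T^{L}_{a}(\sfx)\cdot M_{a,b}(z+\sum_{j}x_{j};-\sfx;\sfy)\cdot T^{L}_{b}(\sfy)$ and then apply \cref{eq.TLM.MTR}. You simply spell out the identification of the bracketed $S$-product with $M_{a,b}(z+\sum_j x_j;-\sfx;\sfy)$ more explicitly, and your final remark about $T^{R}_{a}(\sfx)$ and $T^{L}_{b}(\sfy)$ commuting, while true, is unnecessary since after applying \cref{eq.TLM.MTR} the expression is already in the desired order.
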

\begin{proof}
\cref{le.t-big} can be re-stated as
\begin{equation*}
\textstyle T_{a+b}(\sfx,z,\sfy) \; =\; T^{L}_{a}(\sfx)\cdot M_{a,b}\big(z+\sum_{j}x_{j};-\sfx;\sfy\big)\cdot T^{L}_{b}(\sfy).
\end{equation*}
So the result follows from \cref{eq.TLM.MTR}.
\end{proof}

\subsection{Multiplication in $Q_{n,k}(E,\tau)$}\label{subse.descr.qnk}

Assume $\tau\in\bC-\bigcup_{m\geq 1}\frac{1}{m}\Lambda$.
By \cref{eq.dim.rel.sp.ker},
\begin{equation*}
Q_{n,k}(E,\tau)_d\;=\;\frac{V^{\otimes d}}{\ker F_{d}(-\tau)},
\end{equation*}
which is canonically isomorphic to $\im F_{d}(-\tau)$. Thus 
\begin{equation*}
Q_{n,k}(E,\tau) \; \cong \; \bigoplus_{d\geq 0}\im F_{d}(-\tau)
\end{equation*}
as graded vector spaces, so the multiplication on $Q_{n,k}(E,\tau)$ induces a multiplication on the right-hand space 
making it a graded $\bC$-algebra. \cref{prop.ell.shuffle.prod} describes the induced multiplication.

\begin{lemma}\label{le.mtt}
	With the notation above,
	\begin{align*}
		M_{b,a}(\tau) \cdot (F_{a}(\tau)\otimes F_{b}(\tau)) & \; =\; F_{a+b}(\tau),
		\\ 
		M_{b,a}(-\tau) \cdot  (F_{a}(-\tau)\otimes F_{b}(-\tau))& \; =\;  F_{a+b}(-\tau).
	\end{align*}
\end{lemma}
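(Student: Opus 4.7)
The plan is to extract both identities as direct specializations of \Cref{le.TMT}. The key observation is bookkeeping: in \Cref{le.TMT}, the tuples $\sfx = (x_1, \ldots, x_{a-1})$ and $\sfy = (y_1, \ldots, y_{b-1})$ are the parameters on the right-most $a$ and left-most $b$ tensorands respectively, but the factorization is governed by the decomposition $(a+b) = a + b$ with $T^R_a$ on the right block and $T^L_b$ on the left block.

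First I would interchange the roles of $a$ and $b$ in \Cref{le.TMT}: taking $\sfx$ of length $b-1$ and $\sfy$ of length $a-1$ yields
\[
T_{a+b}(\sfx, z, \sfy) \;=\; M_{b,a}(z; \sfx^{\mathrm{rev}}; \sfy) \cdot T^R_b(\sfx) \cdot T^L_a(\sfy).
\]
Now specialize all scalars to a common value $w$ (soon to be $\pm\tau$): set $\sfx = (w, \ldots, w)$, $z = w$, $\sfy = (w, \ldots, w)$. Then $\sfx^{\mathrm{rev}} = \sfx$, the left-hand side becomes $F_{a+b}(w)$, the factor $T^R_b(w, \ldots, w)$ is exactly $F_b(w)$ applied to the right-most $b$ tensorands, and similarly $T^L_a(w, \ldots, w)$ is $F_a(w)$ applied to the left-most $a$ tensorands.

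Next I would note that these last two factors act on disjoint sets of tensorands and therefore commute. Their composition, applied in either order, is by definition $F_a(w) \otimes F_b(w)$ as an operator on $V^{\otimes a} \otimes V^{\otimes b}$. Combining these steps gives
\[
F_{a+b}(w) \;=\; M_{b,a}(w) \cdot \bigl(F_a(w) \otimes F_b(w)\bigr),
\]
and setting $w = \tau$ and $w = -\tau$ yields the two identities of the lemma.

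There is essentially no obstacle beyond matching the index conventions carefully: the subscript order $M_{b,a}$ (rather than $M_{a,b}$) is forced by the convention that $T^R_a$ in \Cref{le.TMT} operates on the right-hand block, whereas in $F_a(\tau) \otimes F_b(\tau)$ the operator $F_a$ acts on the left block. Once the tuples and position conventions are aligned, the lemma is immediate and no fresh use of the Yang--Baxter equation is required beyond what is already packaged in \Cref{le.TMT}.
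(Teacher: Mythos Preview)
Your proposal is correct and takes essentially the same approach as the paper: both apply \Cref{le.TMT} with the roles of $a$ and $b$ swapped, specialize all parameters to $\pm\tau$, and identify $T^R_b(\pm\tau)\cdot T^L_a(\pm\tau)$ with $F_a(\pm\tau)\otimes F_b(\pm\tau)$.
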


\begin{proof}
	By \cref{le.TMT},
	\begin{align*}
		F_{a+b}(\pm\tau)\;  
		& =\; M_{b,a}(\pm\tau;(\pm\tau)^{b-1};(\pm\tau)^{a-1}) T^{R}_{b}((\pm\tau)^{b-1}) T^{L}_{a}((\pm\tau)^{a-1})
		\\
		& =\; M_{b,a}(\pm \tau)F^{R}_{b}(\pm\tau)F^{L}_{a}(\pm\tau)
		\\
		& =\; M_{b,a}(\pm \tau) \cdot ( F_{a}(\pm\tau)  \otimes F_{b}(\pm\tau))
	\end{align*}
	as desired.  
\end{proof}

\begin{proposition}
\label{prop.ell.shuffle.prod}
Let $\tau\in\bC-\bigcup_{m\geq 1}\frac{1}{m}\Lambda$.
\begin{enumerate}
\item\label{item.prop.ell.shuffle.prod.bilin}
	Define a bilinear multiplication on $A:=\bigoplus_{d\geq 0}\im F_{d}(-\tau)$ by the maps
	\begin{equation*}
		\im F_{a}(-\tau)\otimes\im F_{b}(-\tau) \, \longrightarrow \,  \im F_{a+b}(-\tau)
	\end{equation*}
	induced from $M_{b,a}(-\tau)$ for all $a,b\geq 0$. Then $A$ is a graded algebra isomorphic to $Q_{n,k}(E,\tau)$.
\item\label{item.prop.ell.shuffle.prod.dual}	
	Similarly, the maps induced from $M_{b,a}(\tau)$ make $\bigoplus_{a+b \geq 0}\im F_{a+b}(\tau)$ a graded algebra isomorphic to the quadratic dual $Q_{n,k}(E,\tau)^{!}$.
\end{enumerate}	
\end{proposition}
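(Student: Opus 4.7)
The plan is to exhibit an explicit graded-vector-space isomorphism $\phi=\bigoplus_d\phi_d$, where each $\phi_d$ is the canonical map $Q_{n,k}(E,\tau)_d\xrightarrow{\sim}\im F_d(-\tau)$ induced by the operator $F_d(-\tau)$ itself, and then read off multiplicativity directly from \cref{le.mtt}. Part \cref{item.prop.ell.shuffle.prod.dual} will follow by the same argument with sign flipped.

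First, the operator $F_d(-\tau)\colon V^{\otimes d}\to V^{\otimes d}$ has kernel equal to $\sum_{s+t+2=d}V^{\otimes s}\otimes\im R_\tau(\tau)\otimes V^{\otimes t}$ by \cref{thm.dim.rel.sp}\cref{thm.dim.rel.sp.ker}, and this is precisely the degree-$d$ component of the two-sided ideal defining $Q_{n,k}(E,\tau)$. Hence $F_d(-\tau)$ factors through the quotient $V^{\otimes d}/\ker F_d(-\tau)=Q_{n,k}(E,\tau)_d$, and the induced map $\phi_d\colon Q_{n,k}(E,\tau)_d\to V^{\otimes d}$ is injective with image $\im F_d(-\tau)$, so it is an isomorphism onto $\im F_d(-\tau)$.

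To verify $\phi$ is multiplicative, take homogeneous classes $[u]\in Q_{n,k}(E,\tau)_a$ and $[v]\in Q_{n,k}(E,\tau)_b$; then $\phi_{a+b}([u]\cdot[v])=F_{a+b}(-\tau)(u\otimes v)$, which by \cref{le.mtt} equals $M_{b,a}(-\tau)\bigl(F_a(-\tau)u\otimes F_b(-\tau)v\bigr)=M_{b,a}(-\tau)\bigl(\phi_a([u])\otimes\phi_b([v])\bigr)$, which is exactly the proposed product on $A$. This formula therefore defines a well-defined, associative multiplication on $A$ (associativity being transported along the bijection $\phi$ from the associativity of $Q_{n,k}(E,\tau)$), and $\phi$ is an isomorphism of graded $\bC$-algebras, which proves \cref{item.prop.ell.shuffle.prod.bilin}.

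The proof of \cref{item.prop.ell.shuffle.prod.dual} is strictly parallel, with \cref{thm.dim.rel.sp} replaced by \cref{thm.dim.rel+.sp}: the operator $F_d(\tau)$ descends to a canonical isomorphism $S_{n,k}(E,\tau)_d\xrightarrow{\sim}\im F_d(\tau)$, and the $+\tau$ case of \cref{le.mtt} identifies the resulting algebra with the one carrying the $M_{b,a}(\tau)$-induced product; the identification of $S_{n,k}(E,\tau)$ with $Q_{n,k}(E,\tau)^!$ is then supplied by \cref{th:is-kszdl} together with \cref{pr:sop}. There is no substantial obstacle: once \cref{le.mtt} is in hand the whole statement reduces to bookkeeping around the two identifications $\ker F_d(\pm\tau)=\text{space of degree-}d\text{ relations}$ already established in \cref{sec.hilb.series,subsec.dual}, and the verification that $\phi$ intertwines the multiplications is literally one line.
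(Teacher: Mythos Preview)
Your argument is essentially the paper's own: both use \cref{thm.dim.rel.sp} (resp.\ \cref{thm.dim.rel+.sp}) to identify $\ker F_d(\mp\tau)$ with the degree-$d$ relations, then invoke \cref{le.mtt} to transport the multiplication; the paper packages this as a commutative diagram while you write out the one-line verification $\phi_{a+b}([u][v])=F_{a+b}(-\tau)(u\otimes v)=M_{b,a}(-\tau)(\phi_a[u]\otimes\phi_b[v])$, but the content is identical.

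One small slip in part \cref{item.prop.ell.shuffle.prod.dual}: \cref{th:is-kszdl} gives $Q_{n,k}(E,\tau)^{!}\cong S_{n,n-k}(E,\tau)$, not $S_{n,k}(E,\tau)$, so your cited identification does not directly close the gap; the paper's own proof is terse here as well and simply says to replace $-\tau$ by $\tau$ and use \cref{eq.dim.rel+.sp.im}.
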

\begin{proof}
	Write $Q$ for $Q_{n,k}(E,\tau)$ and $Q_i$ for its degree-$i$ component. Consider the diagram
	\begin{equation*}
		\begin{tikzcd}
			V^{\otimes a}\otimes V^{\otimes b}\ar[d,equal]\ar[r,twoheadrightarrow] & Q_{a}\otimes Q_{b}\ar[d]\ar[r,"\sim"] & A_{a}\otimes A_{b}\ar[r,hook] & V^{\otimes a}\otimes V^{\otimes b}\ar[d,"M_{b,a}(-\tau)"] \\
			V^{\otimes (a+b)}\ar[r,twoheadrightarrow] & Q_{a+b}\ar[r,"\sim"] & A_{a+b}\ar[r,hook] & V^{\otimes (a+b)}
		\end{tikzcd}
	\end{equation*}
	where the first and second rows are factorizations of $F_{a}(-\tau)\otimes F_{b}(-\tau)$ and $F_{a+b}(-\tau)$, respectively. 
	The left-hand square commutes since $Q$ is defined as a quotient of the tensor algebra, and the commutativity of the outer square follows from \cref{le.mtt}. Thus the right-hand square also commutes, whence $M_{b,a}(-\tau)$ induces a map $A_{a}\otimes A_{b}\to A_{a+b}$, which is equal to the one induced from the multiplication of $Q$. This proves the first statement.
	
	If we replace all $-\tau$'s in the above argument by $\tau$ we get a proof of the second statement using \cref{eq.dim.rel+.sp.im}.
	\end{proof}

\section{Koszulity of $Q_{n,k}(E,\tau)$}
\label{se.ksz}

Throughout this section, we assume that $\tau\in\bC-\bigcup_{m\geq 1}\frac{1}{m}\Lambda$ to ensure that the necessary results such as \Cref{thm.dim.rel.sp,thm.dim.rel+.sp} hold.

Let $\Lat(V^{\otimes d})$ denote the lattice of subspaces of $V^{\otimes d}$.

We will use the following result to show that $Q_{n,k}(E,\tau)$ is a Koszul algebra.

\begin{lemma}[Backelin]\cite[Thm.~2.4.1]{PP05}.
Let $\tau\in\bC$. $Q_{n,k}(E,\tau)$ is a Koszul algebra if and only if the sublattice of $\Lat(V^{\otimes d})$\index{Lat(V)@$\Lat(V)$} generated by\index{W_i@$W_{i}$}
\begin{equation}\label{eq:wi}
  W_i \; := \; V^{\otimes( i-1)}\otimes \rel_{n,k}(E,\tau)  \otimes V^{\otimes (d-i-1)},
  \qquad i=1,\ldots,d-1,
\end{equation}
is distributive for all $d \ge 2$.
\end{lemma}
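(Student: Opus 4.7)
The statement is Backelin's distributivity criterion for Koszulity of quadratic algebras, cited here as \cite[Thm.~2.4.1]{PP05}, so the proof is in effect a reference. Nevertheless, since the shape of the criterion drives everything we do in the rest of \cref{se.ksz}, it is worth sketching the conceptual content of what is being invoked, rather than pretending to reprove a standard theorem.

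The strategy behind Backelin's theorem is as follows. A quadratic algebra $A = TV/(R)$ is Koszul precisely when $\Tor^A_{d,j}(\bC,\bC) = 0$ for all $j \ne d$. Using the bar complex one identifies the $(d,d)$ bidegree piece of $\Tor$ with the intersection
\begin{equation*}
  \bigcap_{i=1}^{d-1} W_i, \qquad W_i = V^{\otimes(i-1)}\otimes R \otimes V^{\otimes(d-i-1)},
\end{equation*}
and the higher homological degrees with more elaborate combinations of the $W_i$ formed by iterated intersections and sums inside $V^{\otimes d}$. Backelin's linear-algebra observation is that these iterated combinations are controlled by the combinatorics of a Boolean lattice exactly when the sublattice of $\Lat(V^{\otimes d})$ generated by $W_1,\dots,W_{d-1}$ is distributive. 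Thus distributivity of this sublattice for every $d \ge 2$ is equivalent to the vanishing of the requisite off-diagonal $\Tor$ groups, which is Koszulity.

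The takeaway, and the reason we quote this lemma here rather than regard it as a free-standing result, is that it reduces Koszulity of $Q_{n,k}(E,\tau)$ to a concrete, purely linear-algebraic assertion about the subspaces $W_i$ of $V^{\otimes d}$ built from $\rel_{n,k}(E,\tau)$. The hard part will not be this citation but the subsequent verification: for every $d$ we must exhibit enough intersection/sum identities among the $W_i$'s to certify distributivity, and this is where the methods of \cref{sect.methods}, together with the multiplication operators $M_{a,b}$ of \cref{sect.mult.in.Q} assembled into theta operators like $H_\tau(z)$, will carry the real load.
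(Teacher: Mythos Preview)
Your proposal is correct and matches the paper's approach: the paper does not prove this lemma either but simply cites it as \cite[Thm.~2.4.1]{PP05}. Your additional sketch of the conceptual content behind Backelin's criterion is accurate and goes beyond what the paper provides, but it is expository rather than a proof in its own right, which is appropriate here.
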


\subsection{Distributive lattices}
Recall that a lattice $(\cL,\vee,\wedge)$ is \textsf{distributive} if
\begin{align}
x\vee(y\wedge z)&=(x\vee y)\wedge(x\vee z)\quad\text{and}\label{eq.dist.cond1}\\
x\wedge(y\vee z)&=(x\wedge y)\vee(x\wedge z)\label{eq.dist.cond2}
\end{align}
for all $x,y,z\in\cL$. Condition \cref{eq.dist.cond1} holds for all $x,y,z$ if only if \cref{eq.dist.cond2} holds for all $x,y,z$.

A lattice $\cL$ is \textsf{modular} if \cref{eq.dist.cond1} (or equivalently, \cref{eq.dist.cond2}) holds for all triples $(x,y,z)$ satisfying $x\geq z$. As explained in \cite[Lem.~1.6.1]{PP05}, if $\cL$ is modular, then \cref{eq.dist.cond1} and \cref{eq.dist.cond2} are equivalent for \emph{each} triple $(x,y,z)$ and these conditions are invariant under permutations of $x$, $y$ and $z$. If those equivalent conditions hold, we say that the triple $(x,y,z)$ is \textsf{distributive}.

We write $\Lat(d)$\index{Lat(d)@$\Lat(d)$} for the sublattice of 
$\Lat(V^{\otimes d})$ generated by $W_1,\ldots, W_{d-1}$. Like the lattice of subspaces of any vector space, $\Lat(V^{\otimes d})$ is modular, and so is $\Lat(d)$.

We say that $X\in \Lat(d)$  has {\sf classical dimension} if it 
has the same dimension as its counterpart for the polynomial ring $SV$. 
This terminology is not really for $X$, an element of $\Lat(d)$, but rather for an expression of $X$ using join and meet.

Since $\dim\rel_{n,k}(E,\tau) = \binom{n}{2} = \dim(\Alt^2 V)$, 
every $W_i$ has classical dimension; its classical counterpart is
$V^{\otimes (i-1)} \otimes \Alt^2 V \otimes V^{\otimes (d-i-1)}$. 
The subspaces\index{Sigma_s@$\Sigma_s$}\index{I_t@$I_t$}
\begin{equation*}
  \Sigma_s\;:=\;\sum_{i=1}^s W_i, \qquad \text{and} \qquad I_t\;:=\;\bigcap_{j=d-t}^{d-1} W_j
\end{equation*}
also have classical dimension for all $s$ and $t$ by \Cref{thm.dim.rel.sp,thm.dim.rel+.sp}.  
It follows that $\Sigma_s \cap I_{t}$ has classical dimension
if and only if $\Sigma_s + I_{t}$ does.

Because $\Lat(V^{\otimes d})$ is modular, the second half of \cite[Thm.~1.6.3]{PP05} tells us the following.

\begin{proposition}\label{th.dist}
\label{th.kosz}
  Let $d\ge 3$ and let $W_i$, $1\le i \le d-1$, be the subspaces of $V^{\otimes d}$ defined in \Cref{eq:wi}. 
  If $\Lat(2),\ldots,\Lat(d-1)$ are distributive and, for $1\le\ell\le d-1$, the triple 
  \begin{equation}
  \label{eq:dist.lattice}
   \left(  \sum_{i=1}^{\ell-1} W_i,\; W_{\ell},\; \bigcap_{j=\ell+1}^{d-1}W_j \right)
  \end{equation}
 is distributive, then $\Lat(d)$ is distributive and $Q_{n,k}(E,\tau)$ is a Koszul algebra. 
\end{proposition}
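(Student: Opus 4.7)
My plan is to recognize the proposition as an essentially direct specialization of \cite[Thm.~1.6.3]{PP05}, combined with Backelin's criterion for Koszulity stated just above. The cited PP05 theorem gives an inductive characterization: a sublattice of a modular lattice generated by a finite chain of elements $W_1,\ldots,W_{d-1}$ is distributive provided (a) the two sublattices generated by omitting, respectively, the first or last generator are distributive, and (b) for each pivot $\ell = 1,\ldots,d-1$, the triple consisting of the join of the initial segment, the pivot $W_\ell$, and the meet of the terminal segment is distributive.

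For our setup, $\Lat(V^{\otimes d})$ is modular as the lattice of subspaces of a vector space, so its sublattice $\Lat(d)$ inherits modularity. To verify condition (a), I would observe that the sublattice generated by $\{W_1,\ldots,W_{d-2}\}$ inside $\Lat(V^{\otimes d})$ is the image of $\Lat(d-1) \subseteq \Lat(V^{\otimes(d-1)})$ under the map $X \mapsto X\otimes V$, and similarly $\{W_2,\ldots,W_{d-1}\}$ generates the image of $\Lat(d-1)$ under $X\mapsto V\otimes X$. The essential point is that these tensoring operations, applied to subspaces of a free module, preserve both $\cap$ and $+$; hence they are lattice injections. Since $\Lat(d-1)$ is distributive by the hypothesis of the proposition, so are the two induced sublattices. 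Condition (b) is, by direct inspection, precisely the family of triples in \cref{eq:dist.lattice}. Thus \cite[Thm.~1.6.3]{PP05} delivers the distributivity of $\Lat(d)$.

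Once distributivity of $\Lat(d)$ is established, Koszulity follows from Backelin's criterion: this proposition functions as the inductive step (the base case $d=2$ being trivial, since a lattice generated by a single element is automatically distributive), and once the triple hypotheses \cref{eq:dist.lattice} are verified at every level — which is the role of the subsequent sections applying the operator-theoretic methods of \cref{sect.methods} — one obtains distributivity of $\Lat(d)$ for every $d\ge 2$, whence Koszulity of $Q_{n,k}(E,\tau)$.

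Since the proof is almost entirely a citation, there is no substantive mathematical obstacle here; the only thing requiring care is the bookkeeping verification that the two tensor-shifted copies of $\Lat(d-1)$ really are the sublattices appearing in the inductive clause of \cite[Thm.~1.6.3]{PP05}. The genuinely hard content — verifying the triple distributivity \cref{eq:dist.lattice} — is deferred to the remainder of the section, where it will be attacked by realizing the relevant subspaces as kernels and images of theta operators $H_\tau(z)$ and exploiting the determinantal techniques from \cref{sect.methods,se.hol}.
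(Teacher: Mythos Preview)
Your proposal is correct and matches the paper's approach: the paper simply states that the proposition is what the second half of \cite[Thm.~1.6.3]{PP05} yields, given that $\Lat(V^{\otimes d})$ is modular. You have in fact supplied more detail than the paper does---in particular, your observation that the sublattices generated by $\{W_1,\ldots,W_{d-2}\}$ and $\{W_2,\ldots,W_{d-1}\}$ inside $\Lat(V^{\otimes d})$ are lattice-isomorphic images of $\Lat(d-1)$ via $X\mapsto X\otimes V$ and $X\mapsto V\otimes X$, and your remark that the Koszulity clause is really the outcome of the full induction rather than of the single step, are both correct clarifications that the paper leaves implicit.
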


We will prove that $\Lat(d)$ is distributive by induction on $d$.

\begin{lemma}
Fix $d \ge 3$. Assume $\Lat(2),\ldots,\Lat(d-1)$ are distributive.
If $\Sigma_i \cap I_{d-i-1}$ has classical dimension for all $i=0,\ldots,d-1$, then $\Lat(d)$ is distributive.
\end{lemma}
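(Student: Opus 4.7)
The plan is to apply \cref{th.kosz}: under the inductive assumption that $\Lat(2),\ldots,\Lat(d-1)$ are distributive, it suffices to verify that each triple
\[
T_{\ell}\;:=\;(\Sigma_{\ell-1},\,W_{\ell},\,I_{d-\ell-1}),\qquad 1\le\ell\le d-1,
\]
is distributive.  The boundary cases $\ell=1$ (where $\Sigma_{0}=0$) and $\ell=d-1$ (where $I_{0}=V^{\otimes d}$) are immediate, so attention focuses on $2\le\ell\le d-2$; in particular the statement is vacuous for $d=3$.

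Since $\Lat(V^{\otimes d})$ is modular, a triple $(x,y,z)$ is distributive iff $(x+y)\cap z=(x\cap z)+(y\cap z)$, and the inclusion $\supseteq$ is automatic; equality therefore reduces, via the identity
\[
\dim\bigl((x\cap z)+(y\cap z)\bigr)\;=\;\dim(x\cap z)+\dim(y\cap z)-\dim(x\cap y\cap z),
\]
to comparing the four dimensions $\dim((x+y)\cap z)$, $\dim(x\cap z)$, $\dim(y\cap z)$, $\dim(x\cap y\cap z)$.  Specialized to $T_{\ell}$, these subspaces are $\Sigma_{\ell}\cap I_{d-\ell-1}$, $\Sigma_{\ell-1}\cap I_{d-\ell-1}$, $W_{\ell}\cap I_{d-\ell-1}=I_{d-\ell}$, and $\Sigma_{\ell-1}\cap W_{\ell}\cap I_{d-\ell-1}=\Sigma_{\ell-1}\cap I_{d-\ell}$.

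I would then check that each has classical dimension.  The first and the fourth are classical by the standing hypothesis applied with $i=\ell$ and $i=\ell-1$ respectively.  The third, $I_{d-\ell}$, factors as $V^{\otimes(\ell-1)}\otimes\im F_{d-\ell+1}(\tau)$, which is classical by \cref{thm.dim.rel+.sp}.  The second, $\Sigma_{\ell-1}\cap I_{d-\ell-1}$, has ``disjoint-support'' structure: $\Sigma_{\ell-1}=\Sigma_{\ell-1}^{(L)}\otimes V^{\otimes(d-\ell)}$ and $I_{d-\ell-1}=V^{\otimes\ell}\otimes I_{d-\ell-1}^{(R)}$ live in complementary blocks of tensor positions, so their intersection is $\Sigma_{\ell-1}^{(L)}\otimes I_{d-\ell-1}^{(R)}$, with each factor classical by \cref{thm.dim.rel.sp} and \cref{thm.dim.rel+.sp} applied to smaller tensor powers.

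The argument closes in the classical (polynomial) setting: because $SV=\bC[x_{0},\ldots,x_{n-1}]$ is Koszul, the analogous lattice generated by $V^{\otimes(i-1)}\otimes\Alt^{2}V\otimes V^{\otimes(d-i-1)}$ in $V^{\otimes d}$ is distributive, and so the analogous four classical dimensions satisfy
\[
\dim^{\mathrm{cl}}\bigl((x+y)\cap z\bigr)\;=\;\dim^{\mathrm{cl}}(x\cap z)+\dim^{\mathrm{cl}}(y\cap z)-\dim^{\mathrm{cl}}(x\cap y\cap z).
\]
Since each of the four dimensions computed above matches its classical counterpart, this identity persists in our setting, which promotes the automatic inclusion $\supseteq$ to equality and yields the distributivity of $T_{\ell}$.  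The main obstacle is the bookkeeping: the hypothesis supplies exactly the two genuinely ``overlapping'' intersections $\Sigma_{\ell}\cap I_{d-\ell-1}$ and $\Sigma_{\ell-1}\cap I_{d-\ell}$ that appear in the inclusion-exclusion count, while the other two dimensions are free of charge, one from disjoint tensor supports and the other from the results on iterated relation spaces already proved in \cref{se.det,sec.hilb.series,subsec.dual}.
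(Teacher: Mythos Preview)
Your proof is correct and uses essentially the same ingredients as the paper's: two applications of the hypothesis (at $i=\ell$ and $i=\ell-1$), the ``disjoint tensor supports'' computation of $\Sigma_{\ell-1}\cap I_{d-\ell-1}$, the classical dimensions of $\Sigma$'s and $I$'s from \cref{thm.dim.rel.sp,thm.dim.rel+.sp}, and comparison with the polynomial ring. The only difference is cosmetic: you verify the identity $(x+y)\cap z=(x\cap z)+(y\cap z)$ directly via inclusion--exclusion on the meet side, whereas the paper verifies the dual identity $x+(y\cap z)=(x+y)\cap(x+z)$, rewriting it as $\Sigma_{\ell-1}+I_{r+1}=\Sigma_{\ell}\cap(\Sigma_{\ell-1}+I_r)$ and checking both sides have classical dimension. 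Since the two conditions are equivalent for each triple in a modular lattice, the arguments are interchangeable.
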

\begin{proof}
It suffices to show that $(\Sigma_{\ell-1},W_\ell,I_{d-\ell-1})$ is a distributive triple for all  integers $\ell$
in $[1,d-1]$.

Fix $\ell$ and write $r:=d-\ell-1$.

Since $\Sigma_{\ell-1}+W_\ell = \Sigma_\ell$ and $W_\ell \cap I_r = I_{r+1}$, the distributivity condition  
\begin{equation*}
  \Sigma_{\ell-1}+(W_\ell\cap I_r) \;=\; (\Sigma_{\ell-1}+ W_\ell) \cap (\Sigma_{\ell-1}+ I_r). 
\end{equation*}
is equivalent to the condition
\begin{equation}\label{eq:wlr}
  \Sigma_{\ell-1}+ I_{r+1} \;=\; \Sigma_\ell \cap (\Sigma_{\ell-1}+ I_r).
\end{equation}
The two terms on the right-hand side of \Cref{eq:wlr} have classical dimensions:
\begin{itemize}
\item[$\hdot$] $\Sigma_\ell$ does by \Cref{thm.dim.rel.sp};
\item[$\hdot$] $\Sigma_{\ell-1}+ I_r$ does by \cref{thm.dim.rel.sp} and 
the observation that $\Sigma_{\ell-1}=X\otimes V^{\otimes d-\ell}$ and $I_r=V^{\otimes \ell}\otimes Y$ where
  \begin{equation*}
    X=\Sigma_{\ell-1}\subseteq V^{\otimes \ell},\quad \text{ and } \quad Y=I_r\subseteq V^{\otimes (d-\ell)}
  \end{equation*}
whence
$\dim (\Sigma_{\ell-1}\cap I_r) = \dim(X \otimes Y) = \dim X \cdot \dim Y = \text{the classical dimension}$.
\end{itemize}

Since $\Sigma_\ell$ and $I_{r}$ have classical dimension, and $\Sigma_\ell \cap I_{r}$ has classical dimension by assumption,
$\Sigma_\ell + I_{r}$ also has classical dimension. But this is true for all $\ell$ so the left-hand side of  \Cref{eq:wlr} has classical dimension. So does the right-hand side because 
\begin{align*}
\dim(\Sigma_\ell \cap (\Sigma_{\ell-1}+ I_r)) & \;=\;  \dim\Sigma_\ell + \dim(\Sigma_{\ell-1}+ I_r) - \dim(\Sigma_\ell + (\Sigma_{\ell-1}+I_r))
\\
& \;=\;  \dim\Sigma_\ell + \dim(\Sigma_{\ell-1}+ I_r) - \dim(\Sigma_\ell + I_r).
\end{align*}
Thus the left- and right-hand sides of \Cref{eq:wlr} have classical dimensions. However,  
\begin{equation*}
  \Sigma_{\ell-1}+ I_{r+1}\;  \subseteq  \; \Sigma_{\ell} \cap (\Sigma_{\ell-1}+ I_r)
\end{equation*}
and this inclusion is an equality in the case of the polynomial ring so both the left- and right-hand sides have the same dimension for
the polynomial ring {\it and} in the present situation too because they have classical dimensions (by hypothesis).
Hence this inclusion is an equality in our case too; i.e., \cref{eq:wlr} holds, and the proof is complete. 
\end{proof}

Thus, to show that $Q_{n,k}(E,\tau)$ is a Koszul algebra it suffices to show that
$\Sigma_\ell\cap I_{r}$ (with $r=d-\ell-1$) has classical dimension
for all $d\geq 3$ and $0\leq\ell\leq d-1$. We will achieve this goal in  \Cref{pr.dist-right-dim}.

Let $r=d-\ell-1$. If $\ell \in \{0,1,d-1\}$, then $\Sigma_\ell\cap I_{r}$ has classical dimension so we can assume that $2 \le \ell \le d-2$ (i.e., $1 \le r \le d-3$), but for now we also allow the case $\ell=1$ (i.e., $r=d-2$) to show some necessary results for induction and exclude this case later.

To show that $\Sigma_\ell \cap I_r$ has classical dimension we first convert the problem into a 
question about the rank of the operator $F^L_{\ell+1}(-\tau)F^{R}_{r+1}(\tau): V^{\otimes d} \to V^{\otimes d}$.

\begin{lemma}
\label{eq:7}
Let $F_p(z)$ be the operator on $V^{\otimes p}$ defined in \cref{defn.F_d}. We have 
\begin{equation*}
\dim\!\left(\Sigma_\ell \cap I_r\right) \;=\; \dim\!\left( \ker F^L_{\ell+1}(-\tau)F^{R}_{r+1}(\tau)\right)
\, -\, n^{\ell}\left(n^{r+1}-\tbinom{n}{r+1}\right).
\end{equation*}
\end{lemma}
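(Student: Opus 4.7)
The plan is to reinterpret both $\Sigma_\ell$ and $I_r$ as, respectively, a kernel and an image of operators already studied in \cref{sec.hilb.series,subsec.dual}, and then finish with a one-line linear-algebra identity.

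First I would rewrite $\Sigma_\ell$ using \cref{thm.dim.rel.sp}. Splitting the tail of each summand off,
\begin{equation*}
	\Sigma_\ell \;=\; \left(\sum_{s+t+2=\ell+1} V^{\otimes s} \otimes \im R_\tau(\tau) \otimes V^{\otimes t}\right) \otimes V^{\otimes(d-\ell-1)},
\end{equation*}
and the inner sum is $\ker F_{\ell+1}(-\tau)$ by \cref{eq.dim.rel.sp.ker}; hence $\Sigma_\ell = \ker F^L_{\ell+1}(-\tau)$. Next, using $\im R_\tau(\tau) = \ker R_\tau(-\tau)$ (which holds because our hypothesis $\tau\in\bC-\bigcup_{m\geq 1}\frac{1}{m}\Lambda$ implies $\tau\notin\frac{1}{2n}\Lambda$, so \cref{prop.nullities}\cref{item.cor.r.rk.ex} applies), the same kind of reindexing yields
\begin{equation*}
	I_r \;=\; V^{\otimes \ell} \otimes \bigcap_{s+t+2=r+1} V^{\otimes s} \otimes \ker R_\tau(-\tau) \otimes V^{\otimes t},
\end{equation*}
and by \cref{eq.dim.rel+.sp.im} of \cref{thm.dim.rel+.sp} the inner intersection equals $\im F_{r+1}(\tau)$, so $I_r = \im F^R_{r+1}(\tau)$.

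At this point, setting $F:=F^L_{\ell+1}(-\tau)$ and $G:=F^R_{r+1}(\tau)$, we must show
\begin{equation*}
	\dim(\ker F \cap \im G) \;=\; \dim \ker(FG) - n^\ell\bigl(n^{r+1}-\tbinom{n}{r+1}\bigr).
\end{equation*}
Restricting $F$ to $\im G$ gives an exact sequence whose kernel is $\ker F\cap\im G$ and whose image is $F(\im G)=\im(FG)$; rank--nullity for $FG:V^{\otimes d}\to V^{\otimes d}$ then yields
\begin{equation*}
	\dim(\ker F \cap \im G) \;=\; \dim \im G - \dim \im(FG) \;=\; \dim \im G - n^d + \dim\ker(FG).
\end{equation*}
Since \cref{thm.dim.rel+.sp}\cref{thm.dim.rel+.sp.im} gives $\dim \im G = n^\ell\tbinom{n}{r+1}$, the claimed identity follows immediately.

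There is no serious obstacle here: the two identifications $\Sigma_\ell=\ker F^L_{\ell+1}(-\tau)$ and $I_r=\im F^R_{r+1}(\tau)$ are the only non-trivial inputs, and they are direct consequences of the theorems of the previous two sections. The minor bookkeeping is handling the endpoints $\ell=0$ and $\ell=d-1$, where one factor degenerates to the identity, but in those cases both sides of the claimed identity are immediately seen to be equal to $0$ and $n^\ell\tbinom{n}{r+1}$ respectively, matching the classical dimension of $\Sigma_\ell\cap I_r$.
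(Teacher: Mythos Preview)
Your proof is correct and follows the same approach as the paper: identify $\Sigma_\ell=\ker F^L_{\ell+1}(-\tau)$ and $I_r=\im F^R_{r+1}(\tau)$ via \cref{thm.dim.rel.sp,thm.dim.rel+.sp}, then finish with rank--nullity. The paper phrases the last step as $\dim(\ker F\cap\im G)=\dim\ker(FG)-\dim\ker G$ rather than going through $\dim\im G$, but the two computations are equivalent.

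One small slip in your closing remark: for $\ell=d-1$ (so $r=0$) both sides equal $\dim\ker F_d(-\tau)=n^d-\tbinom{n+d-1}{d}$, not $n^\ell\tbinom{n}{r+1}=n^d$. This does not affect the argument since the main proof already covers that case.
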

\begin{proof}
Since $\Sigma_\ell \cap I_r = \sum_{i=1}^{\ell}W_i \,  \cap\, \bigcap_{j=\ell+1}^{d-1} W_j$  (by definition),
\begin{align*}
\Sigma_\ell \cap I_r
&  \;=\;
\ker\!\big(F^L_{\ell+1}(-\tau): V^{\otimes d} \to V^{\otimes d}\big)  \, \cap \, \im\!\big(F^{R}_{r+1}(\tau):V^{\otimes d} \to V^{\otimes d}\big)
\end{align*}
by  \Cref{thm.dim.rel.sp,thm.dim.rel+.sp}. Therefore
\begin{equation*}
\dim \left(\Sigma_\ell \cap I_r\right) \;=\; \dim\!\left( \ker F^L_{\ell+1}(-\tau)F^{R}_{r+1}(\tau)\right)
\, -\, \dim\!\left(\ker F^{R}_{r+1}(\tau)\right).
\end{equation*}
But
$
\dim\!\left(\ker F^{R}_{r+1}(\tau)\right) =   n^{\ell}\left(n^{r+1}-\tbinom{n}{r+1}\right)
$
by \Cref{thm.dim.rel+.sp}, so the proof is complete.
\end{proof}

\subsubsection{Notation for the classical case} 
\label{sect.cl.notn}
For the symmetric algebra, $SV$, the analogue of $W_i$ is\index{Lambda_i,i+1@$\Lambda_{i,i+1}$}
\begin{equation*}
 \Lambda_{i,i+1} \; :=\;  V^{\otimes (i-1)} \otimes \Alt^2 V \otimes V^{\otimes (d-i-1)}
\end{equation*}
The  classical analogue of $\Sigma_\ell \cap I_r$ is therefore the space\index{W^l,r@$W^{\ell,r}$}
\begin{equation*}
W^{\ell+1,r+1}  \; :=  \;   \sum_{i=1}^{\ell}\Lambda_{i,i+1}\cap\bigcap_{i=\ell+1}^{d-1}\Lambda_{i,i+1}.
\end{equation*}

\subsection{The operators $T_{\ell,r}(z)$ and $H_\tau(z)$ on $V^{\otimes d}$. }\label{subse.cnt}


When $\ell\geq 2$, we will use the notation\index{T_ell,r(z)@$T_{\ell,r}(z)$}
\begin{equation*}
  T_{\ell,r}(z) \;  := \;  T_d(\tau^r,-(r+1)\tau,(-\tau)^{\ell -2},z).
\end{equation*}
When $\ell=1$, we only define
\begin{equation*}
  T_{1,r}(-\tau) \;  := \;  T_d(\tau^r,-(r+1)\tau).
\end{equation*}

\begin{lemma}
\label{lem.MTT}
With the above notation, 
\begin{equation}
\label{MTT}
T_{\ell,r}(z)  \;=\; M_{r+1,\ell}(-(r+1)\tau; \tau^r; (-\tau)^{\ell-2},z) \, \cdot \,  T^L_{\ell}((-\tau)^{\ell-2},z)  \, \cdot \, T^R_{r+1}(\tau^r) .
\end{equation}
 \end{lemma}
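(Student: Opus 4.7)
The plan is to derive \cref{MTT} as a direct application of \cref{le.TMT} combined with an elementary observation about commuting tensor factors.

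First I would set up the matching of arguments. With $d=\ell+r+1$, take $a=r+1$, $b=\ell$, $\sfx=(\tau,\ldots,\tau)$ ($r$ copies, so length $a-1=r$), middle entry $-(r+1)\tau$, and $\sfy=((-\tau)^{\ell-2},z)$ of length $b-1=\ell-1$. With these choices,
\begin{equation*}
T_{\ell,r}(z) \;=\; T_d(\tau^r,-(r+1)\tau,(-\tau)^{\ell-2},z) \;=\; T_d(\sfx,-(r+1)\tau,\sfy).
\end{equation*}
\cref{le.TMT} then gives
\begin{equation*}
T_{\ell,r}(z) \;=\; M_{r+1,\ell}\bigl(-(r+1)\tau;\sfx^{\mathrm{rev}};\sfy\bigr)\cdot T^{R}_{r+1}(\sfx)\cdot T^{L}_{\ell}(\sfy).
\end{equation*}
Since every entry of $\sfx$ equals $\tau$, we have $\sfx^{\mathrm{rev}}=\sfx=\tau^r$, so the $M$-factor is precisely the one in \cref{MTT}.

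It remains to commute the two $T$-factors. The operator $T^{L}_{\ell}((-\tau)^{\ell-2},z)$ acts only on the left-most $\ell$ tensorands of $V^{\otimes d}$, while $T^{R}_{r+1}(\tau^r)$ acts only on the right-most $r+1$ tensorands; since $\ell+(r+1)=d$, these two collections of tensorands are disjoint, so the operators commute. Swapping them yields exactly
\begin{equation*}
T_{\ell,r}(z)\;=\;M_{r+1,\ell}\bigl(-(r+1)\tau;\tau^r;(-\tau)^{\ell-2},z\bigr)\cdot T^{L}_{\ell}\bigl((-\tau)^{\ell-2},z\bigr)\cdot T^{R}_{r+1}(\tau^r),
\end{equation*}
as required.

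There is no serious obstacle here; the whole argument is essentially a relabeling plus the commutation of disjoint-support operators. The only minor thing to watch is the boundary case $\ell=2$, where $(-\tau)^{\ell-2}$ is the empty tuple and $T^{L}_{\ell}$ reduces to $T^{L}_{2}(z)=R(z)_{12}$, but the identity from \cref{le.TMT} applies uniformly and produces the correct expression. (When $\ell=1$ the statement of the lemma specializes further and is handled by the same application of \cref{le.TMT} with $b=1$, in which case $\sfy$ is empty and $T^L_1$ is the identity.)
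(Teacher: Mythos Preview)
Your proof is correct and takes exactly the same approach as the paper, which simply says ``This follows from \cref{le.TMT}.'' You have just spelled out the substitution and the commutation of the disjoint-support operators $T^{L}_{\ell}$ and $T^{R}_{r+1}$ that the paper leaves implicit.
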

\begin{proof}
This follows from \cref{le.TMT}.
\end{proof}

We now define $H_\tau(z)$.

\begin{proposition}
\label{prop.defn.H}
If $\ell\geq 2$, then the theta operator
\begin{equation}
  \label{eq:sw} 
 S^{\mathrm{rev}}_{d\to 1}(z, \, (-\tau)^{\ell -2}, \,  -(r+1)\tau, \, \tau^r)
\end{equation}
on $V^{\otimes d}$ restricts to a theta operator\index{H_tau(z)@$H_\tau(z)$}
\begin{equation*}
  H_\tau(z): V\otimes\im T_{\ell-1,r}(-\tau) \, \longrightarrow \, \im T_{\ell-1,r}(-\tau)\otimes V
\end{equation*}
and $\im H_\tau(z)=\im T_{\ell,r}(z)$.
\end{proposition}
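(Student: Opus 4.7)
The plan is to prove both assertions at once by exhibiting two different factorizations of $T_{\ell,r}(z)$ obtained from \cref{le.f-id}, applied to the tuple $(z_1,\ldots,z_{d-1})=(\tau^r,-(r+1)\tau,(-\tau)^{\ell-2},z)$.

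First, I would use the fourth identity in \cref{le.f-id}, namely
\begin{equation*}
T_d(z_1,\ldots,z_{d-1}) \;=\; S^{\mathrm{rev}}_{d\to 1}(z_{d-1},\ldots,z_1)\,T^R_{d-1}(z_1,\ldots,z_{d-2}).
\end{equation*}
Since the unused right-most arguments $(\tau^r,-(r+1)\tau,(-\tau)^{\ell-2})$ are precisely the arguments defining $T_{\ell-1,r}(-\tau)$ on $V^{\otimes(d-1)}$ (for $\ell-1\geq 2$ this follows from the definition of $T_{\ell-1,r}$; the case $\ell=2$ reduces to the special case $T_{1,r}(-\tau)=T_d(\tau^r,-(r+1)\tau)$), the factor $T^R_{d-1}(\tau^r,-(r+1)\tau,(-\tau)^{\ell-2})$ equals $I\otimes T_{\ell-1,r}(-\tau)$ on $V^{\otimes d}$. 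Consequently
\begin{equation*}
T_{\ell,r}(z) \;=\; S^{\mathrm{rev}}_{d\to 1}(z,(-\tau)^{\ell-2},-(r+1)\tau,\tau^r)\,\cdot\,\big(I\otimes T_{\ell-1,r}(-\tau)\big),
\end{equation*}
so the image of the operator \cref{eq:sw} applied to $V\otimes \im T_{\ell-1,r}(-\tau)$ equals $\im T_{\ell,r}(z)$.

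Next, I would apply the \emph{first} identity in \cref{le.f-id}, namely $T_d=T^L_{d-1}(z_1,\ldots,z_{d-2})\,S_{d\to 1}(z_1,\ldots,z_{d-1})$, to the same tuple. The resulting left-most factor is $T^L_{d-1}(\tau^r,-(r+1)\tau,(-\tau)^{\ell-2})=T^L_{\ell-1,r}(-\tau)$, giving
\begin{equation*}
T_{\ell,r}(z) \;=\; T^L_{\ell-1,r}(-\tau)\,\cdot\,S_{d\to 1}(\tau^r,-(r+1)\tau,(-\tau)^{\ell-2},z).
\end{equation*}
This forces $\im T_{\ell,r}(z)\subseteq \im T^L_{\ell-1,r}(-\tau)=\im T_{\ell-1,r}(-\tau)\otimes V$. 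Combined with the previous paragraph, this shows that \cref{eq:sw} actually sends $V\otimes\im T_{\ell-1,r}(-\tau)$ into $\im T_{\ell-1,r}(-\tau)\otimes V$, and that the image of this restriction is exactly $\im T_{\ell,r}(z)$. Well-definedness and the identity $\im H_\tau(z)=\im T_{\ell,r}(z)$ are then both delivered.

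Finally, I would observe that the ambient operator \cref{eq:sw} is a product of factors $R(z+c)_{p,p+1}$ (where each $c$ is a fixed integer multiple of $\tau$) together with factors of the form $R(c')_{p,p+1}$ that do not involve $z$. Since $R(w)$ is a theta operator of order $n^2$ with respect to $\Lambda$ (as noted after \cref{def.thetaop}), each $R(z+c)$ is a theta operator in $z$, a product of theta operators is a theta operator, and the restriction to fixed source and target subspaces inherits the same quasi-periodicity on matrix entries. No step is a substantive obstacle; the only thing to be careful about is bookkeeping of the tuple $(\tau^r,-(r+1)\tau,(-\tau)^{\ell-2},z)$ so that the indices $d$, $d-1$, $\ell$, $\ell-1$ and $r$ align correctly with the arguments of the $T_{\star,\star}$ operators, especially in the boundary case $\ell=2$.
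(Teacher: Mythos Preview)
Your proposal is correct and follows essentially the same route as the paper: both apply the first and fourth identities of \cref{le.f-id} to the tuple $(\tau^r,-(r+1)\tau,(-\tau)^{\ell-2},z)$ to obtain the two factorizations $T_{\ell,r}(z)=S^{\mathrm{rev}}_{d\to 1}(\cdots)\cdot T^R_{\ell-1,r}(-\tau)=T^L_{\ell-1,r}(-\tau)\cdot S_{d\to 1}(\cdots)$, from which the well-definedness of $H_\tau(z)$ and the equality $\im H_\tau(z)=\im T_{\ell,r}(z)$ follow. One small correction: in $S^{\mathrm{rev}}_{d\to 1}(z,(-\tau)^{\ell-2},-(r+1)\tau,\tau^r)$ \emph{every} factor has the form $R(z+c)_{p,p+1}$ (since $t_{d-1}=z$ appears in every partial sum $\Sigma_q^{d-1}$), so there are no $z$-independent factors---this only simplifies your theta-operator observation.
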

\begin{proof}
By \Cref{le.f-id}, 
\begin{align*}
T_{\ell,r}(z)    &  \;=\; 
S^{\mathrm{rev}}_{d\to 1}(z, \, (-\tau)^{\ell-2},\,  -(r+1)\tau, \, \tau^r) \, \cdot \,
  T^R_{d-1}(\tau^r, \,  -(r+1)\tau, (-\tau)^{\ell-2}) \numberthis \label{eq:tstts}
  \\
& \; = \; T^L_{d-1}( \tau^r, \, -(r+1)\tau, \, (-\tau)^{\ell-2}) \, \cdot \, S_{d\to 1}(\tau^r, \,  -(r+1)\tau, \, (-\tau)^{\ell-2}, \, z).
\end{align*}
In other words,
\begin{align*}
T_{\ell,r}(z)
\numberthis \label{eq:tt-tw}
  &  \;=\; S^{\mathrm{rev}}_{d\to 1}(z, \, (-\tau)^{\ell-2},\,  -(r+1)\tau, \, \tau^r) \, \cdot \, T^R_{\ell-1,r}(-\tau) 
  \\
& \; = \; T^L_{\ell-1,r}(-\tau) \, \cdot \, S_{d\to 1}(\tau^r, \,  -(r+1)\tau, \, (-\tau)^{\ell-2}, \, z)
\end{align*}
 The results follow because $\im T^R_{\ell-1,r}(-\tau)  = V\otimes\im T_{\ell-1,r}(-\tau)$ and
 $\im T^L_{\ell-1,r}(-\tau)=\im T_{\ell-1,r}(-\tau)\otimes V$.
\end{proof}

\begin{lemma}
\label{lem.new.goal}
Let $1\leq\ell\leq d-2$. Then $\Sigma_\ell \cap I_r$ has classical dimension if and only if
\begin{equation}\label{eq:alt-goalt}
 \dim\!\big(\!\im T_{\ell,r}(-\tau)\big) \;=\;  n^{\ell}\tbinom{n}{r+1} \, -\, \dim W^{\ell+1,r+1}.
\end{equation}
\end{lemma}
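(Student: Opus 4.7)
The plan is to prove the stated equivalence in three stages: a rank-nullity reformulation of \cref{eq:7}, a reduction to an unconditional image identity, and a QYBE verification of that identity.

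First I would apply rank-nullity to \cref{eq:7}: since
\begin{equation*}
\dim\ker(F^L_{\ell+1}(-\tau)F^R_{r+1}(\tau)) \;=\; n^{d} - \dim\im(F^L_{\ell+1}(-\tau)F^R_{r+1}(\tau))
\end{equation*}
and $n^{d}=n^{\ell}\cdot n^{r+1}$, \cref{eq:7} rearranges to
\begin{equation*}
\dim(\Sigma_\ell\cap I_r) \;=\; n^{\ell}\tbinom{n}{r+1} \,-\, \dim\im(F^L_{\ell+1}(-\tau)F^R_{r+1}(\tau)).
\end{equation*}
Consequently, the classical-dimension condition $\dim(\Sigma_\ell\cap I_r)=\dim W^{\ell+1,r+1}$ is equivalent to $\dim\im(F^L_{\ell+1}(-\tau)F^R_{r+1}(\tau))=n^{\ell}\tbinom{n}{r+1}-\dim W^{\ell+1,r+1}$.

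Next, I would reduce the stated equivalence to the unconditional image identity $\im T_{\ell,r}(-\tau) = \im(F^L_{\ell+1}(-\tau)F^R_{r+1}(\tau))$ in $V^{\otimes d}$. On the right, $\im F^R_{r+1}(\tau)=I_r$, so the image equals $F^L_{\ell+1}(-\tau)(I_r)$; on the left, \cref{lem.MTT} at $z=-\tau$ gives
\begin{equation*}
T_{\ell,r}(-\tau) \;=\; M_{r+1,\ell}\bigl(-(r+1)\tau;\tau^r;(-\tau)^{\ell-1}\bigr)\cdot F^L_\ell(-\tau)\cdot F^R_{r+1}(\tau).
\end{equation*}

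The hard part will be the QYBE verification of the image identity. Using the factorization $F^L_{\ell+1}(-\tau)=F^L_\ell(-\tau)\cdot S^L_{\ell+1\to 1}((-\tau)^\ell)$ from \cref{le.f-id}, together with the standing hypothesis $\tau\notin\bigcup_{m\ge 1}\frac{1}{m}\Lambda$, which via \cref{cor.R.isom} renders all $R(j\tau)$ with $j\ne 0,\pm 1$ that appear invertible, I would commute the factors of $S^L_{\ell+1\to 1}((-\tau)^\ell)$ not touching position $\ell+1$ past $F^R_{r+1}(\tau)$ and discard them by invertibility. What remains is to match $F^L_\ell(-\tau)R(-\ell\tau)_{\ell,\ell+1}F^R_{r+1}(\tau)$ with $M_{r+1,\ell}(-(r+1)\tau;\tau^r;(-\tau)^{\ell-1})\cdot F^L_\ell(-\tau)\cdot F^R_{r+1}(\tau)$ at the level of images; this is accomplished by iterated applications of \cref{QYBE2} sweeping $R(-\ell\tau)_{\ell,\ell+1}$ rightward through the layered structure of $F^R_{r+1}(\tau)$, each braiding step peeling off one row of the $M_{r+1,\ell}$-array on the left while producing invertible residual factors on the right that can be discarded.
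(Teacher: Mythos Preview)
Your stages 1 and 2 are fine and match the paper: yes, \cref{eq:7} plus rank--nullity gives $\dim(\Sigma_\ell\cap I_r) = n^\ell\tbinom{n}{r+1} - \rank(F^L_{\ell+1}(-\tau)F^R_{r+1}(\tau))$, so the lemma reduces to showing $\rank T_{\ell,r}(-\tau) = \rank(F^L_{\ell+1}(-\tau)F^R_{r+1}(\tau))$. (You do not actually need the images to coincide as subspaces, and the paper does not claim they do.)

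The gap is in stage 3. When you commute the factors of $S_{\ell+1\to 1}((-\tau)^\ell) = R(-\ell\tau)_{\ell,\ell+1}\cdots R(-\tau)_{1,2}$ that do not touch position $\ell+1$ past $F^R_{r+1}(\tau)$ and then try to ``discard them by invertibility'', the rightmost such factor is $R(-\tau)_{1,2}$, and this is \emph{not} invertible: by \cref{prop.nullities} it has nullity $\tbinom{n}{2}$. So you cannot drop it without further argument, and the reduction to matching $F^L_\ell(-\tau)R(-\ell\tau)_{\ell,\ell+1}F^R_{r+1}(\tau)$ with $M_{r+1,\ell}(\cdots)F^L_\ell(-\tau)F^R_{r+1}(\tau)$ at the level of images does not go through as stated. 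The subsequent ``QYBE sweep'' is also too vague to assess.

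The paper avoids this problem by working from the other side, and no QYBE is needed at all. Starting from \cref{MTT} at $z=-\tau$, one factors $M_{r+1,\ell}(-(r+1)\tau;\tau^r;(-\tau)^{\ell-1})$ via \cref{eq.M.incr} as $A\cdot S^{\rm rev}_{1\to\ell+1}((-\tau)^\ell)$; then the identity $S^{\rm rev}_{1\to\ell+1}((-\tau)^\ell)\,T^L_\ell((-\tau)^{\ell-1}) = T^L_{\ell+1}((-\tau)^\ell) = F^L_{\ell+1}(-\tau)$ from \cref{le.f-id} yields the \emph{operator} equality
\[
T_{\ell,r}(-\tau) \;=\; A\cdot F^L_{\ell+1}(-\tau)\,F^R_{r+1}(\tau).
\]
The remaining factor $A$ is a product of $S^{\rm rev}$'s whose $R$-factors are all of the form $R(-m\tau)$ with $2\le m\le d-1$, hence invertible under the standing hypothesis $\tau\notin\bigcup_{m\ge 1}\frac{1}{m}\Lambda$. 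This gives equality of ranks immediately.
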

\begin{proof}
Setting $z=-\tau$ and  using \cref{eq.M.incr} to factor the $M_{r+1,\ell}$ term in \cref{MTT} as 
$A\cdot S^{\mathrm{rev}}_{1\to\ell+1}((-\tau)^\ell)$, we have
\begin{align*}
T_{\ell,r}(-\tau) &  \;=\; A \cdot S^{\mathrm{rev}}_{1\to\ell+1}((-\tau)^\ell) T^L_{\ell}((-\tau)^{\ell-1})T^{R}_{r+1}(\tau^r)
\\
&  \;=\; A \cdot T^L_{\ell+1}((-\tau)^\ell)T^{R}_{r+1}(\tau^r) \qquad \qquad\text{by \cref{le.f-id}}
\\
&  \;=\; A \cdot F^L_{\ell+1}(-\tau)F^{R}_{r+1}(\tau).
\end{align*}
Since the $R$'s appearing in $A$ belong to $\{R(-2\tau),\ldots,R(-(d-1)\tau)\}$, the assumption $\tau\in\bC-\bigcup_{m\geq 1}\frac{1}{m}\Lambda$
implies that $A$ is an isomorphism. Hence, by \cref{eq:7},
\begin{align*}
\dim\!\left(\Sigma_\ell \cap I_r\right)
&\;=\;\dim\!\left( \ker F^L_{\ell+1}(-\tau)F^{R}_{r+1}(\tau)\right)
\, -\, n^{\ell}\left(n^{r+1}-\tbinom{n}{r+1}\right)\\
&\;=\;\dim V^{\otimes d}-\dim(\im T_{\ell,r}(-\tau))\, -\, n^{\ell}\left(n^{r+1}-\tbinom{n}{r+1}\right)\\
&\;=\;n^{\ell}\tbinom{n}{r+1}\,-\,\dim(\im T_{\ell,r}(-\tau)).
\end{align*}
Since $W^{\ell+1,r+1}$ is the classical analogue of $\Sigma_\ell \cap I_r$, the result follows.
\end{proof}

\subsubsection{The induction hypothesis}
We will prove that \cref{eq:alt-goalt} holds by induction on $d$. 
Thus, we assume \cref{eq:alt-goalt} is true for $d-1$ or fewer tensorands. If $\ell=1$, then \cref{eq:alt-goalt} follows from \cref{lem.new.goal} since $\Sigma_1 \cap I_r=I_{r+1}$ has classical dimension. So we also assume $2 \le \ell \le d-2$, i.e., $1 \le r \le d-3$.
The induction hypothesis implies that
\begin{equation}
\label{eq:dim.domain.H}
\dim \!\big(\text{the domain of $H_\tau(z)$}\big) \;=\; n^\ell \tbinom{n}{r+1} \,-\, n\dim W^{\ell,r+1}.
\end{equation}

The function $\det H_\tau(z)$ in the next result is only defined up to a non-zero scalar multiple (see \cref{rmk.det}).

\begin{proposition}\label{le.hdet}
The  function $\det H_\tau(z)$ is a  theta function with respect to  $\frac{1}{n}\Lambda$ having
  \begin{equation}\label{eq:hdet}
   (d-1)\dim\!\big(V \otimes \im T_{\ell-1,r}(-\tau)\big)  \; = \;  (d-1)\! \left(n^{\ell}\tbinom{n}{r+1}\,-\, n\dim W^{\ell,r+1}\right)
  \end{equation} 
  zeros in each fundamental parallelogram for $\frac{1}{n}\Lambda$, all of which belong to 
  \begin{equation}
  \label{eq:possible.zeros}
  \{-\tau,0,\tau,\ldots, (d-1)\tau\} + \tfrac{1}{n}\Lambda.
  \end{equation}
\end{proposition}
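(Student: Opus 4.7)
The plan is to prove the two assertions separately: (i) $\det H_\tau(z)$ is a theta function with respect to $\frac{1}{n}\Lambda$ of order $(d-1)\dim(V\otimes \im T_{\ell-1,r}(-\tau))$, and (ii) its zeros lie in $\{-\tau, 0, \tau, \ldots, (d-1)\tau\} + \frac{1}{n}\Lambda$.

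For (i), the operator $S^{\mathrm{rev}}_{d\to 1}(z, (-\tau)^{\ell-2}, -(r+1)\tau, \tau^r)$ is the product of the $d-1$ factors $R(z - m\tau)_{q,q+1}$ with $m\in\{0,1,\ldots,d-2\}$, so it is a theta operator of order $(d-1)n^2$ with respect to $\Lambda$. \Cref{prop.hol.det.zeros} applied to its restriction then tells us that $\det H_\tau(z)$ is a theta function of order $(d-1)n^2 \cdot \dim(V \otimes \im T_{\ell-1,r}(-\tau))$ with respect to $\Lambda$. To upgrade this to $\frac{1}{n}\Lambda$-quasi-periodicity, I will imitate the telescoping argument of \Cref{lem.G.nullity}: by \Cref{cor.R.transl.props}, for any $\zeta = \frac{a}{n} + \frac{b}{n}\eta \in \frac{1}{n}\Lambda$ and any $w \in \bC$, one has $R(w + \zeta) = f(w,\zeta,\tau)(I \otimes C)^{-1} R(w) (C \otimes I)$ with $C := T^b S^{ka}$ invertible and $f$ nowhere-vanishing. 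Applying this identity to each of the $d-1$ factors in $H_\tau(z+\zeta)$ and cancelling the interior pairs $C_q \cdot C_q^{-1}$ that arise where consecutive $R$-factors meet, the only surviving conjugating operators are $C_d^{-1}$ on the extreme right and $C_1$ on the extreme left. Since $C_1$ preserves $V \otimes \im T_{\ell-1,r}(-\tau)$ and $C_d^{-1}$ preserves $\im T_{\ell-1,r}(-\tau) \otimes V$, passing to the restriction and taking determinants yields a relation of the form $\det H_\tau(z+\zeta) = e(\alpha z + \beta)\det H_\tau(z)$ for constants $\alpha,\beta$ depending on $\zeta$ and $\tau$. Hence $\det H_\tau(z)$ is a theta function with respect to $\frac{1}{n}\Lambda$, and since its order with respect to $\Lambda$ is $(d-1) n^2 \dim(V \otimes \im T_{\ell-1,r}(-\tau))$, its number of zeros in a fundamental parallelogram of $\frac{1}{n}\Lambda$ equals this quantity divided by $n^2$, which is exactly \cref{eq:hdet}.

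For (ii), suppose $z \notin \{-\tau, 0, \tau, \ldots, (d-1)\tau\} + \frac{1}{n}\Lambda$. Then for each $m \in \{0, 1, \ldots, d-2\}$, the point $z - m\tau$ does not lie in $\pm \tau + \frac{1}{n}\Lambda$, so by \Cref{cor.R.isom} each factor $R(z-m\tau)$ is an isomorphism of $V^{\otimes 2}$. Consequently every $R(z-m\tau)_{q,q+1}$ is an isomorphism of $V^{\otimes d}$ and the product $S^{\mathrm{rev}}_{d\to 1}(z, (-\tau)^{\ell-2}, -(r+1)\tau, \tau^r)$ is an isomorphism of $V^{\otimes d}$. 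Its restriction $H_\tau(z)$, which by \Cref{prop.defn.H} maps $V \otimes \im T_{\ell-1,r}(-\tau)$ into the equal-dimensional subspace $\im T_{\ell-1,r}(-\tau) \otimes V$, is therefore injective and hence an isomorphism, so $\det H_\tau(z) \neq 0$.

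The main technical obstacle will be the telescoping bookkeeping in step (i) --- verifying that after applying \Cref{cor.R.transl.props} to each of the $d-1$ factors the internal conjugating operators cancel pairwise in the correct order, and that the resulting product of scalar factors $f(z - m\tau, \zeta, \tau)$, once raised to the power $\dim(V \otimes \im T_{\ell-1,r}(-\tau))$, combines into a single exponential of the form $e(\alpha z + \beta)$ with the correct coefficient. The proof of \Cref{lem.G.nullity} carries out precisely this sort of manipulation for an operator of the same general shape and provides the natural template.
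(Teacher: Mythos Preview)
Your proof is correct and follows essentially the same approach as the paper. The paper's own proof is somewhat terser: it asserts the $\frac{1}{n}\Lambda$-quasi-periodicity of $\det H_\tau(z)$ by the sentence ``since $\det R(z)$ is a theta function with respect to $\frac{1}{n}\Lambda$, so is $\det H_\tau(z)$,'' whereas you spell out the telescoping argument explicitly---and indeed the paper carries out precisely this telescoping computation later, in the proof of \Cref{lem.H.nullity}, so your version is arguably more self-contained at this point in the exposition. One small omission: you do not explicitly invoke the induction hypothesis to justify the equality $\dim(V\otimes\im T_{\ell-1,r}(-\tau)) = n^\ell\tbinom{n}{r+1} - n\dim W^{\ell,r+1}$ in \cref{eq:hdet}; the paper notes this follows from \cref{eq:alt-goalt} applied with $\ell-1$ in place of $\ell$.
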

\begin{proof}
Since $R(z)$ is a theta operator of order $n^2$ with respect to $\Lambda$,  $S^{\rm rev}_{1 \to d}(z,\ldots)$ in \cref{eq:sw} is a 
theta operator of order $(d-1)n^2$ with respect to $\Lambda$.   
Since $H_\tau(z)$ is the restriction of $S^{\rm rev}_{1 \to d}(z,\ldots)$ to $ \im T^R_{\ell-1,r}(-\tau)$,  \Cref{prop.hol.det.zeros} tells us that
$\det H_\tau(z)$ has 
\begin{align*}
  (d-1)n^2\dim \!\big(\! \im T^R_{\ell-1,r}(-\tau)\big)
\end{align*}
zeros in every fundamental parallelogram for $\Lambda$. However, since $\det R(z)$ is a theta function with respect to $\frac{1}{n}\Lambda$, so is $\det H_\tau(z)$. Hence $\det H_\tau(z)$ has $(d-1) \,  \dim \!\big(\! \im T^R_{\ell-1,r}(-\tau)\big)$
zeros in every fundamental parallelogram for $\frac{1}{n}\Lambda$.

By the induction hypothesis, \cref{eq:alt-goalt} tells us that  
$\dim(\im T_{\ell-1,r}(-\tau)) =  n^{\ell-1}\tbinom{n}{r+1} \, -\, \dim W^{\ell,r+1}$. The equality in \cref{eq:hdet} 
now follows once we observe that $\det H_\tau(z)$ is not identically zero:
it isn't because the factors of $S^{\rm rev}_{1 \to d}(z,\ldots)$ are invertible for all but finitely many $z$'s.  

Since $H_\tau(z)$ is the restriction of a product of terms of the form $R(z-m\tau)_{i,i+1}$ for various $i$'s and
 $m=0,\ldots, d-2$, the zeros of $\det H_\tau(z)$ belong to 
\begin{equation*}
\{z \; | \; \det R(z-m\tau)=0 \text{ for some } m \in [0,d-2] \}.
\end{equation*}
But $\det R(z)=0$ if and only if $z \in \pm \tau+\frac{1}{n}\Lambda$, so this set is $\{-\tau,0,\tau,\ldots, (d-1)\tau\} + \tfrac{1}{n}\Lambda$.
\end{proof}
 
We now examine $\mult_p(\det H_\tau(z))$ for the $p$'s in \cref{eq:possible.zeros}. 
In truth, we will only examine $\mult_p(\det H_\tau(z))$ when $p \in  \{-\tau,0,\tau,\ldots, (d-1)\tau\}$ and then apply
\cref{lem.H.nullity}. 

As  the next result shows, $H_\tau(p)=0$  for some of these $p$'s.

\begin{lemma}\label{le.hzero}
If $m \in \bZ \cap [1,d-3]$,  then $\dim(\ker H_\tau(m\tau)) \ge  n^\ell \tbinom{n}{r+1} \,-\, n\dim W^{\ell,r+1}$.
\end{lemma}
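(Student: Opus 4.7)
My plan is to prove the stronger statement that $H_\tau(m\tau)$ vanishes identically on its domain $V\otimes\im T_{\ell-1,r}(-\tau)$. Combined with \cref{eq:dim.domain.H}, this yields the claimed nullity bound (in fact, with equality). The idea, for each $m\in[1,d-3]$, is to identify a single factor of the form $R(\pm\tau)_{i,i+1}$ inside the explicit product expression for $H_\tau(m\tau)$, commute it all the way to the right past factors whose supports are disjoint, and then annihilate it against the domain by combining \cref{rmk.R(0)}\cref{item.prop.r.pm} with a containment of the domain coming from \cref{le.shfl.gen}.

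Concretely, by \cref{eq:sw}, $H_\tau(m\tau)$ is the restriction of
\[
R(p_{d-1})_{d-1,d}\,R(p_{d-2})_{d-2,d-1}\cdots R(p_1)_{1,2}
\]
to $V\otimes\im T_{\ell-1,r}(-\tau)$, where the partial sums are $p_j=(m-d+1+j)\tau$ for $j\in[r+2,d-1]$ and $p_j=(m-\ell+2-j)\tau$ for $j\in[1,r+1]$. Using $r+\ell+1=d$, a routine computation shows that there is a unique index $j_0$ with $p_{j_0}=0$: either $j_0=d-m-1$ with $p_{j_0+1}=\tau$ when $m\in[1,\ell-2]$, or $j_0=m-\ell+2$ with $p_{j_0+1}=-\tau$ when $m\in[\ell-1,d-3]$.

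For $m\in[1,\ell-2]$, set $i:=d-m\in[r+3,d-1]$. The factor $R(\tau)_{i,i+1}$ sits immediately to the left of the identity factor $R(0)_{i-1,i}$, while every subsequent factor in the product acts at positions $(j,j+1)$ with $j\le i-2$ and hence on tensorands disjoint from $\{i,i+1\}$; thus $R(\tau)_{i,i+1}$ commutes past all of them, and we may rewrite $H_\tau(m\tau)=\widetilde{Q}\cdot R(\tau)_{i,i+1}$ as operators on $V^{\otimes d}$. Applying \cref{le.shfl.gen} to $T_{\ell-1,r}(-\tau)=T_{d-1}(\tau^r,-(r+1)\tau,(-\tau)^{\ell-2})$ at position $i-1\in[r+2,d-2]$, whose argument is $-\tau$, yields $\im T_{\ell-1,r}(-\tau)\subseteq \im R(-\tau)_{i-1,i}$ in $V^{\otimes(d-1)}$; tensoring a $V$ on the left and shifting indices by one gives $V\otimes\im T_{\ell-1,r}(-\tau)\subseteq \im R(-\tau)_{i,i+1}$ in $V^{\otimes d}$. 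Since $\im R(-\tau)_{i,i+1}=\ker R(\tau)_{i,i+1}$ by \cref{prop.nullities}\cref{item.cor.r.rk.ex}, the composition vanishes on the domain.

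The remaining range $m\in[\ell-1,d-3]$ is handled by the mirror-image argument. Set $i:=m-\ell+3\in[2,r+1]$; then $R(-\tau)_{i,i+1}$ sits immediately to the left of the identity at position $i-1$ and commutes past all factors further to the right, yielding $H_\tau(m\tau)=\widetilde{Q}\cdot R(-\tau)_{i,i+1}$. Applying \cref{le.shfl.gen} at position $i-1\in[1,r]$, whose argument is $\tau$, and shifting tensorands by one produces $V\otimes\im T_{\ell-1,r}(-\tau)\subseteq \im R(\tau)_{i,i+1}=\ker R(-\tau)_{i,i+1}$, killing the surviving factor. Since $[1,\ell-2]\cup[\ell-1,d-3]=[1,d-3]$, the two cases together exhaust all $m$ in the required range. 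The main point requiring care---and what I expect to be the fussiest step in the writeup---is the bookkeeping: verifying that the distinguished $R(\pm\tau)$-factor is in both cases adjacent to an identity factor and that the relevant index $i$ actually lands in $[r+3,d-1]$ or $[2,r+1]$ as required. All of this follows from $r+\ell+1=d$ together with the elementary constraints $1\le m\le d-3$.
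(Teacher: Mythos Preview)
Your proof is correct and takes essentially the same approach as the paper: both show $H_\tau(m\tau)=0$ identically by splitting into the two ranges $m\in[1,\ell-2]$ and $m\in[\ell-1,d-3]$, locating in each case an $R(\pm\tau)$ factor adjacent to an $R(0)=I\otimes I$ factor, commuting it past the remaining (disjointly-supported) terms, and annihilating via $R(\tau)R(-\tau)=0$. The only packaging difference is that the paper passes through $\im H_\tau(m\tau)=\im T_{\ell,r}(m\tau)$ and uses the factorization $T_{\ell,r}=M_{r+1,\ell}\cdot T^L_\ell\cdot T^R_{r+1}$ from \Cref{lem.MTT} to find the vanishing factor inside $T^L_\ell$ or $M_{r+1,\ell}$, whereas you work directly with the $S^{\mathrm{rev}}_{d\to 1}$ expression for $H_\tau$ and invoke \Cref{le.shfl.gen} on $T_{\ell-1,r}(-\tau)$ to exhibit the domain as lying in the kernel of the surviving $R(\pm\tau)_{i,i+1}$.
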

\begin{proof}
By \cref{eq:dim.domain.H}, that dimension is $n^\ell \tbinom{n}{r+1} \,-\, n\dim W^{\ell,r+1}$. 
We will therefore show that $H_\tau(m\tau)=0$.

Since $\im H_\tau(m\tau)=\im T_{\ell,r}(m\tau)$ by \cref{prop.defn.H},
$H_\tau(m\tau)=0$ if $T_{\ell,r}(m\tau)=0$. Thus we will prove the lemma by showing that $T_{\ell,r}(m\tau)=0$
for the $m$'s in $[1,d-3]$. We split the proof into two parts.

(1)  Assume  $1\le m\le \ell-2$. (This case is vacuous if $\ell=2$ so we assume $\ell \ge 3$.)
By \cref{lem.MTT}, $T^L_{\ell}((-\tau)^{\ell-2},m\tau)$ is a factor of $T_{\ell,r}(m\tau)$ so it suffices to 
show that $T_{\ell}((-\tau)^{\ell-2},m\tau)=0$. By \Cref{le.f-id},
  \begin{equation*}
    T_{\ell}((-\tau)^{\ell-2},m\tau) \; =\;  S^{\mathrm{rev}}_{\ell\to 1}(m\tau, (-\tau)^{\ell-2}) \cdot     T^R_{\ell-1}((-\tau)^{\ell-2}).
 \end{equation*}
But
 \begin{equation*}
   S^{\mathrm{rev}}_{\ell\to 1}(m\tau,(-\tau)^{\ell-2}) \;=\; R(m\tau)_{\ell-1,\ell} R((m-1)\tau)_{\ell-2,\ell-1}\cdots R((m-\ell+2)\tau)_{12}
 \end{equation*}
 and this has two consecutive factors   of the form
 \begin{equation*}
   R(\tau)_{j,j+1} \, R(0)_{j-1,j}
 \end{equation*}
 for some $j \ge 2$. The $R(\tau)_{j,j+1}$ commutes past the $R(0)=I \otimes I$ term, and also commutes past the other factors to 
 its right, to ultimately annihilate the term $T^R_{\ell-1}((-\tau)^{\ell-2}) = R(-\tau)_{j,j+1} Q'$ (where the latter equality uses \Cref{le.shfl}\cref{item.shfl.Q.m}).

(2) Assume $\ell-1\le m\le d-3$. By \cref{eq.M.decr}, $M_{r+1,\ell}(-(r+1)\tau; \tau^r; (-\tau)^{\ell-2},m\tau)$ is right-divisible by
\begin{equation*}
S^{\rm rev}_{d \to \ell}((m-d+2)\tau,\tau^r) \;=\; R((m-d+2)\tau)_{d-1,d} R((m-d+3)\tau)_{d-2,d-1} \ldots R((m-\ell+1)\tau)_{\ell,\ell+1},
\end{equation*}
and $m-d+2 \le -1 \le 0 \le m-\ell+1$. Hence it has two consecutive factors of the form $R(-\tau)_{j,j+1}R(0)_{j-1,j}$ for some $j\ge \ell+1$. Since $R(0)=I \otimes I$, $R(-\tau)_{j,j+1}$ commutes with all factors to the right of it in $S^{\rm rev}_{d \to \ell}((m-d+2)\tau,\tau^r)$. After moving it all the way to the right in the expression \cref{MTT}, we conclude that
 $T_{\ell,r}(m\tau)$ is right-divisible by $R(-\tau)_{j,j+1}T^R_{r+1}(\tau^r)$. However, by \cref{le.shfl}(2), $T^R_{r+1}(\tau^r)=R(\tau)_{j,j+1}Q$ 
 for some $Q$ so $R(-\tau)_{j,j+1}T^R_{r+1}(\tau^r)=0$. 
\end{proof}

\begin{lemma}\label{le.null.mult.two}
If $T_{\ell-1,r}^{R}(-\tau)$ denotes $I \otimes T_{\ell-1,r}(-\tau)$  acting on $V^{\otimes d}$, then
\begin{equation*}
\rank T_{\ell-1,r}^{R}(-\tau)S_{1\to\ell}((-\tau)^{\ell-1}) \; =\; \tbinom{n+\ell-1}{\ell}\tbinom{n}{r+1}.
\end{equation*}
\end{lemma}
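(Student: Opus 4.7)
The plan is to cleanly factor $F := T_{\ell-1,r}^{R}(-\tau)\cdot S_{1\to\ell}((-\tau)^{\ell-1})$ using the tools in \cref{subse.cnt,se:symmetrizers}, then bound its rank from above and below. Applying \cref{le.TMT} with the split at the distinguished $(r+1)$-st entry decomposes
\begin{equation*}
T_{\ell-1,r}(-\tau)\;=\;M_{r+1,\ell-1}\bigl(-(r+1)\tau;\tau^{r};(-\tau)^{\ell-2}\bigr)\cdot F_{r+1}(\tau)^{R}\cdot F_{\ell-1}(-\tau)^{L}
\end{equation*}
on $V^{\otimes(d-1)}$. Lifting to $V^{\otimes d}$ via $T_{\ell-1,r}^{R}(-\tau)=I\otimes T_{\ell-1,r}(-\tau)$ and composing on the right with $S_{1\to\ell}((-\tau)^{\ell-1})$, the piece $F_{r+1}(\tau)^{R}$ acts on slots disjoint from those touched by $S_{1\to\ell}((-\tau)^{\ell-1})$ and therefore commutes past it, while the shifted $F_{\ell-1}(-\tau)$ on slots $2,\ldots,\ell$ combined with $S_{1\to\ell}((-\tau)^{\ell-1})$ assembles (by the identity $F_{\ell}(-\tau)=F_{\ell-1}(-\tau)^{R}\,S_{1\to\ell}((-\tau)^{\ell-1})$ of \cref{le.f-id}) into $F_{\ell}(-\tau)^{L}$. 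This yields the factorization
\begin{equation*}
F\;=\;M_{r+1,\ell-1}^{R}\cdot\bigl(F_{\ell}(-\tau)\otimes F_{r+1}(\tau)\bigr)
\end{equation*}
under the identification $V^{\otimes d}=V^{\otimes\ell}\otimes V^{\otimes(r+1)}$.

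The rank upper bound is immediate. Since $\im(F_{\ell}(-\tau)\otimes F_{r+1}(\tau))=\im F_{\ell}(-\tau)\otimes\im F_{r+1}(\tau)$, the theorems \cref{thm.dim.rel.sp}\cref{thm.dim.rel.sp.im} and \cref{thm.dim.rel+.sp}\cref{thm.dim.rel+.sp.im} (both of which are in force under the hypothesis of this section) give
\begin{equation*}
\rank F\;\le\;\rank F_{\ell}(-\tau)\cdot\rank F_{r+1}(\tau)\;=\;\binom{n+\ell-1}{\ell}\binom{n}{r+1}.
\end{equation*}

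For the matching lower bound, which is the principal obstacle, I would argue that $M_{r+1,\ell-1}^{R}$ restricts to an injection on $\im F_{\ell}(-\tau)\otimes\im F_{r+1}(\tau)$. Inspection of $M_{r+1,\ell-1}(-(r+1)\tau;\tau^{r};(-\tau)^{\ell-2})$ shows that every one of its $R$-factors has the form $R(c\tau)$ with $c\in\{-(d-2),\ldots,-1\}$; by \cref{cor.R.isom}, all such factors are isomorphisms under the hypothesis $\tau\in\bC-\bigcup_{m\ge 1}\frac{1}{m}\Lambda$ except the unique factor with $c=-1$, which in the shifted indexing becomes $R(-\tau)_{2,3}$. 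Writing $M^{R}=A\cdot R(-\tau)_{2,3}\cdot B$ with $A,B$ isomorphisms, the problem reduces to verifying that $B\bigl(\im F_{\ell}(-\tau)\otimes\im F_{r+1}(\tau)\bigr)$ meets $\ker R(-\tau)_{2,3}$ only at $0$. The key inputs are the inclusions $\im F_{\ell}(-\tau)\subseteq\bigcap_{i=1}^{\ell-1}\ker R(\tau)_{i,i+1}$ (from \cref{thm.dim.rel.sp}\cref{thm.dim.rel.sp.im}) and $\im F_{r+1}(\tau)\subseteq\bigcap_{i=\ell+1}^{d-1}\ker R(-\tau)_{i,i+1}$ (from \cref{thm.dim.rel+.sp}\cref{thm.dim.rel+.sp.im}), together with the QYBE-based commutations of \cref{le.TM.MT} which control how $B$ interacts with these subspaces. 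As a sanity/limit check, at $\tau=0$ every $R$-factor in $M^{R}$ degenerates to the identity by \cref{rmk.R(0)}\cref{item.prop.r.zero}, while \cref{pr.flim} makes $F_{\ell}(-\tau)\otimes F_{r+1}(\tau)$ tend to a non-zero scalar multiple of the symmetrization-on-left-$\ell$-tensor-antisymmetrization-on-right-$(r+1)$ projector whose image has dimension exactly $\binom{n+\ell-1}{\ell}\binom{n}{r+1}$; by lower semicontinuity of rank, this gives the lower bound on a neighborhood of $0$, and the theta-operator rigidity argument (in the spirit of the analyses of $G_{\tau}$ and $H_{\tau}$ in \cref{sec.hilb.series,se.ksz}) then extends the equality to every $\tau$ in the hypothesis set.
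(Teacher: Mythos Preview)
Your factorization
\[
F \;=\; M_{r+1,\ell-1}^{R}\cdot\bigl(F_{\ell}(-\tau)\otimes F_{r+1}(\tau)\bigr)
\]
is correct, and the upper bound follows immediately. The genuine gap is in the lower bound: you never actually prove that $M_{r+1,\ell-1}^{R}$ is injective on $\im F_{\ell}(-\tau)\otimes\im F_{r+1}(\tau)$. You correctly isolate the single non-invertible factor $R(-\tau)_{2,3}$, but the sentence ``the key inputs are the inclusions \ldots\ together with the QYBE-based commutations of \cref{le.TM.MT}'' does not constitute an argument, and your fallback limit-plus-rigidity sketch does not work as stated. The limit at $\tau=0$ together with lower semicontinuity gives the equality only on a neighborhood of $0$; the ``theta-operator rigidity'' you invoke is modeled on the analyses of $G_\tau(z)$ and $H_\tau(z)$, which are theta operators in a spectral variable $z$ with $\tau$ fixed, whereas your $F$ has no spectral variable and the relevant domain $\im F_{\ell}(-\tau)\otimes\im F_{r+1}(\tau)$ itself moves with $\tau$. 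Nothing in the paper's toolkit lets you jump from a neighborhood of $0$ to all $\tau$ in the hypothesis set.

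The paper closes this gap by factoring one step further. Using \cref{eq.M.incr}, the bottom row of $M_{r+1,\ell-1}$ is exactly $S^{\mathrm{rev}}_{1\to\ell}((-\tau)^{\ell-1})$, and this row is the one containing the unique $R(-\tau)$ factor. Peeling it off leaves behind $M^{R}_{r,\ell-1}(-(r+1)\tau;\tau^{r-1};(-\tau)^{\ell-2})$, whose $R$-arguments lie in $\{-2\tau,\ldots,-(d-2)\tau\}$ and is therefore a genuine isomorphism. The peeled-off row merges with $T^{L}_{\ell-1}$ via \cref{le.f-id} to give $T^{L}_{\ell}((-\tau)^{\ell-1})$. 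After lifting to $V^{\otimes d}$ and absorbing the extra $S_{1\to\ell}((-\tau)^{\ell-1})$, one arrives (again via \cref{le.f-id}) at
\[
S^{\mathrm{rev}}_{1\to\ell+1}\bigl(0,(-\tau)^{\ell-1}\bigr)\cdot T^{L}_{\ell}((-\tau)^{\ell-1})\cdot T^{R}_{r+1}(\tau^{r}),
\]
and the injectivity question reduces to whether $S^{\mathrm{rev}}_{1\to\ell+1}(0,(-\tau)^{\ell-1})$ is injective on $\im F^{L}_{\ell}(-\tau)$. But that operator is precisely $G_{\tau}(0)$, already shown to be an isomorphism in \cref{lem.dim.ker.eq}. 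This is the missing idea: absorb the bad $R(-\tau)$ into the $F_{\ell}$ side rather than leaving it inside the $M$-block, so that the remaining obstruction is a specialization of $G_{\tau}$ at a value where it is known to be invertible for \emph{every} $\tau$ in the hypothesis set.
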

\begin{proof}
As operators on $V^{\otimes  (d-1)}$, 
  \begin{align*}
    T_{\ell-1,r}(-\tau)
    & \; =\; M_{r+1,\ell-1}(-(r+1)\tau;\tau^{r};(-\tau)^{\ell-2}) \cdot T^{L}_{\ell-1}((-\tau)^{\ell-2}) \cdot T^{R}_{r+1}(\tau^{r})\\
    & \; =\; M^R_{r,\ell-1}(-(r+1)\tau;\tau^{r-1};(-\tau)^{\ell-2})\cdot S^{\rm rev}_{1\to\ell}((-\tau)^{\ell-1})\cdot T^{L}_{\ell-1}((-\tau)^{\ell-2})\cdot T^{R}_{r+1}(\tau^{r})\\
    & \; =\;  M^R_{r,\ell-1}(-(r+1)\tau;\tau^{r-1};(-\tau)^{\ell-2})\cdot T^{L}_{\ell}((-\tau)^{\ell-1})\cdot T^{R}_{r+1}(\tau^{r})
  \end{align*}
  where $M^R_{r,\ell-1}(\cdots)$ is acting on the $(d-2)$ right-most tensorands of  $V^{\otimes  (d-1)}$;
   the second equality comes from   \cref{eq.M.incr}, and the third  from \cref{le.f-id}. 
We now view this as an equality of operators on $V^{\otimes d}=V \otimes V^{\otimes  (d-1)}$ by considering each of the four operators
in it as acting  on the  right-most $(d-1)$ tensorands; i.e., we replace each operator
by ($I \, \otimes \,$itself).  But $I \otimes  T_{\ell-1,r}(-\tau)=T^R_{\ell-1,r}(-\tau)$,  so the equality implies 
\begin{equation*}
T^R_{\ell-1,r}(-\tau)\cdot S_{1\to\ell}((-\tau)^{\ell-1}) \;=\; M_{r,\ell-1}^R(\cdots)  \cdot (I \otimes T^{L}_{\ell}((-\tau)^{\ell-1}))\cdot 
T^{R}_{r+1}(\tau^{r}) \cdot S_{1\to\ell}((-\tau)^{\ell-1}).
\end{equation*} 
By the assumption $\tau\in\bC-\bigcup_{m\geq 1}\frac{1}{m}\Lambda$, all the $R$'s appearing in the term $M^R_{r,\ell-1}(\cdots)$ are isomorphisms. The rank of 
$T^R_{\ell-1,r}(-\tau)S_{1\to\ell}((-\tau)^{\ell-1})$ therefore equals that of
\begin{equation}
\label{eq:new.image}
\big(I \otimes T^{L}_{\ell}((-\tau)^{\ell-1})\big)\cdot T^{R}_{r+1}(\tau^{r}) \cdot S_{1\to\ell}((-\tau)^{\ell-1}).
\end{equation} 
The left-most $\ell$ tensorands of $V^{\otimes d}$ are ``disjoint'' from the right-most $(r+1)$ tensorands so
\begin{equation*}
\text{\cref{eq:new.image}}
\;=\; 
\big(I \otimes T^{L}_{\ell}((-\tau)^{\ell-1})\big)\cdot  S_{1\to\ell}((-\tau)^{\ell-1}) \cdot T^{R}_{r+1}(\tau^{r}).
\end{equation*} 
But $S_{1 \to \ell}(z_1,\ldots,z_{\ell-1}) = S_{1 \to \ell+1}(z_1,\ldots,z_{\ell-1},0)$ and
\begin{equation*}
T^R_{\ell}(z_2,\ldots,z_{\ell}) \cdot S_{1 \to \ell+1}(z_{\ell},\ldots, z_1) \;=\; S^{\rm rev}_{1 \to \ell+1}(z_1,\ldots, z_\ell)\cdot T^L_{\ell}(z_2,\ldots,z_{\ell}) 
\end{equation*}
by \Cref{le.f-id}, so, as operators on $V^{\otimes( \ell+1)}$,  
\begin{align*}
\text{\cref{eq:new.image}} & \;= \; \big(I \otimes T^{L}_{\ell}((-\tau)^{\ell-1})\big) \cdot S_{1 \to \ell}((-\tau)^{\ell-1})  \cdot T^{R}_{r+1}(\tau^{r})
\\
&  \;=\; \big(I \otimes T^{L}_{\ell}((-\tau)^{\ell-1})\big) \cdot S_{1 \to \ell+1}((-\tau)^{\ell-1},0)  \cdot T^{R}_{r+1}(\tau^{r})
\\
&  \;=\;  S^{\rm rev}_{1 \to \ell+1}(0,(-\tau)^{\ell-1}) \cdot T^{L}_{\ell}((-\tau)^{\ell-1})  \cdot T^{R}_{r+1}(\tau^{r}).
\end{align*}

By \cref{lem.dim.ker.eq}, $G_\tau(0)$ is an isomorphism. But $G_\tau(0)$ is the 
restriction of $S^{\mathrm{rev}}_{1\to\ell+1}(0,(-\tau)^{\ell-1})$ to the image of $T^{L}_{\ell}((-\tau)^{\ell-1})$ so
$S^{\mathrm{rev}}_{1\to\ell+1}(0,(-\tau)^{\ell-1})$ acts injectively on the image of $T^{L}_{\ell}((-\tau)^{\ell-1})$, whence
\begin{equation*}
\rank\text{\cref{eq:new.image}} \;=\; \rank T^{L}_{\ell}((-\tau)^{\ell-1}) T^{R}_{r+1}(\tau^{r}).
\end{equation*} 
As operators on $V^{\otimes d}=V^{\otimes \ell} \otimes V^{\otimes (r+1)}$,  $T^{L}_{\ell}((-\tau)^{\ell-1}) T^{R}_{r+1}(\tau^{r})
=T_{\ell}((-\tau)^{\ell-1})  \otimes T_{r+1}(\tau^{r})$ so
\begin{align*}
\rank T^{L}_{\ell}((-\tau)^{\ell-1}) T^{R}_{r+1}(\tau^{r}) & \;=\; \rank T^{L}_{\ell}((-\tau)^{\ell-1}) \cdot \rank T^{R}_{r+1}(\tau^{r})
\\
& \;=\; \rank F_{\ell}(-\tau) \cdot \rank F_{r+1}(\tau)
\end{align*}
which is $\tbinom{n+\ell-1}{\ell}\tbinom{n}{r+1}$ by \cref{thm.dim.rel.sp,thm.dim.rel+.sp}.
\end{proof}

\subsubsection{Terminology}
A family of linear operators $A(z)$ {\sf has a zero of multiplicity $m$ at a point $p\in \bC$} if $A(z)=(z-p)^mB(z)$ where $B(z)$
is an operator such that $\det B(z)$ has neither a zero nor a pole at $p$. 

\begin{lemma}\label{le.ker.mult.two}
  The  restriction of $H_\tau(z)$ to  $\im T_{\ell-1,r}^{R}(-\tau)S_{1\to\ell}((-\tau)^{\ell-1})$ has a zero of multiplicity $\ge 2$  
  at $z=(\ell-1)\tau$.
\end{lemma}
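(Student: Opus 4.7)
The first step is to reduce to an identity on $V^{\otimes d}$. From \cref{eq:tstts} in the proof of \cref{prop.defn.H}, we have $S^{\mathrm{rev}}_{d\to 1}(z,(-\tau)^{\ell-2},-(r+1)\tau,\tau^r)\cdot T^R_{\ell-1,r}(-\tau)=T_{\ell,r}(z)$ on $V^{\otimes d}$; since $T^R_{\ell-1,r}(-\tau)$ lands inside $V\otimes \im T_{\ell-1,r}(-\tau)$ (where $S^{\mathrm{rev}}_{d\to 1}(\ldots)$ restricts to $H_\tau(z)$), showing that the restriction of $H_\tau(z)$ to $\im T^R_{\ell-1,r}(-\tau) S_{1\to\ell}((-\tau)^{\ell-1})$ vanishes to order $\ge 2$ at $z=(\ell-1)\tau$ is the same as showing that the operator-valued analytic function
\[
\mathcal{O}(z)\;:=\;T_{\ell,r}(z)\cdot S_{1\to\ell}((-\tau)^{\ell-1})\;:\;V^{\otimes d}\longrightarrow V^{\otimes d}
\]
has a zero of order $\ge 2$ at $z=(\ell-1)\tau$.

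Order-one vanishing is immediate: the standing hypothesis $2\le\ell\le d-2$ places $\ell-1$ in $[1,d-3]$, so the argument of \cref{le.hzero} applied with $m=\ell-1$ yields $T_{\ell,r}((\ell-1)\tau)=0$ on all of $V^{\otimes d}$. Hence $T_{\ell,r}(z)=(z-(\ell-1)\tau)\widetilde T(z)$ for an analytic operator-valued $\widetilde T$, and what remains is to show $\widetilde T((\ell-1)\tau)\cdot S_{1\to\ell}((-\tau)^{\ell-1})=0$.

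To compute $\widetilde T((\ell-1)\tau)$ I would use \cref{le.f-id} in the form
\[
T_{\ell,r}(z)\;=\;T^L_{d-1}(\tau^r,-(r+1)\tau,(-\tau)^{\ell-2})\cdot R(z-(\ell-1)\tau)_{d-1,d}\cdot S_{d-1\to 1}(\tau^r,-(r+1)\tau,(-\tau)^{\ell-2}),
\]
which concentrates all $z$-dependence into a single $R$-factor at position $(d-1,d)$. Taylor-expanding $R(z-(\ell-1)\tau)=I+(z-(\ell-1)\tau)R'(0)+O((z-(\ell-1)\tau)^2)$, the vanishing of the constant term of $\mathcal O(z)$ at $z=(\ell-1)\tau$ forces the identity $T^L_{d-1}(\ldots)\cdot S_{d-1\to 1}(\ldots)=0$, and the first-order Taylor coefficient then yields $\widetilde T((\ell-1)\tau)=T^L_{d-1}(\ldots)\cdot R'(0)_{d-1,d}\cdot S_{d-1\to 1}(\ldots)$.

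The main obstacle is to show that the resulting expression $T^L_{d-1}(\ldots)\cdot R'(0)_{d-1,d}\cdot S_{d-1\to 1}(\ldots)\cdot S_{1\to\ell}((-\tau)^{\ell-1})$ vanishes. Position $(d-1,d)$ is disjoint from $(1,2),\ldots,(\ell-1,\ell)$, so $R'(0)_{d-1,d}$ commutes with $S_{1\to\ell}((-\tau)^{\ell-1})$; it also commutes with every factor of $S_{d-1\to 1}(\ldots)$ except the leftmost one, namely $R(-(\ell-1)\tau)_{d-2,d-1}$ (all others act at positions $(i,i+1)$ with $i\le d-3$). After peeling off the trivially vanishing piece $T^L_{d-1}(\ldots)\cdot S_{d-1\to 1}(\ldots)\cdot R'(0)_{d-1,d}\cdot S_{1\to\ell}((-\tau)^{\ell-1})=0$, what is left is a localized commutator $[R'(0)_{d-1,d},R(-(\ell-1)\tau)_{d-2,d-1}]$, which I would rewrite by differentiating the Yang-Baxter relation $R(u)_{12}R(u+v)_{23}R(v)_{12}=R(v)_{23}R(u+v)_{12}R(u)_{23}$ with respect to $u$ at $(u,v)=(0,-(\ell-1)\tau)$. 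The identity so obtained expresses the commutator as a combination of products involving $R(-(\ell-1)\tau)$ and $R'(-(\ell-1)\tau)$ at adjacent positions; further Yang-Baxter moves then permit one to absorb these factors back into $T^L_{d-1}(\ldots)$ and $S_{d-2\to 1}(\ldots)$ and ultimately to surface an $R(\tau)R(-\tau)=0$ cancellation coming from \cref{lem.transf.props.R.tau.z}\cref{item.prop.r.pm.tors}. This last bit of bookkeeping is the chief technical difficulty.
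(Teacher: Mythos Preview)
Your reduction to showing that $\mathcal O(z):=T_{\ell,r}(z)\,S_{1\to\ell}((-\tau)^{\ell-1})$ has a zero of order $\ge 2$ at $z=(\ell-1)\tau$ is correct, and so is the observation that $T_{\ell,r}((\ell-1)\tau)=0$ (this is exactly what the proof of \cref{le.hzero} establishes for $m=\ell-1$). But the derivative/commutator step is a genuine gap: you have not shown that
\[
T^L_{d-1}(\ldots)\cdot\bigl[R'(0)_{d-1,d},\,R(-(\ell-1)\tau)_{d-2,d-1}\bigr]\cdot S_{d-2\to 1}(\ldots)\cdot S_{1\to\ell}((-\tau)^{\ell-1})
\]
vanishes. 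The differentiated Yang--Baxter relation does not express this commutator in isolation; it only gives identities of the form $R'(0)_{b}R(w)_{a}R(w)_{b}=\cdots$ in which an extra factor $R(w)_b$ appears on one side, so ``further Yang--Baxter moves'' would have to absorb or cancel that factor as well. You have not indicated how this is to be done, and it is not clear that an $R(\tau)R(-\tau)=0$ cancellation can be surfaced from those terms.

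The paper avoids derivatives altogether by choosing a different factorization. Instead of concentrating all $z$-dependence in a single $R$-factor, it uses \cref{le.TMT} (the $M\!T\!T$ factorization) to write
\[
\mathcal O(z)\;=\;\underbrace{\bigl[M_{r+1,\ell}(-(r+1)\tau;\tau^r;(-\tau)^{\ell-2},z)\cdot T^{R}_{r+1}(\tau^{r})\bigr]}_{A(z)}\cdot
\underbrace{\bigl[T^{L}_{\ell}((-\tau)^{\ell-2},z)\cdot S_{1\to\ell}((-\tau)^{\ell-1})\bigr]}_{B(z)},
\]
and then shows that $A((\ell-1)\tau)=0$ and $B((\ell-1)\tau)=0$ separately. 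For $A$, one uses \cref{eq.M.decr} and \cref{le.f-id} to rewrite it with a right factor $T^{R}_{r+2}(\tau^{r},z-(\ell+r-1)\tau)$, which at $z=(\ell-1)\tau$ is $T^{R}_{r+2}(\tau^{r},-r\tau)=0$ by \cref{eq.dim.ker.eq.plus}. For $B$, one expands via \cref{le.f-id} and then uses $R(u)R(-u)=c(u)I$ from \cref{lem.comp.pm} to rearrange a product of the form $R(\tau)_{\ell-1,\ell}\cdots R((\ell-1)\tau)_{12}\cdot R(-(\ell-1)\tau)_{12}\cdots R(-\tau)_{\ell-1,\ell}$ and expose an $R(\tau)R(-\tau)=0$ factor. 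Since both holomorphic factors $A(z)$ and $B(z)$ vanish at $z=(\ell-1)\tau$, the product $\mathcal O(z)$ has a zero of order $\ge 2$ there, with no Taylor expansion or commutator manipulation needed.
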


\begin{proof}
Since $H_\tau(z):\im T^R_{\ell-1,r}(-\tau)  \to \im T^L_{\ell-1,r}(-\tau)$ is the restriction of 
$S^{\mathrm{rev}}_{d\to 1}(z, \, (-\tau)^{\ell -2}, \,  -(r+1)\tau, \, \tau^r)$, it suffices to prove that
	\begin{equation}\label{eq.SrevTR}
		S^{\mathrm{rev}}_{d\to 1}(z,(-\tau)^{\ell-2},-(r+1)\tau,\tau^{r})\cdot T_{\ell-1,r}^{R}(-\tau)\cdot S_{1\to\ell}((-\tau)^{\ell-1})
	\end{equation}
	has a zero of multiplicity $\ge 2$ on its domain $V^{\otimes d}$. Since
	\begin{align*}
		&S^{\mathrm{rev}}_{d\to 1}(z,(-\tau)^{\ell-2},-(r+1)\tau,\tau^{r})\cdot T_{\ell-1,r}^{R}(-\tau)\\
		&\;=\;T_{d}(\tau^{r},-(r+1)\tau,(-\tau)^{\ell-2},z)\\
		&\;=\;M_{r+1,\ell}(-(r+1)\tau;\tau^{r};(-\tau)^{\ell-2},z)\cdot T^{R}_{r+1}(\tau^{r})\cdot T^{L}_{\ell}((-\tau)^{\ell-2},z)
	\end{align*}
	by \cref{eq:tt-tw} and \cref{le.TMT}, the operator  in \cref{eq.SrevTR} is the composition of
	\begin{equation}\label{eq.TLM}
		M_{r+1,\ell}(-(r+1)\tau;\tau^{r};(-\tau)^{\ell-2},z)\cdot T^{R}_{r+1}(\tau^{r})
	\end{equation}
	and
	\begin{equation}\label{eq.TLS}
		T^{L}_{\ell}((-\tau)^{\ell-2},z)\cdot S_{1\to\ell}((-\tau)^{\ell-1}).
	\end{equation}
	We will show that  each of these operators is zero at $z=(\ell-1)\tau$.
	
	First,
		\begin{align*}
	\text{\cref{eq.TLM}}
		& \;=\; M^L_{r+1,\ell-1}(-(r+1)\tau;\tau^{r};(-\tau)^{\ell-2})\cdot S^{\mathrm{rev}}_{d\to\ell}(z-(\ell+r-1)\tau,\tau^{r})\cdot T^{R}_{r+1}(\tau^{r})\\
		& \;=\; M^L_{r+1,\ell-1}(-(r+1)\tau;\tau^{r};(-\tau)^{\ell-2})\cdot T^{R}_{r+2}(\tau^{r},z-(\ell+r-1)\tau)
	\end{align*}
	where the first and second equalities follow from \cref{eq.M.decr} and \cref{le.f-id}, respectively. When $z=(\ell-1)\tau$, 
	the right-most factor is $T^{R}_{r+2}(\tau^{r},-r\tau)$ which $=0$ by \cref{eq.dim.ker.eq.plus}. Hence $\text{\cref{eq.TLM}}=0$.
	
	When $z=(\ell-1)\tau$, 
	\begin{align*}
		\text{  \cref{eq.TLS}} &  \;=\; 
		T^{L}_{\ell-1}((-\tau)^{\ell-2})\cdot S_{\ell \to 1}((-\tau)^{\ell-2},(\ell-1)\tau)\cdot  S_{1\to\ell}((-\tau)^{\ell-1}) 
		\qquad \text{by \cref{le.f-id}}
		\\
		&  \;=\; 
		T^{L}_{\ell-1}((-\tau)^{\ell-2})\cdot R(\tau)_{\ell-1,\ell} \cdots R((\ell-1)\tau)_{12}  
		\cdot R(-(\ell-1)\tau)_{12}\cdots R(-\tau)_{\ell-1,\ell}.
	\end{align*}
By \cref{lem.comp.pm}, for all $u$ the operator 
$R(u)R(-u)$ is a scalar multiple of the identity so we can rearrange the terms in this product to
	obtain a factor of the form $R(\tau)_{\ell-1,\ell} R(-\tau)_{\ell-1,\ell}$ which $=0$.
\end{proof}

\begin{lemma}
\label{le.-tau}
$\dim(\ker H_\tau(-\tau)) \; \ge \; \dim W^{\ell+1,r+1}-n\dim W^{\ell,r+1}$.
\end{lemma}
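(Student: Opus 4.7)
The plan is to translate the asserted lower bound on $\nullity H_\tau(-\tau)$ into an assertion about $\dim(\Sigma_\ell\cap I_r)$ and then prove the latter via a classical/elliptic comparison enabled by the inductive distributivity of $\Lat(d-1)$. Since $\im H_\tau(-\tau)=\im T_{\ell,r}(-\tau)$ by \cref{prop.defn.H} and the domain $V\otimes\im T_{\ell-1,r}(-\tau)$ has dimension $n^\ell\binom{n}{r+1}-n\dim W^{\ell,r+1}$ by the induction hypothesis, rank-nullity makes the lemma equivalent to $\rank T_{\ell,r}(-\tau)\leq n^\ell\binom{n}{r+1}-\dim W^{\ell+1,r+1}$.

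The first step is a factorization that identifies $\rank T_{\ell,r}(-\tau)$ with $\rank(F^L_{\ell+1}(-\tau)F^R_{r+1}(\tau))$. Applying \cref{le.TMT} at $z=-\tau$ and expanding $M_{r+1,\ell}$ via \cref{eq.M.incr}, its rightmost column $S^{\mathrm{rev}}_{1\to\ell+1}((-\tau)^\ell)$ absorbs into $F^L_\ell(-\tau)$ by the instance $S^{\mathrm{rev}}_{1\to\ell+1}((-\tau)^\ell)\cdot F_\ell(-\tau)=F_{\ell+1}(-\tau)$ of \cref{le.f-id}, so $T_{\ell,r}(-\tau)=M_0\cdot F^L_{\ell+1}(-\tau)\cdot F^R_{r+1}(\tau)$ where $M_0$ is a product of operators $R(-m\tau)_{i,i+1}$ with $m\in\{2,\ldots,d-1\}$. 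Under the hypothesis $\tau\notin\bigcup_{m\geq 1}\tfrac{1}{m}\Lambda$, each such factor is an isomorphism by \cref{cor.R.isom}, so $M_0$ is an isomorphism. Combining this with the identifications $\ker F^L_{\ell+1}(-\tau)=\Sigma_\ell$ and $\im F^R_{r+1}(\tau)=I_r$ (from \cref{thm.dim.rel.sp.ker} and \cref{thm.dim.rel+.sp.im}) and the computation \cref{eq:7}, one concludes $\rank T_{\ell,r}(-\tau)=n^\ell\binom{n}{r+1}-\dim(\Sigma_\ell\cap I_r)$. The lemma is thereby equivalent to
\[
\dim(\Sigma_\ell\cap I_r)\;\geq\;\dim W^{\ell+1,r+1}.
\]

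For this lower bound I would produce an explicit subspace of $\Sigma_\ell\cap I_r$ of the required dimension. The tensor identities $\sum_{k=2}^{\ell}W_k=V\otimes\Sigma_{\ell-1}^{(d-1)}$ and $I_r=V\otimes I_r^{(d-1)}$ (obtained by writing each $W_k$ for $k\geq 2$ in the form $V\otimes(\text{shifted copy})$ and using $(V\otimes A)\cap(V\otimes B)=V\otimes(A\cap B)$), together with the decomposition $\Sigma_\ell=W_1+\sum_{k=2}^{\ell}W_k$, yield
\[
(W_1\cap I_r)\,+\,V\otimes(\Sigma_{\ell-1}^{(d-1)}\cap I_r^{(d-1)})\;\subseteq\;\Sigma_\ell\cap I_r.
\]
The induction hypothesis that $\Lat(d-1)$ is distributive gives $\dim(\Sigma_{\ell-1}^{(d-1)}\cap I_r^{(d-1)})=\dim W^{\ell,r+1}$, while $\ell\geq 2$ makes the position sets $\{1,2\}$ and $\{\ell+1,\ldots,d\}$ disjoint so that $W_1\cap I_r=\im R(\tau)\otimes V^{\otimes(\ell-2)}\otimes\im F_{r+1}(\tau)$ has dimension $\binom{n}{2}n^{\ell-2}\binom{n}{r+1}$, independent of $\tau$. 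The dimension of their sum depends on $\tau$ only through the pairwise intersection, which is generically small, so its elliptic dimension is at most its classical counterpart; equivalently, the elliptic sum dimension is at least the classical one. Classically (i.e., at the boundary $\tau\to 0$ where $\im R(\tau)$ degenerates to $\Alt^2 V$), the polynomial-ring lattice is distributive, so classically $(W_1\cap I_r)+V\otimes(\Sigma_{\ell-1}^{(d-1)}\cap I_r^{(d-1)})=\Sigma_\ell^{\mathrm{cl}}\cap I_r^{\mathrm{cl}}=W^{\ell+1,r+1}$. This yields $\dim(\Sigma_\ell\cap I_r)\geq\dim W^{\ell+1,r+1}$, as desired.

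The main obstacle is the semi-continuity step: one has to ensure that the classical sum is genuinely taken at a point where the family of subspaces $W_i(\tau)$ degenerates to its polynomial-ring avatars $\Lambda_{i,i+1}$ (rather than at the trivial point $\tau=0$ where $\im R(\tau)$ is all of $V^{\otimes 2}$), and to justify the application of classical distributivity of the polynomial-ring lattice to the identity $(W_1\cap I_r)^{\mathrm{cl}}+V\otimes(\Sigma_{\ell-1}^{(d-1)}\cap I_r^{(d-1)})^{\mathrm{cl}}=W^{\ell+1,r+1}$ in a way that genuinely respects the flat-deformation behavior of the grassmannian point $W_i(\tau)$. Once that is in hand, the generically-small-intersection principle does the rest.
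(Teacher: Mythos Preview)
Your reduction to the inequality $\dim(\Sigma_\ell\cap I_r)\ge\dim W^{\ell+1,r+1}$ is correct and matches the paper's route exactly (this is essentially the content of \cref{lem.new.goal}, which you have rediscovered). Your choice of intermediate subspace $(W_1\cap I_r)+V\otimes(\Sigma_{\ell-1}^{(d-1)}\cap I_r^{(d-1)})$ is also viable; it contains the paper's choice $\sum_{i=1}^{\ell}(W_i\cap I_r)$ and is likewise contained in $\Sigma_\ell\cap I_r$, so either serves.

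The gap is in the semi-continuity step, and it is precisely the one you flag as the main obstacle---but your proposed resolution runs in the wrong direction. ``The intersection $A\cap B$ is generically small'' does \emph{not} imply that the intersection at your specific $\tau$ is at most its classical value: generic smallness says only that the minimum is attained on a dense open set, so at an arbitrary $\tau$ the intersection dimension can only be \emph{at least} the generic value, with no control against the classical value either way. What one actually gets from ``$\dim(A+B)$ is generically large'' is that on a dense open set $U$ the sum attains its maximum, which is $\ge$ the classical value $\dim W^{\ell+1,r+1}$; hence $\dim(\Sigma_\ell\cap I_r)\ge\dim W^{\ell+1,r+1}$ holds on $U$. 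To upgrade this from ``on a dense set'' to ``for all $\tau$'' you need a second semi-continuity fact that you never invoke: $\dim(\Sigma_\ell\cap I_r)$ is itself generically \emph{small}, because by your own factorization it equals $n^\ell\tbinom{n}{r+1}-\rank T_{\ell,r}(-\tau)$ and ranks are generically large. Intersecting the two dense sets shows the generic (hence minimum) value of $\dim(\Sigma_\ell\cap I_r)$ is $\ge\dim W^{\ell+1,r+1}$, whence every value is. This two-sided sandwich is exactly how the paper closes the argument.
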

\begin{proof}
Since $\im H_\tau(m\tau)=\im T_{\ell,r}(m\tau)$ by \cref{prop.defn.H},
\begin{align*}
  \dim(\ker H_\tau(z)) + \dim(\im T_{\ell,r}(z)) &  \;=\;   \dim(\im T^R_{\ell-1,r}(-\tau))
\\
&  \;=\;  n^{\ell}\tbinom{n}{r+1} \,-\, n\dim W^{\ell,r+1}
\end{align*}
where the second equality follows from the induction hypothesis.
Thus, to prove the lemma we must show that 
\begin{equation*}
n^{\ell}\tbinom{n}{r+1}   \, - \, \dim(\im T_{\ell,r}(-\tau)) \; \ge \; \dim W^{\ell+1,r+1}.
\end{equation*}

However, 
\begin{align*}
 \dim(\ker T_{\ell,r}(-\tau)) &  \; \ge \; \dim(\ker F^L_{\ell+1}(-\tau)F^{R}_{r+1}(\tau)) \qquad \text{because, as observed in \cref{lem.new.goal},}
 \\ 
  &  \phantom{\; \ge \; \dim(\ker F^L_{\ell+1}(-\tau)F^{R}_{r+1}(\tau))}
  \qquad \text{   $T_{\ell,r}(-\tau) = A \cdot F^L_{\ell+1}(-\tau)F^{R}_{r+1}(\tau)$ for some  $A$}
 \\
                        &\;=\; \dim \left(\ker F^L_{\ell+1}(-\tau)\cap \im F^{R}_{r+1}(\tau)\right) \,+\, \dim(\ker F^{R}_{r+1}(\tau))
                         \\
                         &\;=\; \dim \left(\ker F^L_{\ell+1}(-\tau)\cap \im F^{R}_{r+1}(\tau)\right) \,+\, n^d-n^{\ell}\tbinom{n}{r+1}
                         \qquad \text{by \Cref{thm.dim.rel+.sp}}
                         \\
                         &\;=\;  \dim \left(\sum_{i=1}^{\ell} W_i \, \cap  \, \bigcap_{j=\ell+1}^{d-1}W_j \right) \,+\, n^d-n^{\ell}\tbinom{n}{r+1} 
                         \qquad \text{by \Cref{thm.dim.rel.sp,thm.dim.rel+.sp}}
                         \\
                         &\;=\;  \dim \left(\Sigma_\ell \cap I_r\right) \,+\, n^d-n^{\ell}\tbinom{n}{r+1}.
\end{align*}
It follows that $n^d -  \dim(\im T_{\ell,r}(-\tau)) \ge  \dim \left(\Sigma_\ell \cap I_r\right) \,+\, n^d-n^{\ell}\tbinom{n}{r+1}$; i.e.,
\begin{equation*}
n^{\ell}\tbinom{n}{r+1} \,-\, \dim(\im T_{\ell,r}(-\tau)) \;  \ge \; \dim \left(\Sigma_\ell \cap I_r\right).
\end{equation*}
Thus, the proof will be complete once we show that
\begin{equation*}
 \dim \left(\Sigma_\ell \cap I_r\right)   \; \ge \;  \dim W^{\ell+1,r+1}
 \end{equation*}
  (note that the right-hand side is the classical analogue of the left-hand side).
Since
\begin{equation*}   
\Sigma_\ell \cap I_r  \; \supseteq \; \sum_{i=1}^{\ell}\left(W_i\cap \bigcap _{j=\ell+1}^{d-1}W_j\right),
\end{equation*}
with equality when $\Lat(d)$ is distributive, we consider the expression on the right. The term inside the parentheses has classical dimension by \Cref{thm.dim.rel+.sp} and hence the right hand sum has generically large dimension by \Cref{pr:genker}. By \Cref{cor.sym} the sum on the right has classical dimension in the limit as $\tau\to 0$, so
\begin{equation*}
 \dim \left(\Sigma_\ell \cap I_r\right)   \; \ge \; \dim\sum_{i=1}^{\ell}\left(W_i\cap \bigcap _{j=\ell+1}^{d-1}W_j\right) \; \ge \; \dim W^{\ell+1,r+1}
\end{equation*}
on a dense set of $\tau$'s.  But $\dim \left(\Sigma_\ell \cap I_r\right)$ is generically small because $\dim(\ker F^L_{\ell+1}(-\tau)F^{R}_{r+1}(\tau))$ is, so $ \dim \left(\Sigma_\ell \cap I_r\right)  \ge  \dim  W^{\ell+1,r+1}$ for all $\tau$.
\end{proof}

\begin{lemma}\label{le.(d-1)tau}
  We have
  \begin{equation}
  \label{eq:(d-1)tau}
    \dim(\ker H_\tau((d-1)\tau)) \; \ge \; \dim W^{\ell,r+2}+n^{\ell}\tbinom{n}{r+1}-n^{\ell-1}\tbinom{n}{r+2}-n\dim W^{\ell,r+1}.
  \end{equation}
\end{lemma}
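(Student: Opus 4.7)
The aim is to establish \cref{eq:(d-1)tau}. Since $\im H_\tau(z)=\im T_{\ell,r}(z)$ by \cref{prop.defn.H}, and since the domain of $H_\tau$ has dimension $n\bigl(n^{\ell-1}\binom{n}{r+1}-\dim W^{\ell,r+1}\bigr)$ by the induction hypothesis \cref{eq:alt-goalt} at level $d-1$, the target inequality is equivalent to the upper bound
\[
\dim\ker T_{\ell,r}((d-1)\tau) \;\ge\; n^d - n^{\ell-1}\tbinom{n}{r+2} + \dim W^{\ell,r+2}.
\]
I will prove this following the template of \cref{le.-tau}: express $T_{\ell,r}((d-1)\tau)$ as an invertible operator composed with a product of two $F$-operators, and then compute the kernel.

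First, the plan is to show that there is an invertible operator $A\in\End(V^{\otimes d})$ such that
\[
T_{\ell,r}((d-1)\tau) \;=\; A\cdot F^L_\ell(-\tau)\cdot F^R_{r+2}(\tau).
\]
Starting from the factorization $T_{\ell,r}(z) = M_{r+1,\ell}(-(r+1)\tau;\tau^r;(-\tau)^{\ell-2},z)\cdot T^R_{r+1}(\tau^r)\cdot T^L_\ell((-\tau)^{\ell-2},z)$ of \cref{MTT}, specializing $z=(d-1)\tau$ and using \cref{le.f-id} to re-expand $T^L_\ell((-\tau)^{\ell-2},(d-1)\tau)$ and the $M$-operator, the partial sums arising in the constituent $S^{\rm rev}$- and $S$-factors at this specific value of $z$ produce arguments of the form $\pm\tau$ and $m\tau$ with $m\in\{2,3,\dots,d-1\}$. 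The latter give invertible $R$-factors by the standing assumption $\tau\notin\bigcup_{m\geq 1}\frac{1}{m}\Lambda$, while the former are absorbed, via the QYBE, into the promotion of $T^R_{r+1}(\tau^r)$ to $F^R_{r+2}(\tau)$ and $T^L_\ell((-\tau)^{\ell-2},(d-1)\tau)$ to $F^L_\ell(-\tau)$ (modulo further invertible factors), using the key identity $R(\tau)R(-\tau)=0$ from \cref{prop.nullities}\cref{item.cor.r.rk.ex}.

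Granted the factorization, invertibility of $A$ gives
\[
\dim\ker T_{\ell,r}((d-1)\tau) \;=\; \dim\ker F^R_{r+2}(\tau) + \dim\bigl(\ker F^L_\ell(-\tau)\cap \im F^R_{r+2}(\tau)\bigr).
\]
By \cref{thm.dim.rel+.sp} applied at level $d$, $\dim\ker F^R_{r+2}(\tau)=n^d-n^{\ell-1}\tbinom{n}{r+2}$ and $\im F^R_{r+2}(\tau)=I_{r+1}$; by \cref{thm.dim.rel.sp}, $\ker F^L_\ell(-\tau)=\Sigma_{\ell-1}$. Therefore
\[
\dim\ker T_{\ell,r}((d-1)\tau) \;=\; n^d - n^{\ell-1}\tbinom{n}{r+2} + \dim(\Sigma_{\ell-1}\cap I_{r+1}).
\]
Finally, mimicking the generic-dimension argument in the last paragraph of \cref{le.-tau}, the inclusion
\[
\Sigma_{\ell-1}\cap I_{r+1} \;\supseteq\; \sum_{i=1}^{\ell-1}\Bigl(W_i\cap \bigcap_{j=\ell}^{d-1}W_j\Bigr)
\]
holds, and each summand on the right has classical dimension by \cref{thm.dim.rel+.sp}. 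By \cref{pr:genker} the sum has generically large dimension, while \cref{cor.sym} identifies its $\tau\to 0$ limit with the classical $W^{\ell,r+2}$; since intersections are generically small (\cref{pr:optosp}), $\dim(\Sigma_{\ell-1}\cap I_{r+1})\ge \dim W^{\ell,r+2}$ for all admissible $\tau$, yielding the desired lower bound.

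The main obstacle is the factorization step. The arguments $(\tau^r,-(r+1)\tau,(-\tau)^{\ell-2},(d-1)\tau)$ of $T_d$ do not split cleanly into the blocks $\tau^{r+1}$ and $(-\tau)^{\ell-1}$ required to directly read off $F^R_{r+2}(\tau)$ and $F^L_\ell(-\tau)$. The numerical identity $(d-1)\tau = (\ell-1)\tau + r\tau + \tau$ is what allows the QYBE-driven rearrangement to succeed, but the bookkeeping is delicate and is expected to parallel (though be more elaborate than) the moves used in \cref{le.hzero} and \cref{le.ker.mult.two}.
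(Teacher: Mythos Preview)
Your overall strategy matches the paper's: reduce \cref{eq:(d-1)tau} to the bound
$\dim\ker T_{\ell,r}((d-1)\tau)\ge n^d-n^{\ell-1}\tbinom{n}{r+2}+\dim W^{\ell,r+2}$,
relate $T_{\ell,r}((d-1)\tau)$ to $F^L_\ell(-\tau)F^R_{r+2}(\tau)$, identify the latter's kernel with $\dim(\Sigma_{\ell-1}\cap I_{r+1})$ plus $\dim\ker F^R_{r+2}(\tau)$, and finish with the generic-dimension argument.

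The gap is your factorization step. You assert an \emph{invertible} $A$ with $T_{\ell,r}((d-1)\tau)=A\cdot F^L_\ell(-\tau)\cdot F^R_{r+2}(\tau)$, but offer only a heuristic (and the appeal to $R(\tau)R(-\tau)=0$ is puzzling: that identity kills things, it does not produce invertible factors). The paper does \emph{not} claim such an $A$. Instead it writes
\[
T_{\ell,r}((d-1)\tau)\;=\;X\cdot S_{\ell\to 1}((-\tau)^{\ell-2},(d-1)\tau)
\]
via \cref{MTT} and \cref{le.f-id}, where the right-hand factor $S_{\ell\to 1}((-\tau)^{\ell-2},(d-1)\tau)$ contains $R(-\tau)_{12}$ and is therefore \emph{not} invertible. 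Dropping this factor gives only $\dim\ker T_{\ell,r}((d-1)\tau)\ge\dim\ker X$. The paper then manipulates $X$ (using \cref{le.f-id} repeatedly) into $C\cdot F^L_\ell(-\tau)\cdot F^R_{r+2}(\tau)$, where $C$ is a product of $R(m\tau)$'s with $|m|\ge 2$ and hence invertible. This yields the needed inequality $\dim\ker T_{\ell,r}((d-1)\tau)\ge\dim\ker F^L_\ell(-\tau)F^R_{r+2}(\tau)$, after which your final paragraph goes through verbatim.

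So the fix is small but essential: replace the claimed equality with the inequality obtained by dropping the non-invertible tail factor, and carry out the rearrangement of the remaining piece explicitly rather than by appeal to ``QYBE-driven'' bookkeeping.
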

\begin{proof}
As observed in the proof of \cref{le.-tau},
\begin{equation*}
\dim(\ker H_\tau(z))+\dim(\im T_{\ell,r}(z))\;=\;n^{\ell}\tbinom{n}{r+1} \,-\, n\dim W^{\ell,r+1}.
\end{equation*}
Thus, to prove the lemma we must show that 
\begin{equation*}
n^{\ell}\tbinom{n}{r+1} \,-\, n\dim W^{\ell,r+1}  \, - \, \dim(\im T_{\ell,r}((d-1)\tau)) \; \ge \;  \text{the right-hand side of \cref{eq:(d-1)tau}}
\end{equation*}
or, equivalently, that 
\begin{equation}
\label{eq:new.inequality}
\dim (\ker T_{\ell,r}((d-1)\tau)) \; \ge \;  n^d \,-\, n^{\ell-1}\tbinom{n}{r+2} \,+\, \dim W^{\ell,r+2}.
\end{equation}

We now consider $\dim\ker T_{\ell,r}((d-1)\tau)$.
We have
\begin{align*}
& T_{\ell,r}((d-1)\tau) \\
&  \; =\; M_{r+1,\ell}(-(r+1)\tau; \tau^r; (-\tau)^{\ell-2},(d-1)\tau)  \cdot  \big( T_{\ell}((-\tau)^{\ell-2},(d-1)\tau) \otimes  T_{r+1}(\tau^r)\big) 
 \qquad \text{by \cref{MTT}}
 \\
& \; =\; B \cdot S^{\rm rev}_{d \to \ell}(\tau,\tau^{r})  \cdot  \big(T_{\ell}((-\tau)^{\ell-2},(d-1)\tau)\otimes  T_{r+1}(\tau^r)\big)   
 \qquad \text{by \cref{eq.M.decr}} 
\end{align*}
where
\begin{align*}
B & \;=\;    S^{\mathrm{rev}}_{r+2\to 1}(-(r+1)\tau,\tau^r)S^{\mathrm{rev}}_{r+3\to 2}(-(r+2)\tau,\tau^r)\cdots 
S^{\mathrm{rev}}_{d-1\to \ell-1}(-(d-2)\tau,\tau^r)
\\
& \; = \;  S^{\rm rev}_{r+1 \to d-1}(-(r+1)\tau, (-\tau)^{\ell-2}) S^{\rm rev}_{r \to d-2}(-r\tau, (-\tau)^{\ell-2})  \cdots 
S^{\rm rev}_{1 \to \ell}((-\tau)^{\ell-1}), 
\end{align*}
 the equality being essentially the same as \cref{eq.M.incr} = \cref{eq.M.decr}.
We write $B =  C \cdot S^{\rm rev}_{1 \to \ell}((-\tau)^{\ell-1})$. 

By \cref{le.f-id}, $T_{\ell}((-\tau)^{\ell-2},z) =  T^L_{\ell-1}((-\tau)^{\ell-2}) \cdot S_{\ell\to 1}((-\tau)^{\ell-2},z)$. 
Hence $\dim\ker T_{\ell,r}((d-1)\tau)$ is $\ge $ the dimension of the kernel of
  \begin{align*} 
& \phantom{\mbox{}\;=\;\mbox{}}  
B \cdot S^{\rm rev}_{d \to \ell}(\tau,\tau^{r})  \cdot  \big(T_{\ell-1}((-\tau)^{\ell-2})\otimes I \otimes  T_{r+1}(\tau^r)\big)
\\
&  \;=\; B \cdot \big(I^{\otimes (\ell-1)} \otimes S^{\rm rev}_{r+2 \to 1}(\tau^{r+1}) \big) \cdot \big(T_{\ell-1}((-\tau)^{\ell-2})\otimes I \otimes  T_{r+1}(\tau^r)\big)
\\
&  \;=\; B \cdot \big(T_{\ell-1}((-\tau)^{\ell-2})\otimes  S^{\rm rev}_{r+2 \to 1}(\tau^{r+1} )T^R_{r+1}(\tau^r)\big)
\\
&  \;=\; B \cdot \big(T_{\ell-1}((-\tau)^{\ell-2})\otimes  T_{r+2}(\tau^{r+1})\big)
\qquad \text{by \cref{le.f-id}}
\\
&  \;=\; B \cdot T^L_{\ell-1}((-\tau)^{\ell-2}) \cdot T^R_{r+2}(\tau^{r+1})
\\
 &  \;=\; C \cdot S^{\rm rev}_{1 \to \ell}((-\tau)^{\ell-1}) \cdot T^L_{\ell-1}((-\tau)^{\ell-2})  \cdot T^R_{r+2}(\tau^{r+1})
\\
&  \;=\; C \cdot T^L_\ell((-\tau)^{\ell-1}) \cdot T^R_{r+2}(\tau^{r+1})\qquad \text{by \cref{le.f-id}}.
  \end{align*}
In particular,
\begin{align*}
  \dim(\ker T_{\ell,r}((d-1)\tau)) &  \; \ge \; \dim \! \big( \ker F^L_{\ell}(-\tau)F^{R}_{r+2}(\tau) \big) 
 \\
 & \;=\;   \dim \! \big( \ker F^L_{\ell}(-\tau) \cap \im F^{R}_{r+2}(\tau) \big) \, + \, \dim(\ker F^{R}_{r+2}(\tau))
  \\
 & \;=\;   \dim ( \Sigma_{\ell-1}  \cap I_{r+1}) \, + \, \dim(\ker F^{R}_{r+2}(\tau))\qquad \text{by \cref{thm.dim.rel.sp,thm.dim.rel+.sp}}
 \\
  & \; \ge \;   \dim W^{\ell,r+2} \, + \, \dim(\ker F^{R}_{r+2}(\tau))
  \qquad \text{as in the proof of \Cref{le.-tau}}
 \\
&\; =\; \dim W^{\ell,r+2}+n^d-n^{\ell-1}\tbinom{n}{r+2}
\qquad \text{by \cref{thm.dim.rel+.sp}.}
  \end{align*}
Thus, the inequality in \cref{eq:new.inequality} holds and the proof is complete.
\end{proof}

\begin{lemma}
\label{lem.H.nullity}
For all  $\zeta \in \frac{1}{n}\Lambda$,  $H_\tau(z+\zeta) $ and $H_\tau(z)$ have the same nullity. 
\end{lemma}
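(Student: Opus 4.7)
The argument should closely parallel the proof of \cref{lem.G.nullity}, the only extra bookkeeping being that now the operator we restrict is $S^{\mathrm{rev}}_{d\to 1}(z,(-\tau)^{\ell-2},-(r+1)\tau,\tau^r)$ rather than $S^{\mathrm{rev}}_{1\to d}(z,-\tau,\ldots,-\tau)$. Write $\zeta=\frac{a}{n}+\frac{b}{n}\eta$, set $C:=T^bS^{ka}\in\GL(V)$, and for $1\le j\le d$ write $C_j:=I^{\otimes(j-1)}\otimes C\otimes I^{\otimes(d-j)}$.

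The key observation is that in $S^{\mathrm{rev}}_{d\to 1}(z,(-\tau)^{\ell-2},-(r+1)\tau,\tau^r)$, the factor at position $(q,q+1)$ (for $1\le q\le d-1$) takes as argument the partial sum $\Sigma_q^{d-1}$, and \emph{every} such partial sum contains the first argument $t_{d-1}=z$. Hence shifting $z\mapsto z+\zeta$ shifts the argument of every factor by $\zeta$. By \cref{cor.R.transl.props} we have, for each $w\in\bC$,
\begin{equation*}
R_\tau(w+\zeta)_{q,q+1}\;=\;f(w,\zeta,\tau)\,C_{q+1}^{-1}\,R_\tau(w)_{q,q+1}\,C_q
\end{equation*}
with a nowhere-vanishing scalar $f(w,\zeta,\tau)$. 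Multiplying these relations across the $d-1$ factors of $S^{\mathrm{rev}}_{d\to 1}$, the interior factors $C_{q}C_{q}^{-1}$ telescope and we obtain
\begin{equation*}
S^{\mathrm{rev}}_{d\to 1}(z+\zeta,(-\tau)^{\ell-2},-(r+1)\tau,\tau^r)\;=\;F(z,\zeta,\tau)\,C_d^{-1}\cdot S^{\mathrm{rev}}_{d\to 1}(z,(-\tau)^{\ell-2},-(r+1)\tau,\tau^r)\cdot C_1
\end{equation*}
for some nowhere-vanishing scalar $F(z,\zeta,\tau)$.

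Now $C_1=C\otimes I^{\otimes(d-1)}$ acts only on the first tensorand of $V^{\otimes d}$, so it restricts to an automorphism of $V\otimes\im T_{\ell-1,r}(-\tau)$; similarly $C_d^{-1}=I^{\otimes(d-1)}\otimes C^{-1}$ restricts to an automorphism of $\im T_{\ell-1,r}(-\tau)\otimes V$. In particular, the displayed identity immediately shows that $S^{\mathrm{rev}}_{d\to 1}(z+\zeta,\ldots)$ carries $V\otimes\im T_{\ell-1,r}(-\tau)$ into $\im T_{\ell-1,r}(-\tau)\otimes V$ (so $H_\tau(z+\zeta)$ is well-defined in the sense of \cref{prop.defn.H}) and moreover
\begin{equation*}
H_\tau(z+\zeta)\;=\;F(z,\zeta,\tau)\,C_d^{-1}\,H_\tau(z)\,C_1
\end{equation*}
as maps $V\otimes\im T_{\ell-1,r}(-\tau)\longrightarrow\im T_{\ell-1,r}(-\tau)\otimes V$. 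Since $F(z,\zeta,\tau)\ne 0$ and both $C_1$ and $C_d^{-1}$ are isomorphisms of the relevant subspaces, the nullity is preserved. There is no real obstacle here: the only point requiring attention is verifying that every partial sum in $S^{\mathrm{rev}}_{d\to 1}$ involves the first argument $z$, so that the telescoping is clean and no ``left-over'' untouched factors obstruct the conjugation.
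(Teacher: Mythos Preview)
Your argument is correct and is essentially the same as the paper's own proof: both write $\zeta=\frac{a}{n}+\frac{b}{n}\eta$, set $C=T^bS^{ka}$, apply \cref{cor.R.transl.props} to each factor of $S^{\mathrm{rev}}_{d\to 1}$, telescope to obtain $F(z,\zeta,\tau)\,C_d^{-1}\,S^{\mathrm{rev}}_{d\to 1}(z,\ldots)\,C_1$, and conclude because $C_1$ and $C_d^{-1}$ act only on the outermost tensorands and hence restrict to automorphisms of the domain and codomain of $H_\tau$. Your explicit remark that every partial sum in $S^{\mathrm{rev}}_{d\to 1}$ contains the first argument $z$ is exactly the ``cancellation'' the paper alludes to when it says this case is simpler than \cref{lem.G.nullity}.
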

\begin{proof} 
The proof resembles that of \cref{lem.G.nullity}; actually, it's a little simpler because of some cancellation. 
As in that proof, we write $\zeta=\frac{a}{n}+\frac{b}{n}\eta$ with $a,b \in \bZ$ and set $C:=T^bS^{ka}$.
 
By \cref{cor.R.transl.props},  $R_\tau(z+\zeta) = f(z,\zeta,\tau) C_2^{-1} R_\tau(z) C_1$.

By definition, $H_\tau(z)$ is the restriction of $S^{\rm rev}_{d \to 1}(z,\ldots)=R(z+*)_{d-1,d} \ldots R(z+*)_{12}$, 
where the $*$'s represent some terms that play no role
in the calculations below, to the image of $T^R_{\ell-1,r}(-\tau)$. 
Hence $H_\tau(z+\zeta)$  is the restriction of 
\begin{align*}
& R(z+\zeta+*)_{d-1,d}R(z+\zeta+*)_{d-2,d-1}\cdots R(z+\zeta+*)_{12}
\\
& \;=\; g(z,\zeta,\tau) C_d^{-1} R(z +*)_{d-1,d}C_{d-1} \cdot C_{d-1}^{-1}R(z +*)_{d-2,d-1}C_{d-2}\cdots C_2^{-1}R(z+*)_{12}C_1
\\
& \;=\; g(z,\zeta,\tau)C_d^{-1} \, R(z+*)_{d-1,d}R(z+*)_{d-2,d-1}\cdots R(z+*)_{12} C_1
\\
& \;=\; g(z,\zeta,\tau) C_d^{-1} \, S^{\rm rev}_{d \to 1}(z,\ldots )\, C_1
\end{align*}
to  the image of  $T^R_{\ell-1,r}(-\tau)$, where the function $g(z,\zeta,\tau)$ is a product of various $f(\cdot,\cdot,\cdot)$'s and therefore 
never vanishes.

Thus, the nullity of $H_\tau(z+\zeta)$ is the same as the nullity of the restriction of 
$S^{\rm rev}_{1 \to d}(z,\ldots) C_1$ to  $\im T^R_{\ell-1,r}(-\tau)=V \otimes \im T_{\ell-1,r}(-\tau)$. 
But $C_1$ is an automorphism so  the nullity of $H_\tau(z+\zeta)$ equals the nullity of the restriction of 
$S^{\rm rev}_{1 \to d}(z,\ldots)$ to $V \otimes \im T_{\ell-1,r}(-\tau)$; i.e., it equals the nullity of $H_\tau(z)$.
\end{proof}

\Cref{le.hzero,le.-tau,le.(d-1)tau}, which also hold when $\zeta \in \frac{1}{n}\Lambda$ is added to the input of $H_{\tau}(z)$,
tell us that
\begin{equation}\label{eq:sumh}
  \dim(\ker H_\tau(p))\geq
  \begin{cases}
        \dim W^{\ell,r+2}+n^{\ell}\tbinom{n}{r+1}-n^{\ell-1}\tbinom{n}{r+2}-n\dim W^{\ell,r+1} & p \in (d-1)\tau+\frac{1}{n}\Lambda,
         \\
    n^{\ell}\tbinom{n}{r+1}-n\dim W^{\ell,r+1} & p\in m\tau+\frac 1n \Lambda \; \text{and}
    \\
    &  \,\, m\in \{1,\cdots,d-3\},
    \\
    \dim W^{\ell+1,r+1}-n\dim W^{\ell,r+1} & p\in-\tau+\frac{1}{n}\Lambda,
    \\
    0 & \text{otherwise.}
  \end{cases}
\end{equation}

Further taking into account the multiplicity-two result in \Cref{le.ker.mult.two} for $p=(\ell-1)\tau$, and for
$p=(\ell-1)\tau+\zeta$ with $\zeta \in \frac{1}{n}\Lambda$, the following inequalities for the singularity partitions of $H_{\tau}$ (\Cref{def.sing-part}) hold:

\begin{equation}
  \label{eq:6}
  |\sigma_p(H_\tau)|\geq
  \begin{cases}
    \dim W^{\ell,r+2}+n^{\ell}\tbinom{n}{r+1}-n^{\ell-1}\tbinom{n}{r+2}-n\dim W^{\ell,r+1} & p \in (d-1)\tau+\frac{1}{n}\Lambda, \\
    n^{\ell}\tbinom{n}{r+1}-n\dim W^{\ell,r+1} & p \in m\tau+\frac 1n \Lambda \; \text{and} 
    \\ 
    & \,\, m\in \{1,\cdots,d-3\} - \{\ell-1\},   \\
    n^{\ell}\tbinom{n}{r+1}-n\dim W^{\ell,r+1}+\tbinom{n+\ell-1}{\ell}\tbinom{n}{r+1} & p \in (\ell-1)\tau+\frac 1n \Lambda,  \\
    \dim W^{\ell+1,r+1}-n\dim W^{\ell,r+1} & p\in-\tau+\frac{1}{n}\Lambda,   \\
    0 & \text{otherwise.}
  \end{cases}  
\end{equation}

\begin{proposition}\label{pr.h-part}
All inequalities in \Cref{eq:6} are equalities. 
Furthermore,  if $p\notin (\ell-1)\tau+\frac 1n\Lambda$, all the inequalities in \cref{eq:sumh} are equalities.
\end{proposition}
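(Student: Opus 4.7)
The plan is to force all the inequalities in \cref{eq:6} to saturate by a global counting argument on the zeros of the theta function $\det H_\tau(z)$, and to deduce the ``moreover'' clause by sandwiching nullities between equal lower and upper bounds. The two key inputs are \cref{pr.hol-op-bis}, which gives $\mult_p(\det H_\tau(z)) \geq |\sigma_p(H_\tau)|$ for every $p \in \bC$, and \cref{le.hdet}, which both pins the total zero count in a fundamental parallelogram for $\frac{1}{n}\Lambda$ at $(d-1)(n^\ell\tbinom{n}{r+1} - n\dim W^{\ell,r+1})$ and confines all zeros to the $d+1$ cosets $m\tau + \frac{1}{n}\Lambda$ with $m\in\{-1,0,1,\ldots,d-1\}$, which are pairwise distinct by the hypothesis on $\tau$.

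First I would extend \cref{lem.H.nullity} from nullities to full singularity partitions: the formula $H_\tau(z+\zeta) = g(z,\zeta,\tau)\,C_d^{-1} H_\tau(z)\, C_1$ exhibits $H_\tau(z+\zeta)$ as a nowhere-vanishing scalar times a conjugation of $H_\tau(z)$ by invertible operators preserving the domain $V\otimes \im T_{\ell-1,r}(-\tau)$ and the codomain $\im T_{\ell-1,r}(-\tau)\otimes V$, and conjugation by invertibles preserves the whole chain of iterated kernels defining $\sigma_p(H_\tau)$. Consequently, the lower bounds in \cref{eq:6} depend only on the coset class of $p$, and $\sum_{p}|\sigma_p(H_\tau)|$ over a fundamental parallelogram reduces to a sum over the $d+1$ coset representatives $m\tau$.

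Next, I would add up the lower bounds of \cref{eq:6}: the zero contributions at $m\in\{0,d-2\}$, the $(d-4)$ middle contributions $n^\ell\tbinom{n}{r+1} - n\dim W^{\ell,r+1}$ for $m\in \{1,\ldots,d-3\}-\{\ell-1\}$, the enlarged contribution at $m=\ell-1$ coming from the multiplicity-two zero in \cref{le.ker.mult.two}, and the two special contributions at $m\in\{-1, d-1\}$. Equality of the resulting total with the zero count $(d-1)(n^\ell\tbinom{n}{r+1}-n\dim W^{\ell,r+1})$ is algebraically equivalent to the purely classical identity
\begin{equation*}
\dim W^{\ell+1,r+1} + \dim W^{\ell,r+2} \;=\; \big(n^\ell - \tbinom{n+\ell-1}{\ell}\big)\tbinom{n}{r+1} + n^{\ell-1}\tbinom{n}{r+2}
\end{equation*}
concerning subspaces of $V^{\otimes d}$ defined by the exterior and symmetric structure. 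Once this identity is established, every inequality in \cref{eq:6} must be an equality. For the ``moreover'' clause at $p\notin (\ell-1)\tau+\frac{1}{n}\Lambda$, the right-hand sides of \cref{eq:6} and \cref{eq:sumh} coincide; combining this with the always-valid inequality $\dim\ker H_\tau(p)\leq |\sigma_p(H_\tau)|$ sandwiches $\dim\ker H_\tau(p)$ between the common lower bound and the now-sharp upper bound, yielding equality in \cref{eq:sumh}.

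The main obstacle is establishing the classical identity. I would attack it via the Schur--Weyl decomposition of $V^{\otimes d}$: with $A := V^{\otimes\ell}\otimes \Alt^{r+1}V$ and $B := V^{\otimes(\ell-1)}\otimes\Alt^{r+2}V \subseteq A$, one has $\dim W^{\ell+1,r+1} = \dim A - \dim \pi_{\ell+1}(A)$ and $\dim W^{\ell,r+2} = \dim B - \dim \pi_\ell(B)$, where $\pi_j: V^{\otimes d}\twoheadrightarrow \Sym^j V\otimes V^{\otimes(d-j)}$ is the symmetrization of the first $j$ tensorands. The identity then collapses to the compact statement
\begin{equation*}
\dim \pi_{\ell+1}(A) + \dim \pi_\ell(B) \;=\; \tbinom{n+\ell-1}{\ell}\tbinom{n}{r+1},
\end{equation*}
which by Schur's lemma reduces to identifying the $GL(V)$-isotypic components appearing in each image via the Pieri rule. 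The shape of the identity is corroborated by the sample case $n=3,\ d=4,\ \ell=2,\ r=1$, where one computes $\dim W^{3,2}=12$ and $\dim W^{2,3}=0$, matching the right-hand side $(9-6)\cdot 3 + 3\cdot 1 = 12$.
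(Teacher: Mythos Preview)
Your proposal is correct and follows essentially the same strategy as the paper: a global count of zeros of $\det H_\tau(z)$ (via \cref{pr.hol-op-bis} and \cref{le.hdet}) forces the inequalities in \cref{eq:6} to be equalities, once one verifies the classical identity
\[
\dim W^{\ell+1,r+1} + \dim W^{\ell,r+2} \;=\; \big(n^\ell - \tbinom{n+\ell-1}{\ell}\big)\tbinom{n}{r+1} + n^{\ell-1}\tbinom{n}{r+2},
\]
and the ``furthermore'' clause follows by the sandwich $\dim\ker H_\tau(p)\le |\sigma_p(H_\tau)|$ together with matching lower bounds. Your explicit extension of \cref{lem.H.nullity} from nullities to singularity partitions is a welcome point of rigor that the paper leaves implicit.

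The one genuine difference is in how the classical identity is proved. You propose to compute $\dim\pi_{\ell+1}(A)+\dim\pi_\ell(B)$ via Schur--Weyl and the Pieri rule, which is a valid but heavier approach that you leave as a sketch. The paper instead gives a two-line argument: writing $W^{\ell+1,r+1}=(X+Y)\cap Z$ with $X=\sum_{i<\ell}\Lambda_{i,i+1}$, $Y=\Lambda_{\ell,\ell+1}$, $Z=\bigcap_{i>\ell}\Lambda_{i,i+1}$, distributivity of the \emph{classical} lattice (equivalently, Koszulity of $SV$) gives $(X+Y)\cap Z=X\cap Z+Y\cap Z$, and ordinary inclusion--exclusion then yields the identity directly since $X\cap Y\cap Z=W^{\ell,r+2}$, $\dim(X\cap Z)=(n^\ell-\tbinom{n+\ell-1}{\ell})\tbinom{n}{r+1}$, and $\dim(Y\cap Z)=n^{\ell-1}\tbinom{n}{r+2}$. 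This avoids any appeal to representation theory and closes the gap you identified as the ``main obstacle''.
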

\begin{proof}
Let $P$ be a fundamental parallelogram with respect to $\Lambda$.

We will show that
\begin{equation}\label{eq:ww}
\dim W^{\ell,r+2} + \dim W^{\ell+1,r+1} \; =\;  n^{\ell}\tbinom{n}{r+1} + n^{\ell-1}\tbinom{n}{r+2}-\tbinom{n+\ell-1}{\ell}\tbinom{n}{r+1}
\end{equation}
in the last paragraph of this proof. For now, assume that  \cref{eq:ww} is true. 
With that assumption,
\begin{align*}
\numberthis \label{eq:almost.there}
\sum_{p \in P}   |\sigma_p(H_\tau)|   &  \; \ge \; (d-1) n^2 \left(n^{\ell}\tbinom{n}{r+1}\,-\, n\dim W^{\ell,r+1}\right)
\\
& \;=\; \text{the number of zeros $\det H_\tau(z)$ has in $P$} \qquad \text{by \Cref{le.hdet}} 
\\
&  \; =  \;   \sum_{p \in P}  \mult_p(\det H_\tau(z))
\\
&  \; \ge  \;   \sum_{p \in P} |\sigma_p(H_\tau)|      \qquad \text{by \Cref{pr.hol-op-bis}}.
\end{align*}
Hence the two inequalities in \cref{eq:almost.there} are equalities. It follows that the  inequalities in \Cref{eq:6} are equalities, as claimed.

The second sentence in the proposition follows since the only points $p$ where we have to consider zeros of multiplicity
$\ge 2$ are those $p\in (\ell-1)\tau+\frac 1n\Lambda$.

We will now prove \cref{eq:ww}.
Recall the notation in \cref{sect.cl.notn}, and consider
  \begin{equation*}
W^{\ell+1,r+1} \;=\;     \sum_{i=1}^{\ell}\Lambda_{i,i+1}\cap \bigcap_{i=\ell+1}^{d-1}\Lambda_{i,i+1} \;=\;  (X+Y)\cap Z
  \end{equation*}
 where 
  \begin{equation*}
    X=\sum_{i=1}^{\ell-1}\Lambda_{i,i+1},\quad Y= \Lambda_{\ell,\ell+1},\quad Z=\bigcap_{i=\ell+1}^{d-1}\Lambda_{i,i+1}. 
  \end{equation*}
The lattice generated by the $  \Lambda_{i,i+1}$'s is distributive so
$(X+Y)\cap Z=X\cap Z + Y\cap Z$. Hence 
  \begin{equation}\label{eq:in-ex}
    \dim W^{\ell+1,r+1} = \dim(X\cap Z) + \dim(Y\cap Z) - \dim(X\cap Y\cap Z).
  \end{equation}
But $X\cap Y\cap Z=W^{\ell,r+2}$, $ \dim(X\cap Z) =\left(n^{\ell}-\tbinom{n+\ell-1}{\ell}\right)\tbinom{n}{r+1}$, and 
$\dim(Y\cap Z) =n^{\ell-1}\tbinom{n}{r+2}$; substituting  these into \Cref{eq:in-ex} yields \cref{eq:ww}.
\end{proof}

\begin{proposition}\label{pr.dist-right-dim}
$\Sigma_\ell \cap I_r$  has classical dimension. 
\end{proposition}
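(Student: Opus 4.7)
The plan is to read off \Cref{pr.dist-right-dim} directly from \Cref{pr.h-part} via the translation already set up in \Cref{lem.new.goal}. The boundary cases $\ell\in\{0,1,d-1\}$ were already dealt with prior to \Cref{eq:7} (since then $\Sigma_\ell\cap I_r$ is itself either $\Sigma_\ell$, $I_{r+1}$, or $I_{d-1}=\Sigma_0\cap I_{d-1}$, each of which has classical dimension by \Cref{thm.dim.rel.sp,thm.dim.rel+.sp}), so the remaining work is the inductive step $2\le\ell\le d-2$.

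First I would note that, by \Cref{prop.defn.H}, $\im H_\tau(-\tau)=\im T_{\ell,r}(-\tau)$, so the desired identity
\begin{equation*}
\dim(\im T_{\ell,r}(-\tau)) \;=\; n^\ell\tbinom{n}{r+1}-\dim W^{\ell+1,r+1}
\end{equation*}
in \Cref{eq:alt-goalt} is equivalent (via the rank--nullity theorem and \Cref{eq:dim.domain.H}) to the single equality
\begin{equation*}
\dim(\ker H_\tau(-\tau)) \;=\; \dim W^{\ell+1,r+1}-n\dim W^{\ell,r+1}.
\end{equation*}

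Next I would observe that this is precisely the $p\in -\tau+\frac{1}{n}\Lambda$ case of the estimates \Cref{eq:sumh}, where \Cref{le.-tau} provides the $\ge$ direction. Since $-\tau\notin(\ell-1)\tau+\frac{1}{n}\Lambda$ (we are in the range $2\le\ell\le d-2$ and $\tau\notin\bigcup_{m\ge 1}\frac{1}{m}\Lambda$, so the cosets $m\tau+\frac{1}{n}\Lambda$ for $m\in\{-1,0,\ldots,d-1\}$ are pairwise disjoint), \Cref{pr.h-part} tells us that the inequality at $p=-\tau$ is in fact an equality. Combining with \Cref{lem.new.goal} now gives classical dimension of $\Sigma_\ell\cap I_r$.

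No step here is a real obstacle, since all the analytical work (the determinant computation for $H_\tau(z)$, the localization of its zeros, and the lower bounds on kernel dimensions at the cosets $m\tau+\frac{1}{n}\Lambda$) has already been carried out in Lemmas \ref{le.hdet}--\ref{le.(d-1)tau} and packaged into \Cref{pr.h-part}. The hardest conceptual ingredient was setting up the theta operator $H_\tau(z)$ so that its kernel at $p=-\tau$ computes exactly the failure of $\Sigma_\ell\cap I_r$ to have classical dimension; once that is in place, the proof of \Cref{pr.dist-right-dim} is a short bookkeeping argument feeding the equalities of \Cref{pr.h-part} back into \Cref{lem.new.goal}, and the induction closes. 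Feeding this back into \Cref{th.kosz} will then complete the proof that $Q_{n,k}(E,\tau)$ is Koszul.
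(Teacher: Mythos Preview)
Your proposal is correct and follows essentially the same approach as the paper: compute $\dim(\ker H_\tau(-\tau))$ exactly via \Cref{pr.h-part} (using that $-\tau\notin(\ell-1)\tau+\frac{1}{n}\Lambda$), convert this via rank--nullity and \Cref{eq:dim.domain.H} into the formula for $\dim(\im T_{\ell,r}(-\tau))$ in \Cref{eq:alt-goalt}, and conclude with \Cref{lem.new.goal}. The paper's proof is the same chain of equalities written out in three lines.
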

\begin{proof}
We have
\begin{align*}
&\dim(\im T_{\ell,r}(-\tau))\\
& \;=\; n^\ell  \tbinom{n}{r+1} \,-\, n\dim W^{\ell,r+1} \, -\, \dim (\ker H_\tau(-\tau))
 \qquad \text{by the proof of \cref{le.-tau}}
\\
& \;=\; n^\ell  \tbinom{n}{r+1} \,-\, n\dim W^{\ell,r+1} \, -\,      \dim W^{\ell+1,r+1}  \,+\,     n\dim W^{\ell,r+1} 
 \qquad \text{by \cref{pr.h-part}}
\\
& \;=\; n^\ell  \tbinom{n}{r+1} \, -\,      \dim W^{\ell+1,r+1} .   
\end{align*}
The result now follows from \cref{lem.new.goal}.
\end{proof}

\begin{theorem}
\label{thm.Koszul}
For all $\tau\in(\bC-\bigcup_{m\geq 1}\frac{1}{m}\Lambda)\cup\frac{1}{n}\Lambda$, $Q_{n,k}(E,\tau)$ is a Koszul algebra.
\end{theorem}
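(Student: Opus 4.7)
The plan is to verify Backelin's distributive-lattice criterion (\cref{th.kosz}): the sublattice $\Lat(d) \subseteq \Lat(V^{\otimes d})$ generated by the subspaces $W_i = V^{\otimes(i-1)} \otimes \rel_{n,k}(E,\tau) \otimes V^{\otimes(d-i-1)}$ must be distributive for every $d \ge 2$. We proceed by induction on $d$, and within a given $d$ we apply the reformulation in \cref{th.dist}: once $\Lat(2),\ldots,\Lat(d-1)$ are distributive, it suffices to show that for every $1 \le \ell \le d-1$ the triple $(\Sigma_{\ell-1}, W_\ell, I_{d-\ell-1})$ is distributive. By the standard modular-lattice reduction, this follows once we know that every intersection $\Sigma_\ell \cap I_r$ (with $r = d-\ell-1$) has \emph{classical dimension}, i.e.\ the dimension of its counterpart $W^{\ell+1,r+1}$ for the polynomial ring $SV$.

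The first step is to handle the torsion case $\tau \in \frac{1}{n}\Lambda$ separately: there $Q_{n,k}(E,\tau)$ is a twist of $\bC[x_0,\ldots,x_{n-1}]$ by \cite[Cor.~5.2]{CKS1}, so Koszulity is preserved under the twist. For the remaining $\tau \in \bC - \bigcup_{m \ge 1}\tfrac{1}{m}\Lambda$ we use \cref{lem.new.goal}, which rewrites the classical-dimension condition as the single equality
\begin{equation*}
\dim \im T_{\ell,r}(-\tau) \;=\; n^\ell \tbinom{n}{r+1} - \dim W^{\ell+1,r+1}.
\end{equation*}
To access this dimension, we factor $T_{\ell,r}(z)$ via \cref{prop.defn.H} through the theta operator
\begin{equation*}
H_\tau(z) : V \otimes \im T_{\ell-1,r}(-\tau) \;\longrightarrow\; \im T_{\ell-1,r}(-\tau) \otimes V,
\end{equation*}
so that $\im H_\tau(z) = \im T_{\ell,r}(z)$ and the inductive hypothesis determines $\dim \im T_{\ell-1,r}(-\tau)$.

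The second step, which is the core of the argument, is a zero-counting comparison for the theta function $\det H_\tau(z)$ with respect to $\frac{1}{n}\Lambda$. We first show (using that matrix entries of $R(z)$ lie in $\Theta_{n^2,\ldots}(\Lambda)$ and the determinant descends to $\frac{1}{n}\Lambda$) that $\det H_\tau(z)$ has exactly $(d-1)(n^\ell\binom{n}{r+1} - n \dim W^{\ell,r+1})$ zeros per fundamental parallelogram, located in $\{-\tau, 0, \tau, \ldots, (d-1)\tau\} + \frac{1}{n}\Lambda$. We then establish lower bounds on $\nullity H_\tau(p)$ at the candidate cosets: vanishing $H_\tau(m\tau) \equiv 0$ for $m \in [1,d-3]$ using commutation tricks combined with $R(\tau)R(-\tau)=0$; a bound at $p = (d-1)\tau$ arising from $\ker F^L_\ell(-\tau) \cap \im F^R_{r+2}(\tau) \supseteq W^{\ell,r+2}$; and a bound at $p = -\tau$ using that $\Sigma_\ell \cap I_r \supseteq \sum_i(W_i \cap \bigcap_{j>\ell} W_j)$, whose dimension is generically large and classical at $\tau = 0$. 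Because cosets are distinct under our hypothesis on $\tau$, the nullity bounds transport to any $\zeta$-translate by the invariance result (the analogue of \cref{lem.G.nullity} for $H_\tau$), so each $\frac{1}{n}\Lambda$-coset contributes $n^2$ times the bound.

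The hard part---and the main obstacle---is that simply summing the $\nullity$ bounds does not quite exhaust the total zero count of $\det H_\tau(z)$: one needs an \emph{extra multiplicity-two contribution} at $p = (\ell-1)\tau$. This is where the singularity-partition refinement (\cref{pr.hol-op-bis,def.sing-part}) is essential: one shows that on a certain subspace, namely $\im\bigl(T^R_{\ell-1,r}(-\tau) S_{1\to\ell}((-\tau)^{\ell-1})\bigr)$ of dimension $\binom{n+\ell-1}{\ell}\binom{n}{r+1}$, the operator $H_\tau(z)$ vanishes to order at least two at $z = (\ell-1)\tau$, exploiting the factorization through $T_{r+2}(\tau^r,-r\tau) = 0$ on one side and $R(\tau)R(-\tau) = 0$ on the other. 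Combined with the combinatorial identity $\dim W^{\ell,r+2} + \dim W^{\ell+1,r+1} = n^\ell\binom{n}{r+1} + n^{\ell-1}\binom{n}{r+2} - \binom{n+\ell-1}{\ell}\binom{n}{r+1}$ (obtained from distributivity in the polynomial case), the total bound on $\sum_p |\sigma_p(H_\tau)|$ matches the zero count exactly, forcing every inequality to be an equality.

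Once equality holds, reading off $\nullity H_\tau(-\tau)$ yields $\dim \im T_{\ell,r}(-\tau) = n^\ell\binom{n}{r+1} - \dim W^{\ell+1,r+1}$, so $\Sigma_\ell \cap I_r$ has classical dimension for every $\ell$ and $r$. Invoking \cref{th.dist} completes the induction, establishing distributivity of $\Lat(d)$ for all $d$, and hence Koszulity of $Q_{n,k}(E,\tau)$.
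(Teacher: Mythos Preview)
Your proposal is correct and follows essentially the same route as the paper: the Backelin distributive-lattice criterion, reduced via \cref{th.dist} to showing $\Sigma_\ell\cap I_r$ has classical dimension, which is translated by \cref{lem.new.goal} into a rank statement about $T_{\ell,r}(-\tau)$ and then proved by a zero-count for $\det H_\tau(z)$ using the nullity bounds at $m\tau$, $(d-1)\tau$, $-\tau$ together with the extra multiplicity-two contribution at $(\ell-1)\tau$ and the identity \cref{eq:ww}. The torsion case $\tau\in\frac{1}{n}\Lambda$ is handled exactly as you describe, via the twist of the polynomial ring.
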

\begin{proof}
We observed this in the proof of \cref{thm.H.series.dual} for $\tau\in\frac{1}{n}\Lambda$. The result for $\tau\in\bC-\bigcup_{m\geq 1}\frac{1}{m}\Lambda$ follows from the arguments in this section.
\end{proof}

\section{Artin-Schelter regularity of $Q_{n,k}(E,\tau)$ }\label{se.reg}

In this section we show, for all but countably many $\tau$, that $Q_{n,k}(E,\tau)$ is an Artin-Schelter regular algebra 
in the sense of \cite{AS87}. Suppose $\tau\in(\bC-\bigcup_{m\geq 1}\frac{1}{m}\Lambda)\cup\frac{1}{n}\Lambda$. Since $Q:=Q_{n,k}(E,\tau)$ has finite Gelfand-Kirillov dimension, it is Artin-Schelter regular of 
dimension $n$ if the global dimension of $Q$ is $n$ and
\begin{equation*}
  \Ext_Q^i(\bC,Q) \; \cong \; 
  \begin{cases}
  \bC & \text{if $i=n$,}
  \\
  0 & \text{if $i\ne n$.}
  \end{cases}
\end{equation*}

The next result provides a partial confirmation of Artin-Schelter regularity. 

\begin{theorem}\label{th.fgl}
For all $\tau\in(\bC-\bigcup_{m\geq 1}\frac{1}{m}\Lambda)\cup\frac{1}{n}\Lambda$, the global dimension of $Q_{n,k}(E,\tau)$ is $n$.  
\end{theorem}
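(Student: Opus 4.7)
The plan is to bootstrap from the two deep results already in hand: that $Q_{n,k}(E,\tau)$ is Koszul (\cref{thm.Koszul}) and that its quadratic dual has Hilbert series $(1+t)^n$ (\cref{thm.H.series.dual}). The global dimension of a graded, connected, finitely generated $\bC$-algebra equals the projective dimension of the trivial module $\bC$, so it suffices to show that $\operatorname{pd}_{Q}\bC = n$, where $Q:=Q_{n,k}(E,\tau)$.

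First I would dispose of the case $\tau\in\frac{1}{n}\Lambda$. By \cite[Cor.~5.2]{CKS1}, $Q_{n,k}(E,\tau)$ is a graded twist of the polynomial ring $SV=\bC[x_0,\ldots,x_{n-1}]$, and graded twisting in the sense of \cite{Z-twist} induces an equivalence of graded module categories; in particular it preserves projective dimension of graded modules and hence global dimension. Since $SV$ has global dimension $n$, so does $Q_{n,k}(E,\tau)$.

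For $\tau\in\bC-\bigcup_{m\geq 1}\frac{1}{m}\Lambda$, I would invoke the standard Koszul duality machinery. Since $Q$ is Koszul, the Koszul complex
\begin{equation*}
\cdots \;\longrightarrow\; Q\otimes (Q^!_d)^*\;\longrightarrow\;\cdots\;\longrightarrow\;Q\otimes (Q^!_1)^*\;\longrightarrow\; Q\;\longrightarrow\; \bC\;\longrightarrow\; 0
\end{equation*}
is a minimal graded free resolution of the trivial left $Q$-module $\bC$ (see, e.g., \cite[Ch.~2, Cor.~3.2]{PP05}). Consequently $\operatorname{pd}_{Q}\bC$ equals the top non-vanishing degree of $Q^!$. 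Now one notes that the hypothesis $\tau\in\bC-\bigcup_{m\geq 1}\frac{1}{m}\Lambda$ implies $\tau\in\bC-\bigcup_{m=1}^{n+1}\frac{1}{mn}\Lambda$ (because each $\frac{1}{mn}\Lambda$ is one of the sublattices $\frac{1}{k}\Lambda$ with $k=mn\geq 1$), so \cref{thm.H.series.dual} applies and tells us that $Q^!$ has Hilbert series $(1+t)^n$. The top non-vanishing degree of $Q^!$ is therefore $n$, and $Q^!_n\cong\bC$, so $\operatorname{pd}_{Q}\bC=n$ and hence the global dimension of $Q$ is $n$.

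There is really no serious obstacle: once Koszulity and the Hilbert series of $Q^!$ are in place, the argument reduces to quoting the minimal Koszul resolution. The only point that requires a moment's care is verifying the set-theoretic inclusion between the hypothesis on $\tau$ here and the hypothesis on $\tau$ appearing in \cref{thm.H.series.dual}, which I handled above.
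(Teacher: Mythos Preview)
Your proof is correct and follows essentially the same route as the paper: Koszulity plus the Hilbert series $(1+t)^n$ of the dual force the projective dimension of $\bC$ (equivalently, the top nonvanishing $\Ext^d_Q(\bC,\bC)\cong Q^!_d$) to be exactly $n$. The paper phrases this via $\Ext$ rather than the Koszul resolution, and does not separate out the case $\tau\in\frac{1}{n}\Lambda$ since both \cref{thm.Koszul} and \cref{thm.H.series.dual} already cover it, making your separate treatment of that case unnecessary (though not wrong).
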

\begin{proof}
Let $A$ be a connected graded algebra over a field $\Bbbk$.
It is well known that the global dimension of $A$ is the largest integer $d$
such that $\Ext^d_A(\Bbbk,\Bbbk) \ne 0$ (see, e.g., \cite[Prop.~3.18]{RR18} and \cite[Thm.~11]{Eil56}). If $A$ is a Koszul algebra, then $\Ext^d_A(\Bbbk,\Bbbk) \cong A_{d}^!$ so
its global dimension is the largest integer $d$ such that $A^!_d \ne 0$. 
Since $Q_{n,k}(E,\tau)^!$ is a Koszul algebra with Hilbert series  $(1+t)^n$, 
the global dimension of $Q_{n,k}(E,\tau)$ is $n$.
\end{proof}

We will use the following result with $A=Q_{n,k}(E,\tau)$; see, e.g., \cite[Prop.~5.10]{smith-mexico} or \cite[Thm.~1.9]{LPWZ}. 

\begin{theorem}
\label{thm.as-reg.frob}
Let $A$ be a connected graded Koszul algebra over a field $\Bbbk$. If its global dimension is finite,
then $A$ is Artin-Schelter regular if and only if $A^!$ is Frobenius.  
\end{theorem}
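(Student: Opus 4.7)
The plan is to compute $\Ext^*_A(\Bbbk, A)$ directly from the minimal Koszul resolution of the trivial module and read off the Artin-Schelter condition as the Frobenius property of $A^!$. Write $n$ for the global dimension of $A$. By \cref{th.fgl}-style reasoning, Koszulity combined with finite global dimension forces $A^!_i=0$ for $i>n$ and $A^!_n\neq 0$; in particular $A^!$ is finite-dimensional and the Koszul resolution
\begin{equation*}
0\;\to\; A\otimes_\Bbbk (A^!_n)^*\;\to\;\cdots\;\to\; A\otimes_\Bbbk(A^!_1)^*\;\to\; A\;\to\;\Bbbk\;\to\;0
\end{equation*}
is finite. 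Here the differential is the canonical one determined by the element $\sum_i x_i\otimes x_i^*\in V\otimes V^*$ using a basis $\{x_i\}$ of $A_1$ and the dual basis $\{x_i^*\}$ of $A^!_1$.

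Next, I would apply $\Hom_A(-,A)$ term-by-term. Using $\Hom_A\bigl(A\otimes_\Bbbk (A^!_i)^*,\,A\bigr)\cong A\otimes_\Bbbk A^!_i$, one obtains a cohomological complex
\begin{equation*}
C^\bullet\;:\;\; A\;\to\; A\otimes_\Bbbk A^!_1\;\to\;\cdots\;\to\; A\otimes_\Bbbk A^!_n\;\to\;0,
\end{equation*}
whose differentials are given by the same formula $a\otimes\xi\mapsto\sum_i ax_i\otimes x_i^*\xi$, and whose cohomology is $\Ext^*_A(\Bbbk,A)$. This complex is a complex of free right $A^!$-modules in a natural way: right multiplication by $A^!$ on the factors $A^!_i$ commutes with the differential. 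Thus $\Ext^*_A(\Bbbk,A)$ inherits the structure of a graded right $A^!$-module, and a bigraded analysis shows that the ``lowest internal degree'' piece of $C^\bullet$ is precisely the right-multiplication complex $A^!_\bullet\otimes_\Bbbk A^!$, whose cohomology in turn encodes the failure of the pairing $A^!_i\otimes A^!_{n-i}\to A^!_n$ to be perfect.

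The AS-regular conclusion then falls out as follows. The algebra $A$ is AS-regular of dimension $n$ iff $\Ext^i_A(\Bbbk,A)$ vanishes for $i<n$ and equals a one-dimensional $\Bbbk$ in cohomological degree $n$. Unwinding the complex $C^\bullet$, the vanishing in degrees $i<n$ is equivalent to exactness of the multiplication maps $A^!_{n-i}\to\Hom_\Bbbk(A^!_i,A^!_n)$, $\xi\mapsto(\eta\mapsto\eta\xi)$, being injective for each $i$; together with the one-dimensionality in top degree, this is exactly the statement that the form $A^!_i\times A^!_{n-i}\to A^!_n\cong\Bbbk$ induced by multiplication is non-degenerate in every degree. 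That non-degeneracy is precisely the Frobenius property of the finite-dimensional graded algebra $A^!$.

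The main obstacle will be the careful bookkeeping in the middle step: isolating from the bigraded complex $C^\bullet$ the ``linear strand'' that computes the pairing, and matching its cohomology with $\Ext^*_A(\Bbbk,A)$ on the nose (rather than only up to a spectral-sequence convergence argument). The subtlety is that $C^i=A\otimes A^!_i$ is a direct sum over internal degrees of $A$, and one has to verify that the minimality of the Koszul resolution makes the strand-by-strand cohomology collapse in the expected way; once that is in hand, the equivalence of AS-regularity with Frobenius-ness of $A^!$ is essentially tautological.
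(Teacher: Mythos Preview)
The paper does not prove this theorem; it simply quotes it with references to \cite[Prop.~5.10]{smith-mexico} and \cite[Thm.~1.9]{LPWZ}. So there is no ``paper's own proof'' to compare against.

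Your approach---compute $\Ext^*_A(\Bbbk,A)$ from the Koszul resolution and match the Gorenstein condition with non-degeneracy of the multiplication pairing on $A^!$---is exactly the standard route taken in those references. The outline is correct, but your sketch of the key step is imprecise in two places. First, you assert that the vanishing of $\Ext^i_A(\Bbbk,A)$ for $i<n$ is ``equivalent to'' injectivity of the maps $A^!_{n-i}\to\Hom_\Bbbk(A^!_i,A^!_n)$; this equivalence is not immediate from the shape of $C^\bullet$ and is precisely what the ``careful bookkeeping'' you allude to must establish. The complex $C^i=A\otimes_\Bbbk A^!_i$ mixes many internal degrees, and extracting the pairing requires identifying the relevant graded strand (degree $-n$, after the usual shift) and arguing that it alone detects the cohomology---this uses Koszulity of $A^!$ in an essential way, not just of $A$. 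Second, you tacitly assume $\dim_\Bbbk A^!_n=1$ when you write $A^!_n\cong\Bbbk$; at this point we only know $A^!_n\ne0$ and $A^!_{>n}=0$, so one-dimensionality of the top must itself be extracted from the condition $\Ext^n_A(\Bbbk,A)\cong\Bbbk$. Neither point is fatal, but as written the argument does not yet close the loop; the cited references carry out exactly this bigraded analysis.
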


A finite-dimensional $\Bbbk$-algebra $S$ is {\sf Frobenius} if it is isomorphic as a left $S$-module to its dual 
$S^*:=\Hom_{\Bbbk}(S,\Bbbk)$ equipped with the left module structure resulting from right multiplication.
By \cite[Lem.~3.2]{smith-mexico}, $S:=S_{n,k}(E,\tau)$ is Frobenius  if and only if the 
multiplication maps
\begin{equation*}
S_i \times S_{n-i} \; \longrightarrow \; S_n\;\cong\;\Bbbk
\end{equation*}
are non-degenerate bilinear forms for all $i=0,\ldots,n$;
this happens if and only if $S_{n}$ is the socle of $S$ as a left (or right) $S$-module.

\begin{proposition}
\label{lem.S.frob}
For each $(n,k,E)$, $S_{n,k}(E,\tau)$ is a Frobenius algebra for all but finitely many $\tau \in E$.
\end{proposition}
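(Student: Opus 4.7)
The plan is to realize Frobenius-ness as a non-degeneracy condition on a holomorphic family of pairings over a cofinite subset of $E$ and then to reduce to a single point where $S_{n,k}(E,\tau)$ is visibly Frobenius.

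First I would fix the cofinite open subset $U\subseteq E$ on which $S_{n,k}(E,\tau)$ has the exterior-algebra Hilbert series $(1+t)^n$: by \cref{thm.H.series.dual} together with \cref{thm.dim.rel+.sp}, $E-U$ consists of finitely many cosets, and for $\tau\in U$ the space
\begin{equation*}
  K_d(\tau) \;:=\; \sum_{s+t+2=d} V^{\otimes s}\otimes \ker R_\tau(\tau)\otimes V^{\otimes t}
\end{equation*}
has constant dimension $n^d-\binom{n}{d}$, with $S_{n,k}(E,\tau)_d = V^{\otimes d}/K_d(\tau)$. By \cref{pr:optosp,pr:genker}, the $K_d(\tau)$ therefore assemble into a holomorphic vector subbundle of the trivial bundle $U\times V^{\otimes d}$, making the quotients $S_{n,k}(E,\tau)_d$ and the multiplication pairings
\begin{equation*}
  \mu_i(\tau):S_{n,k}(E,\tau)_i\otimes S_{n,k}(E,\tau)_{n-i}\;\longrightarrow\; S_{n,k}(E,\tau)_n
\end{equation*}
into holomorphic families of bilinear maps between two bundles of equal rank $\binom{n}{i}$ and a line bundle over $U$.

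Next I would recast Frobenius-ness as a non-vanishing condition. $S_{n,k}(E,\tau)$ is Frobenius precisely when every $\mu_i(\tau)$ is non-degenerate, which is detected by the non-vanishing of the determinant of the associated adjoint bundle map. Because the data defining $R_\tau(\tau)$, hence $K_d(\tau)$, is built from theta functions that descend to sections of line bundles on the compact elliptic curve $E$, these determinants extend from $U$ to meromorphic sections of line bundles on all of $E$, and therefore have only finitely many zeros. Consequently the locus $F\subseteq U$ where $S_{n,k}(E,\tau)$ fails to be Frobenius is either all of $U$ or a finite subset of $U$.

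To rule out $F=U$ I would evaluate at $\tau=0\in U$: by \cref{cor.sym}, $R_+(0)(x_i\otimes x_j)=x_i\otimes x_j-x_j\otimes x_i$, so $\ker R_+(0)=\Sym^2 V$ and
\begin{equation*}
  S_{n,k}(E,0)\;=\;\frac{TV}{(\Sym^{2}V)}\;\cong\;\Lambda V,
\end{equation*}
the exterior algebra on $n$ variables, which is well known to be Frobenius. Hence $0\notin F$, $F$ is finite, and adjoining the finite set $E-U$ gives the proposition. The main obstacle will be the families/semicontinuity book-keeping---verifying via \cref{pr:optosp,pr:genker} that $K_d(\tau)$ genuinely varies holomorphically in $\tau$ on all of $U$, and then promoting the locally defined determinant on $U$ to a meromorphic section on the compact curve $E$ in order to conclude finiteness (rather than merely countability or discreteness) of $F$. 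Once those technicalities are in hand, the rest of the argument is either the semicontinuity outlined above or the explicit $\tau=0$ specialization.
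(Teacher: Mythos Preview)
Your strategy matches the paper's: work on the cofinite set where $S:=S_{n,k}(E,\tau)$ has Hilbert series $(1+t)^n$, interpret the Frobenius condition as nondegeneracy of the multiplication pairings, and verify it at $\tau=0$ where $S\cong\Lambda V$. The difference lies precisely in the step you flag as the main obstacle.

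Rather than building the quotient bundles $S_d(\tau)=V^{\otimes d}/K_d(\tau)$ and then arguing that the determinant of the pairing extends meromorphically across $E\setminus U$, the paper lifts everything to the fixed ambient space. By \cref{prop.ell.shuffle.prod}\cref{item.prop.ell.shuffle.prod.dual} and \cref{le.mtt}, the pairing $S_i\times S_{n-i}\to S_n$ is the one induced on $\im F_i(\tau)\times\im F_{n-i}(\tau)\to\im F_n(\tau)$ by the single operator $F_n(\tau):V^{\otimes n}\to V^{\otimes n}$; nondegeneracy of the $i^{\rm th}$ pairing is therefore equivalent to the bilinear form
\[
  V^{\otimes i}\times V^{\otimes(n-i)}\longrightarrow \im F_n(\tau)\cong\bC,\qquad (v,w)\longmapsto F_n(\tau)(v\otimes w),
\]
having rank exactly $\binom{n}{i}$. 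Now fix $x\in V^{\otimes n}$ with $F_n(0)(x)\ne 0$; on the Zariski-open set where $F_n(\tau)(x)\ne 0$ one writes $F_n(\tau)(v_j\otimes w_k)=c_{jk}(\tau)\,F_n(\tau)(x)$. Because $F_n(\tau)$ is a theta operator, each $c_{jk}$ is a ratio of two theta functions with the same quasi-periodicity and hence an \emph{elliptic function} on $E$. The minors of $(c_{jk})$ are therefore elliptic too, so the rank drops on at most a finite set, and at $\tau=0$ it attains the required value $\binom{n}{i}$. This produces the finitely many bad $\tau$'s directly, with no need to extend bundles or sections across the punctures of $U$. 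Your bundle argument can be pushed through, but this ambient-space maneuver is what makes the finiteness (as opposed to mere discreteness) fall out for free.
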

\begin{proof}
In this proof, $\tau$ denotes a complex number.

Let $\cF=\bigcup_{m=1}^{n+1} E[mn]$. 
Assume $\tau+\Lambda \in E-\cF$; 
 i.e., $\tau \in  \bC-\bigcup_{m=1}^{n+1}\frac{1}{mn}\Lambda$.

Let $S:=S_{n,k}(E,\tau)$.
By \cref{thm.dim.rel+.sp}, $S_n \cong \bC$ and $S_{n+1}=0$.
 Since $S$ is generated in degree one, $S_{d}=0$ for all $d\geq n+1$. In particular, $S$ is finite-dimensional.

 By \cref{prop.ell.shuffle.prod}\cref{item.prop.ell.shuffle.prod.dual}, $S$ is Frobenius if and only if the bilinear maps
\begin{equation*}
\begin{tikzcd}
  \im F_{i}(\tau)\times\im F_{n-i}(\tau)\ar[r] & \im F_{i}(\tau)\otimes\im F_{n-i}(\tau)\ar[rr,"M_{n-i,i}(\tau)"] & & \im F_{n}(\tau) \;\cong\; \bC
\end{tikzcd}
\end{equation*}
are non-degenerate for all $i=0,\ldots,n$. 
Since $M_{n-i,i}(\tau) \! \cdot \!(F_{i}(\tau)\otimes F_{n-i}(\tau)) = F_{n}(\tau)$ (\cref{le.mtt}), this happens if and only if,
for all $i=0,\ldots,n$, the rank of the bilinear map
\begin{equation}\label{eq.bilin.Fn}
\begin{tikzcd}
  V^{\otimes i}\times V^{\otimes (n-i)}\ar[r] & V^{\otimes i}\otimes V^{\otimes (n-i)}\ar[rr,"F_{n}(\tau)"] && \im F_{n}(\tau)\;\cong\;\bC
\end{tikzcd}
\end{equation}
equals $\dim(\im F_{i}(\tau)) = \dim(\im F_{n-i}(\tau))=\binom{n}{i}$. Clearly, the rank can be no larger than this.

Fix $x\in V^{\otimes n}$ such that $F_{n}(0)(x)\neq 0$. 
There is a Zariski-open dense subset $U \subseteq E$ such that $F_{n}(\tau)(x)\neq 0$ for all $\tau+\Lambda\in U$.
Assume $\tau + \Lambda \in U$. 

Let $\{v_{j}\}$ and $\{w_{k}\}$ be bases for $V^{\otimes i}$ and $V^{\otimes (n-i)}$, respectively. The rank of the bilinear map in 
\cref{eq.bilin.Fn} is the rank of the matrix $(c_{jk}(\tau))_{j,k}$ where
\begin{equation*}
  F_{n}(\tau)(v_{j}\otimes w_{k})\;=\;c_{jk}(\tau)\cdot F_{n}(\tau)(x).
\end{equation*}
Since $F_{n}(\tau)$ is a theta operator, $c_{jk}(\tau)$ is an elliptic function, and so are all minors of $(c_{jk}(\tau))_{j,k}$. So the rank of $(c_{jk}(\tau))_{j,k}$ is generically large. Since the rank attains the maximal value $\binom{n}{i}$ at $\tau=0$, it equals $\binom{n}{i}$ for all $\tau+\Lambda$ belonging to a 
dense open subset $U_i \subseteq U$. Since $U_0 \cap \ldots \cap U_n=E-\cF'$ for some finite subset $\cF' \subseteq E$, 
we see that $S_{n,k}(E,\tau)$ is Frobenius for all $\tau+\Lambda \in E-(\cF \cup \cF')$.
\end{proof}

Suppose $\tau\in\bC-\bigcup_{m= 1}^{n+1}\frac{1}{mn}\Lambda$. Since the space of degree-$d$ relations for $S_{n,k}(E,\tau)$ is
\begin{equation*}
\ker F_{d}(\tau)  \; = \;    \sum_{s+t+2=d}V^{\otimes s}\otimes\im R_{\tau}(-\tau)\otimes V^{\otimes t}
\end{equation*}
(\cref{thm.dim.rel+.sp}), the Frobenius property for $S_{n,k}(E,\tau)$ can be reduced to a statement about the kernel of the 
operators $F_{d}(\tau)$ (defined in \cref{defn.F_d}) and $G^+_{\tau}(\tau)$ (defined in the proof of \Cref{thm.dim.rel+.sp}).
The algebra $S_{n,k}(E,\tau)$ is Frobenius if and only if the following statements
(and their left-right symmetric versions which we do not state) are true for all $d=0,\ldots,n-1$:
\begin{quote}
 the largest subspace $W\subseteq V^{\otimes d}$ such that $V\otimes W \subseteq \ker F_{d+1}(\tau)$ is $W=\ker F_{d}(\tau)$
\end{quote}
or, equivalently, 
\begin{quote}
if  $V\otimes \{w\}$ is in the kernel of $G^+_{\tau}(\tau):V\otimes \im F_{d}(\tau) \, \longrightarrow \, \im F_{d}(\tau)\otimes V$,
 then $w=0$.
\end{quote}

\begin{theorem}\label{th.reg}
Let $\tau \in \bC$ and fix $(n,k,E)$.
\begin{enumerate}
  \item\label{item.reg.cnt}
  $Q_{n,k}(E,\tau)$ is Artin-Schelter regular of dimension $n$ for all but countably many $\tau$.
  \item\label{item.reg.fin}
 If $Q_{n,k}(E,\tau)$ is a Koszul algebra for all $\tau$, 
 then it is Artin-Schelter regular of dimension $n$ for all but finitely many $\tau + \Lambda$.
\end{enumerate}
\end{theorem}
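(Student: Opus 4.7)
My plan is to invoke \Cref{thm.as-reg.frob}, which characterizes Artin-Schelter regularity for Koszul algebras of finite global dimension as the Frobenius property of the quadratic dual. All of the needed ingredients are already in the paper: \Cref{thm.Koszul} supplies Koszulity, \Cref{th.fgl} supplies global dimension $n$, \Cref{th:is-kszdl} identifies $Q_{n,k}(E,\tau)^{!}$ with $S_{n,n-k}(E,\tau)$, and \Cref{lem.S.frob} supplies the Frobenius property for $S_{n,n-k}(E,\tau)$ for cofinitely many cosets $\tau+\Lambda$.

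For part \cref{item.reg.cnt}, \Cref{thm.Koszul,th.fgl} apply for $\tau$ in the cocountable set $X:=(\bC-\bigcup_{m\geq 1}\frac{1}{m}\Lambda)\cup\frac{1}{n}\Lambda$. The Frobenius property for $Q_{n,k}(E,\tau)^{!}\cong S_{n,n-k}(E,\tau)$ holds for $\tau$ in the preimage under $\bC\to E$ of a cofinite subset of $E$, whose complement in $\bC$ is a finite union of $\Lambda$-cosets and therefore countable. Intersecting these two cocountable sets produces a cocountable set of $\tau$ for which $Q_{n,k}(E,\tau)$ is a Koszul algebra of global dimension $n$ with Frobenius quadratic dual, hence Artin-Schelter regular of dimension $n$ by \Cref{thm.as-reg.frob}.

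For part \cref{item.reg.fin}, assuming $Q_{n,k}(E,\tau)$ is Koszul for every $\tau$ removes the countable obstruction from Koszulity, leaving only the finitely many exceptional cosets $\tau+\Lambda$ from \Cref{lem.S.frob}. To apply \Cref{thm.as-reg.frob} one also needs finite global dimension, which does not follow from \Cref{th.fgl} on all of $\bC$; however, for $\tau$ outside the finite bad set of \Cref{lem.S.frob}, $Q_{n,k}(E,\tau)^{!}\cong S_{n,n-k}(E,\tau)$ is Frobenius and hence finite-dimensional, and for a Koszul algebra $A$ with $A^{!}$ finite-dimensional the global dimension of $A$ equals the top nonzero degree of $A^{!}$. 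Intersecting further with the cofinite set of cosets on which \Cref{thm.H.series.dual} pins the Hilbert series of $S_{n,n-k}(E,\tau)$ to $(1+t)^{n}$ identifies this top degree as $n$, yielding Artin-Schelter regularity of dimension $n$ for all but finitely many cosets $\tau+\Lambda$.

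The heavy lifting is done in the cited results, so no serious technical obstacle remains; the only point requiring care is the verification in part \cref{item.reg.fin} that finite global dimension and the correct top degree $n$ persist throughout the cofinite exceptional set, which is handled by combining \Cref{lem.S.frob} with \Cref{thm.H.series.dual} as above.
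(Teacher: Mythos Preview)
Your proof is correct and follows essentially the same approach as the paper: reduce Artin-Schelter regularity to the three conditions (Koszulity, the Hilbert series of the dual being $(1+t)^n$, and the Frobenius property of the dual), then invoke \Cref{thm.Koszul}, \Cref{thm.H.series.dual}, and \Cref{lem.S.frob} to verify these on the appropriate cocountable or cofinite sets. Your extra care in part \cref{item.reg.fin} about deducing finite global dimension from Koszulity plus finite-dimensionality of $Q^{!}$ is a nice touch, though in fact the good set of \Cref{lem.S.frob} is already contained in that of \Cref{thm.H.series.dual} (by inspection of its proof), so the additional intersection is harmless but redundant.
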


\begin{proof} 
\cref{item.reg.cnt}
The algebra $Q_{n,k}(E,\tau)$ is Artin-Schelter regular of dimension $n$ if the following three statements are true: (a) it is a Koszul algebra; 
(b) the Hilbert series of $Q_{n,k}(E,\tau)^!$ is $(1+t)^n$; (c) $Q_{n,k}(E,\tau)^!$ is a Frobenius algebra. 
By \cref{thm.H.series.dual} and \cref{lem.S.frob}, there is a finite set $\cF \subseteq E$ such that (b) and (c) are true. 
By \cref{thm.Koszul}, (a) is true for all but countably many cosets $\tau+\Lambda$ and hence for all but countably many $\tau$.
Thus (a), (b), and (c) are simultaneously true for all but countably many $\tau$.  

\cref{item.reg.fin} 
The argument follows that in \cref{item.reg.cnt}. The only difference is that we are now assuming that (a) is true for all $\tau$.
Thus, since (b) and (c) are true for all but finitely many $\tau+\Lambda$, $Q_{n,k}(E,\tau)$
 is Artin-Schelter regular of dimension $n$ for all but finitely many $\tau + \Lambda$. 
\end{proof}


\printindex
\clearpage

\bibliography{biblio4}
\bibliographystyle{customamsalpha}

\end{document}